\newtheorem{thm}{Theorem}[section]
\newtheorem{lem}[thm]{Lemma}
\newtheorem{prop}[thm]{Proposition}
\newtheorem{cor}[thm]{Corollary}
\theoremstyle{definition}
\theoremstyle{remark}
\newtheorem{remark}{Remark} %
\theoremstyle{plain}
\def\CC{{\mathbb C}}
\def\HH{{\mathbb H}}
\def\RR{{\mathbb R}}
\def\ZZ{{\mathbb Z}}
\def\veca{{\text{\boldmath$a$}}}
\def\vecb{{\text{\boldmath$b$}}}
\def\vece{{\text{\boldmath$e$}}}
\def\veck{{\text{\boldmath$k$}}}
\def\vecm{{\text{\boldmath$m$}}}
\def\vecn{{\text{\boldmath$n$}}}
\def\vecq{{\text{\boldmath$q$}}}
\def\vecr{{\text{\boldmath$r$}}}
\def\vecs{{\text{\boldmath$s$}}}
\def\vecv{{\text{\boldmath$v$}}}
\def\vecw{{\text{\boldmath$w$}}}
\def\vecx{{\text{\boldmath$x$}}}
\def\vecy{{\text{\boldmath$y$}}}
\def\vecz{{\text{\boldmath$z$}}}
\def\vecalf{{\text{\boldmath$\alpha$}}}
\def\veceta{{\text{\boldmath$\eta$}}}
\def\vecxi{{\text{\boldmath$\xi$}}}
\def\vecnull{{\text{\boldmath$0$}}}
\def\scrF{{\mathcal F}}
\def\scrS{{\mathcal S}}
\def\scrX{{\mathcal X}}
\def\scrY{{\mathcal Y}}
\def\tim{\operatorname{Im}}
\def\C{\operatorname{C{}}}
\def\L{\operatorname{L{}}}
\def\Sw{{\mathcal S}}
\def\SL{\operatorname{SL}}
\def\T{\mathbb T}
\def\supp{\operatorname{supp}}
\def\ord{\operatorname{ord}}
\def\SLZ{\SL(2,\ZZ)}
\def\SLR{\SL(2,\RR)}
\def\trans{\,^\mathrm{t}\!}
\def\Onder#1#2#3#4#5{#1 \setbox0=\hbox{$#1$}\setbox1=\hbox{$#2$}
       \dimen0=.5\wd0 \dimen1=\dimen0 \dimen2=\dp0 \dimen3=\dimen2
       \advance\dimen0 by .5\wd1 \advance\dimen0 by -#4
       \advance\dimen1 by -.5\wd1 \advance\dimen1 by -#4
       \advance\dimen2 by -#3 \advance\dimen2 by \ht1
       \advance\dimen2 by 0.3ex \advance\dimen3 by #5
        \kern-\dimen0\raisebox{-\dimen2}[0ex][\dimen3]{\box1}
       \kern\dimen1}
\newcommand{\sumone}{\sideset{}{^{(1)}}\sum_{\smatr abcd}}
\newcommand{\sumtone}{\sideset{}{^{(\widetilde 1)}}\sum_{\smatr abcd}}
\newcommand{\sumtwo}{\sideset{}{^{(2)}}\sum_{\smatr abcd}}
\renewcommand{\mod}{\:\text{mod}\:}
\newcommand{\tGamma}{\Gamma'}
\newcommand{\tscrX}{\widetilde{\scrX}}
\newcommand{\hg}{\widehat{g}}
\newcommand{\hh}{\widehat{h}}
\newcommand{\hF}{\widehat{F}}
\newcommand{\og}{\overline{g}}
\newcommand{\hpsi}{\widehat{\psi}}
\newcommand{\tF}{\widetilde{F}}
\newcommand{\tQ}{\widetilde{Q}}
\newcommand{\oGamma}{\overline{\Gamma}}
\newcommand{\bs}{\backslash}
\newcommand{\lsl}{\mathfrak{sl}}
\newcommand{\ig}{\mathfrak{g}}
\newcommand{\GaG}{\Gamma\backslash G}
\newcommand{\wh}{\widehat}
\newcommand{\tf}{\widetilde f}
\newcommand{\wdelta}{\widetilde\delta}
\newcommand{\Q}{\mathbb{Q}}
\newcommand{\R}{\mathbb{R}}
\newcommand{\Z}{\mathbb{Z}}
\newcommand{\col}{\: : \:}
\newcommand{\bn}{\mathbf{0}}
\newcommand{\tg}{\widetilde{g}}
\newcommand{\ve}{\varepsilon}
\newcommand{\cmatr}[2]{\left( \begin{matrix} #1 \\ #2 \end{matrix} \right) }
\newcommand{\scmatr}[2]{\left( \begin{smallmatrix} #1 \\ #2 \end{smallmatrix} \right) }
\newcommand{\matr}[4]{\left( \begin{matrix} #1 & #2 \\ #3 & #4 \end{matrix} \right) }
\newcommand{\smatr}[4]{\bigl( \begin{smallmatrix} #1 & #2 \\ #3 & #4 \end{smallmatrix} \bigr) }
\title[Effective equidistribution and application to quadratic forms]{An effective equidistribution result for $\SL(2,\R)\ltimes(\R^2)^{\oplus k}$ and application to inhomogeneous quadratic forms}
\author{Andreas Str\"ombergsson}
\author{Pankaj Vishe}
\address{Department of Mathematics, Box 480, Uppsala University,
SE-75106 Uppsala, Sweden\newline
\rule[0ex]{0ex}{0ex} \hspace{8pt}{\tt astrombe@math.uu.se}}
\address{Department of Mathematical Sciences, Durham University, Durham DH1 3LE, U.K.\newline
\rule[0ex]{0ex}{0ex} \hspace{8pt}{\tt  pankaj.vishe@durham.ac.uk}}
\thanks{A.S.\ was %
supported by the Swedish Research Council Grant 2016-03360. 
P.V.\ was partly supported by the G\"oran Gustafsson Foundation (KVA) at KTH and 
by the EPSRC programme grant EP/J018260/1.}
\begin{document}

\begin{abstract}
Let $G=\SL(2,\R)\ltimes(\R^2)^{\oplus k}$ and let $\Gamma$ be a congruence subgroup of $\SL(2,\Z)\ltimes(\Z^2)^{\oplus
k}$.
We prove a polynomially effective asymptotic equidistribution result for 
special types of unipotent orbits in $\GaG$
which project to pieces of closed horocycles in $\SL(2,\Z)\backslash\SL(2,\R)$.
As an application, we prove an effective quantitative Oppenheim type result for the 
quadratic form $(m_1-\alpha)^2+(m_2-\beta)^2-(m_3-\alpha)^2-(m_4-\beta)^2$,
for $(\alpha,\beta)\in\R^2$ of Diophantine type,
following the approach by Marklof \cite{MarklofpaircorrI} using theta sums.
\end{abstract}

\maketitle

\section{Introduction}
\label{intro}

The results of M.\ Ratner on measure rigidity and equidistribution of orbits
of a unipotent flow \cite{Ratner91}, \cite{mR91}, play a fundamental role in homogeneous dynamics.
These results also have   %
many applications outside of dynamics,    %
ranging from problems in number theory to mathematical physics.    %
In recent years %
there has been an increased interest in obtaining %
\textit{effective} versions of Ratner's results in special cases, 
i.e., to provide an explicit rate of density or equidistribution
for the orbits of a (non-horospherical) 
unipotent flow; cf.\ \cite{GreenTao}, \cite{EMV}, \cite{mohammadi2012}, \cite{LM}, \cite{SASL},
\cite{BV}, \cite{Prinyasart}.
In particular, in \cite{SASL} and \cite{BV}, effective equidistribution results were obtained for orbits of a 
1-parameter unipotent flow on $\SL(2,\Z)\ltimes\Z^2 \backslash \SL(2,\R)\ltimes\R^2$,
using Fourier analysis and methods of from analytic number theory,
and in the very recent paper \cite{Prinyasart},
building on similar methods,
effective equidistribution of diagonal translates of certain orbits in
$\SL(3,\Z)\ltimes\Z^3 \backslash \SL(3,\R)\ltimes\R^3$
was established.
Our purpose in the present paper is to prove results of a similar nature for
homogeneous spaces of the group 
$G=\SL(2,\R)\ltimes(\R^2)^{\oplus k}$
for $k\geq2$,
and to apply these to derive an
effective quantitative Oppenheim type result for a certain family of inhomogeneous quadratic forms
of signature $(2,2)$.
Here $(\R^2)^{\oplus k}$ denotes the direct sum of $k$ copies of $\R^2$, each provided with the standard action of
$\SL(2,\R)$.

We now turn to a precise description of our setting.
We represent vectors %
by column matrices. 
Throughout the paper we will identify $(\R^2)^{\oplus k}$ with $\R^{2k}$
so that the action of $G':=\SL(2,\R)$ is given by
\begin{align*}
\matr abcd\cmatr{\vecx}{\vecx'}=\cmatr{a\vecx+b\vecx'}{c\vecx+d\vecx'}
\qquad \text{for }\:\matr abcd\in G',\: \vecx,\vecx'\in\R^k.
\end{align*}
The elements of 
\begin{align*}
G=\SL(2,\R)\ltimes(\R^2)^{\oplus k}
\end{align*}
are then represented by pairs $(M,\vecv)\in G'\times\R^{2k}$, with a multiplication law
\begin{align*}
(M,\vecv)(M',\vecv')=(MM',\vecv+M\vecv').
\end{align*}
Let 
\begin{align*}
a(y)=\matr{\sqrt y}00{1/\sqrt y}\qquad
\text{and} \qquad
u(x)=\matr 1x01
\qquad (y>0,\: x\in\R).
\end{align*}
We will always view $G'=\SL(2,\R)$ as a subgroup of $G$ through $M\mapsto(M,\bn)$;
in particular, $a(y)$ and $u(x)$ are also elements of $G$.
We set
\begin{align*}
\overline\Gamma=\SL(2,\Z)\ltimes(\Z^2)^{\oplus k}.
\end{align*}
In our notation, this is the subgroup of all $(M,\vecv)\in G$ with $M\in\SL(2,\Z)$ and $\vecv\in\Z^{2k}$.
Given a subgroup $\Gamma$ of $\overline\Gamma$ of finite index,
we consider the homogeneous space 
\begin{align*}
X=\GaG.
\end{align*}
As we will detail %
below, this space is a torus bundle %
over a finite cover of the familiar 
3-dimensional homogeneous space $\SL(2,\Z)\backslash\SL(2,\R)$ classifying unimodular lattices in $\R^2$.
We fix $\mu$ to be the (left and right invariant) Haar measure on $G$, normalized so as to induce a probability measure
on $X$,
which we also denote by $\mu$.

The following %
equidistribution result is a
special case of \cite[Thm.\ 3]{DMS}\footnote{Apply \cite[Thm.\ 3]{DMS} with $d=2$ and $M=1_2$
and use the anti-automorphism $(M,\scmatr{\vecx}{\vecx'})\mapsto (\trans M,\trans M\scmatr{\vecx'}{-\vecx})$
of $G$ to translate from the setting with $G/\Gamma$ in \cite{DMS} into our setting with $X=\GaG$.
As noted in \cite[Remark 7.2]{DMS}, the proof of \cite[Thm.\ 3]{DMS} extends trivially to the case when $\Gamma$ is an 
arbitrary subgroup of $\SL(2,\Z)\ltimes(\Z^2)^{\oplus k}$ of finite index.};
alternatively it may be deduced (with some work) as a consequence of
\cite[Thm.\ 1.4]{Shah}.
Note that both \cite[Thm.\ 3]{DMS} and \cite[Thm.\ 1.4]{Shah}  depend crucially on Ratner's classification of invariant measures.

For any $\veca,\vecb\in\R^k$ we denote by $\veca\vecb$ the standard scalar product,
$\veca\vecb=a_1b_1+\ldots+a_kb_k$.

\begin{thm}\label{INEFFECTIVETHM}
Let $\Gamma$ be a subgroup of $\overline\Gamma=\SL(2,\Z)\ltimes(\Z^2)^{\oplus k}$ of finite index.
Fix $\vecxi=\cmatr{\vecxi_1}{\vecxi_2}$ in $\R^{2k}$ %
subject to the condition that 
there does not exist any $\vecm\in\Z^k\setminus\{\bn\}$ for which both 
$\vecm\vecxi_1$ and $\vecm\vecxi_2$ are integers.
Then for any Borel probability measure $\lambda$ on $\R$ which is absolutely continuous with respect to the Lebesgue
measure,
and any bounded continuous function $f$ on $X=\Gamma\backslash G$, %
\begin{align}\label{INEFFECTIVETHMRES}
\lim_{y\to0^+}\int_\R f\left(\Gamma\bigl(1_2,\vecxi\bigr) %
u(x)a(y)\right)\,d\lambda(x)
=%
\int_X f\,d\mu.
\end{align}
\end{thm}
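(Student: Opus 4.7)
The plan is to reduce the result to classical equidistribution of expanding horocycles on $\SL(2,\Z)\backslash\SL(2,\R)$ via a Fourier decomposition along the torus fibers of $X$. Let $\Gamma_1\subset\SL(2,\Z)$ be the projection of $\Gamma$ and let $\Lambda:=\{\vecm\in\Z^{2k}:(1_2,\vecm)\in\Gamma\}$, so that $X$ is generically a torus bundle over the finite cover $\Gamma_1\backslash\SL(2,\R)$ with fiber $\R^{2k}/\Lambda$. Any bounded continuous $\Gamma$-invariant $f$ on $G$ admits the fiberwise Fourier expansion
\begin{equation*}
f(g',\vecv)=\sum_{\vecn\in\Lambda^*} c_\vecn(g')\,e^{2\pi i\,\vecn\cdot\vecv},
\end{equation*}
whose coefficients satisfy the equivariance $c_{\gamma^{-T}\vecn}(\gamma g')=c_\vecn(g')$ for all $\gamma\in\Gamma_1$. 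Hence
\begin{equation*}
\int_\R f\bigl(u(x)a(y),\vecxi\bigr)\,d\lambda(x)=\sum_{\vecn\in\Lambda^*} e^{2\pi i\,\vecn\cdot\vecxi}\int_\R c_\vecn\bigl(u(x)a(y)\bigr)\,d\lambda(x).
\end{equation*}

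The zero-mode $c_\bn$ is $\Gamma_1$-invariant and equals the fiberwise integral of $f$, so it descends to a bounded continuous function on $\Gamma_1\backslash\SL(2,\R)$; the classical equidistribution of long closed horocycles on the modular surface, together with an $L^1$-approximation of $\rho=d\lambda/dx$ by smooth compactly supported densities, then yields $\int_\R c_\bn(u(x)a(y))\,d\lambda(x)\to\int_X f\,d\mu$ as $y\to 0^+$. For the nonzero modes, grouping $\Lambda^*\setminus\{\bn\}$ into $\Gamma_1$-orbits and unfolding each orbital sum via the equivariance produces an oscillatory integral over a Siegel fundamental domain whose phase, after pushing the $a(y)$-action through, acquires an effective frequency of order $y^{-1/2}$ as $y\to 0^+$. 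The Diophantine hypothesis on $\vecxi$---no nonzero $\vecm\in\Z^k$ makes both $\vecm\vecxi_1$ and $\vecm\vecxi_2$ integers---prevents the phase from being stationary for $x$ in a set of positive $\lambda$-measure, so a Riemann--Lebesgue argument exploiting absolute continuity of $\lambda$ eliminates each nonzero mode in the limit.

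The main obstacle is controlling the sum over nonzero modes \emph{uniformly}: the decay per mode must be weighed against only polynomial growth of $c_\vecn$ in $\|\vecn\|$, and without uniformity one cannot exchange the limit in $y$ with the $\vecn$-sum. The standard remedy is to prove the result first for $f$ in a dense subspace of smooth functions whose fiberwise Fourier coefficients decay rapidly in $\vecn$ (for instance, smooth $f$ of compact support in $X$), and then to pass to the general bounded continuous case by approximation. The absolute continuity of $\lambda$ is used once more here via the equicontinuity $\|\rho(\cdot-t)-\rho\|_{L^1}\to 0$ as $t\to 0$, which ensures that such approximations behave well uniformly in $y$.
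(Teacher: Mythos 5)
Your proposal takes a genuinely different route from the paper. The paper does not prove Theorem~\ref{INEFFECTIVETHM} directly: it records the statement as a special case of \cite[Thm.~3]{DMS}, alternatively deducible from \cite[Thm.~1.4]{Shah}, both of which rest on Ratner's classification of invariant measures. Your plan --- Fourier-expand along the torus fiber, treat the zero mode via horocycle equidistribution on the base, and eliminate the nonzero modes --- is precisely the template the paper develops in Sections~\ref{FOURIERDECSEC}--\ref{BKSEC} for the \emph{effective} Theorems~\ref{MAINTHM1} and~\ref{MAINTHM2}. So the skeleton is reasonable, and your handling of the zero mode (approximating $d\lambda/dx$ in $\L^1$, equidistribution of pieces of closed horocycles, $\wh f(\cdot,\bn)$ being $D(\Gamma)$-invariant with mean $\int_X f\,d\mu$) and of the uniformity issue (first for smooth $f$ with rapidly decaying fiber coefficients, then approximating) is fine as far as it goes.

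The gap is in the nonzero modes. Along the orbit $x\mapsto\Gamma(1_2,\vecxi)u(x)a(y)$ the torus coordinate is \emph{fixed} at $\vecxi$; only the $\SL(2,\R)$ coordinate moves. Consequently the Fourier factor $e(\vecn\vecxi)$ is \emph{constant} in $x$: there is no oscillatory phase in the $x$-integral, no stationary-phase mechanism, and nothing for a Riemann--Lebesgue argument to act on. The cancellation that makes the nonzero modes vanish in the limit must come from the \emph{orbital sum}: grouping the modes into orbits for the action $\vecn\mapsto\trans T\vecn$ of $D(\Gamma)$ and writing the contribution of a representative $\veceta$ as $\sum_T\wh f\bigl(T\,u(x)a(y),\veceta\bigr)\,e((\trans T\veceta)\vecxi)$, the phases vary with $T=\smatr abcd$ and one needs Weyl/Kloosterman-type cancellation in the $(c,d)$-sum. (The relevant range is $|c|,|d|\ll y^{-1/2}$, which is presumably the origin of your ``frequency of order $y^{-1/2}$''; but that is a range of summation, not an oscillation frequency in $x$.) Proving this cancellation under the sole hypothesis that no $\vecm\neq\bn$ makes both $\vecm\vecxi_1$ and $\vecm\vecxi_2$ integral is the crux of the whole problem. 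For $k\geq2$ the orbits split into A-orbits and B-orbits (Section~\ref{FOURIERDECSEC}); the B-orbits require the considerably more delicate analysis of Sections~\ref{cancellationsec} and~\ref{BKSEC}, and the paper explicitly records that its effective treatment can be carried out only when $\vecxi_1=\bn$ or $\vecxi_2=\bn$, with the general $\vecxi$ case flagged as open. Your proposal supplies not even a qualitative substitute for these orbital-sum estimates, and absolute continuity of $\lambda$ does not help here: the missing cancellation lives in the $T$-sum, not in the $x$-integral. As written, the argument does not establish the theorem.
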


In view of the relation 
\begin{align*}
u(x)a(y)=a(y)u(y^{-1}x),
\end{align*}
the integration in the left hand side of \eqref{INEFFECTIVETHMRES} is along an orbit of
the unipotent flow 
\begin{align*}
U^t:\Gamma g\mapsto \Gamma g u(t)\qquad (t\in\R)
\end{align*}
on $X$. Let $D:G\to G'$ be the natural projection sending $(M,\vecv)$ to $M$;
then $D(\Gamma)$ is a finite index subgroup of $\SL(2,\Z)$, 
and $D$ induces a projection map from $X$ to $X':=D(\Gamma)\backslash G'$,
which we also call $D$;
this realizes $X$ as a torus bundle %
over the space $X'$, which in turn
is a finite cover of $\SL(2,\Z)\backslash\SL(2,\R)$. 
The orbits which appear in \eqref{INEFFECTIVETHMRES} are exactly those orbits of the flow $U^t$
which project to a closed horocycle in $X'$ around its cusp at $\infty$.
Letting $y$ decrease towards zero means that we are considering expanding translates
of the initial orbit $x\mapsto\Gamma(1_2,\vecxi)u(x)$.

Let us note that the condition imposed on $\vecxi$ in Theorem \ref{INEFFECTIVETHM} cannot be weakened. %
Indeed, for any $\vecm\in\Z^k\setminus\{\bn\}$, set
\begin{align*}
X_\vecm:=\left\{\Gamma \left(M,\cmatr{\vecv_1}{\vecv_2}\right)\col M\in\SL(2,\R),\:\vecm\vecv_1\in\Z
,\:\vecm\vecv_2\in\Z\right\}.
\end{align*}
This is a closed embedded submanifold of codimension 2 in $X$.
If both $\vecm\vecxi_1$ and $\vecm\vecxi_2$ are integers then 
\begin{align}\label{ORBITCONFINED}
\Gamma\bigl(1_2,\vecxi\bigr)u(x)a(y)\in X_\vecm
\qquad
\text{for all }\: x\in\R,\: y>0,
\end{align}
and therefore the curve certainly cannot become equidistributed in $X$,
i.e.\ \eqref{INEFFECTIVETHMRES} fails for some $f$.
(For example, consider any bounded continuous $f\geq0$ such that $f_{|X_\vecm}\equiv0$
while $\int_X f\,d\mu>0$.)

Marklof in \cite[Thm.\ 5.7]{MarklofpaircorrI} proved
Theorem \ref{INEFFECTIVETHM} in the special case of $\vecxi_1=\bn$,
and then in \cite[Thm.\ 3.1]{Marklofmeansquare} in the special case of $\vecxi_2=\bn$.
Note that if $\vecxi_1=\bn$, the condition on $\vecxi_2$ in the theorem 
becomes that $1$ together with the $k$ components of $\vecxi_2$ 
should be linearly independent over $\Q$
(and vice versa if $\vecxi_2=\bn$).
Our main results in the present paper are 
Theorem \ref{MAINTHM1} and Theorem \ref{MAINTHM2} below, which give
\textit{effective} versions of 
these two special cases of Theorem \ref{INEFFECTIVETHM},
under the further requirement that $\Gamma$ is a congruence subgroup of $\overline\Gamma$.

To prepare for the statement of the main theorems we introduce some further notation.
For a positive integer $N$, $\Gamma(N)$ denotes the principal congruence subgroup of $\SL(2,\Z)$ of level
$N$: %
\begin{align*}
\Gamma(N)=\biggl\{\matr abcd\in\SL(2,\Z)\col \matr abcd\equiv \matr 1001 \mod N\biggr\}.
\end{align*}
We will consider $X=\GaG$ where $\Gamma$ is a subgroup of $\overline\Gamma$ of the form
$\Gamma=\Gamma(N)\ltimes\Z^{2k}$.\label{GAMMAdef}
(The case of an arbitrary congruence subgroup of $\overline\Gamma$ can easily be reduced to %
the case of $\Gamma=\Gamma(N)\ltimes\Z^{2k}$,
by using the fact that for any $q\in\Z^+$, the map $(M,\vecv)\mapsto(M,q\vecv)$ is an automorphism of $G$.)

We introduce the following cuspidal height function, for $(M,\vecv)\in G$:
\begin{align}\label{CHdef}
\scrY(M,\vecv)=\scrY(M)=\sup \{\tim \gamma M(i)\col\gamma\in\SL(2,\Z)\},
\end{align}
where in the right hand side we use the standard action of $G'=\SL(2,\R)$ on the
Poincar\'e upper half plane $\HH=\{\tau=u+iv\in\CC\col v>0\}$.
Then $\scrY(M,\vecv)\geq\sqrt3/2$ for all $(M,\vecv)\in G$.
Note that $\scrY(M,\vecv)$ depends only on the coset $\oGamma(M,\vecv)$, 
and in particular $\scrY$ can be viewed as a function on $X$.
Given $p_1,p_2,\ldots\in X$, we have $\scrY(p_j)\to\infty$ if and only if the sequence $p_1,p_2,\ldots$
leaves all compact subsets of $X$.

For $m\geq0$ %
and $a\in\R$, we let $\C_a^m(X)$ be the space of all $m$ times continuously differentiable
functions
on $X$, all of whose derivatives up to order $m$ are $\ll\scrY^{-a}$ throughout $X$.
In more precise terms, let $\ig$ be the Lie algebra of $G$, and fix a basis $X_1,\ldots,X_{2k+3}$ of $\ig$
(we make a definite choice of this basis; cf.\ \eqref{FIXEDBASIS} below).
Each $Y\in\ig$ can be realised as a left invariant differential operator
on functions on $G$, and thus also a differential operator on $X=\GaG$,
which we will also denote by $Y$.
For any $f\in\C^m(X)$, set
\begin{align}\label{CMNORMDEF}
\|f\|_{\C^m_a}:=\sum_{\ord(D)\leq m}\,\sup_{p\in X}\,\bigl|\scrY(p)^a(Df)(p)\bigr|, %
\end{align}
where the sum is taken %
over all monomials in $X_1,\ldots,X_{3+2k}$ of degree $\leq m$.
In particular, $\|\cdot\|_{\C^0_0}$ is the supremum norm.
Then $\C_a^m(X)$ is the space of all $f\in\C^m(X)$ with $\|f\|_{\C^m_a}<\infty$.

For any integer $n\geq0$ %
and real numbers $a\geq0$ and $p\in[1,+\infty]$,
we introduce the weighted Sobolev norm $S_{p,a,n}(h)$ 
on functions $h\in\C^n(\R)$ 
through
\begin{align*}
S_{p,a,n}(h)=\sum_{j=0}^n\|(1+|x|)^a\,\partial^j h(x) \|_{\L^p}.
\end{align*}

For $x\in\R$ let $\langle x\rangle$ denote the distance %
to the nearest integer; $\langle x\rangle=\min_{n\in\Z}|x-n|$.
Given $\beta>k$, $\vecxi \in \RR^{k}$, $T>0$  we define
\begin{align}\label{MMXIDEF}
\delta_{\beta,\vecxi}(T)=\sum_{\vecr\in\Z^{k}\setminus\{\bn\}}\|\vecr\|^{-\beta}\sum_{j=1}^\infty
\frac{1+\log^+\bigl(\frac{T\langle j\vecr\vecxi\rangle}j\bigr)}{j^2+Tj\langle j\vecr\vecxi\rangle}.
\end{align}
Since $\log^+(x)<x$ %
($\forall x>0$), one has
\begin{align}\label{deltabetaxiBASICBOUND}
\delta_{\beta,\vecxi}(T)\leq C_{k,\beta}:=\sum_{\vecr\in\Z^{k}\setminus\{\bn\}}\|\vecr\|^{-\beta}\sum_{j=1}^\infty j^{-2}<\infty,
\end{align}
for all $\vecxi$ and $T$.
\pagebreak

We now state our two main theorems:
\begin{thm}\label{MAINTHM1}
\emph{[Effective version of Theorem \ref{INEFFECTIVETHM} when $\vecxi_1=\bn$.]}
Let $k\geq2$ and $\Gamma=\Gamma(N)\ltimes\Z^{2k}$.
Fix $\ve>0$ and an integer $\beta\geq\max(8-k,1+k)$,
and set $m=3(\beta+k+1)$ and $a=(\beta-1)/2$.
Then for any $f\in\C^m_a(X)$,
$h\in\C^2(\R)$ with $S_{\infty,2+\ve,2}(h)<\infty$, $\vecxi_2\in\R^k$ and $y>0$, we have
\begin{align}\notag
\left|\int_\R f %
\Bigl(\Gamma\Bigl(1_2,\Bigl(\begin{matrix} \bn \\ \vecxi_2 \end{matrix}\Bigr)\Big)u(x)a(y)\Bigr)
h(x)\,dx-\int_{X}f\,d\mu\int_\R h \,dx\right |
\hspace{150pt}
\\\label{MAINTHM1RES}
\ll \|f\|_{\C^m_a} S_{\infty,2+\ve,2}(h) 
\Bigl(\delta_{\beta,\vecxi_2}(y^{-\frac12})+y^{\frac14-\ve}\Bigr),
\end{align}
where the implied constant depends only on $k$, $N$, $\ve$, $\beta$.
\end{thm}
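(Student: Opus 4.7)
\emph{Proof plan.}

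The plan is to Fourier-expand $f$ along the $T^{2k}$ fibers of the natural projection $X \to X' := \Gamma(N)\backslash\SLR$, extract the main term from the zero Fourier mode via effective equidistribution of closed horocycles on $X'$, and control the non-zero modes by combining the covariance of the Fourier coefficients with integration by parts that exploits the Diophantine behavior of $\vecxi_2$. Concretely, I will first write
\begin{align*}
f(\Gamma(M,\vecv)) = \sum_{\vecn \in \ZZ^{2k}} c_\vecn(M)\,\e^{2\pi i\,\vecn \vecv},
\end{align*}
so that the Fourier coefficients $c_\vecn$ are smooth functions on $\SLR$ satisfying the covariance $c_\vecn(\gamma M) = c_{\vecn\gamma}(M)$ for $\gamma \in \Gamma(N)$, where $\vecn\gamma$ denotes the natural right action of $\SLZ$ on $\ZZ^{2k}$. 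Integration by parts in the torus directions will give decay $\sup_M \scrY(M)^a |c_\vecn(M)| \ll \|f\|_{\C^m_a} (1+\|\vecn\|)^{-m}$. With $\vecxi=\cmatr{\bn}{\vecxi_2}$, the integral on the left of \eqref{MAINTHM1RES} splits as $\sum_{\vecn} \e^{2\pi i\,\vecn_2\vecxi_2} I_\vecn(y)$, with $I_\vecn(y):=\int_\RR c_\vecn(u(x)a(y)) h(x)\,dx$.

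For the zero mode $\vecn=\bn$ the coefficient $c_\bn$ is $\Gamma(N)$-invariant and so descends to $X'$ with $\int_{X'} c_\bn = \int_X f\,d\mu$; an effective equidistribution theorem for long closed horocycles on $\Gamma(N)\backslash\SLR$ (originating with Sarnak and given a polynomial rate by Str\"ombergsson \cite{SASL}) will yield
\begin{align*}
\Bigl|I_\bn(y) - \int_X f\,d\mu\,\int_\RR h\,dx\Bigr| \ll \|f\|_{\C^m_a}\,S_{\infty,2+\ve,2}(h)\,y^{\frac14-\ve},
\end{align*}
producing the $y^{1/4-\ve}$ term in \eqref{MAINTHM1RES}. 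For the non-zero modes I will group $\sum_{\vecn\ne\bn}$ into right $\Gamma(N)$-orbits on $\ZZ^{2k}\setminus\{\bn\}$, rewriting each orbit sum via $c_{\vecn^{(0)}\gamma}(M)=c_{\vecn^{(0)}}(\gamma M)$ in terms of a single coefficient $c_{\vecn^{(0)}}$ evaluated along translates $\gamma\,u(x)a(y)$ of the horocycle, twisted by an oscillatory phase depending on $\gamma$ and $\vecxi_2$. After parametrizing the orbits by a primitive vector $\vecr\in\ZZ^k$ and an integer $j\geq 1$, the phase along $x$ will have frequency essentially $j\vecr\vecxi_2$; two integrations by parts in $x$ (using $S_{\infty,2+\ve,2}(h)<\infty$) together with cuspidal decay of $c_{\vecn^{(0)}}$ will then yield a per-orbit bound
\begin{align*}
\|f\|_{\C^m_a}\, S_{\infty,2+\ve,2}(h)\, \|\vecr\|^{-\beta}\, \frac{1+\log^+(y^{-1/2}\langle j\vecr\vecxi_2\rangle/j)}{j^2+y^{-1/2}j\langle j\vecr\vecxi_2\rangle},
\end{align*}
and summing over $(\vecr,j)$ will reproduce exactly $\delta_{\beta,\vecxi_2}(y^{-1/2})$.

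The main obstacle will be carrying out this non-zero mode analysis rigorously. The orbit structure of $\SLZ$ acting on the right on $\ZZ^{2k}$ is considerably richer for $k\geq 2$ than for $k=1$ (the case handled in \cite{SASL,BV}): in particular, orbits in which $\vecn_1,\vecn_2$ are $\QQ$-linearly independent have no analogue when $k=1$ and must be either specially reparametrized or shown to have subdominant contribution via the trivial bound on $c_\vecn$. Choosing orbit representatives so that the manipulation of the horocycle integral is clean, and then executing the integration-by-parts calculation so that the denominator emerges precisely as $j^2+y^{-1/2}j\langle j\vecr\vecxi_2\rangle$ (the exact shape that encodes the Diophantine behavior of $\vecxi_2$), is where essentially all the technical work lies. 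The conditions $\beta\geq\max(8-k,1+k)$, $m=3(\beta+k+1)$, $a=(\beta-1)/2$ are calibrated to guarantee absolute convergence of the resulting double sum over $(\vecr,j)$ after integration by parts, and compatibility with the Sobolev requirements of the effective horocycle equidistribution.
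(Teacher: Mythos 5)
Your overall architecture---Fourier-expand $f$ along the torus fibers, extract the main term from the zero mode via effective equidistribution of closed horocycles on $\Gamma(N)\backslash\SL(2,\R)$, group the non-zero modes into $\SL(2,\Z)$-orbits on $\Z^{2k}$ and use decay of Fourier coefficients plus Diophantine properties of $\vecxi_2$---is indeed the structure of the paper's proof (Sections \ref{FOURIERDECSEC}--\ref{BKSEC}). But your proposed mechanism for the non-zero modes would not deliver the bound. The key point you miss is that after applying the covariance $\widehat f(TM,\vecm)=\widehat f(M,\trans T\vecm)$ and pulling the phase $e((\trans T\veceta)\vecxi)$ outside the $x$-integral, that phase (which in the $\vecxi_1=\bn$ case reduces to $e(d\vecr\vecxi_2)$ for $T=\smatr abcd$) is \emph{independent of $x$}. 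So two integrations by parts in $x$ will not see the oscillation $e(d\vecr\vecxi_2)$ at all, and cannot produce a denominator of the form $j^2 + y^{-1/2}j\langle j\vecr\vecxi_2\rangle$. What actually produces that shape is an \emph{arithmetic} exponential sum over the matrix entries: the paper rewrites the $x$-integral in Iwasawa coordinates (Lemma \ref{IWASAWACOORDSLEM1}), so that for each fixed $c$ one is left with a sum of $e(d\alpha)F(\tfrac dc,\tfrac ac)$ over coset representatives $\smatr abcd$, and this is estimated by the generalized Kloosterman/Ramanujan sum analysis of Lemmata \ref{lem:exp sum} and \ref{final decomp} (resting on Weil's bound). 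The resulting GCD sums are then summed in $c$ via Lemma \ref{PROOF1LEM1}, and it is \emph{this} step that manufactures $\sum_j (j^2+Xj\langle j\alpha\rangle)^{-1}$. Your claim that the $y^{1/4-\ve}$ term ``is produced by'' the zero-mode equidistribution is also off: by \eqref{MAINTERM} the zero mode contributes $O(y^{1/2-7/64})$, and the $y^{1/4-\ve}$ actually emerges from the Kloosterman error terms in the A- and B-orbit analyses.

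The second and more serious gap concerns the B-orbits (the orbits of $\trans\Gamma'$ on $\Z^{2k}$ that only appear for $k\geq 2$). You correctly notice that for $k\geq 2$ there are orbits with no $k=1$ analogue, but you suggest they might be ``shown to have subdominant contribution via the trivial bound on $c_\vecn$''. That cannot work: the trivial estimate (the paper's Lemma \ref{BPARTTRIVBOUNDLEM}) only shows the B-orbit sum is $O(\|f\|_{\C_0^{3m}}S_{\infty,2,0}(h))$, i.e.\ bounded, with \emph{no decay} in $y$. Establishing cancellation in the B-orbit sum is the central novelty of the paper, and it requires substantially more machinery than the A-orbit case: an approximation step replacing $\tfrac ac$ by $\tfrac bd$ (Lemma \ref{BPARTAPPRBOUNDLEM}), a reparametrization of the sum using the matrix $\widetilde R$, and---crucially---a sharper exponential-sum lemma in mixed $L^{1,2}$ norms (Lemma \ref{expsumlem2}) whose point is precisely that the ordinary $L^1$-based Lemma \ref{lem:exp sum} would require too many $x_2$-derivatives of the auxiliary function $F_{c,\theta}$ and therefore lose the needed power of $y$ (see Remark \ref{L12NORMrem}). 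As written, your plan would stall exactly at the B-orbits, where the trivial bound gives $O(1)$ and the A-orbit method (Lemma \ref{lem:exp sum}) gives no saving.
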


\begin{thm}\label{MAINTHM2}
\emph{[Effective version of Theorem \ref{INEFFECTIVETHM} when $\vecxi_2=\bn$.]}
Let $k\geq2$ and $\Gamma=\Gamma(N)\ltimes\Z^{2k}$.
Fix $\ve>0$ and an integer $\beta\geq\max(7-k,1+k)$,
and set $m=3(\beta+k)+2$ and $a=(\beta-1)/2$.
Then for any $f\in\C^m_a(X)$,
$h\in\C^2(\R)$ with $S_{1,0,2}(h)<\infty$, $\vecxi_1\in\R^k$ and $y>0$, we have
\begin{align}\notag
\left|\int_\R f %
\Bigl(\Gamma\Bigl(1_2,\Bigl(\begin{matrix} \vecxi_1 \\ \bn \end{matrix}\Bigr)\Big)u(x)a(y)\Bigr)
h(x)\,dx-\int_{X}f\,d\mu\int_\R h \,dx\right |
\hspace{150pt}
\\\label{MAINTHM2RES}
\ll \|f\|_{\C^m_a} S_{1,0,2}(h) \Bigl(\delta_{\beta,\vecxi_1}(y^{-\frac12})+y^{\frac14-\ve}\Bigr),
\end{align}
where the implied constant depends only on $k$, $N$, $\ve$, $\beta$.
\end{thm}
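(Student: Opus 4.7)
The proof follows the Fourier-theoretic scheme employed in \cite{SASL}, \cite{BV}, \cite{Prinyasart} for related effective equidistribution problems, combined with theta-sum estimates in the spirit of Marklof \cite{MarklofpaircorrI}, \cite{Marklofmeansquare}. Since $f$ is $\Z^{2k}$-invariant in $\vecv$, the starting point is a fiberwise Fourier expansion
\begin{equation*}
f((M,\vecv))=\sum_{\vecn\in\Z^{2k}}\hat f_\vecn(M)\,e^{2\pi i\vecn\vecv},\qquad \vecn=\cmatr{\vecm_1}{\vecm_2},
\end{equation*}
with Fourier coefficients obeying the equivariance $\hat f_{\trans\gamma\vecn}(\gamma M)=\hat f_\vecn(M)$ for every $\gamma\in\Gamma(N)$. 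Integration by parts in $\vecv$ yields $|\hat f_\vecn(M)|\ll\|\vecn\|^{-m}\scrY(M)^{-a}\|f\|_{\C^m_a}$, which makes the series absolutely convergent. Because $\vecxi_2=\bn$ forces $\vecn\vecxi=\vecm_1\vecxi_1$, the left-hand side of \eqref{MAINTHM2RES} splits into the zero-mode integral $\int_\R\hat f_\bn(u(x)a(y))h(x)\,dx$ and a sum over $\vecn\ne\bn$ carrying the phase factor $e^{2\pi i\vecm_1\vecxi_1}$.

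For the zero mode, $\hat f_\bn$ descends to a function on the finite cover $\Gamma(N)\bs\SLR$ of $\SLZ\bs\SLR$, and the desired main term $\int_X f\,d\mu\int_\R h\,dx$ plus an error $O(y^{1/4-\ve})$ times a Sobolev norm of $\hat f_\bn$ follows from effective equidistribution of long closed horocycles on this quotient, via spectral decomposition into Eisenstein series and Maass forms together with the Selberg $3/16$ bound on exceptional eigenvalues of $\Gamma(N)$; the exponent $1/4$ is exactly the loss from $\lambda_0\geq 3/16$.

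For the nonzero modes we partition $\Z^{2k}\setminus\{\bn\}$ into $\Gamma(N)$-orbits and fix primitive representatives. For orbits containing a ``cuspidal'' representative $\vecn_0=(\vecr,\bn)$ with $\vecr\in\Z^k$ primitive, the stabilizer in $\Gamma(N)$ is the lower-unipotent subgroup, and the equivariance law rewrites the orbit sum as a single sum over stabilizer cosets, which one further unfolds via the Bruhat decomposition of $\gamma u(x)a(y)$. Each resulting summand is an oscillatory $x$-integral whose phase derivative is governed by $\vecr\vecxi_1$ modulo integer shifts, with the Iwasawa height $y'$ of $\gamma u(x)a(y)$ controlling the decay of $\hat f_{\vecn_0}$ through the cusp weight $\scrY^{-a}$. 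Poisson summation in $x$, followed by two integrations by parts against $h$ using the derivatives of $S_{1,0,2}(h)$, produces a dual sum over $j\in\Z_+$ whose contribution matches the inner factor of $\delta_{\beta,\vecxi_1}(y^{-1/2})$; the weight $\|\vecr\|^{-\beta}$ arises from the Sobolev derivative budget on $\hat f_\vecn$, and $\beta\geq\max(7-k,1+k)$ is dictated by absolute convergence of the $\vecr$-sum.

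The main obstacle is the treatment of the \emph{non-cuspidal} orbits---those whose primitive representative $\vecn_0$ has rank $2$ as a $2\times k$ integer matrix, a phenomenon new to $k\geq 2$. Such orbits have trivial stabilizer in $\SL(2,\Z)$, so the resulting Poincar\'e-style sum $\sum_{\gamma\in\Gamma(N)}\hat f_{\vecn_0}(\gamma u(x)a(y))$ does not produce a clean $\vecxi_1$-Diophantine obstruction; it must instead be handled by a generic $L^2$-type theta-sum estimate independent of $\vecxi_1$, contributing the stand-alone $y^{1/4-\ve}$ term in the error (merging with the analogous zero-mode contribution). Balancing the derivative count $m=3(\beta+k)+2$, the cusp decay exponent $a=(\beta-1)/2$, and the choice of $L^1$ Sobolev norm on $h$ (sufficient here because we are effectively on the frequency side of the theta transform, in contrast to the $L^\infty$ norm required in Theorem \ref{MAINTHM1}) is the delicate bookkeeping that occupies the bulk of the proof.
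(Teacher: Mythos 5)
Your high-level architecture is the same as the paper's: Fourier-expand in the torus fiber, isolate the zero mode, partition the nonzero frequencies by the $\overline{\Gamma}'$-action into what you call ``cuspidal'' orbits (the paper's $A$-orbits, represented by $\scmatr{\bn}{\vecr}$) and rank-$2$ orbits (the paper's $B$-orbits, new for $k\geq 2$), and treat each class separately. However, your treatment of the $B$-orbits contains a genuine gap.

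You assert that the $B$-orbit contribution ``does not produce a clean $\vecxi_1$-Diophantine obstruction'' and ``must instead be handled by a generic $L^2$-type theta-sum estimate \emph{independent of} $\vecxi_1$, contributing the stand-alone $y^{1/4-\ve}$ term.'' This is not what happens. The paper's Proposition \ref{BKX1BOUNDPROP} bounds the $B$-orbit sum by $\|f\|_{\C_0^{3m+2}}S_{1,0,2}(h)\bigl(\wdelta_{m-k,\vecxi_1}(y^{-1/2})+y^{1/4-\ve}\bigr)$ --- so the $B$-orbits \emph{do} carry a $\vecxi_1$-Diophantine factor, governed by $\vecq\vecxi_1$ where $\vecq$ is the upper block of the $B$-orbit representative. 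The mechanism is not an $L^2$-theta estimate but the same Kloosterman-type cancellation as for the $A$-orbits: after swapping the roles of $a$ and $d$, one applies Lemma \ref{lem:exp sum} with phase $e(d\vecq\vecxi_1)$, and the Weil bound together with Lemma \ref{PROOF1LEM1} produces $\sum_j (j^2+y^{-1/2}j\langle j\vecq\vecxi_1\rangle)^{-1}$. Without this exponential-sum cancellation there is no reason the $B$-orbit contribution should decay at all: the trivial bound (Lemma \ref{BPARTTRIVBOUNDLEM}) only shows it stays \emph{bounded} as $y\to0$, and for $\vecxi_1$ rational (when the hypothesis of Theorem \ref{INEFFECTIVETHM} fails, so equidistribution is false) the phases do not oscillate and no decay is available. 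Thus a claim of a uniform, $\vecxi_1$-independent $y^{1/4-\ve}$ bound for the $B$-orbit sum cannot hold. This is the missing idea in your argument.

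A secondary inaccuracy: you attribute the final exponent $1/4$ to the Selberg $3/16$ spectral gap on the zero mode, but the paper uses the Kim--Sarnak bound $\lambda_1\geq\tfrac14-(\tfrac7{64})^2$ there, giving a zero-mode error of size $y^{1/2-7/64}$, strictly better than $y^{1/4}$. The $y^{1/4-\ve}$ in the theorem statement actually comes from the nonzero-mode propositions (both the $A$- and $B$-orbit bounds saturate at $y^{1/4-\ve}$). There are also small bookkeeping inconsistencies --- e.g., you take $\vecn_0=(\vecr,\bn)$ as the $A$-orbit representative but then claim its stabilizer is the lower-unipotent subgroup, whereas the stabilizer of $\scmatr{\vecr}{\bn}$ under the standard action is the upper-unipotent subgroup; the paper's choice $\scmatr{\bn}{\vecr}$ has stabilizer $\{\smatr10n1\}$, consistent with what you wrote. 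These latter issues are cosmetic, but the $B$-orbit gap is substantive.
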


Let us make some comments on these results.
Firstly, note that for any fixed $\vecxi_2\in\R^k$ and $\beta>k$, 
one has $\delta_{\beta,\vecxi_2}(y^{-1/2})\to0$ as $y\to0$ if and only if
$\vecr\vecxi_2\notin\Q$ for all $\vecr\in\Z^{k}\setminus\{\bn\}$.
Hence Theorem \ref{MAINTHM1} indeed gives an effective version of Theorem \ref{INEFFECTIVETHM}
in the special case when $\Gamma$ is a congruence subgroup of $\overline\Gamma$ and $\vecxi_1=\bn$.
Similarly Theorem \ref{MAINTHM2} gives an effective version of Theorem \ref{INEFFECTIVETHM}
when %
$\vecxi_2=\bn$.
Secondly, as we will explain in Section \ref{DIOPHSEC} below
(see especially Lemmata \ref{DIOFHDIMLEM} and \ref{MXIBOUNDLEM2}, and the relation \eqref{wMMcomp}),
for a sufficiently large $\beta$ and $\vecxi_2$ subject to a Diophantine condition,
the majorant function $\delta_{\beta,\vecxi_2}(T)$ has a power rate decay in $T$ as $T\to\infty$.
In particular for any $\ve>0$,
$\delta_{\beta,\vecxi_2}(T)\ll T^{\ve-1}$ holds for all $\vecxi_2\in\R^k$ outside a set of 
Hausdorff dimension $<k$.
Note that for any such $\beta$ and $\vecxi_2$, the bound in Theorem \ref{MAINTHM1}
decays like $y^{\frac14-\ve}$ as $y\to0$.
An analogous statement holds for Theorem \ref{MAINTHM2}.

One should also note that the integral in Theorem \ref{MAINTHM2} 
(but not the one in Theorem \ref{MAINTHM1}) 
runs over a \textit{closed}
orbit in $X$;
indeed the point $\Gamma\bigl(1_2,\scmatr{\vecxi_1}{\bn}\bigr)u(x)$ is invariant under
$x\mapsto x+N$, since $u(t)\scmatr{\vecxi_1}{\bn}=\scmatr{\vecxi_1}{\bn}$, $\forall t$,
and $u(N)\in\Gamma(N)$.
Hence it is only natural that the bound obtained 
in Theorem \ref{MAINTHM2} is invariant under translations of $h$.

We have made no effort to optimize the dependence on the test functions $f$ and $h$ in the theorems;
rather, we have simply imposed as much smoothness and decay of these %
as needed to comfortably reach the best decay rate with respect to $y$ that our method can give.

\vspace{2pt}

The proofs of Theorems \ref{MAINTHM1} and \ref{MAINTHM2} are given in Sections \ref{FOURIERDECSEC}--\ref{BKSEC};
the basic approach is to use Fourier decomposition with respect to the torus fiber variable,
just as in \cite{SASL};
however there are several new difficulties that have to be tackled.
In particular, the $\oGamma'$-orbits in $\Z^{2k}$,
which are used to partition the Fourier decomposition,
are more complicated for $k\geq2$ than for $k=1$:
There are two types of orbits, which we call ``A-orbits'' and ``B-orbits'',
where B-orbits only appear for $k\geq2$;
cf.\ Sec.\ \ref{FOURIERDECSEC}.
Establishing cancelation in the contribution from the B-orbits
requires a novel %
treatment, which we give in    %
Sec.\ \ref{BKSEC}.
The treatment of the A-orbits (cf.\ Sec.~ \ref{A sum}) becomes more delicate for $k\geq2$
than for $k=1$,
and %
this is where we need to require that the test function $f$
decays sufficiently rapidly in the cusp
(cf.\ the parameter ``$a$'' in Theorems~ \ref{MAINTHM1} and \ref{MAINTHM2});
this is not needed for $k=1$.
Other differences versus \cite{SASL}
are that we consider congruence subgroups and not just
$\overline\Gamma=\SL(2,\Z)\ltimes(\Z^2)^{\oplus k}$ itself,
and the fact that the Diophantine conditions are more complicated
in the present paper, 
as they concern vectors in $\R^k$.

As will be seen, %
in the present paper we make crucial use of the assumptions
in Theorems~ \ref{MAINTHM1} and \ref{MAINTHM2} that either $\vecxi_1=\bn$ or $\vecxi_2=\bn$.
It is an interesting problem to seek a more general treatment so as to obtain 
an effective version of Theorem \ref{INEFFECTIVETHM} for general $\vecxi_1,\vecxi_2$. %
We have some preliminary results on this problem and hope to return to it in a later paper.

\vspace{5pt}

We next turn to an application of Theorem \ref{MAINTHM1}:
Following an approach introduced by Marklof in \cite{MarklofpaircorrI}
using theta series,
we will prove an 
effective quantitative Oppenheim type result for the inhomogeneous quadratic form
\begin{align}\label{Qdef}
Q(x_1,x_2,x_3,x_4)=(x_1-\alpha)^2+(x_2-\beta)^2-(x_3-\alpha)^2-(x_4-\beta)^2
\end{align}
for a fixed vector $(\alpha,\beta)\in\R^2$ subject to Diophantine conditions.
Recall that the original Oppenheim conjecture states that for any 
indefinite nondegenerate
homogeneous quadratic form $\tQ$
in $n\geq3$ variables, not proportional to a rational form,
$\tQ(\Z^n)$ is dense in $\R$.
This was proved in celebrated work by Margulis \cite{Margulis89}.
An effective version of this result has more recently been obtained by
Lindenstrauss and Margulis, \cite{LM}.
A quantitative (but non-effective) version of the Oppenheim conjecture 
for forms of signature $(p,q)$ with $p\geq3$ and $q\geq1$
was proved by Eskin, Margulis and Mozes, 
\cite{EMM98}, and extended to forms of signature $(2,2)$
subject to a Diophantine condition in \cite{EMM05}.
Similar quantitative results were later proved also for
inhomogeneous quadratic forms by Margulis and Mohammadi \cite{MM};
in particular the result proved by Marklof \cite{MarklofpaircorrI}
for the form $Q$ in \eqref{Qdef} is a special case of the results in 
\cite{MM};
however the method of proof in \cite{MM}
is different and does not involve theta series.

Effective quantitative results for indefinite 
forms in $n\geq5$ variables 
have been proved by
G\"otze and Margulis \cite{GM}.
However we are not aware of any previous effective quantitative results for forms in $3$ or $4$ variables.

Returning to the form $Q$ in \eqref{Qdef},
for $f\in\C_c(\R^4)$, $g\in\C(\R)\cap \L^1(\R)$ and $T>0$, set
\begin{align}\label{NabfgTdef}
N_{\alpha,\beta}(f,g,T):=\frac1{T^2}\sum_{\vecm\in\Z^4\setminus\Delta}f(T^{-1}\vecm)g(Q(\vecm))
\end{align}
where $\Delta:=\{(\vecm_1,\vecm_1)\col\vecm_1\in\Z^2\}$.
We also set
\begin{align}\label{lambdafdef}
\lambda_f:=\frac12\int_0^\infty\int_0^{2\pi}\int_0^{2\pi}
f\bigl(r\cos\zeta_1,r\sin\zeta_1,r\cos\zeta_2,r\sin\zeta_2\bigr)\,d\zeta_1\,d\zeta_2\,r\,dr.
\end{align}
One verifies easily that
\begin{align*}
\lim_{T\to\infty}\frac1{T^2}\int_{\R^4}f\bigl(T^{-1}\vecx\bigr) g(Q(\vecx))\,d\vecx
=\lambda_f\int_{\R}g(r)\,dr.
\end{align*}

We say that $\vecxi\in\R^k$ is \textit{$\kappa$-Diophantine} if there exists a constant
$c>0$ such that $\|q\vecxi-\vecm\|\geq cq^{-\kappa}$ for all $q\in\Z^+$ and $\vecm\in\Z^k$
(cf.\ \cite[Sec.\ 1.5]{MarklofpaircorrII}\footnote{Note\label{KAPPAfootnote} that our $\kappa$ corresponds to ``$\kappa-1$'' in
\cite[Sec.\ 1.5]{MarklofpaircorrII}. Both of these conventions are common in the literature,
and we made our choice so as to make the statement of Theorem \ref{MAINAPPLTHM} and later results
as simple as possible.}).
We also say that $\vecxi$ is \textit{$[\kappa;c]$-Diophantine} in this case.
The smallest possible value for $\kappa$ is $\kappa=k^{-1}$,
and on the other hand Lebesgue-almost every $\vecxi\in\R^k$ is $(k^{-1}+\ve)$-Diophantine
for every $\ve>0$.
In Section \ref{DIOPHSEC} we will also discuss a different (also standard) Diophantine condition,
which is more directly connected to the decay properties of $\delta_{\beta,\vecxi}(T)$.

In Section \ref{APPLSEC} we prove the following 
effective quantitative Oppenheim result for the form $Q$: %
\begin{thm}\label{MAINAPPLTHM}
There exists an absolute constant $B>0$ such that 
for any $[\kappa;c]$-Diophantine vector $(\alpha,\beta)\in\R^2$ with $|\alpha|,|\beta|\leq1$, %
any $f\in\C_c^1(\R^4)$ with support contained in the unit ball centered at the origin,
any $g\in\C^3(\R)$ with $S_{1,2,3}(g)<\infty$,
and any $T\geq1$,
\begin{align}\label{MAINAPPLTHMres}
\biggl|N_{\alpha,\beta}(f,g,T)-\lambda_f\int_{\R}g(s)\,ds\biggr|
\ll \sum_{j=1}^4\Bigl\|\frac{\partial}{\partial x_j}f\Bigr\|_{\L^\infty}\,S_{1,2,3}(g)\,\kappa \,c^{-\frac1{\kappa}}
\,\delta_{6,(\alpha,\beta)}(T)^{1/(B\kappa)},
\end{align}
where the implied constant is absolute.
\end{thm}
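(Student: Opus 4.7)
The plan is to adapt Marklof's theta-series approach from \cite{MarklofpaircorrI}, using our effective Theorem \ref{MAINTHM1} (with $k=2$) in place of the qualitative equidistribution invoked there. The first step is to produce an identity of the shape
\[
N_{\alpha,\beta}(f,g,T) \;=\; \int_\R \Theta_F\bigl(\Gamma(1_2,\scmatr{\bn}{\vecxi}) u(x)a(y)\bigr)\,h(x)\,dx,
\]
with $y = T^{-2}$ and $\vecxi = (\alpha,\beta)$, obtained by Fourier inversion on $g$, the expansion $Q(\vecm)=\|\vecm_1-\vecxi\|^2-\|\vecm_2-\vecxi\|^2$, Poisson summation in one of the lattice variables, and the usual theta-inversion/Weil-representation identifications. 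Here $\Theta_F$ is a theta-type automorphic function on $X=\Gamma\backslash G$ for $k=2$ and $\Gamma=\Gamma(N)\ltimes\Z^4$ (with an absolute $N$), built from $f$ via the Weil representation, and $h\in\C^2(\R)$ is an auxiliary weight derived from $g$ whose Sobolev norm satisfies $S_{\infty,2+\ve,2}(h)\ll S_{1,2,3}(g)$. Excluding the diagonal $\Delta$ in the definition of $N_{\alpha,\beta}$ is precisely what avoids the trapped orbit in the codimension-$2$ submanifold $X_\vecm$ of \eqref{ORBITCONFINED} with $\vecm=(1,-1)$, which would otherwise force $\Theta_F$ to concentrate there. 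A Siegel-type computation together with Fourier inversion on $g$ then verifies that the main term $\int_X\Theta_F\,d\mu\cdot\int_\R h(x)\,dx$ predicted by Theorem \ref{MAINTHM1} is exactly $\lambda_f\int_\R g(s)\,ds$.

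The chief technical obstacle is that $\Theta_F$ is unbounded: it grows like $\scrY^{1/2}$ in the cusp of $X$, so it lies in no space $\C_a^m(X)$ with $a>1/2$. We therefore introduce a smooth cusp cutoff $\chi_L$ (for a parameter $L\geq 1$) with $\chi_L\equiv 1$ on $\{\scrY\leq L/2\}$ and $\chi_L\equiv 0$ on $\{\scrY\geq L\}$, and split $\Theta_F=\Theta_F\chi_L+\Theta_F(1-\chi_L)$. The truncated piece $\Theta_F\chi_L$ belongs to $\C_a^m(X)$ with $\|\Theta_F\chi_L\|_{\C_a^m}\ll L^A$ for an explicit $A=A(m,a)$, and Theorem \ref{MAINTHM1} applied with $\beta=6$ (the smallest admissible exponent, since $\max(8-k,1+k)=6$ for $k=2$) gives an equidistribution error of the form $L^A\bigl(\delta_{6,\vecxi}(T)+T^{-\frac12+\ve}\bigr)$. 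The cusp piece $\Theta_F(1-\chi_L)$ is controlled by combining the pointwise estimate $|\Theta_F(p)|\ll\scrY(p)^{1/2}$ with a horocycle-excursion estimate: the $[\kappa;c]$-Diophantine hypothesis on $\vecxi$ bounds the measure of $\bigl\{x\in\supp h:\scrY\bigl(\Gamma(1_2,\scmatr{\bn}{\vecxi})u(x)a(y)\bigr)>L/2\bigr\}$ by $\ll c^{-1/\kappa}L^{-\mu(\kappa)}$ for some explicit $\mu(\kappa)>1/2$, yielding a cusp contribution $\ll c^{-1/\kappa}L^{1/2-\mu(\kappa)}$.

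Optimising $L$ so as to balance $L^A\delta_{6,\vecxi}(T)$ against $c^{-1/\kappa}L^{1/2-\mu(\kappa)}$ produces the final bound of the claimed shape $\kappa\, c^{-1/\kappa}\,\delta_{6,\vecxi}(T)^{1/(B\kappa)}$ for an absolute $B>0$; the exponent $1/(B\kappa)$ reflects the linear degradation of $\mu(\kappa)-\tfrac12$ as $\kappa$ grows. The main difficulty is to make the cusp-excursion estimate sharp and quantitative in $(\kappa,c)$, since this is the step that converts the Diophantine data into a polynomial saving, uniformly in the time parameter $y=T^{-2}$. Subsidiary issues are: (i) verifying that the level $N$ of the congruence subgroup produced by the theta construction is absolute and hence absorbed into the implied constant; (ii) controlling the dependence of $\|\Theta_F\chi_L\|_{\C_a^m}$ on $f$ so that it reduces to the single quantity $\sum_j\|\partial_j f\|_{\L^\infty}$ appearing in the statement; and (iii) handling a small region near $w=0$ of the underlying Fourier integral, which is managed by a short tail bound using $S_{1,2,3}(g)<\infty$.
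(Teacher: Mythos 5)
Your general framework --- theta sums, Theorem~\ref{MAINTHM1}, cusp truncation, optimisation of the truncation height --- matches the paper's approach, but there is a serious conceptual gap in how you produce the main term, and it would cause the argument to fail.

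You assert that ``the main term $\int_X\Theta_F\,d\mu\cdot\int_\R h(x)\,dx$ predicted by Theorem~\ref{MAINTHM1} is exactly $\lambda_f\int_\R g(s)\,ds$.'' This is not so. By the Siegel-type identity one has $\int_{\Gamma\bs G}\Theta_f\overline{\Theta_g}\,d\mu=\int_{\R^2}f\,\overline g\,d\vecx$, so the equidistribution main term equals $\int f\overline g\,d\vecx\cdot\hh(0)$ --- and this is \emph{precisely} the contribution from the diagonal terms $\vecm_1=\vecm_2$, which is subtracted off when one restricts to $\Z^4\setminus\Delta$ in $N_{\alpha,\beta}$. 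In other words, removing the diagonal does not supply you with a harmless restriction; it exactly cancels the equidistribution main term, and the genuine main term $\lambda_{f}\,\hg(0)$ comes from a completely different source, namely the small-$u$ region $|u|<Bv$ where the horocycle arc makes its excursion into the cusp. The paper isolates this contribution in Lemma~\ref{|v|<} (an explicit computation using the $(1_2,\smatr0{-1}10)$-invariance of $\Theta_f\overline\Theta_g$ and \eqref{eq:m=0term}), and in Lemma~\ref{|v|>} applies Theorem~\ref{MAINTHM1} only on the complementary region $|u|>Bv$, where the truncation $\scrX_Y$ is well behaved because of \eqref{|v|>PF5}. Your proposal treats the whole integral as one block and never extracts this extra term; since it is not an error term but the actual asymptotic answer, your scheme would compute the wrong constant.

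A related misreading: you say that removing $\Delta$ ``avoids the trapped orbit in $X_\vecm$ with $\vecm=(1,-1)$.'' The trapped-orbit obstruction in \eqref{ORBITCONFINED} concerns the initial condition $\vecxi_2=(\alpha,\beta)$ (it arises when $1,\alpha,\beta$ are $\Q$-linearly dependent) and is ruled out by the Diophantine hypothesis, not by excising the diagonal of $\Z^4$. These are orthogonal issues; conflating them obscures where the hypothesis on $(\alpha,\beta)$ is actually used. Finally, your cusp estimate, combining a pointwise growth bound on $\Theta_f\overline\Theta_g$ with a measure bound on the excursion set, is coarser than what the paper does: it controls the truncation error by the integrated majorant $F_{f^*,Y}$ and the explicit counting bound of Lemma~\ref{lem:bound upper}, which is what makes the $(\kappa,c)$-dependence in Corollary~\ref{opp:upper bound2} come out cleanly. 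A pointwise-times-measure bound would also need to be checked carefully against the growth $v^{k/2}=v$ of the \emph{product} $\Theta_f\overline\Theta_g$ (not $v^{1/2}$ as you write), and one should be sure it is uniform in $y=T^{-2}$; but these are repairable details, whereas the omission of the $|u|<Bv$ contribution is not.
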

The assumption in Theorem \ref{MAINAPPLTHM} that $\supp(f)$ is contained in the unit ball 
simplifies the statement of the theorem,
but can easily be weakened by an aposteriori scaling argument;
furthermore %
one can remove the assumption that $(\alpha,\beta)\in[-1,1]^2$,
as long as $T$ is large compared to $\|(\alpha,\beta)\|$.
Cf.\ Corollary \ref{MAINAPPLTHMfnoncptsuppCOR} in Sec.\ \ref{MAINAPPLTHMcorproofsec}.

As we will show in Section \ref{MAINAPPLTHMcorproofsec},
by a standard approximation argument, %
Theorem \ref{MAINAPPLTHM} %
implies the following effective counting result.
For real numbers $a<b$ and $T>0$, set
\begin{align}\label{Nalfbetdef}
N_{\alpha,\beta}(a,b,T):=\frac1{T^2}\#\bigl\{\vecx\in\Z^4\setminus\Delta\col \|\vecx\|<T,\: a<Q(\vecx)<b\bigr\}.
\end{align}
(One could also replace
the ball $\{\|\vecx\|<T\}$ in \eqref{Nalfbetdef} by a more general expanding region in $\R^4$;
however in order to keep the presentation simple  we will not elaborate on this.)
\begin{cor}\label{MAINAPPLTHMCOR1}
There exists an absolute constant $B'>0$ such that 
for any $[\kappa;c]$-Diophantine vector $(\alpha,\beta)\in[-1,1]^2$
and any real numbers $a<b$ and $T\geq1$,
\begin{align}\label{MAINAPPLTHMCOR1RES}
\Bigl|N_{\alpha,\beta}(a,b,T)-\tfrac{\pi^2}2(b-a)\Bigr|\ll 
\bigl(1+|a|+|b|\bigr)^3
\kappa c^{-\frac1{\kappa}}\,\delta_{6,(\alpha,\beta)}(T)^{1/(B'\kappa)},
\end{align}
where the implied constant is absolute.
\end{cor}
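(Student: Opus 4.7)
The plan is to deduce Corollary~\ref{MAINAPPLTHMCOR1} from Theorem~\ref{MAINAPPLTHM} by the standard smoothing argument: sandwich the indicator $\chi_{\{\|\vecm\|<T\}}\chi_{(a,b)}(Q(\vecm))$ between two smooth test pairs of the form $f(T^{-1}\cdot)\,g(Q(\cdot))$, apply Theorem~\ref{MAINAPPLTHM} to both sides, and optimize the smoothing scale. Fix once and for all a cutoff $\psi\in\C^\infty(\R,[0,1])$ with $\psi\equiv 1$ on $(-\infty,0]$ and $\psi\equiv 0$ on $[1,\infty)$. For a parameter $\eta\in(0,1/2]$ to be chosen, I build a radial $f_\eta\in\C_c^\infty(\R^4)$ with $\supp f_\eta\subset B_1$, $f_\eta\equiv 1$ on $B_{1-\eta}$, and $\|\partial_{x_j}f_\eta\|_\infty\ll\eta^{-1}$; together with $\C^3$-mollifications $g_\eta^\pm$ of $\chi_{(a-\eta,b+\eta)}$ and $\chi_{(a+\eta,b-\eta)}$ (the latter taken as $0$ when $b-a<2\eta$) satisfying $g_\eta^-\le\chi_{(a,b)}\le g_\eta^+$ and $\|\partial^j g_\eta^\pm\|_\infty\ll\eta^{-j}$ for $0\le j\le 3$.

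Setting $T':=T/(1-\eta)$, the pointwise inequalities $f_\eta(T^{-1}\vecm)\le\chi_{\{\|\vecm\|<T\}}(\vecm)\le f_\eta((T')^{-1}\vecm)$ yield
\begin{align*}
N_{\alpha,\beta}(f_\eta,g_\eta^-,T)\le N_{\alpha,\beta}(a,b,T)\le (1-\eta)^{-2}\,N_{\alpha,\beta}(f_\eta,g_\eta^+,T').
\end{align*}
A direct computation from \eqref{lambdafdef} gives $\lambda_{\chi_{B_1}}=\pi^2/2$, hence $\lambda_{f_\eta}=\pi^2/2+O(\eta)$; moreover $\int g_\eta^\pm\,ds=(b-a)+O(\eta)$, and a termwise comparison of the summands in \eqref{MMXIDEF} shows $\delta_{6,(\alpha,\beta)}(T')\ll\delta_{6,(\alpha,\beta)}(T)$ uniformly for $T'\in[T,2T]$. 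Applying Theorem~\ref{MAINAPPLTHM} to both outer terms and combining these ingredients produces
\begin{align*}
\Bigl|N_{\alpha,\beta}(a,b,T)-\tfrac{\pi^2}{2}(b-a)\Bigr|\ll\eta(1+|a|+|b|)+\eta^{-1}\,S_{1,2,3}(g_\eta^+)\,\kappa c^{-1/\kappa}\,\delta_{6,(\alpha,\beta)}(T)^{1/(B\kappa)}.
\end{align*}

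A routine estimate gives $\|\partial^j g_\eta^\pm\|_1\ll\eta^{1-j}$ for $1\le j\le 3$ and $\|g_\eta^\pm\|_1\ll b-a+\eta$, whence $S_{1,2,3}(g_\eta^\pm)\ll(1+|a|+|b|)^3+(1+|a|+|b|)^2\eta^{-2}$. Writing $P:=1+|a|+|b|$ and $Q:=\kappa c^{-1/\kappa}\delta_{6,(\alpha,\beta)}(T)^{1/(B\kappa)}$, the right-hand side becomes $\eta P+\eta^{-1}(P^3+P^2\eta^{-2})Q$; optimizing over $\eta\in(0,1/2]$ (or taking $\eta=1/2$ if the unconstrained optimum exceeds this, a regime covered by the trivial inequality $\delta^{1/(B\kappa)}\ll\delta^{1/(4B\kappa)}$ coming from the boundedness of $\delta$) yields a bound $\ll P^3Q^{1/4}$. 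Under the hypotheses $|\alpha|,|\beta|\le 1$ and $\kappa\ge 1/2$ one has $c\le\sqrt 2$ and hence $\kappa c^{-1/\kappa}\ge 1/4$, so $Q^{1/4}\ll\kappa c^{-1/\kappa}\,\delta_{6,(\alpha,\beta)}(T)^{1/(4B\kappa)}$, and the corollary follows with $B':=4B$. The main obstacle is the bookkeeping: one must carefully verify that the weighted Sobolev norm picks up only the polynomial factor $(1+|a|+|b|)^3$, that the substitution $T\leftrightarrow T'$ costs only an absolute constant in $\delta$, and that the loss of exponent from $1/(B\kappa)$ to $1/(4B\kappa)$ induced by the optimization can indeed be absorbed into the explicit factor $\kappa c^{-1/\kappa}$ within the permitted parameter range.
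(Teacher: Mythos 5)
Your argument is correct and lands on the same $B'=4B$, and the overall structure — sandwiching the characteristic functions by smooth bumps at scale $\eta$, invoking the effective asymptotic, and optimizing — is exactly the one the paper uses. The genuine variation is in how you handle the outer upper bound: the paper allows $f_+$ to have support in the enlarged ball $\{\|\vecx\|\leq1+\eta\}$ and therefore applies Corollary~\ref{MAINAPPLTHMfnoncptsuppCOR} (which drops the unit-ball support restriction), whereas you keep $f_\eta$ supported in $B_1$ and instead rescale $T\mapsto T'=T/(1-\eta)$, so that Theorem~\ref{MAINAPPLTHM} applies directly at $T'$; this requires the additional (easy) monotonicity observation $\delta_{6,\vecxi}(T')\ll\delta_{6,\vecxi}(T)$, which the paper records as \eqref{DELTAESSDECR}. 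Your approach has the small merit of not invoking Corollary~\ref{MAINAPPLTHMfnoncptsuppCOR} at all, at the cost of the $(1-\eta)^{-2}$ bookkeeping; the paper's approach reuses a corollary that it proves anyway. A second difference is purely cosmetic: the paper keeps two independent scales $\eta,\eta'$ for the $\vecx$- and $s$-variables and sets them equal at the end, whereas you merge them from the start. One small imprecision to flag: the stated intermediate bound $\ll P^3Q^{1/4}$ is literally valid only in the regime where the unconstrained optimum $\eta=(PQ)^{1/4}$ lies in $(0,1/2]$; when one is forced to take $\eta=1/2$ the raw bound is $\ll P^3Q$, and one then passes to the target $(1+|a|+|b|)^3\kappa c^{-1/\kappa}\delta^{1/(4B\kappa)}$ via $\delta^{1/(B\kappa)}\ll\delta^{1/(4B\kappa)}$ (which is indeed what your parenthetical indicates), not via $Q\ll Q^{1/4}$. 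This is harmless — in that regime $Q\gg P^{-1}$, so the residual $\eta P$ term is also absorbed — but you should make the two cases and the two absorption mechanisms explicit rather than phrasing both as giving $\ll P^3Q^{1/4}$.
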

Note that 
the right hand sides of \eqref{MAINAPPLTHMres} and \eqref{MAINAPPLTHMCOR1RES} tend to
zero as $T\to\infty$ 
(keeping all other data fixed)
whenever 
$1,\alpha,\beta$ are linearly independent over $\Q$
and the vector $(\alpha,\beta)$ is $\kappa$-Diophantine for some $\kappa$.
If $(\alpha,\beta)$ furthermore satisfies a Diophantine condition of the type discussed in
Section \ref{DIOPHSEC}
then we even have a power rate decay with respect to $T$ in \eqref{MAINAPPLTHMres}
and \eqref{MAINAPPLTHMCOR1RES}.
In particular, by a result of Schmidt \cite{wS70} (or \cite{wS67}), we have a power rate 
decay with respect to $T$ whenever 
$\alpha,\beta$ are algebraic numbers
such that $1,\alpha,\beta$ are linearly independent over $\Q$;
cf.\ Remark \ref{SCHMIDT70REM} below.

\begin{remark}
The actual powers for the decay with respect to $T$
which we obtain in Theorem~\ref{MAINAPPLTHM} and Corollary \ref{MAINAPPLTHMCOR1}
are quite small
and depend strongly on the $a$ and $m$ appearing in the $\C_a^m$-norm in 
Theorem~\ref{MAINTHM1} (which, as we remarked above, we have not attempted to optimize).
Cf.\ Lemma~\ref{|v|>} and Remark \ref{|v|>REM} below.
It is an interesting problem to seek the \textit{maximal} power $\eta$
such that the difference in 
\eqref{MAINAPPLTHMres}
decays like $T^{-\eta}$,
for any fixed $(\alpha,\beta)$
subject to an appropriate Diophantine condition
and any sufficiently nice test functions $f$ and $g$.
\end{remark}

\begin{remark}\label{LINDEPrem}
The relation
$\lim_{T\to\infty} N_{\alpha,\beta}(a,b,T)=\frac{\pi^2}2(b-a)$
also holds for $(\alpha,\beta)$ $\kappa$-Diophantine
with $1,\alpha,\beta$ linearly \textit{de}pendent over $\Q$,
except that for certain such pairs $\alpha,\beta$ the definition of $N_{\alpha,\beta}(a,b,T)$
in \eqref{Nalfbetdef} has to be modified by removing one more exceptional subspace besides $\Delta$.
This follows as a %
special case of the (ineffective) result of 
Margulis and Mohammadi \cite[Theorem 1.9]{MM}\footnote{The notion \label{MMDEF1p7footnote}
of $\vecxi\in\R^k$ being ``$\kappa$-Diophantine''
in \cite{MM} %
is different from the one which we have defined;
however it is easy to verify that
if $\vecxi$ is $\kappa$-Diophantine in the sense of 
\cite[Def.\ 1.7]{MM} then $\vecxi$ is $(\kappa-1)$-Diophantine in our sense,
and %
if $\vecxi$ is $\kappa$-Diophantine in our sense then
$\vecxi$ is $k(\kappa+1)$-Diophantine in the sense of \cite[Def.~1.7]{MM}.
One also verifies by a direct computation 
that the form $Q$ in \eqref{Qdef} with $(\alpha,\beta)\notin\Q^2$
admits at most one more exceptional
subspace in the sense of \cite[p.\ 124(bottom)]{MM}
besides $\Delta=\{(\vecm_1,\vecm_1)\col\vecm_1\in\Z^2\}$, and 
such an exceptional subspace
can only occur %
when $1,\alpha,\beta$ are linearly dependent over $\Q$.}.
The reason why Theorem \ref{MAINAPPLTHM} and Cor.\ \ref{MAINAPPLTHMCOR1} fail to 
give the desired limiting result 
in the case when $1,\alpha,\beta$ are linearly dependent over $\Q$ is that 
as a crucial step in the proof, 
Theorem~\ref{MAINTHM1} is applied with $\vecxi_2=\scmatr{\alpha}{\beta}$,
and as we discussed in connection with Theorem \ref{INEFFECTIVETHM}
(cf.\ \eqref{ORBITCONFINED}),
the asymptotic equidistribution therein fails 
when $1,\alpha,\beta$ are $\Q$-linearly dependent.
This situation is discussed in \cite[Appendix A]{MarklofpaircorrI},
and as indicated there, and carried out in some special cases,
it is possible to extend the proof method of \cite{MarklofpaircorrI}
to the case of $\Q$-linear dependence, %
by utilizing equidistribution in the appropriate homogeneous submanifold of $\Gamma\bs G$.
It would be interesting to make this approach effective,
i.e.\ to seek a satisfactory effective version of the statement that
$\lim_{T\to\infty} N_{\alpha,\beta}(a,b,T)=\frac{\pi^2}2(b-a)$
for \textit{all} $\kappa$-Diophantine vectors $(\alpha,\beta)\in\R^2$.

It should be noted that \textit{some}
Diophantine condition on $(\alpha,\beta)$ is certainly necessary
in order for 
$\lim_{T\to\infty} N_{\alpha,\beta}(a,b,T)=\frac{\pi^2}2(b-a)$
to hold;
cf.\ \cite[Thm.\ 1.13 and Sec.\ 9]{MarklofpaircorrI}.
By contrast,
the \textit{non-}quantitative result that $Q(\Z^4)$ is dense in $\R$,
and in fact even
$\liminf_{T\to\infty} N_{\alpha,\beta}(a,b,T)\geq\frac{\pi^2}2(b-a)$
for all $a<b$,
is known to hold for \textit{all} irrational vectors $(\alpha,\beta)$,
that is, for all $(\alpha,\beta)\in\R^2\setminus\Q^2$.
This is a special case of \cite[Thm.\ 1.4]{MM}.
\end{remark}

\vspace{5pt}

Finally let us note that Theorem \ref{MAINAPPLTHM} implies an effective version
of the main theorem of \cite{MarklofpaircorrI},
which says that under explicit Diophantine conditions on $(\alpha,\beta)\in\R^2$,
the local two-point correlations of the sequence given by the values of 
$Q_1(m,n)=(m-\alpha)^2+(n-\beta)^2$, with $(m,n)\in\Z^2$,
are those of a Poisson process
--- a result which %
partly confirms a conjecture of Berry and Tabor \cite{BT}
on quantized integrable systems.
For fixed $(\alpha,\beta)\in\R^2$,
denote by $0\leq\lambda_1\leq\lambda_2\leq\cdots\to\infty$
the sequence of %
values of $Q_1(m,n)$ for $(m,n)\in\Z^2$, counted with multiplicity.
One easily verifies %
that the asymptotic density of this sequence is $\pi$:
\begin{align*}
\#\{j\col\lambda_j\leq\Lambda\}
=\#\{(m,n)\in\Z^2\col (m-\alpha)^2+(n-\beta)^2<\Lambda\}
\sim\pi \Lambda\qquad\text{as }\:\Lambda\to\infty.
\end{align*}
For a given interval $[a,b]\subset\R$,
the \textit{pair correlation function} is then defined as
\begin{align}\label{R2def}
R_2[a,b](\Lambda)=\frac1{\pi\Lambda}\#\bigl\{(j,k)\in(\Z^+)^2\col
j\neq k,\: \lambda_j,\lambda_k<\Lambda;\: \lambda_j-\lambda_k\in(a,b)\bigr\}.
\end{align}
In Section \ref{MAINAPPLTHMcorproofsec} we will prove:
\begin{cor}\label{BTcor}
There exists an absolute constant $B''>0$ such that
for any $[\kappa;c]$-Diophantine vector $(\alpha,\beta)\in\R^2$,
and any real numbers $a<b$ and $\Lambda\geq1$,
\begin{align}\label{BTcorres}
\bigl|R_2[a,b](\Lambda)-\pi(b-a)\bigr|\ll (1+|a|+|b|)^3\kappa c^{-\frac1{\kappa}}
\delta_{6,(\alpha,\beta)}(T)^{1/(B''\kappa)},
\end{align}
where the implied constant is absolute.
\end{cor}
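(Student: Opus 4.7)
\medskip

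\noindent\textbf{Plan for the proof of Corollary \ref{BTcor}.}
The strategy is to rewrite $R_2[a,b](\Lambda)$ as a lattice point count of exactly the type appearing in $N_{\alpha,\beta}(f,g,T)$, and then invoke Theorem~\ref{MAINAPPLTHM} after a standard smoothing argument; this parallels the derivation of Corollary~\ref{MAINAPPLTHMCOR1} from Theorem~\ref{MAINAPPLTHM} in Section~\ref{MAINAPPLTHMcorproofsec}.

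Setting $T=\sqrt{\Lambda}$ and identifying each index $j$ in the sequence $\{\lambda_j\}$ with the lattice point $(m,n)\in\Z^2$ producing $\lambda_j=(m-\alpha)^2+(n-\beta)^2$, a pair $(j,k)$ with $j\ne k$ corresponds to $\vecm=(m_1,m_2,m_3,m_4)\in\Z^4\setminus\Delta$, and $\lambda_j-\lambda_k=Q(\vecm)$ for $Q$ as in \eqref{Qdef}. Hence
\begin{align*}
R_2[a,b](\Lambda)=\frac{1}{\pi T^2}\#\bigl\{\vecm\in\Z^4\setminus\Delta\col (m_1-\alpha)^2+(m_2-\beta)^2<T^2,\ (m_3-\alpha)^2+(m_4-\beta)^2<T^2,\ Q(\vecm)\in(a,b)\bigr\}.
\end{align*}
Translating $(m_1,m_3)$ and $(m_2,m_4)$ by integers shows that this count is $\Z^2$-periodic in $(\alpha,\beta)$, and the $[\kappa;c]$-Diophantine property is preserved by $\Z^2$-shifts, so I may assume $(\alpha,\beta)\in[-1,1]^2$. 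Writing $\vec\xi=(\alpha,\beta,\alpha,\beta)$, the count equals $\pi^{-1}N_{\alpha,\beta}(\widetilde F,\chi_{(a,b)},T)$ where $\widetilde F(\vecx)=F(\vecx-T^{-1}\vec\xi)$ and $F(\vecx)=\chi_{\{x_1^2+x_2^2<1\}}\chi_{\{x_3^2+x_4^2<1\}}$.

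Next, I smooth both indicator functions. By standard convolution, pick $f_\pm\in\C_c^1(\R^4)$ with $f_-\le F\le f_+$, supports in slightly inflated/deflated product-of-disks regions of scale $\delta$, and $\sum_j\|\partial f_\pm/\partial x_j\|_\infty\ll\delta^{-1}$; and pick $g_\pm\in\C^3(\R)$ with $g_-\le\chi_{(a,b)}\le g_+$ supported in $(a-\eta,b+\eta)$, so that $S_{1,2,3}(g_\pm)\ll(1+|a|+|b|)^3\eta^{-2}$ (the main $j=0$ term contributes $(1+|a|+|b|)^3$, while each derivative of order $j\ge 1$ contributes a factor $\eta^{1-j}(1+|a|+|b|)^2$). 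Applying Theorem~\ref{MAINAPPLTHM} (or its non-unit-support variant Corollary~\ref{MAINAPPLTHMfnoncptsuppCOR}) to the shifted test functions $\widetilde f_\pm(\vecx):=f_\pm(\vecx-T^{-1}\vec\xi)$, which have the same $C^1$-norm as $f_\pm$, gives
\begin{align*}
\bigl|N_{\alpha,\beta}(\widetilde f_\pm,g_\pm,T)-\lambda_{\widetilde f_\pm}\textstyle\int g_\pm(s)\,ds\bigr|\ll\delta^{-1}(1+|a|+|b|)^3\eta^{-2}\,\kappa c^{-\frac1\kappa}\,\delta_{6,(\alpha,\beta)}(T)^{1/(B\kappa)}.
\end{align*}
A direct polar-coordinate computation in \eqref{lambdafdef} gives $\lambda_F=\frac12\int_0^1 r\,dr\cdot(2\pi)^2=\pi^2$, whence $\lambda_{\widetilde f_\pm}=\pi^2+O(\delta+T^{-1})$, and $\int g_\pm=(b-a)+O(\eta)$.

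Finally, sandwiching $\pi^{-1}N_{\alpha,\beta}(\widetilde f_-,g_-,T)\le R_2[a,b](\Lambda)\le\pi^{-1}N_{\alpha,\beta}(\widetilde f_+,g_+,T)$ and collecting terms yields
\begin{align*}
|R_2[a,b](\Lambda)-\pi(b-a)|\ll (1+|a|+|b|)(\delta+\eta+T^{-1})+\delta^{-1}(1+|a|+|b|)^3\eta^{-2}\,\kappa c^{-\frac1\kappa}\,\delta_{6,(\alpha,\beta)}(T)^{1/(B\kappa)}.
\end{align*}
The main technical point, and the only place where serious bookkeeping is needed, is to balance $\delta$ and $\eta$ against the given error; the choice $\delta=\eta=\delta_{6,(\alpha,\beta)}(T)^{1/(4B\kappa)}$ equalises the two contributions, and the $T^{-1}$ perturbation is absorbed either by this choice (when $T$ is large) or by the trivial bound $|R_2[a,b](\Lambda)-\pi(b-a)|\ll (1+|a|+|b|)$ (when $T$ is small, using the volume estimate of the counting region). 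This produces the claimed bound with $B''=4B$, and the implied constant is absolute.
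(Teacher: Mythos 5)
Your proof is correct, but it takes a genuinely different route from the paper's. The paper argues directly from the intermediate relation \eqref{MAINAPPLTHMpfusefulrel2} (obtained en route to Theorem \ref{MAINAPPLTHM}), exploiting the fact that there $\vecm_1$ and $\vecm_2$ already appear shifted by $\vecxi_2$: with $g_1=g_2=\chi$ (the open unit disk indicator) and $\hh=\chi_{(-b/2,-a/2)}$, the left-hand side of \eqref{MAINAPPLTHMpfusefulrel2} is exactly $\pi R_2[a,b](T^2)$, and then the smoothing/balancing argument proceeds as in the proof of Corollary \ref{MAINAPPLTHMCOR1}. You instead go through the black-box statement Corollary \ref{MAINAPPLTHMfnoncptsuppCOR}, which forces you to compensate for the fact that $N_{\alpha,\beta}(f,g,T)$ evaluates $f$ at $T^{-1}\vecm$ rather than $T^{-1}(\vecm-\vecxi)$: you shift the test function by $T^{-1}\vecxi$, reduce to $(\alpha,\beta)\in[-1,1]^2$ by $\Z^2$-periodicity of $R_2$, of the Diophantine condition, and of $\delta_{6,(\alpha,\beta)}(T)$, and you must keep track of the support enlargement (product of two unit disks rather than a unit ball, necessitating Corollary \ref{MAINAPPLTHMfnoncptsuppCOR} rather than Theorem \ref{MAINAPPLTHM}) and the extra $\|\vecxi\|/T$ error, both of which are correctly absorbed. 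Your identity $\lambda_F=\pi^2$ and the resulting balancing $\delta=\eta=\delta_{6,(\alpha,\beta)}(T)^{1/(4B\kappa)}$ mirror the paper's calculation; the paper's route simply avoids the shift manipulations and is, as the authors remark, the cleaner of the two. One small inaccuracy: your estimate $\lambda_{\tilde f_\pm}=\pi^2+O(\delta+T^{-1})$ should more carefully read $O(\delta+T^{-1}\delta^{-1})$ since the shift error scales with the gradient of $f_\pm$, but this is still absorbed by the main error term.
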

This corollary %
indeed gives an effective version of Marklof \cite[Theorem 1.8]{MarklofpaircorrI},
as well as of 
\cite[Theorem 1.6]{MarklofpaircorrII} in the case $k=2$,
since the right hand side of \eqref{BTcorres} tends to zero as $T\to\infty$
for any fixed $\kappa$-Diophantine vector $(\alpha,\beta)$ (any $\kappa$) %
such that $1,\alpha,\beta$ are linearly independent over $\Q$.

The main result in Marklof \cite[Theorem 1.6]{MarklofpaircorrII}
generalizes \cite[Theorem 1.8]{MarklofpaircorrI}
to the case of the local pair correlation density of the sequence $\|\vecm-\vecalf\|^k$ ($\vecm\in\Z^k$)
for any $k\geq2$
(and also for $k=2$ it is a stronger result,
since the Diophantine condition imposed on the vector $(\alpha,\beta)\in\R^2$ is weaker).
Unfortunately it seems that
Theorem \ref{MAINTHM1} above cannot be used to prove an effective version of
this more general result when $k\geq3$.
The reason is that the key equidistribution result required,
\cite[Thm.\ 5.1]{MarklofpaircorrII},
concerns the integral
\begin{align}\label{MpcIIintegral}
y^{\sigma}\int_{\R} f \Bigl(\Gamma\Bigl(1_2,\Bigl(\begin{matrix} \bn \\ \vecxi_2 \end{matrix}\Bigr)\Bigr)u(x)a(y)\Bigr)
h(y^{\sigma}x)\,dx
\end{align}
with $\sigma=\frac k2-1$,
that is, the integral which appears in Theorem~\ref{MAINTHM1} but with the function $h$ replaced by
$x\mapsto y^{\sigma} h(y^\sigma x)$.
With this choice,
the %
$S_{\infty,2+\ve,2}$-norm in the right hand side of \eqref{MAINTHM1RES}
grows rapidly as $y\to0$, making the bound useless. 
This failure may at first seem surprising, since the factor $y^\sigma$ means, when $\sigma>0$,
that we are considering a unipotent orbit expanding at a \textit{faster} rate than for $\sigma=0$,
so the result can be expected to be easier (or at least not more difficult) to prove.
However, there is a genuine difference between
$x$ near zero and $x$ far from zero in the integrand in \eqref{MpcIIintegral};
for example, for any %
$u(n)\in\Gamma$,
using $u(n)\bigl(1_2,\scmatr{\bn}{\vecxi_2}\bigr)=\bigl(1_2,\scmatr{n\vecxi_2}{\vecxi_2}\bigr)u(n)$
 we have
\begin{align*}
f \Bigl(\Gamma\Bigl(1_2,\Bigl(\begin{matrix} \bn \\ \vecxi_2 \end{matrix}\Bigr)\Bigr)u(x)a(y)\Bigr)
=f \Bigl(\Gamma \Bigl(1_2,\Bigl(\begin{matrix} n\vecxi_2 \\ \vecxi_2 \end{matrix}\Bigr)\Bigr)u(x+n)a(y)\Bigr).
\end{align*}
It is clear from this that if one would solve the aforementioned
problem of proving an effective version of 
Theorem \ref{INEFFECTIVETHM} in the general case with both $\vecxi_1,\vecxi_2$ allowed to be  non-zero,
this can be expected to also lead to an effective version of 
\cite[Thm.\ 5.1]{MarklofpaircorrII}, and so, with further work,
should also lead to an effective version of \cite[Theorem 1.6]{MarklofpaircorrII} for general $k\geq2$.

\vspace{5pt}

\textit{Acknowledgments.}
We would like to thank 
Sanju Velani for helpful discussions regarding Lemma \ref{DIOFHDIMLEM}.

\section{Some notation}
\label{NOTATION_SEC}

We use the standard notation $A=O(B)$ or $A\ll B$ meaning $|A|\leq CB$ for some constant $C>0$.
We shall also use $A\asymp B$ as a substitute for $A\ll B\ll A$.
The implicit constant $C$ will always be allowed to depend on $k$ and $N$ without any explicit mention.
If we wish to indicate that $C$ also depends on some other quantities $f,g,h$, we will use the notation
$A\ll_{f,g,h} B$ or $A=O_{f,g,h}(B)$.

Recall from Section \ref{intro} that 
$G'=\SL(2,\R)$ and $G=G'\ltimes\R^{2k}$.
Let $\ig$ be the Lie algebra of $G$;
it may be naturally identified with the space $\lsl(2,\R)\oplus\R^{2k}$,
with Lie bracket %
$[(X,\vecv),(Y,\vecw)]=(XY-YX,X\vecw-Y\vecv)$
(cf., e.g., \cite[Prop.\ 1.124]{Knapp}).
Using this notation, we fix the following basis of $\ig$:
\begin{align}\label{FIXEDBASIS}
&X_1=\left(\matr 0100,\bn\right);\qquad
X_2=\left(\matr 0010,\bn\right);\qquad
X_3=\left(\matr 100{-1},\bn\right);\qquad
\\\notag
& X_{3+\ell}=\left(0,\cmatr{\vece_\ell}{\bn}\right);\qquad
X_{3+k+\ell}=\left(0,\cmatr{\bn}{\vece_\ell}\right)\qquad(\ell=1,\ldots,k).
\end{align}
Here $\vece_1=(1,0,\ldots,0)$, $\vece_2=(0,1,0,\ldots,0)$, $\ldots$, $\vece_k=(0,\ldots,0,1)$ are the
standard basis vectors of $\R^k$.

We set 
\begin{align*}
\overline\Gamma'=\SL(2,\Z)\quad\text{and}\quad\Gamma'=\Gamma(N),
\quad\text{so that }\quad
\overline\Gamma=\overline\Gamma'\ltimes\Z^{2k}\quad\text{and}\quad\Gamma=\Gamma'\ltimes\Z^{2k}
\end{align*}
(cf.\ Section \ref{intro}).
Given a function $f$ on $X=\GaG$, we will often view $f$ as a function on $G$ through $f(g)=f(\Gamma g)$,
and we will write $f(M,\vecv)$ in place of $f((M,\vecv))$, for $(M,\vecv)\in G$. %
Furthermore, given any $R\in\overline\Gamma'$, we set
\begin{align}\label{FRDEF}
f_R(M,\vecv):=f(R^{-1}(M,\vecv))=f( R^{-1}M, R^{-1}\vecv).
\end{align}
Since $\Gamma'$ is normal in $\overline\Gamma'$, $f_R$ is also left $\Gamma$-invariant, i.e.\ $f_R$ can be
viewed as a function on $X$.
Note also that $\|f_R\|_{\C_a^m}=\|f\|_{\C_a^m}$ for all $m\geq0$, $a\in\R$.

\section{Linear form Diophantine conditions}\label{DIOPHSEC}

Given real numbers $\kappa\geq k$ and $\alpha\geq1$, we say that a vector $\vecxi\in\R^k$ is
\textit{$\kappa$-LFD}
(short for $\kappa$-linear form Diophantine)
if there is a constant $c>0$ such that
\begin{align}\label{DIOPH0}
\langle\vecr\vecxi\rangle\geq c\|\vecr\|^{-\kappa}\qquad
\text{for all }\:\vecr\in\Z^k\setminus\{\bn\},
\end{align}
and we say that $\vecxi$ is \textit{$(\kappa,\alpha)$-LFD}
if there is a constant $c>0$ such that 
\begin{align}\label{DIOPH1}
\langle j\vecr\vecxi\rangle\geq cj^{-\alpha}\|\vecr\|^{-\kappa}\qquad
\text{for all }\: j\in\Z^+,\:\vecr\in\Z^k\setminus\{\bn\}.
\end{align}
Recall here that for $x\in\R$, $\langle x\rangle$ denotes the distance to the nearest integer,
and $\vecr\vecxi$ is the scalar product, $\vecr\vecxi=r_1\xi_1+\ldots+r_k\xi_k$.
The condition in \eqref{DIOPH0} is very standard in the Diophantine approximation literature;
however we are not aware of any discussion %
of the more general condition in 
\eqref{DIOPH1}.
When \eqref{DIOPH0} holds, we will say that $\vecxi$ is $[\kappa;c]$-LFD,  %
and similarly when \eqref{DIOPH1} holds, we will say that $\vecxi$ is $[(\kappa,\alpha);c]$-LFD.
Note that being $[\kappa;c]$-LFD is equivalent to being $[(\kappa,\alpha);c]$-LFD for any
$\alpha\geq\kappa$.
Hence the notion of being $[(\kappa,\alpha);c]$-LFD is mainly relevant when $1\leq\alpha<\kappa$,
and in this case the condition \eqref{DIOPH1} is equivalent
to the same condition with $\vecr$ restricted to being a \textit{primitive} vector in $\Z^k$
(viz., a vector with $\gcd(r_1,\ldots,r_k)=1$).

Note that if $\vecxi$ is $(\kappa,\alpha)$-LFD, then $\vecxi$ is also $\kappa$-LFD
and furthermore 
each co-ordinate $\xi_\ell$ of $\vecxi$ is an 
$\alpha$-LFD ($\Leftrightarrow$ $\alpha$-Diophantine) real number
(apply \eqref{DIOPH1} with $\vecr=\vece_{\ell}$).
Hence 
if either $\kappa=k$ or $\alpha=1$, then the set
of $(\kappa,\alpha)$-LFD $\vecxi\in\R^k$ has Lebesgue measure zero
\cite{Khintchine26}; \cite{Khintchine25,Perron}.
On the other hand if both $\kappa>k$ and $\alpha>1$, then the \textit{complement} of that set has 
Lebesgue measure zero, and moreover, it has Hausdorff dimension strictly less than $k$:
\begin{lem}\label{DIOFHDIMLEM}
If $\kappa>k$ and $\alpha>1$ then the Hausdorff dimension of the set of all 
$\vecxi\in\R^k$ which are not $(\kappa,\alpha)$-LFD
equals $k-1+\max\bigl(\frac{k+1}{\kappa+1},\frac2{\alpha+1}\bigr)$.
\end{lem}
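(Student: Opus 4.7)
The plan is to identify the set $E\subset\R^k$ of vectors which fail to be $(\kappa,\alpha)$-LFD as a countable intersection of lim-sup sets, and then bound $\dim_H E$ from above and below separately. By the $\Z^k$-periodicity of the defining condition it suffices to work on $[0,1]^k$, and one has
\[
E\cap[0,1]^k\;=\;\bigcap_{c>0}\,\limsup_{(j,\vecr)}\,A_{j,\vecr}(c),
\qquad
A_{j,\vecr}(c):=\bigl\{\vecxi\in[0,1]^k\col\langle j\vecr\vecxi\rangle<cj^{-\alpha}\|\vecr\|^{-\kappa}\bigr\},
\]
with $(j,\vecr)$ ranging over $\Z^+\times(\Z^k\setminus\{\bn\})$. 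One may (and we do) restrict to primitive $\vecr$, since if $\vecr=d\vecr'$ with $\vecr'$ primitive then $\langle j\vecr\vecxi\rangle=\langle(dj)\vecr'\vecxi\rangle$ and dividing off $d$ only improves \eqref{DIOPH1}.

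For the \emph{upper bound}, fix any $s>k-1+\max\bigl(\tfrac{k+1}{\kappa+1},\tfrac{2}{\alpha+1}\bigr)$. For each pair $(j,\vecr)$ the set $A_{j,\vecr}(c)$ is a union of $\ll_{c,k} j\|\vecr\|$ parallel slabs of thickness $t_{j,\vecr}\asymp cj^{-(\alpha+1)}\|\vecr\|^{-(\kappa+1)}$ in the direction orthogonal to $\vecr$, and each slab is covered by $\ll t_{j,\vecr}^{-(k-1)}$ axis-aligned cubes of side $t_{j,\vecr}$. The choice of $s$ forces $(\alpha+1)(s-k+1)>2$ and $(\kappa+1)(s-k+1)>k+1$, so that the total $s$-mass of the cover satisfies
\[
\sum_{j,\vecr}j\|\vecr\|\,t_{j,\vecr}^{\,s-k+1}
\;\ll_c\;
\Bigl(\sum_{j\ge1}j^{\,1-(\alpha+1)(s-k+1)}\Bigr)
\Bigl(\sum_{\vecr\ne\bn}\|\vecr\|^{\,1-(\kappa+1)(s-k+1)}\Bigr)
<\infty,
\]
using $\#\{\vecr\col\|\vecr\|\asymp R\}\asymp R^{k-1}$. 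The Hausdorff--Cantelli lemma then gives $\mathcal{H}^s(E)=0$ for every such $s$, whence $\dim_H E\leq k-1+\max\bigl(\tfrac{k+1}{\kappa+1},\tfrac{2}{\alpha+1}\bigr)$.

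For the \emph{lower bound} we exhibit two subsets of $E$ realising the two candidate dimensions. First, let $A\subset\R^k$ be the set of $\vecxi$ with $\langle\vecr\vecxi\rangle<\|\vecr\|^{-\kappa}$ for infinitely many $\vecr\in\Z^k\setminus\{\bn\}$; taking $j=1$ shows $A\subset E$, and the multidimensional Jarn\'{\i}k--Besicovitch theorem for systems of linear forms (due to Bovey--Dodson) yields $\dim_H A=k-1+\tfrac{k+1}{\kappa+1}$. Second, let $B\subset\R^k$ be the set of $\vecxi$ whose first coordinate $\xi_1$ satisfies $\langle j\xi_1\rangle<j^{-\alpha}$ for infinitely many $j\in\Z^+$; taking $\vecr=\vece_1$ shows $B\subset E$, and the classical one-dimensional Jarn\'{\i}k--Besicovitch theorem applied to $\xi_1$, combined with the product-set formula for Hausdorff dimension, gives $\dim_H B=(k-1)+\tfrac{2}{\alpha+1}$. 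Together these bounds yield $\dim_H E\geq k-1+\max\bigl(\tfrac{k+1}{\kappa+1},\tfrac{2}{\alpha+1}\bigr)$.

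The main technical point is the upper bound: one must tailor the cover to the intrinsic scale $t_{j,\vecr}$ of each slab so that the double sum converges \emph{simultaneously} in $j$ and in $\vecr$, and the threshold for convergence in each of the two independent sums is precisely what produces the two terms in the $\max$. The lower bound is essentially a matter of checking that the two classical "extremal" sets --- one coming from dual approximation by linear forms in $\vecr$, the other from single-variable approximation in $\xi_1$ --- are both contained in $E$ and then invoking the corresponding limsup-set dimension theorems.
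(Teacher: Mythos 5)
Your upper bound is correct and is essentially the paper's argument: each $A_{j,\vecr}(c)$ is a union of $\ll j\|\vecr\|$ slabs of thickness $\asymp c\,j^{-\alpha-1}\|\vecr\|^{-\kappa-1}$, each covered by $\ll t^{1-k}$ cubes of side $t$ at that intrinsic scale, and the resulting $s$-content converges exactly when $(\alpha+1)(s-k+1)>2$ and $(\kappa+1)(s-k+1)>k+1$, which is what $s>k-1+\max\bigl(\frac{k+1}{\kappa+1},\frac2{\alpha+1}\bigr)$ gives.

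The lower bound has a genuine gap. The sets $A$ and $B$ you define are limsup sets with the implicit constant frozen at $1$, and those are \emph{not} contained in $E$. Membership in $E$ requires violating $\langle j\vecr\vecxi\rangle\geq c\,j^{-\alpha}\|\vecr\|^{-\kappa}$ for \emph{every} $c>0$; the substitution $j=1$ (resp.\ $\vecr=\vece_1$) shows only that $\vecxi\in A$ (resp.\ $\vecxi\in B$) violates it with $c=1$. Concretely, a real $\xi_1$ whose continued-fraction denominators satisfy $q_{n+1}\asymp 2q_n^{\alpha}$ obeys $\langle q_n\xi_1\rangle<q_n^{-\alpha}$ for all $n$ (so $\vecxi\in B$ for any choice of the remaining coordinates) while $\xi_1$ is $\alpha$-LFD with a positive constant, and such a $\vecxi$ need not lie in $E$. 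The fix is what the paper does: take $A$ to be the set of $\vecxi$ which are \emph{not} $\kappa$-LFD and $B$ to be the set whose first coordinate is \emph{not} $\alpha$-LFD; these genuinely sit inside $E$ by the same substitutions, and they have the same Hausdorff dimensions as your $A$, $B$ since the exponent-$\kappa'$ Bovey--Dodson limsup set (any $\kappa'>\kappa$) lies inside the non-$\kappa$-LFD set and has dimension $k-1+\frac{k+1}{\kappa'+1}\to k-1+\frac{k+1}{\kappa+1}$, and likewise for Jarn\'{\i}k--Besicovitch in the $\alpha$ direction. A side remark: your justification for restricting to primitive $\vecr$ is only valid for $\alpha\leq\kappa$, since writing $\vecr=d\vecr'$ replaces $j$ by $jd$ and changes $j^{-\alpha}\|\vecr\|^{-\kappa}$ by a factor $d^{\alpha-\kappa}$; since the sum in your upper bound converges over all $\vecr\neq\bn$ anyway, it is simplest to drop that restriction entirely.
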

\begin{proof}
The set in the statement of the Lemma contains the set of all $\vecxi\in\R^k$ which are not
$\kappa$-LFD, and the latter set has (Hausdorff) dimension
$k-1+\frac{k+1}{\kappa+1}$,
cf.\ Bovey and Dodson, \cite{BD}.
Furthermore, taking $\vecr=\vece_1$ in \eqref{DIOPH1} we see that 
the set in the statement of the lemma contains the set of all $\vecxi\in\R^k$
for which $\xi_1$ is not $\alpha$-LFD, 
and this set has dimension $k-1+\frac2{\alpha+1}$.

Hence it remains to prove that the dimension in the statement of the lemma is bounded \textit{above}
by $k-1+\max\bigl(\frac{k+1}{\kappa+1},\frac2{\alpha+1}\bigr)$.
It suffices to consider $\vecxi\in[0,1)^k$.
Set
\begin{align*}
\Delta_{j,\vecr,m}=\bigl\{\vecxi\in[0,1)^k\col|j\vecr\vecxi-m|<j^{-\alpha}\|\vecr\|^{-\kappa}\bigr\}.
\end{align*}
Then every non-$(\kappa,\alpha)$-LFD $\vecxi$ in $[0,1)^k$ belongs to
$\Delta_{j,\vecr,m}$ for infinitely many $(j,\vecr,m)\in\Z^+\times(\Z^k\setminus\{\bn\})\times\Z$.
Note also that $\Delta_{j,\vecr,m}=\emptyset$ unless $|m|\ll j\|\vecr\|$,
and for any $(j,\vecr,m)\in\Z^+\times(\Z^k\setminus\{\bn\})\times\Z$,
if we set $\ell=\ell_{j,\vecr}=j^{-\alpha-1}\|\vecr\|^{-\kappa-1}$
then the set $\Delta_{j,\vecr,m}$ can be covered by
$\ll \ell^{1-k}$ open hypercubes each having sides of length $\ll\ell$,
with the normal to each face being parallel to a co-ordinate axis.
If $s>k-1+\max\bigl(\frac{k+1}{\kappa+1},\frac2{\alpha+1}\bigr)$ then the total $s$-volume of the family
of hypercubes obtained as $(j,\vecr,m)$ runs through $\Z^+\times(\Z^k\setminus\{\bn\})\times\Z$
(subject to $\Delta_{j,\vecr,m}\neq\emptyset$) is
\begin{align*}
\ll\sum_{j=1}^\infty\sum_{\vecr\in\Z^k\setminus\{\bn\}}
j\|\vecr\|\cdot \bigl(j^{-\alpha-1}\|\vecr\|^{-\kappa-1}\bigr)^{1-k+s}<\infty.
\end{align*}
Note also that for any $\delta>0$ there are only a finite number of 
non-empty sets $\Delta_{j,\vecr,m}$ satisfying $\ell_{j,\vecr}\geq\delta$;
hence every non-$(\kappa,\alpha)$-LFD $\vecxi\in [0,1)^k$
is contained in the union of hypercubes in the above family restricted by $\ell_{j,\vecr}<\delta$.
It follows that for every $s>k-1+\max\bigl(\frac{k+1}{\kappa+1},\frac2{\alpha+1}\bigr)$,
the $s$-dimensional outer Hausdorff measure of the set of all
non-$(\kappa,\alpha)$-LFD $\vecxi$ in $[0,1)^k$ equals zero.
This completes the proof.
\end{proof}

We will need the following auxiliary result.
\begin{lem}\label{MXIBOUNDLEM}
Let $\eta\in\R$, $c>0$, $\kappa\geq1$, and assume that $\langle j\eta\rangle\geq cj^{-\kappa}$ for all
$j\in\Z^+$.
Then 
\begin{align}\label{MXIBOUNDLEMRES1}
\sum_{j=1}^\infty \frac1{j^2+Tj\langle j\eta\rangle}\ll(cT)^{-\frac2{1+\kappa}}\log^2(2+T)
\qquad\text{for all }\: T>0.
\end{align}
\end{lem}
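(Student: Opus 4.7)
I would first dyadically decompose the sum: $\sum_{j=1}^\infty=\sum_{J\in\{1,2,4,\dots\}}\sum_{j\in[J,2J)}$. The core input is a counting lemma exploiting the Diophantine hypothesis: if $j_1,j_2\in[J,2J)$ both satisfy $\langle j_i\eta\rangle\leq\delta$, then $|j_1-j_2|\leq J$ and the circle-group triangle inequality gives $\langle(j_1-j_2)\eta\rangle\leq 2\delta$; combined with $\langle j\eta\rangle\geq cj^{-\kappa}$ this forces $|j_1-j_2|\geq(c/(2\delta))^{1/\kappa}$. Hence at most $(2\delta/c)^{1/\kappa}J+1$ of the $j\in[J,2J)$ have $\langle j\eta\rangle\leq\delta$. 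Ordering $\{\langle j\eta\rangle:j\in[J,2J)\}$ in increasing order as $\alpha_1\leq\alpha_2\leq\dots$, this yields $\alpha_l\gg cl^\kappa/J^\kappa$ for all $l\geq 1$ (the $l=1$ case falling back on the Diophantine hypothesis directly, and the $l\geq 2$ case using the counting bound together with $(l-1)\asymp l$).

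Substituting into the dyadic block sum gives
\begin{align*}
S_J:=\sum_{j\in[J,2J)}\frac{1}{j^2+Tj\langle j\eta\rangle}\ll J^{\kappa-1}\sum_{l=1}^{J}\frac{1}{J^{\kappa+1}+cTl^\kappa}.
\end{align*}
I would estimate the inner sum by splitting at the balance point $l_*:=(J^{\kappa+1}/(cT))^{1/\kappa}$. This produces three regimes for $J$ (depending on whether $l_*<1$, $1\leq l_*\leq J$, or $l_*>J$, i.e.\ whether $J$ is small, intermediate, or large relative to $(cT)^{1/(\kappa+1)}$ and $cT$), in which the direct estimates give $S_J\ll J^{\kappa-1}/(cT)$, $(cT)^{-1/\kappa}J^{(1-\kappa)/\kappa}$, or $1/J$ respectively (with an additional logarithmic factor appearing when $\kappa=1$ since the large-$l$ sum becomes harmonic instead of convergent).

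Finally I would sum $S_J$ over dyadic $J$. For $\kappa>1$ each of the three regimes contributes a convergent geometric dyadic sum bounded by $(cT)^{-2/(\kappa+1)}$, the exponents being arranged so that they agree on the boundaries $J=(cT)^{1/(\kappa+1)}$ and $J=cT$ between adjacent regimes. For $\kappa=1$ the dyadic sums become arithmetic rather than geometric and each regime contributes a factor $\log(cT)$, together yielding the $\log^2(2+T)$ in the stated bound. The main obstacle is this bookkeeping across the three regimes together with the realisation that the naive pointwise Diophantine bound $\langle j\eta\rangle\geq cj^{-\kappa}$ alone only yields $(cT)^{-1/(\kappa+1)}$, so the counting improvement exploiting the spread-out structure of the values $\langle j\eta\rangle$ in $[J,2J)$ is essential to reach the stated exponent $-2/(\kappa+1)$.
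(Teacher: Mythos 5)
Your proof is correct, and it takes a genuinely different route from the paper's. The paper's argument runs through the continued fraction expansion of $\eta$: writing $p_k/q_k$ for the convergents, it invokes Nathanson's lemmas to bound $\sum_{j\le q_\ell/2}1/(j\langle j\eta\rangle)$ and the block sums over $hq_\ell<j\le(h+1)q_\ell$, converts the Diophantine hypothesis into the denominator recursion $q_k<c^{-1}q_{k-1}^{\kappa}$, and finally chooses $\ell$ with $q_{\ell-1}\le(cT)^{1/(1+\kappa)}<q_\ell$. Your argument replaces all of this with a self-contained spacing principle inside each dyadic block: if $j_1\ne j_2\in[J,2J)$ both have $\langle j_i\eta\rangle\le\delta$, then $\langle(j_1-j_2)\eta\rangle\le2\delta$ together with the Diophantine hypothesis applied to the positive integer $|j_1-j_2|<J$ forces $|j_1-j_2|\ge(c/2\delta)^{1/\kappa}$, so at most $(2\delta/c)^{1/\kappa}J+1$ of the $j\in[J,2J)$ satisfy $\langle j\eta\rangle\le\delta$; consequently the $l$-th smallest value $\alpha_l$ of $\{\langle j\eta\rangle:j\in[J,2J)\}$ satisfies $\alpha_l\gg c(l/J)^\kappa$ with implied constant depending only on $\kappa$ (roughly $2^{-\kappa-1}$, from replacing $l-1$ by $l/2$ when $l\ge 2$). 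The remainder is an elementary three-regime optimization, and both arguments balance around the same critical scale $J\asymp(cT)^{1/(\kappa+1)}$. Your route avoids all continued-fraction machinery and the external estimates from Nathanson, at the cost of slightly more elaborate dyadic bookkeeping; the two approaches yield bounds of the same quality for this lemma, and your exponent and $\log^2$-factor accounting, including the degeneration at $\kappa=1$, checks out.
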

(The bound is essentially optimal. Indeed,
if $\langle j\eta\rangle\leq cj^{-\kappa}$ holds for some $j$ then for $T=j^{1+\kappa}/c$, already the term
$\frac1{j^2+Tj\langle j\eta\rangle}$ is bounded below by $\frac12(cT)^{-\frac2{1+\kappa}}$.)
\begin{proof}
We assume $cT>1$ since otherwise the bound is trivial.
Note that the assumptions of the lemma imply that $\eta$ is irrational, and $0<c\leq\langle\eta\rangle\leq\frac12$.
Thus %
$T>2$.

Let $p_k/q_k$ %
be the $k$th convergent of the
(simple) continued fraction expansion of $\eta$ (cf., e.g., \cite[Ch.\ X]{HW}; in particular $1=q_0\leq q_1<q_2<\cdots$).
For any $\ell\geq1$ we have
\begin{align*}
\sum_{1\leq j\leq q_\ell/2}\frac{1}{j\langle j\eta\rangle}
=\sum_{k=1}^\ell\sum_{q_{k-1}/2<j\leq q_k/2}\frac1{j\langle j\eta\rangle}
\ll \sum_{k=1}^\ell q_{k-1}^{-1}\sum_{1\leq j\leq q_k/2}
\frac1{\langle j\eta\rangle}
\ll \sum_{k=1}^\ell \frac{q_k\log q_k}{q_{k-1}},
\end{align*}
where the last bound follows from \cite[Lemma 4.8]{Nathanson},
since $|\eta-\frac{p_k}{q_k}|<\frac1{q_kq_{k+1}}$ \cite[Thm.\ 171]{HW}.
But for every $k\geq1$ we have 
$cq_{k-1}^{-{\kappa}}\leq\langle q_{k-1}\eta\rangle<q_k^{-1}$, i.e.\ $q_k<c^{-1}q_{k-1}^{{\kappa}}$;
hence we get
\begin{align}\label{MXIBOUNDLEMPF4}
\sum_{1\leq j\leq q_\ell/2}\frac{1}{j\langle j\eta\rangle}
\ll c^{-1}(\log q_\ell)\sum_{k=1}^\ell q_{k-1}^{{\kappa}-1}
\ll c^{-1}(\log q_\ell)^2q_{\ell-1}^{\kappa-1},
\end{align}
where we used the fact that $q_\ell$ is bounded below by the $\ell$th Fibonacci number.

Next note that for any $\ell\geq1$ and $h\geq1$,
by \cite[Lemma 4.9]{Nathanson},
\begin{align}\label{MXIBOUNDLEMPF1}
\sum_{hq_{\ell}+1\leq j\leq (h+1)q_{\ell}}\frac1{j^2+Tj\langle j\eta\rangle}
\leq \frac1{Thq_{\ell}}
\sum_{r=1}^{q_{\ell}}\min\Bigl(\frac T{hq_{\ell}},\frac1{\langle (hq_{\ell}+r)\eta\rangle}\Bigr)
\ll \frac1{(hq_\ell)^2}+\frac{\log q_{\ell}}{Th}.
\end{align}
Similarly 
\begin{align}\label{MXIBOUNDLEMPF2}
\sum_{q_\ell/2<j\leq q_\ell}
\frac1{j^2+Tj\langle j\eta\rangle}
\ll \frac1{Tq_\ell}\sum_{r=1}^{q_{\ell}}\min\Bigl(\frac T{q_{\ell}},\frac1{\langle r\eta\rangle}\Bigr)
\ll \frac1{q_{\ell}^2}+\frac{\log q_{\ell}}{T}.
\end{align}
Adding \eqref{MXIBOUNDLEMPF2} and \eqref{MXIBOUNDLEMPF1} for all $h\leq T/q_\ell$ we obtain
\begin{align}\label{MXIBOUNDLEMPF3}
\sum_{q_\ell/2<j\leq T}\frac1{j^2+Tj\langle j\eta\rangle}
\ll \frac1{q_\ell^2}+\frac{\log q_\ell \log(1+\frac T{q_\ell})}T.
\end{align}
Now choose $\ell\geq1$ so that $q_{\ell-1}\leq(cT)^{\frac1{1+\kappa}}<q_\ell$.
Then
$q_\ell<c^{-1}q_{\ell-1}^{{\kappa}}\leq (cT)^{-\frac1{\kappa+1}}T<T$.
Now \eqref{MXIBOUNDLEMRES1} follows from
\eqref{MXIBOUNDLEMPF4}, \eqref{MXIBOUNDLEMPF3}
and the bound $\sum_{j>T}j^{-2}\ll T^{-1}$.
\end{proof}

We now give a result on the rate of decay of the majorant function $\delta_{\beta,\vecxi}(T)$ 
(cf.\ \eqref{MMXIDEF}),
assuming that $\vecxi$ is of an appropriate LFD type.
In fact we consider the following slightly simpler majorant:
\begin{align}\label{WMdef}
\wdelta_{\beta,\vecxi}(T)=\sum_{\vecr\in\Z^{k}\setminus\{\bn\}}\|\vecr\|^{-\beta}\sum_{j=1}^\infty
\frac1{j^2+Tj\langle j\vecr\vecxi\rangle}.
\end{align}
Note that $\delta_{\beta,\vecxi}(T)$ and $\wdelta_{\beta,\vecxi}(T)$ decay with very similar rates,
since 
\begin{align}\label{wMMcomp}
\wdelta_{\beta,\vecxi}(T)\leq \delta_{\beta,\vecxi}(T)\leq (2\log T)\wdelta_{\beta,\vecxi}(T),\qquad \forall T\geq e.
\end{align}
\begin{lem}\label{MXIBOUNDLEM2}
For any $\kappa\geq k$, $\alpha\geq1$ and $\beta>k+\frac{2\kappa}{1+\alpha}$,
if $\vecxi\in\R^k$ is $[(\kappa,\alpha);c]$-LFD then
\begin{align*}
\delta_{\beta,\vecxi}(T)\ll_{\beta,\kappa,\alpha} 
(cT)^{-\frac2{1+\alpha}}\log^2(2+T)
\qquad
\text{for all }\: T>0.
\end{align*}
\end{lem}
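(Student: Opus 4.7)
The plan is to reduce the multidimensional sum defining $\delta_{\beta,\vecxi}(T)$ to a family of one-variable sums already controlled by Lemma \ref{MXIBOUNDLEM}. The key observation is that the $[(\kappa,\alpha);c]$-LFD hypothesis says exactly that, for each fixed $\vecr\in\Z^{k}\setminus\{\bn\}$, the real number $\eta_{\vecr}:=\vecr\vecxi$ satisfies $\langle j\eta_{\vecr}\rangle\geq (c\|\vecr\|^{-\kappa})\,j^{-\alpha}$ for every $j\in\Z^{+}$. In other words, $\eta_{\vecr}$ is itself of the one-dimensional LFD type handled by Lemma \ref{MXIBOUNDLEM}, with the constant $c$ of that lemma replaced by $c\|\vecr\|^{-\kappa}$ and its exponent $\kappa$ replaced by $\alpha\geq 1$.

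First I would apply Lemma \ref{MXIBOUNDLEM} to $\eta_{\vecr}$ for every $\vecr$, obtaining uniformly in $\vecr$ and $T>0$
\begin{align*}
\sum_{j=1}^{\infty}\frac{1}{j^{2}+Tj\langle j\vecr\vecxi\rangle}\ll_{\alpha}(cT)^{-2/(1+\alpha)}\,\|\vecr\|^{2\kappa/(1+\alpha)}\log^{2}(2+T).
\end{align*}
Inserting this into the definition \eqref{WMdef} of $\wdelta_{\beta,\vecxi}(T)$ and summing over $\vecr$ yields
\begin{align*}
\wdelta_{\beta,\vecxi}(T)\ll (cT)^{-2/(1+\alpha)}\log^{2}(2+T)\sum_{\vecr\in\Z^{k}\setminus\{\bn\}}\|\vecr\|^{-\beta+2\kappa/(1+\alpha)}.
\end{align*}
The hypothesis $\beta>k+\frac{2\kappa}{1+\alpha}$ is exactly the threshold that makes the $\vecr$-sum converge, via the standard fact that $\sum_{\vecr\neq\bn}\|\vecr\|^{-s}$ converges precisely when $s>k$. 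The bound on $\delta_{\beta,\vecxi}(T)$ then follows from the comparison \eqref{wMMcomp} in the range $T\geq e$, while for $T<e$ the claim is immediate from the uniform bound \eqref{deltabetaxiBASICBOUND}.

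There is no real obstacle here: the lemma is essentially a packaging of the one-variable Lemma \ref{MXIBOUNDLEM} in the multi-variable setting. The conceptual point is that the $[(\kappa,\alpha);c]$-LFD condition has exactly the right shape to feed a $\vecr$-dependent constant $c\|\vecr\|^{-\kappa}$ into Lemma \ref{MXIBOUNDLEM}, producing a factor $\|\vecr\|^{2\kappa/(1+\alpha)}$ that is absorbed by the assumption on $\beta$. The only subtlety is the sharp bookkeeping of the logarithmic factors: the route via \eqref{wMMcomp} sketched above a priori produces one extra $\log(2+T)$ beyond what is stated, and if one insists on the exact $\log^{2}(2+T)$ one should bound $\delta_{\beta,\vecxi}$ directly term-by-term, using the same continued-fraction dissection as in the proof of Lemma \ref{MXIBOUNDLEM} and the elementary observation $\log^{+}(T\langle j\vecr\vecxi\rangle/j)\ll\log(2+T)$ to handle the extra numerator factor present in \eqref{MMXIDEF} compared with \eqref{WMdef}.
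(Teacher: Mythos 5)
Your proposal is correct and is essentially the paper's own proof: apply Lemma \ref{MXIBOUNDLEM} to $\eta=\vecr\vecxi$ with $c\|\vecr\|^{-\kappa}$ in place of $c$ and $\alpha$ in place of $\kappa$, multiply by $\|\vecr\|^{-\beta}$, and sum over $\vecr\in\Z^k\setminus\{\bn\}$, the condition $\beta>k+\frac{2\kappa}{1+\alpha}$ being precisely what makes the $\vecr$-sum converge. You are actually slightly more scrupulous than the paper: the paper's one-line argument bounds the $\wdelta$-inner sum of \eqref{WMdef} and then immediately asserts the stated bound on $\delta$, tacitly absorbing the extra numerator $1+\log^+(T\langle j\vecr\vecxi\rangle/j)$ from \eqref{MMXIDEF}; your remark that going through \eqref{wMMcomp} (or equivalently bounding this numerator by $\ll\log(2+T)$) a priori yields $\log^3(2+T)$ rather than $\log^2(2+T)$ is accurate and applies equally to the paper's version, though the precise power of the logarithm is immaterial for every use of this lemma in the paper.
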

\begin{proof}
Using Lemma \ref{MXIBOUNDLEM} and the assumption that 
$\vecxi$ is $[(\kappa,\alpha);c]$-LFD, we have
\begin{align*}
\sum_{j=1}^\infty \frac1{j^2+Tj\langle j\vecr\vecxi\rangle}\ll
(c\|\vecr\|^{-\kappa}T)^{-\frac2{1+\alpha}}\log^2(2+T),
\qquad\text{for each }\:\vecr\in\Z^k\setminus\{\bn\}.
\end{align*}
Multiplying by $\|\vecr\|^{-\beta}$ and adding over all $\vecr\in\Z^k\setminus\{\bn\}$,
we obtain the stated bound.
\end{proof}

\begin{remark}\label{deltadecayREM}
A standard argument also shows that 
given any $\beta>k$ and $\ve>0$,
the bound $\delta_{\beta,\vecxi}(T)\ll T^{\ve-1}$ as $T\to\infty$ holds for \textit{Lebesgue almost all} $\vecxi\in\R^k$.
We here give an outline of the proof: One verifies that
for $T$ large, $\int_{[0,1]^k}\wdelta_{\beta,\vecxi}(T)\,d\vecxi\ll T^{\frac13{\ve}-1}$,
and hence the set of $\vecxi\in[0,1]^k$ satisfying
$\wdelta_{\beta,\vecxi}(T)\geq T^{\frac23\ve-1}$
has Lebesgue measure $\ll T^{-\frac13\ve}$.
The sum of these measures over $T=2^1,2^2,2^3,\ldots$ is finite,
and so, by Borel-Cantelli,
for almost every $\vecxi\in[0,1]^k$ there is some $M\in\Z^+$ such that
$\wdelta_{\beta,\vecxi}(2^m)<(2^m)^{\frac23\ve-1}$ for all integers $m\geq M$,
and thus
$\wdelta_{\beta,\vecxi}(T)<2T^{\frac23\ve-1}$ for all (real) $T\geq 2^M$.
The desired claim then follows using
\eqref{wMMcomp} and the fact that $\delta_{\beta,\vecxi}(T)$ is invariant under
$\vecxi\mapsto\vecxi+\vecm$, $\vecm\in\Z^k$.
\end{remark}

\begin{remark}\label{SCHMIDT70REM}
By Schmidt, \cite{wS70},
if $\xi_1,\ldots,\xi_k$ are (real) \textit{algebraic} numbers such that
$1,\xi_1,\ldots,\xi_k$ are linearly independent over $\Q$,
then $\vecxi$ is $\kappa$-LFD (and thus $[\kappa,\kappa]$-LFD) for every $\kappa>k$.
Hence for such a $\vecxi$,
Lemma \ref{MXIBOUNDLEM2} implies that
for any $\beta>k\bigl(1+\frac2{1+k}\bigr)$ and $\ve>0$
we have
$\delta_{\beta,\vecxi}(T)\ll_{\vecxi,\beta,\ve} T^{\ve-\frac2{k+1}}$ for all $T>0$.
In connection with Theorem \ref{MAINAPPLTHM}
it should be noted that any such $\vecxi$
is also $\kappa$-Diophantine for every $\kappa>k^{-1}$;
again cf.\ \cite{wS70}.
\end{remark}

\section{Fourier decomposition with respect to the torus variable}
\label{FOURIERDECSEC}

We now start with the proof of Theorems \ref{MAINTHM1} and \ref{MAINTHM2}.
In this section, which generalizes \cite[Sec.\ 4]{SASL}, we consider the Fourier decomposition of a given
test function on $X$ with respect to the torus variable,
and prove bounds on the resulting Fourier coefficients.
Some parts of our discussion is a close mimic of \cite[Sec.\ 4]{SASL},
but there are also some new aspects that have to be considered; 
cf.\ in particular all of Section \ref{BKBOUNDS} below.

To start with, we consider an arbitrary function $f\in\C(\Z^{2k}\backslash G)$,
where $\Z^{2k}$ is viewed as a subgroup of $G$ through $\vecn\mapsto(1_2,\vecn)$.
We view $f$ as a function on $G$ by composing with the projection $G\mapsto\Z^{2k}\backslash G$.
Then $f(M,\vecxi)=f((1_2,\vecn)(M,\vecxi))=f(M,\vecxi+\vecn)$ for all $\vecn\in\Z^{2k}$,
which means that for any fixed $M\in G'$, %
$\vecxi\mapsto f(M,\vecxi)$ is a function on the torus $\T^{2k}=\Z^{2k}\backslash\R^{2k}$.
We write $\widehat f(M,\vecm)$ for the Fourier coefficients in the torus variable;
\begin{align}\label{WHFDEF}
\widehat f(M,\vecm)=\int_{\Z^{2k}\backslash\R^{2k}} f(M,\vecxi)e(-\vecm\vecxi)\,d\vecxi,
\qquad M\in G',\:\vecm\in\Z^{2k}.
\end{align}
Here $d\vecxi$ denotes Lebesgue measure on $\R^{2k}$.
Thus for $f\in\C^2(\Z^{2k}\backslash G)$ %
we have
\begin{align}\label{FOURIERSERIES}
f(M,\vecxi)=\sum_{\vecm\in\Z^{2k}}\widehat f(M,\vecm)e(\vecm\vecxi),
\end{align}
with absolute convergence uniformly\footnote{For any fixed ordering of $\Z^{2k}$.}
over $(M,\vecxi)$ in any compact subset of $G$.
(Indeed, the function $\vecxi\mapsto f(M,\vecxi)$ is in $\C^2(\T^{2k})$,
with $\|f(M,\cdot)\|_{\C^2(\T^{2k})}$ depending continuously on $M\in G'$.)

If $f$ is also invariant under some $T\in\overline\Gamma'=\SL(2,\Z)$,
this leads to a corresponding invariance relation for $\widehat f(M,\vecm)$:
\begin{lem}\label{BASICFOURIERLEM}
For any $T\in\overline\Gamma'$,
if $f\in\C(\Z^{2k}\backslash G)$ is left $T$-invariant, then 
\begin{align}\label{BASICFOURIERLEMRES1}
\wh f(TM,\vecm)=\wh f(M,\trans T\vecm),\qquad\forall M\in G',\:\vecm\in\Z^{2k},
\end{align}
where $\trans T$ is the transpose of $T$.
\end{lem}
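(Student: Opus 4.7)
The plan is to unpack the left $T$-invariance of $f$ into an explicit identity relating $f(TM,\vecxi)$ to $f$ evaluated at $M$, substitute this into the defining integral \eqref{WHFDEF} for $\widehat f(TM,\vecm)$, and then perform a linear change of variables in the torus fiber.

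First I would translate the invariance hypothesis. Viewing $T\in\overline\Gamma'$ as the element $(T,\bn)\in G$, the multiplication law
$(T,\bn)(M,\vecxi) = (TM,T\vecxi)$
shows that the assumed invariance $f((T,\bn)(M,\vecxi))=f(M,\vecxi)$ is equivalent to the pointwise identity
\begin{align*}
f(TM,\vecxi) = f(M,T^{-1}\vecxi), \qquad \forall M\in G',\: \vecxi\in\R^{2k}.
\end{align*}
Here the action of $T\in\SL(2,\Z)$ on $\R^{2k}=(\R^2)^{\oplus k}$ is the one fixed in Section \ref{intro}, which in $\R^{2k}$-coordinates is realized by a block matrix with entries $aI_k,bI_k,cI_k,dI_k$ (writing $T=\smatr abcd$); in particular $T$ preserves the lattice $\Z^{2k}$, has determinant $1$, and its transpose (in the $\R^{2k}$ sense) is the same block matrix built from $\trans T=\smatr acbd$.

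Next I would insert this identity into \eqref{WHFDEF}:
\begin{align*}
\widehat f(TM,\vecm) = \int_{\Z^{2k}\backslash\R^{2k}} f(M,T^{-1}\vecxi)\,e(-\vecm\vecxi)\,d\vecxi,
\end{align*}
and substitute $\vecxi = T\vecxi'$. Since $T$ acts on $\R^{2k}$ preserving $\Z^{2k}$, it descends to a measure-preserving diffeomorphism of $\T^{2k}=\Z^{2k}\backslash\R^{2k}$ (the Jacobian is $|\det T|=1$), so the domain of integration remains $\Z^{2k}\backslash\R^{2k}$ and $d\vecxi=d\vecxi'$. Using the elementary identity $\vecm\cdot (T\vecxi') = (\trans T\vecm)\cdot \vecxi'$ for the standard inner product on $\R^{2k}$, the exponent becomes $e(-(\trans T\vecm)\vecxi')$, and the integral is precisely $\widehat f(M,\trans T\vecm)$, which yields \eqref{BASICFOURIERLEMRES1}.

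There is essentially no obstacle here; the only thing to be careful about is that ``$\trans T$'' appearing in the statement refers to the $2\times 2$ transpose acting on $\R^{2k}=(\R^2)^{\oplus k}$ via the same representation, and this is exactly the transpose of the corresponding $2k\times 2k$ block matrix, so the change-of-variables computation goes through without ambiguity.
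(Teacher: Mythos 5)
Your proposal is correct and follows essentially the same route as the paper: a change of variables $\vecxi\mapsto T\vecxi$ in the torus fiber combined with the left $T$-invariance of $f$ and the identity $\vecm(T\vecxi)=(\trans T\vecm)\vecxi$. The only cosmetic difference is that you first unfold the invariance into $f(TM,\vecxi)=f(M,T^{-1}\vecxi)$ and then substitute, whereas the paper substitutes $\vecxi\mapsto T\vecxi$ in the integral first and then invokes invariance; the two orderings give the same computation.
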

\begin{proof}
We have
\begin{align*}
\widehat f(TM,\vecm)
& =\int_{\T^{2k}} f(TM,\vecxi)e(-\vecm\vecxi)\,d\vecxi
=\int_{\T^{2k}} f(TM,T\vecxi)e(-\vecm(T\vecxi))\,d\vecxi
\\
&=\int_{\T^{2k}} f(T(M,\vecxi))e(-\vecm(T\vecxi))\,d\vecxi
=\int_{\T^{2k}} f(M,\vecxi)e(-\vecm(T\vecxi))\,d\vecxi,
\end{align*}
where in the second equality we used the fact that $\vecxi\mapsto T\vecxi$ is a diffeomorphism of
$\T^{2k}$ preserving $d\vecxi$,
and in the last equality we used the fact that $f$ is left $T$-invariant.
Using $\vecm(T\vecxi)=(\trans T\vecm)\vecxi$ we obtain \eqref{BASICFOURIERLEMRES1}.
\end{proof}

Because of Lemma \ref{BASICFOURIERLEM},
if $f\in\C^2(\overline\Gamma\backslash G)$, then it is convenient to group the terms in
\eqref{FOURIERSERIES} together according to the orbits for the action of $\overline\Gamma'$ on $\Z^{2k}$.
We call an orbit for this action an \textit{A-orbit} if it contains some element
of the form $\cmatr\bn\vecr$, where $\vecr\in\Z^k\setminus\{\bn\}$.
Every other non-zero orbit is called a \textit{B-orbit}.
\begin{lem}\label{GOODBPROPLEM}
Every B-orbit contains an element $\veceta=\cmatr\vecq\vecr$ 
($\vecq=\trans(q_1,\ldots,q_k)$, $\vecr=\trans(r_1,\ldots,r_k)$)
with the property that there are some $1\leq\ell_1<\ell_2\leq k$ such that
$r_j=0$ for all $j<\ell_1$, $q_j=0$ for all $j<\ell_2$,
and $r_{\ell_1}>0$, $0\leq r_{\ell_2}<|q_{\ell_2}|$.
\end{lem}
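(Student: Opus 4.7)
The plan is to reduce any representative of a B-orbit to the desired canonical form via a short sequence of $\SL(2,\Z)$ operations acting on the pair $\cmatr{\vecq}{\vecr}$. The key starting observation is that for any element of a B-orbit, the $k$-vectors $\vecq,\vecr$ are $\Q$-linearly independent. Indeed, if there were a primitive pair $(\alpha,\beta)\in\Z^2$ with $\alpha\vecq+\beta\vecr=\bn$, then, completing $(\alpha,\beta)$ to an $\SL(2,\Z)$ matrix by Bezout and applying it to $\cmatr{\vecq}{\vecr}$, the orbit would contain a vector of the form $\cmatr{\bn}{\ast}$, contradicting the B-orbit hypothesis. Since $\Q$-linear independence is an orbit invariant, both $\vecq$ and $\vecr$ remain nonzero throughout the reduction, so each has a well-defined position of first nonzero entry.

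First, I would identify the smallest index $m$ at which at least one of $q_m,r_m$ is nonzero, and set $d=\gcd(q_m,r_m)>0$. Since $\gcd(q_m/d,r_m/d)=1$, Bezout yields integers $c_1,d_1$ with $c_1 q_m+d_1 r_m=d$, and then $T_1:=\smatr{r_m/d}{-q_m/d}{c_1}{d_1}$ lies in $\SL(2,\Z)$. Applying $T_1$ to $\cmatr{\vecq}{\vecr}$ produces a new pair $\cmatr{\vecq'}{\vecr'}$ with $q'_m=0$, $r'_m=d>0$, and $q'_j=r'_j=0$ for all $j<m$. Hence $\ell_1:=m$ is the leading position of $\vecr'$ with $r'_{\ell_1}=d>0$, and the leading position $\ell_2$ of $\vecq'$ satisfies $\ell_2>\ell_1$ (and is finite because $\vecq'\neq\bn$ by $\Q$-linear independence).

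Second, I would apply $\smatr{1}{0}{c}{1}\in\SL(2,\Z)$ for a suitable integer $c$, which replaces $\vecr'$ by $\vecr'+c\vecq'$ and leaves $\vecq'$ unchanged. Because $q'_j=0$ for all $j<\ell_2$, this operation leaves $r'_1,\ldots,r'_{\ell_2-1}$ untouched; in particular $\ell_1$ and the positivity of $r'_{\ell_1}$ are preserved. By the division algorithm we can choose $c\in\Z$ so that $r'_{\ell_2}+c\,q'_{\ell_2}\in[0,|q'_{\ell_2}|)$, which delivers the desired representative. The only mildly subtle point is the first step, where one must produce a determinant-$+1$ (not merely $\pm1$) matrix effecting the elimination; Bezout handles this cleanly. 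The rest is essentially Hermite normal form row reduction for a rank-$2$ integer matrix of size $2\times k$, oriented so that the bottom row leads before the top row.
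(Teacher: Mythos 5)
Your proof is correct and follows essentially the same route as the paper's: pick the leading nonzero column, use an $\SL(2,\Z)$ Bezout step to zero out the top entry there (giving $\ell_1$, $q_{\ell_1}=0$, $r_{\ell_1}>0$), then observe $\vecq'\ne\bn$ (else the orbit would be an A-orbit), giving $\ell_2$, and finally reduce $r'_{\ell_2}$ modulo $q'_{\ell_2}$ with a lower-triangular unipotent. The paper compresses the first step to a one-line appeal to "an appropriate $T\in\overline\Gamma'$," where you spell out the explicit matrix; and the paper justifies $\vecq'\ne\bn$ directly from the B-orbit definition rather than via $\Q$-linear independence of $\vecq,\vecr$ (your extra preliminary observation is correct but not strictly needed for the reduction). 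Both are fine; no gap.
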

\begin{proof}
Let $\veceta=\cmatr\vecq\vecr$ be an element in a B-orbit.
Then $\veceta\neq\bn$, and we may take $\ell_1$ to be the smallest index for which
$\Bigl(\begin{matrix}{q_{\ell_1}}\\{r_{\ell_1}}\end{matrix}\Bigr)\neq
\Bigl(\begin{matrix}0\\0\end{matrix}\Bigr)$. %
After replacing $\veceta$ by $T\veceta$ for an appropriate $T\in\overline\Gamma'$
we can ensure that $q_{\ell_1}=0$ and $r_{\ell_1}>0$, while clearly still $q_j=r_j=0$ for all
$j<\ell_1$.
Now since $\veceta$ is not in an A-orbit we cannot have $q_j=0$ for all $j$, and we take
$\ell_2>\ell_1$ to be the smallest index for which $q_{\ell_2}\neq0$.
Finally by replacing $\veceta$ by $\matr10x1\veceta$ for an appropriate $x\in\Z$ we can make
$0\leq r_{\ell_2}<|q_{\ell_2}|$ hold, while $q_j$ and $r_j$ for $j<\ell_2$ remain unchanged.
\end{proof}

Let us fix, once and for all, a set of representatives $A_k,B_k\subset\Z^{2k}$ such that
$A_k$ contains exactly one element from each A-orbit and $B_k$ contains exactly one element from
each B-orbit,
and furthermore each $\veceta\in A_k$ is of the form $\veceta=\cmatr\bn\vecr$
and each $\veceta\in B_k$ has the property described in Lemma \ref{GOODBPROPLEM}.

\begin{lem}\label{STABLEM}
The stabilizer  in $\overline\Gamma'$ of any $\veceta\in A_k$ equals 
$\Bigl\{\matr 10n1\col n\in\Z\Bigr\}$.
The stabilizer in $\overline\Gamma'$ of any $\veceta\in B_k$ is trivial.
\end{lem}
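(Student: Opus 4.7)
The plan is to unwind the stabilizer condition $T\veceta=\veceta$ directly, using the special shape of representatives in $A_k$ and $B_k$ guaranteed by the construction.

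For $\veceta\in A_k$, write $\veceta=\cmatr{\bn}{\vecr}$ with $\vecr\neq\bn$. For $T=\smatr abcd\in\oGamma'$, the equation $T\veceta=\veceta$ reads $b\vecr=\bn$ and $d\vecr=\vecr$. Since $\vecr\neq\bn$, these force $b=0$ and $d=1$, whence $\det T=ad-bc=a=1$. Conversely every $\smatr 10n1$ clearly fixes $\veceta$, giving the stated stabilizer.

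For $\veceta\in B_k$, write $\veceta=\cmatr{\vecq}{\vecr}$ in the form provided by Lemma~\ref{GOODBPROPLEM}: there exist $\ell_1<\ell_2$ with $q_j=0$ for $j<\ell_2$, $r_j=0$ for $j<\ell_1$, $r_{\ell_1}>0$, and $q_{\ell_2}\neq 0$. The stabilizer condition becomes
\begin{align*}
(a-1)\vecq+b\vecr=\bn\qquad\text{and}\qquad c\vecq+(d-1)\vecr=\bn.
\end{align*}
Reading off the $\ell_1$-th coordinate, and using $q_{\ell_1}=0$, one obtains $b\,r_{\ell_1}=0$ and $(d-1)\,r_{\ell_1}=0$; since $r_{\ell_1}>0$, this yields $b=0$ and $d=1$. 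From $\det T=1$ we then get $a=1$, and the first equation is satisfied automatically. The second equation reduces to $c\vecq=\bn$, and reading off the $\ell_2$-th coordinate (where $q_{\ell_2}\neq 0$) forces $c=0$. Hence $T$ is the identity, proving that the stabilizer is trivial.

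There is no real obstacle here: both parts follow by a direct coordinate inspection, and the design of $A_k$ and $B_k$ (in particular the inequalities $\vecr\neq\bn$, resp.\ $r_{\ell_1}>0$ together with $q_{\ell_2}\neq 0$) is exactly what is needed to pin down $b,d$ from the first nonzero coordinate and then $a,c$ from $\det T=1$ and a second coordinate.
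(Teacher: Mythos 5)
Your verification is correct and is exactly the ``immediate verification'' the paper alludes to: you unwind $T\veceta=\veceta$ coordinate-by-coordinate, using $\vecr\neq\bn$ for $A_k$ and the normal form from Lemma~\ref{GOODBPROPLEM} (namely $q_{\ell_1}=0$, $r_{\ell_1}>0$, $q_{\ell_2}\neq0$) for $B_k$, together with $\det T=1$. No gaps; this matches the paper's intent.
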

\begin{proof}
Immediate verification.
\end{proof}

The lemma implies that we can decompose $\Z^{2k}$ as a disjoint union of singleton sets as follows:
\begin{align}\label{Z2KDEC}
\Z^{2k}=\{\bn\}\:\:\bigsqcup\:\:\biggl(\bigsqcup_{\veceta\in
A_k}\bigsqcup_{T\in\overline\Gamma'_\infty\backslash\overline\Gamma'}
\{\trans T\veceta\}\biggr)\:\:\bigsqcup\:\:\
\biggl(\bigsqcup_{\veceta\in B_k}\bigsqcup_{T\in\overline\Gamma'} \{\trans T\veceta\}\biggr),
\end{align}
where $\overline\Gamma'_\infty\backslash\overline\Gamma'$ denotes any set of representatives for the right cosets inside
$\overline\Gamma'$ of the subgroup
\begin{align}\label{GAMMAinfdef}
\overline\Gamma'_\infty:=\left\{\matr 1n01\col n\in\Z\right\}.
\end{align}
Grouping together the terms in \eqref{FOURIERSERIES} according to \eqref{Z2KDEC},
and then applying Lemma \ref{BASICFOURIERLEM}, we get,
for any $f\in\C^2(\overline\Gamma\backslash G)$:
\begin{align}\label{BASICFOURIER_SPEC}
f(M,\vecxi)=\widehat f(M,\bn)+\sum_{\veceta\in A_k}
\sum_{T\in\overline\Gamma_\infty'\backslash\overline\Gamma'}
\widehat f(TM,\veceta)e((\trans T\veceta)\vecxi)
+\sum_{\veceta\in B_k}\sum_{T\in \overline\Gamma'} 
\widehat f(TM,\veceta)e((\trans T\veceta)\vecxi).
\end{align}
If $k=1$ then $B_k=\emptyset$ and \eqref{BASICFOURIER_SPEC} can be seen to agree with \cite[Lemma 4.1]{SASL}.
However $B_k$ is easily seen to be nonempty for every $k\geq2$.

We now wish to give a similar decomposition of a general function $f\in\C^2(X)$.
Recall that $X=\GaG$ and $\Gamma=\Gamma'\ltimes\Z^{2k}$
with $\Gamma'=\Gamma(N)$, a normal subgroup of $\overline\Gamma'=\SL(2,\Z)$.
For any subgroup $H$ of $G'$ and any subset $A\subset G'$ satisfying $HA=A$,
we denote by $H\backslash A$ a set of representatives for the distinct cosets $Ha$ ($a\in A$).
We also write 
$\overline\Gamma'_\infty \backslash\overline\Gamma'/\Gamma'$ for a set of representatives for the
double cosets of the form $\overline\Gamma'_\infty R\Gamma'$ with $R\in\overline\Gamma'$.
Let
\begin{align*}
\Gamma'_\infty:=\Gamma'\cap\overline\Gamma'_\infty=\left\{\matr 1{Nn}01\col n\in\Z\right\}.
\end{align*}
One then verifies that
$\bigsqcup_{R\in\overline\Gamma'_\infty \backslash\overline\Gamma'/\Gamma'} 
\bigsqcup_{T\in\Gamma'_\infty\backslash \Gamma'R}\{T\}$
is a set of representatives for $\overline\Gamma'_\infty\backslash\overline\Gamma'$.
Hence from \eqref{Z2KDEC} we get
\begin{align}\label{Z2KDEC2}
\Z^{2k}=\{\bn\}\:\:\bigsqcup\:\:\biggl(\bigsqcup_{\veceta\in A_k}
\bigsqcup_{R\in\overline\Gamma'_\infty \backslash\overline\Gamma'/\Gamma'} 
\bigsqcup_{T\in\Gamma'_\infty\backslash \Gamma'R}
\{\trans T\veceta\}\biggr)\:\:\bigsqcup\:\:\
\biggl(\bigsqcup_{\veceta\in B_k}\bigsqcup_{R\in\overline\Gamma'/\Gamma'}\bigsqcup_{T\in
\Gamma'R} 
\{\trans T\veceta\}\biggr).
\end{align}
Using $\Gamma'R=R\Gamma'$ and $\trans(R\gamma)\veceta=\trans\gamma(\trans R\veceta)$ for $\gamma\in\Gamma'$,
this formula is seen to provide a decomposition of $\Z^{2k}$ into orbits for the action of
$\trans\Gamma'=\Gamma'$.
In order to get a convenient corresponding partition of the sum in \eqref{FOURIERSERIES},
recall \eqref{FRDEF}, and note that for any $R\in\overline\Gamma'$, $M\in G'$, $\vecm\in\Z^{2k}$ we have
\begin{align*}
\widehat{f_R}(M,\vecm)
=\widehat f(R^{-1}M,\trans R\vecm).
\end{align*}
This is proved by a computation similar to the proof of Lemma \ref{BASICFOURIERLEM}.
Using  %
Lemma \ref{BASICFOURIERLEM} we get
$\widehat f(M,\trans\gamma\trans R\veceta)
=\widehat{f_R}(R\gamma M,\veceta)$ for all $\gamma\in\Gamma'$,
or in other words:
\begin{align}\label{FRKEYTRANS}
\widehat f(M,\trans T\veceta)=\widehat{f_R}(TM,\veceta),\qquad\forall R\in\overline\Gamma',\:T\in \Gamma'R,\:
M\in G',\:\veceta\in\Z^{2k}.
\end{align}
Now from \eqref{FOURIERSERIES}, \eqref{Z2KDEC2} and \eqref{FRKEYTRANS} we get:
\begin{align}\notag
f(M,\vecxi)=\widehat f(M,\bn)+\sum_{\veceta\in A_k}
\sum_{R\in\overline\Gamma'_\infty \backslash\overline\Gamma'/\Gamma'} \,
\sum_{T\in\Gamma'_\infty\backslash \Gamma'R}
\widehat{f_R}(TM,\veceta)e((\trans T\veceta)\vecxi)
\\\label{BASICFOURIER}
+\sum_{\veceta\in B_k}\sum_{R\in\overline\Gamma'/\Gamma'}\, \sum_{T\in \Gamma'R} 
\widehat{f_R}(TM,\veceta)e((\trans T\veceta)\vecxi).
\end{align}
Note here that for any $\veceta\in A_k$ and $R\in\overline\Gamma'$, the function
$M\mapsto\widehat{f_R}(M,\veceta)$ is left $\Gamma'_\infty$-invariant,
by \eqref{FRKEYTRANS} and Lemma \ref{STABLEM}.
However for $\veceta\in B_k$ there is no such invariance present.

\vspace{10pt}

\subsection{Bounds when $\veceta\in A_k$}

We now give bounds on $\widehat{f}(T,\veceta)$ for $\veceta\in A_k$.
\begin{lem}\label{DECAYTFNFROMCMNORMLEM}
For any $m\geq0$, %
$\alpha\in\R_{\geq0}$, $\vecr\in\Z^k\setminus\{\bn\}$ and $f\in
\C^m_\alpha(X)$, we have
\begin{align}\label{DECAYTFNFROMCMNORMLEMRES}
\left|\wh
f\left(\matr abcd,\cmatr\bn\vecr\right)\right|\ll_{m,\alpha}\|f\|_{\C^m_\alpha}\|\vecr\|^{-m}
(c^2+d^2)^{-\frac m2}\min\bigl(1,(c^2+d^2)^{\alpha}\bigr).
\end{align}
\end{lem}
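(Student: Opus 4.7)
The plan is to prove the bound by standard integration by parts in the torus variable, using the Lie algebra basis vector fields to produce the decay factors $\|\vecr\|^{-m}(c^2+d^2)^{-m/2}$. First, I would unwind how the left-invariant vector fields $X_{3+\ell}$ and $X_{3+k+\ell}$ act on functions $f(M,\vecxi)$. Since right multiplication by $\exp(t(0,\vecw))$ sends $(M,\vecxi)$ to $(M,\vecxi+tM\vecw)$, the vector field $X^L_{(0,\vecw)}$ acts as the directional derivative in $\vecxi$ along $M\vecw$. Integration by parts on $\T^{2k}$ then yields the key identity
\begin{align*}
\wh{X^L_{(0,\vecw)}f}(M,\vecm) = 2\pi i\,(\vecm\cdot M\vecw)\,\wh f(M,\vecm).
\end{align*}

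The heart of the argument is to choose $\vecw$ so that $\vecm\cdot M\vecw$ is as large as possible for $\vecm=\cmatr{\bn}{\vecr}$. Taking $\vecw=\cmatr{c\vece_\ell}{d\vece_\ell}$ gives the combination $Y_\ell:=cX_{3+\ell}+dX_{3+k+\ell}$, and a direct computation shows $M\vecw=\cmatr{(ac+bd)\vece_\ell}{(c^2+d^2)\vece_\ell}$, so that
\begin{align*}
\wh{Y_\ell f}\Bigl(\matr abcd,\cmatr\bn\vecr\Bigr)=2\pi i\,(c^2+d^2)\,r_\ell\,\wh f\Bigl(\matr abcd,\cmatr\bn\vecr\Bigr).
\end{align*}
Since $X_{3+\ell}$ and $X_{3+k+\ell}$ commute (their bracket vanishes), iterating $m$ times multiplies $\wh f$ by $(2\pi i(c^2+d^2)r_\ell)^m$. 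Picking $\ell$ with $|r_\ell|=\max_j|r_j|\gg_k\|\vecr\|$, I would rearrange to
\begin{align*}
\Bigl|\wh f\Bigl(\matr abcd,\cmatr\bn\vecr\Bigr)\Bigr|\ll_{m,k}\frac{|\wh{Y_\ell^m f}(M,\cmatr\bn\vecr)|}{(c^2+d^2)^m\,\|\vecr\|^m}.
\end{align*}

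To control the numerator, expand $Y_\ell^m=\sum_{j=0}^m\binom{m}{j}c^j d^{m-j}X_{3+\ell}^{j}X_{3+k+\ell}^{m-j}$ (using commutativity), and bound $|\wh g(M,\veceta)|\le\sup_\vecxi|g(M,\vecxi)|$. Each monomial has order $m$ in the fixed basis, so the $\C^m_\alpha$ norm gives $\sup_\vecxi|X_{3+\ell}^{j}X_{3+k+\ell}^{m-j}f(M,\vecxi)|\le\|f\|_{\C^m_\alpha}\scrY(M)^{-\alpha}$ (noting $\scrY(M,\vecxi)=\scrY(M)$). Combining with $(|c|+|d|)^m\le 2^{m/2}(c^2+d^2)^{m/2}$ produces
\begin{align*}
|\wh{Y_\ell^m f}(M,\cmatr\bn\vecr)|\ll_m\|f\|_{\C^m_\alpha}\,\scrY(M)^{-\alpha}\,(c^2+d^2)^{m/2}.
\end{align*}
The final step is to replace $\scrY(M)^{-\alpha}$ by $\min(1,(c^2+d^2)^\alpha)$ in the bound; this follows because $\scrY(M)\ge\tim M(i)=(c^2+d^2)^{-1}$ (take $\gamma=\id$ in \eqref{CHdef}) and also $\scrY(M)\ge\sqrt3/2$, so $\scrY(M)^{-\alpha}\ll_\alpha\min(1,(c^2+d^2)^\alpha)$, and assembling everything gives the asserted inequality. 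There is no real obstacle here; the only point requiring attention is the bookkeeping to identify the precise vector field combination $Y_\ell$ that makes the inner product $\veceta\cdot M\vecw$ collapse to $(c^2+d^2)r_\ell$, which is what drives the $(c^2+d^2)^{-m/2}$ decay.
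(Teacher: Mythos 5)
Your proof is correct and takes essentially the same approach as the paper: integration by parts in the torus variable using the left-invariant fields $X_{3+\ell},X_{3+k+\ell}$, together with the lower bound $\scrY(M)\geq\max(\sqrt3/2,(c^2+d^2)^{-1})$ to extract the $\min(1,(c^2+d^2)^\alpha)$ factor. The only cosmetic difference is that you integrate by parts with the single combination $Y_\ell=cX_{3+\ell}+dX_{3+k+\ell}$ and then expand $Y_\ell^m$ binomially, whereas the paper integrates by parts separately with each field to obtain $(2\pi i r_\ell c)^m\wh f$ and $(2\pi i r_\ell d)^m\wh f$ and then takes the maximum, avoiding the binomial expansion; both routes give the same $\|\vecr\|^{-m}(c^2+d^2)^{-m/2}$ decay.
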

\begin{proof}
The left invariant differential operator corresponding to $Y\in\ig$ is 
given by $Yf(g)=\lim_{t\to0} (f(g\exp(tY))-f(g))/t$.
In particular, 
if we parametrize $G$ as
$\bigl(\smatr abcd,\scmatr{\vecy}{\vecz}\bigr)$, where $\vecy=\trans(y_1,\ldots,y_k)$ and
$\vecz=\trans(z_1,\ldots,z_k)$, then (cf.\ \eqref{FIXEDBASIS})
\begin{align}\label{DECAYTFNFROMCMNORMLEMpf1}
X_{3+\ell}=a\frac{\partial}{\partial y_\ell}+c\frac{\partial}{\partial z_\ell}
\qquad\text{and}\qquad
X_{3+k+\ell}=b\frac{\partial}{\partial y_\ell}+d\frac{\partial}{\partial z_\ell},
\qquad
\ell\in\{1,\ldots,k\}.
\end{align}
Now 
\begin{align*}
\wh f\left(\matr
abcd,\cmatr\bn\vecr\right)=\int_{\T^k}\int_{\T^k}
f\left(\matr abcd,\cmatr\vecy\vecz\right)e(-\vecr\vecz)\,d\vecy\,d\vecz,
\end{align*}
and hence by repeated integration by parts we have
\begin{align*}
(2\pi i r_\ell c)^m\cdot\wh f\left(\matr abcd,\cmatr\bn\vecr\right)
=\int_{\T^k}\int_{\T^k}
[X_{3+\ell}^mf]\left(\matr abcd,\cmatr\vecy\vecz\right)e(-\vecr\vecz)\,d\vecy\,d\vecz
\end{align*}
and
\begin{align*}
(2\pi i r_\ell d)^m\cdot\wh f\left(\matr abcd,\cmatr\bn\vecr\right)
=\int_{\T^k}\int_{\T^k}
[X_{3+k+\ell}^mf]\left(\matr abcd,\cmatr\vecy\vecz\right)e(-\vecr\vecz)\,d\vecy\,d\vecz.
\end{align*}
Hence
\begin{align*}
|r_\ell|^m\max(|c|^m,|d|^m)\cdot\left|\wh f\left(\matr abcd,\cmatr\bn\vecr\right)\right|
\leq(2\pi)^{-m}\|f\|_{\C^m_\alpha}\scrY\left(\matr abcd\right)^{-\alpha},
\end{align*}
for each $\ell\in\{1,\ldots,k\}$.
Using $\scrY(\smatr abcd)\geq\max(\sqrt3/2,(c^2+d^2)^{-1})$, we get \eqref{DECAYTFNFROMCMNORMLEMRES}.
\end{proof}

Using Lemma \ref{DECAYTFNFROMCMNORMLEM} we immediately obtain bounds
on \textit{derivatives} of $\wh f(\cdot,\cdot)$ with respect to the first variable.
We express these in terms of Iwasawa co-ordinates,
that is we write (by a slight abuse of notation)
\begin{align}\label{TFNIWASAWA}
\wh f(u,v,\theta;\veceta):=\wh f\left(\matr 1u01\matr{\sqrt v}00{1/\sqrt v}
\matr{\cos\theta}{-\sin\theta}{\sin\theta}{\cos\theta},\veceta\right),
\end{align}
for $u\in\R$, $v>0$, $\theta\in\R/2\pi\Z$, $\veceta\in\Z^{2k}$.

\label{SL2RGLIEALGDISCSEC}

\begin{lem}\label{DERDECAYTFNFROMCMNORMLEM2}
For any %
$\alpha\in\R_{\geq0}$, $\vecr\in\Z^k\setminus\{\bn\}$, 
integers $m,\ell_1,\ell_2,\ell_3\geq0$ and $f\in\C^{m+\ell}_\alpha(X)$,
where $\ell=\ell_1+\ell_2+\ell_3$, we have
\begin{align}\label{DERDECAYTFNFROMCMNORMLEM2RES}
&\left|\Bigl(\frac{\partial}{\partial u}\Bigr)^{\ell_1}
\Bigl(\frac{\partial}{\partial v}\Bigr)^{\ell_2}
\Bigl(\frac{\partial}{\partial\theta}\Bigr)^{\ell_3}
\wh f\left(u,v,\theta;\cmatr\bn\vecr\right)\right|
\ll_{m,\ell,\alpha}\|f\|_{\C_\alpha^{m+\ell}}\|\vecr\|^{-m}v^{\frac m2-\ell_1-\ell_2}\min(1,v^{-\alpha}).
\end{align}
\end{lem}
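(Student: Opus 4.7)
The plan is to reduce to Lemma \ref{DECAYTFNFROMCMNORMLEM} by expressing the Iwasawa-coordinate partial derivatives in terms of the left-invariant basis $X_1,X_2,X_3$ of $\lsl(2,\R)\subset\ig$, and then exploiting the fact that these $X_i$ (which act only on the $\SL(2,\R)$-factor) commute with the torus Fourier transform.

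First, I would compute $M^{-1}\partial_\xi M$ for $\xi\in\{u,v,\theta\}$ along the Iwasawa parametrization $M(u,v,\theta)=u(u)a(v)k(\theta)$. A direct matrix computation (using $\Ad(a(v)^{-1})X_1=v^{-1}X_1$ and $k(\theta)^{-1}k'(\theta)=X_2-X_1$) gives the identities, valid on functions of $M\in G'$:
\begin{align*}
\partial_u &= v^{-1}\bigl(\cos^2\theta\,X_1-\sin^2\theta\,X_2+\sin\theta\cos\theta\,X_3\bigr),\\
\partial_v &= \tfrac{1}{2v}\bigl(\cos 2\theta\,X_3-\sin 2\theta\,(X_1+X_2)\bigr),\\
\partial_\theta &= X_2-X_1.
\end{align*}

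Next, since $X_1,X_2,X_3\in\ig$ correspond to right-translations supported on the $\SL(2,\R)$-factor (i.e.\ $(X_i f)(M,\vecxi)$ only differentiates in $M$), they commute with integration against $e(-\vecm\vecxi)$ on $\T^{2k}$. Thus $X_i\wh f(M,\veceta)=\wh{X_if}(M,\veceta)$ for every $\veceta\in\Z^{2k}$, and more generally $D\wh f=\wh{Df}$ for every monomial $D$ in $X_1,X_2,X_3$.

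The third step is an induction on $\ell=\ell_1+\ell_2+\ell_3$ showing that
\begin{align*}
\partial_u^{\ell_1}\partial_v^{\ell_2}\partial_\theta^{\ell_3}\wh f(u,v,\theta;\veceta)
=v^{-\ell_1-\ell_2}\sum_{D}c_{D}(\theta)\,\wh{Df}(u,v,\theta;\veceta),
\end{align*}
where the sum runs over finitely many monomials $D$ in $X_1,X_2,X_3$ of degree at most $\ell$, and the coefficients $c_D(\theta)$ are uniformly bounded trigonometric polynomials. The step requiring the most care is verifying the exponent $-(\ell_1+\ell_2)$: when $\partial_v$ hits the factor $v^{-j}$ produced by previous derivatives, it yields $-jv^{-j-1}$, which raises the $v$-power by exactly one, matching the increment in $\ell_2$. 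Because neither $\partial_u$ nor $\partial_\theta$ acts on $v$, and the coefficients depend only on $\theta$ (and of course on $v$ only through the explicit $v^{-1}$ factors), the inductive step goes through cleanly, with the degree of $D$ growing by at most one at every differentiation.

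Finally, for $\veceta=\scmatr\bn\vecr$ I would apply Lemma \ref{DECAYTFNFROMCMNORMLEM} to $Df$ in place of $f$, using the elementary bound $\|Df\|_{\C^m_\alpha}\ll_\ell\|f\|_{\C^{m+\ell}_\alpha}$. In Iwasawa coordinates one reads off $c=\sin\theta/\sqrt v$, $d=\cos\theta/\sqrt v$, so $c^2+d^2=v^{-1}$; hence
\begin{align*}
|\wh{Df}(u,v,\theta;\scmatr\bn\vecr)|\ll_{m,\ell,\alpha}\|f\|_{\C_\alpha^{m+\ell}}\,\|\vecr\|^{-m}\,v^{m/2}\min(1,v^{-\alpha}).
\end{align*}
Multiplying by the uniformly bounded $c_D(\theta)$ and the prefactor $v^{-\ell_1-\ell_2}$ yields \eqref{DERDECAYTFNFROMCMNORMLEM2RES}. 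The whole argument is essentially bookkeeping: the only thing that could go wrong is the exact tracking of the $v$-exponent in Step~3, and once the three elementary identities for $\partial_u,\partial_v,\partial_\theta$ are in hand the induction is straightforward.
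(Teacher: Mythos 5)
Your proof is correct and is essentially the argument that the paper defers to via the citation to \cite[Lemmas 4.3, 4.4]{SASL}: convert $\partial_u,\partial_v,\partial_\theta$ into the left-invariant fields $X_1,X_2,X_3$ (your three identities invert precisely the formulas for $X_1,X_2,X_3$ recorded later in the paper in the proof of Lemma \ref{lem:betatheta}), use that these commute with the torus Fourier transform so $D\wh f=\wh{Df}$, track the $v^{-\ell_1-\ell_2}$ prefactor by induction, and finish by applying Lemma \ref{DECAYTFNFROMCMNORMLEM} to $Df$ together with $c^2+d^2=v^{-1}$ and $\|Df\|_{\C^m_\alpha}\le\|f\|_{\C^{m+\ell}_\alpha}$. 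You have usefully written out the bookkeeping that the paper leaves to the external reference.
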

\begin{proof}
This is just as in \cite[Lemmas 4.3, 4.4]{SASL}.
\end{proof}

\subsection{Bounds when $\veceta\in B_k$}
\label{BKBOUNDS}
We now give bounds on $\widehat{f}(T,\veceta)$ when $\veceta\in B_k$.
We will use the Frobenius matrix norm, 
\begin{align*}
\left\|\matr abcd\right\|:=\sqrt{a^2+b^2+c^2+d^2},\qquad\matr abcd \in G'.
\end{align*}
\begin{remark}\label{FROBINIWASAWAREM}
In Iwasawa co-ordinates, for any $u\in\R$, $v>0$, $\theta\in\R/2\pi\Z$,
we have,
\begin{align*}
\left\|\matr 1u01\matr{\sqrt v}00{1/\sqrt
v}\matr{\cos\theta}{-\sin\theta}{\sin\theta}{\cos\theta}\right\|
\asymp\sqrt{\frac{u^2+v^2+1}v}.
\end{align*}
Indeed, 
\begin{align*}
\left\|\matr 1u01\matr{\sqrt v}00{1/\sqrt v}\right\|
=\left\|\matr{\sqrt v}{u/\sqrt v}0{1/\sqrt v}\right\|
=\sqrt{v+\frac{u^2}v+\frac1v}
=\sqrt{\frac{u^2+v^2+1}v};
\end{align*}
hence the stated relation follows using the compactness of
$\{\smatr{\cos\theta}{-\sin\theta}{\sin\theta}{\cos\theta}\col\theta\in\R/2\pi\Z\}$.
\end{remark}

We have the following analogue of Lemma \ref{DECAYTFNFROMCMNORMLEM}.
\begin{lem}\label{PARTBDECAYFNLEM}
For any $\veceta\in B_k$, $m\geq0$,
$f\in \C^m_0(X)$ and $T\in G'$, %
\begin{align}\label{PARTBDECAYFNLEMRES}
\left|\wh
f\bigl(T,\veceta\bigr)\right|\ll_m\frac{\|f\|_{\C^m_0}}{\bigl(\|T\|+\|\veceta\|/\|T\|\bigr)^m}.
\end{align}
\end{lem}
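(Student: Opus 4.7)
The plan is to integrate by parts in the torus variable, using the same left-invariant differential operators as in Lemma \ref{DECAYTFNFROMCMNORMLEM}, and then to exploit the normal form of Lemma \ref{GOODBPROPLEM} to convert the resulting frequency bounds into the desired lower bound. Writing $T=\smatr abcd$ and $\veceta=\scmatr{\vecq}{\vecr}\in B_k$, the definition \eqref{WHFDEF} gives
\[
\wh f(T,\veceta)=\int_{\T^{2k}}f\bigl(T,\scmatr{\vecy}{\vecz}\bigr)e(-\vecq\vecy-\vecr\vecz)\,d\vecy\,d\vecz.
\]
From \eqref{DECAYTFNFROMCMNORMLEMpf1}, each application of $X_{3+\ell}$ to the exponential brings down a factor $2\pi i(aq_\ell+cr_\ell)$, and each $X_{3+k+\ell}$ brings down $2\pi i(bq_\ell+dr_\ell)$. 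Iterating $m$ times and integrating by parts yields, for every $\ell\in\{1,\ldots,k\}$,
\[
|aq_\ell+cr_\ell|^m\,|\wh f(T,\veceta)|\ll \|f\|_{\C_0^m}\quad\text{and}\quad |bq_\ell+dr_\ell|^m\,|\wh f(T,\veceta)|\ll \|f\|_{\C_0^m}.
\]
Since the two coefficients above are precisely the components of $\trans T\binom{q_\ell}{r_\ell}$, the lemma reduces to establishing
\[
\max_\ell\bigl\|\trans T\tbinom{q_\ell}{r_\ell}\bigr\|\gg \|T\|+\|\veceta\|/\|T\|.
\]

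The summand $\|\veceta\|/\|T\|$ is the easy half: since $\det(\trans T)=1$, the smallest singular value of $\trans T$ equals $\|T\|_{\mathrm{op}}^{-1}\asymp\|T\|^{-1}$, so $\|\trans T\vecw\|\gg\|\vecw\|/\|T\|$ for every $\vecw\in\R^2$; applying this with $\vecw=\binom{q_\ell}{r_\ell}$ and maximizing over $\ell$ gives the claim, using $\max_\ell\|\binom{q_\ell}{r_\ell}\|\asymp_k\|\veceta\|$.

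The main obstacle is obtaining the summand $\|T\|$, since the generic singular-value bound cannot produce a term of this size. Here one uses the special form of $B_k$-representatives supplied by Lemma \ref{GOODBPROPLEM}: there exist indices $\ell_1<\ell_2$ with $\binom{q_{\ell_1}}{r_{\ell_1}}=\binom{0}{r_{\ell_1}}$, $r_{\ell_1}\ge 1$, $|q_{\ell_2}|\ge 1$, and $0\le r_{\ell_2}<|q_{\ell_2}|$. Then $\trans T\binom{0}{r_{\ell_1}}=r_{\ell_1}\binom{c}{d}$ directly controls $\sqrt{c^2+d^2}$, while $\trans T\binom{q_{\ell_2}}{r_{\ell_2}}=q_{\ell_2}\binom{a}{b}+r_{\ell_2}\binom{c}{d}$, combined with the previous identity, controls $\sqrt{a^2+b^2}$; the bound $r_{\ell_2}/|q_{\ell_2}|<1$ keeps the cross-contribution harmless. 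Summing the two estimates yields $\|T\|\ll\|\trans T\binom{0}{r_{\ell_1}}\|+\|\trans T\binom{q_{\ell_2}}{r_{\ell_2}}\|\le 2\max_\ell\|\trans T\binom{q_\ell}{r_\ell}\|$, completing the proof. In essence, the $B_k$ normalization is tailored precisely so that two of the column-vectors of $\veceta$ form a quantitatively well-conditioned basis of $\R^2$, and it is this non-degeneracy that supplies the $\|T\|$-part of the bound — the sole new ingredient beyond the $A_k$ case treated in Lemma \ref{DECAYTFNFROMCMNORMLEM}.
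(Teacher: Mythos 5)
Your proof is correct and shares the paper's overall structure: integration by parts against $X_{3+\ell},X_{3+k+\ell}$ reduces the bound to showing that some entry of $\trans T\scmatr{q_\ell}{r_\ell}$ (equivalently, some scalar product $\scmatr{q_\ell}{r_\ell}\cdot\vecv$ with $\vecv$ a column of $T$, which is exactly the form in which the paper writes it) is $\gg\|T\|+\|\veceta\|/\|T\|$. Where you and the paper diverge is in how this lower bound is extracted from the $B_k$ normalization. For the $\|\veceta\|/\|T\|$ summand you invoke the smallest singular value of $\trans T\in\SL(2,\R)$; the paper instead takes the index $\ell$ maximizing $\|\scmatr{q_\ell}{r_\ell}\|$ and argues geometrically with the angle $\alpha$ between the two columns of $T$ via $\|\vecv\|\|\vecv'\|\sin\alpha=1$ — the two arguments are equivalent, and your singular-value phrasing is arguably the cleaner one. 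For the $\|T\|$ summand the paper observes that the angle between $\R\scmatr{q_{\ell_1}}{r_{\ell_1}}$ and $\R\scmatr{q_{\ell_2}}{r_{\ell_2}}$ exceeds $\pi/4$ (a direct consequence of $q_{\ell_1}=0<r_{\ell_1}$ together with $0\le r_{\ell_2}<|q_{\ell_2}|$) and compares with the normal direction of the longer column of $T$; you instead solve algebraically, writing $q_{\ell_2}\scmatr{a}{b}=\trans T\scmatr{q_{\ell_2}}{r_{\ell_2}}-(r_{\ell_2}/r_{\ell_1})\trans T\scmatr{0}{r_{\ell_1}}$ and noting that $|q_{\ell_2}|\geq 1$, $r_{\ell_1}\geq 1$, $r_{\ell_2}/|q_{\ell_2}|<1$ keep all coefficients at most $1$, so $\|\scmatr{a}{b}\|$ is bounded by the sum of the two transformed norms and $\|\scmatr{c}{d}\|$ by the first alone. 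Both routes rest on the identical structural fact that the $B_k$ normal form makes $\scmatr{q_{\ell_1}}{r_{\ell_1}},\scmatr{q_{\ell_2}}{r_{\ell_2}}$ a uniformly well-conditioned basis of $\R^2$; yours is the algebraic version and the paper's is the geometric (angle) version. One phrase worth tightening: as literally stated, ``summing the two estimates'' after a reverse-triangle-inequality step is delicate, but the division-by-$q_{\ell_2}$ and $r_{\ell_1}$ step just described makes the sum bound airtight without any case distinction.
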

\begin{remark}\label{PARTBDECAYFNLEMREM}
As a consequence, for any $0<\beta<\frac12$ we have 
\begin{align*}
\left|\wh
f\bigl(T,\veceta\bigr)\right|\ll_{m}\frac{\|f\|_{\C^m_0}}{\|T\|^{m(1-2\beta)}\|\veceta\|^{m\beta}
}.
\end{align*}
\end{remark}
\begin{proof}
We write $\veceta=\cmatr\vecq\vecr$ and $T=\matr abcd$.
Repeated integration by parts gives (cf.\ the proof of Lemma \ref{DECAYTFNFROMCMNORMLEM}):
\begin{align*}
(2\pi i (q_\ell a+r_\ell c))^m\cdot\wh f\left(T,\veceta %
\right)
=\int_{\T^k}\int_{\T^k}
[X_{3+\ell}^mf]\biggl(T,\cmatr\vecy\vecz\biggr)e\biggl(-\veceta\cmatr\vecy\vecz\biggr)\,d\vecy\,
d\vecz
\end{align*}
and
\begin{align*}
(2\pi i (q_\ell b+r_\ell d))^m\cdot\wh f\left(T,\veceta %
\right)
=\int_{\T^k}\int_{\T^k}
[X_{3+k+\ell}^mf]\biggl(T,\cmatr\vecy\vecz\biggr)e\biggl(-\veceta\cmatr\vecy\vecz\biggr)\,d\vecy\,
d\vecz.
\end{align*}
Hence if we write $\veceta^{(\ell)}:=\cmatr{q_\ell}{r_\ell}\in\R^2$ then we conclude that for each
$\ell\in\{1,\ldots,k\}$ and for each column vector $\vecv$ of $T$, we have
\begin{align}\label{PARTBDECAYFNLEMPF1}
\bigl|\wh
f\bigl(T,\veceta\bigr)\bigr|\leq\frac{\|f\|_{\C^m_0}}{(2\pi)^m|\veceta^{(\ell)}\vecv|^m}.
\end{align}

Now fix a column vector $\vecv$ of $T$ with the largest norm.
Then $\|T\|\leq\sqrt2\|\vecv\|$.
By our definition of $B_k$, $\veceta$ has the property described in Lemma
\ref{GOODBPROPLEM},
i.e.\ there are $1\leq\ell_1<\ell_2\leq k$ such that
$r_j=0$ for all $j<\ell_1$, $q_j=0$ for all $j<\ell_2$,
and $r_{\ell_1}>0$, $0\leq r_{\ell_2}<|q_{\ell_2}|$.
In particular the vectors $\veceta^{(\ell_1)}$ and $\veceta^{(\ell_2)}$ are non-zero, hence both
have length
$\geq1$, 
and the angle between the lines
$\R\veceta^{(\ell_1)}$ and $\R\veceta^{(\ell_2)}$ in $\R^2$ is $>\frac\pi4$.
Hence the normal line to $\vecv$ in $\R^2$ has an angle $\geq\frac\pi8$ to at least one of the lines
$\R\veceta^{(\ell_1)}$ and $\R\veceta^{(\ell_2)}$,
and it follows that at least one of the scalar products 
$\veceta^{(\ell_1)}\vecv$ and $\veceta^{(\ell_2)}\vecv$
has an absolute value %
$\geq\sin(\frac\pi8)\|\vecv\|$. Hence using \eqref{PARTBDECAYFNLEMPF1} we get
\begin{align}\label{PARTBDECAYFNLEMPF2}
\bigl|\wh f\bigl(T,\veceta\bigr)\bigr|\ll_m\frac{\|f\|_{\C^m_0}}{\|T\|^m}.
\end{align}

Next let $\vecv'$ be the \textit{other} column vector of $T$, and let $\alpha\in(0,\frac\pi2]$ be
the angle between the lines
$\R\vecv$ and $\R\vecv'$; then $\|\vecv\|\|\vecv'\|\sin\alpha=1$, since $\det T=1$.
Let $\ell\in\{1,\ldots,k\}$ be the index for which $\|\veceta^{(\ell)}\|$ is maximal;
then $\|\veceta\|\leq\sqrt k\|\veceta^{(\ell)}\|$.
Now the normal line to $\veceta^{(\ell)}$ in $\R^2$ must have an angle $\geq\frac\alpha2$ to at
least one of the lines
$\R\vecv$ and $\R\vecv'$.
Hence either 
\begin{align*}
|\veceta^{(\ell)}\vecv|\geq\|\veceta^{(\ell)}\|\|\vecv\|\sin(\tfrac\alpha2)
>\tfrac12\|\veceta^{(\ell)}\|\|\vecv\|\sin\alpha=\frac{\|\veceta^{(\ell)}\|}{2\|\vecv'\|}\geq
\frac{\|\veceta\|}{2\sqrt k\|T\|}
\end{align*}
or else
\begin{align*}
|\veceta^{(\ell)}\vecv'|\geq\|\veceta^{(\ell)}\|\|\vecv'\|\sin(\tfrac\alpha2)
>\tfrac12\|\veceta^{(\ell)}\|\|\vecv'\|\sin\alpha=\frac{\|\veceta^{(\ell)}\|}{2\|\vecv\|}\geq
\frac{\|\veceta\|}{2\sqrt k\|T\|}.
\end{align*}
Applying \eqref{PARTBDECAYFNLEMPF1} for the appropriate column vector of $T$ we get
\begin{align}\label{PARTBDECAYFNLEMPF3}
\bigl|\wh f\bigl(T,\veceta\bigr)\bigr|\ll_m\frac{\|f\|_{\C^m_0}}{\bigl(\|\veceta\|/\|T\|\bigr)^m}.
\end{align}
Together, \eqref{PARTBDECAYFNLEMPF2} and \eqref{PARTBDECAYFNLEMPF3} imply \eqref{PARTBDECAYFNLEMRES}.
\end{proof}

Using Iwasawa co-ordinates, %
the bound in Remark \ref{PARTBDECAYFNLEMREM} can be expressed as follows,
for any $0<\beta<\frac12$ (cf.\ Remark \ref{FROBINIWASAWAREM}):
\begin{align}\label{PARTBDECAYFNLEMREMEXPL}
\left|\wh f\bigl(u,v,\theta;\veceta\bigr)\right|\ll_{m}
\|f\|_{\C^m_0}\Bigl(\frac v{u^2+v^2+1}\Bigr)^{m(\frac12-\beta)}\|\veceta\|^{-m\beta}.
\end{align}
Arguing again as in \cite[Lemmas 4.3, 4.4]{SASL} we now obtain the following bound on derivatives.
\begin{lem}\label{PARTBDERDECAYTFNFROMCMNORMLEM2}
Fix $0<\beta<\frac12$ and integers $m,\ell_1,\ell_2,\ell_3\geq0$.
For any $\veceta\in B_k$ and $f\in \C^{m+\ell}_0(X)$, 
where $\ell=\ell_1+\ell_2+\ell_3$, we have
\begin{align*}
&\left|\Bigl(\frac{\partial}{\partial u}\Bigr)^{\ell_1}
\Bigl(\frac{\partial}{\partial v}\Bigr)^{\ell_2}
\Bigl(\frac{\partial}{\partial\theta}\Bigr)^{\ell_3}
\wh f\left(u,v,\theta;\veceta\right)\right|
\ll_{m,\ell}\|f\|_{\C^{m+\ell}_0}\, v^{-\ell_1-\ell_2}
\Bigl(\frac v{u^2+v^2+1}\Bigr)^{m(\frac12-\beta)}\|\veceta\|^{-m\beta}.
\end{align*}
\end{lem}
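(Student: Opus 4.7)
The plan is to express the Iwasawa partial derivatives in terms of the left-invariant vector fields $X_1, X_2, X_3$ of $\lsl(2,\R)$, and then exploit the fact that these commute with the Fourier transform in the torus fiber, thereby reducing the problem to the pointwise bound \eqref{PARTBDECAYFNLEMREMEXPL} applied to derivatives $X^\alpha f$ of the original function $f$. This is essentially the strategy of \cite{SASL}, Lemmas 4.3 and 4.4; the only substantive change is that for $\veceta\in B_k$ the basic pointwise estimate to feed in is \eqref{PARTBDECAYFNLEMREMEXPL}, rather than Lemma \ref{DECAYTFNFROMCMNORMLEM}.

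Writing $g(u,v,\theta)=\smatr{1}{u}{0}{1}\smatr{\sqrt v}{0}{0}{1/\sqrt v}k(\theta)$ with $k(\theta)$ the standard rotation, and computing directly using $a(v)^{-1}X_1 a(v)=v^{-1}X_1$ and $\tfrac{d}{dv}a(v)=\tfrac{1}{2v}a(v)X_3$, one obtains the identities
\begin{align*}
\partial_u = v^{-1}Z(\theta), \qquad \partial_v = (2v)^{-1}Y(\theta), \qquad \partial_\theta = X_2-X_1,
\end{align*}
acting on smooth functions on $G'$, where $Z(\theta):=\Ad(k(\theta)^{-1})X_1$ and $Y(\theta):=\Ad(k(\theta)^{-1})X_3$ are smooth $\lsl(2,\R)$-valued functions of $\theta$ with uniformly bounded coefficients in the basis $X_1,X_2,X_3$. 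The essential feature is that each of $\partial_u$ and $\partial_v$ carries a prefactor $v^{-1}$, whereas $\partial_\theta$ contributes no $v$-factor.

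Next I would apply $\partial_u^{\ell_1}\partial_v^{\ell_2}\partial_\theta^{\ell_3}$ to $\wh f(g;\veceta)$ and expand by the product rule. Since $X_1,X_2,X_3$ act only on the $G'$-variable and commute with integration over the torus fiber, $X_i\wh f(\cdot;\veceta)=\wh{X_if}(\cdot;\veceta)$ for each $i$. An easy induction on $\ell=\ell_1+\ell_2+\ell_3$ shows that the resulting expression is a finite linear combination, with coefficients of absolute value $\ll_\ell v^{-\ell_1-\ell_2}$, of terms $\wh{X^\alpha f}(u,v,\theta;\veceta)$ where $X^\alpha$ ranges over monomials in $X_1,X_2,X_3$ of total degree $|\alpha|\leq\ell$. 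Finally, applying \eqref{PARTBDECAYFNLEMREMEXPL} to each such $\wh{X^\alpha f}$, bounding $\|X^\alpha f\|_{\C^m_0}\leq\|f\|_{\C^{m+\ell}_0}$, and summing over the (bounded number of) terms yields the stated estimate. The only book-keeping to verify is that each successive application of $\partial_u$ or $\partial_v$ introduces exactly one extra factor $v^{-1}$, whether that factor comes from the explicit $v^{-1}$ in the formulas above or from differentiating an already-present $v^{-1}$; since $Z(\theta), Y(\theta)$ depend only on $\theta$ and the coefficient $v^{-1}$ depends only on $v$, the product rule never creates worse $v$-dependence than $v^{-\ell_1-\ell_2}$, so this poses no real obstacle.
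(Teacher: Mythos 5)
Your proposal is correct, and it is essentially the argument the paper intends: the proof is deferred to the analogous Lemmas 4.3--4.4 of \cite{SASL}, which proceed exactly as you describe, namely expressing $\partial_u,\partial_v,\partial_\theta$ as $v^{-1}$ times Lie derivatives with $\theta$-dependent bounded coefficients (plus $\partial_\theta=X_2-X_1$), using $X_i\wh f=\wh{X_if}$, and then feeding the resulting degree-$\leq\ell$ monomials applied to $f$ into the pointwise bound \eqref{PARTBDECAYFNLEMREMEXPL} in place of the bound from Lemma~\ref{DECAYTFNFROMCMNORMLEM}. The explicit identifications $\partial_u=v^{-1}\Ad(k(\theta)^{-1})X_1$ and $\partial_v=(2v)^{-1}\Ad(k(\theta)^{-1})X_3$ check out against the coordinate expressions for $X_1,X_2,X_3$ given later in the paper, and the $v^{-\ell_1-\ell_2}$ bookkeeping is sound.
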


\section{Obtaining the leading term}
\label{LEADINGTERMSEC}

Our task is to study the integral
\begin{align}\label{OURINTEGRAL}
\int_\R f\Bigl(\Gamma\,(1_2,\vecxi)u(x)a(y)\Bigr)h(x)\,dx
=\int_\R f\left(\Gamma
\left(\matr{\sqrt y}{x/\sqrt y}0{1/\sqrt y},\vecxi\right)\right)h(x)\,dx.
\end{align}
We may assume $0<y\leq1$ from the start, since \eqref{MAINTHM1RES} and \eqref{MAINTHM2RES} are otherwise trivial
(indeed, the left hand sides of \eqref{MAINTHM1RES}, \eqref{MAINTHM2RES} are always
$\ll\|f\|_{\C_0^0} S_{\infty,0,2}(h)$).
Decomposing $f$ as in \eqref{BASICFOURIER}, we get that \eqref{OURINTEGRAL} is
\begin{align}\label{MAINSTEP1}
=\int_\R \wh f\left(\matr{\sqrt y}{x/\sqrt y}0{1/\sqrt y},\bn\right)h(x)\,dx
\hspace{220pt}
\\\notag
+\sum_{\veceta\in A_k}\sum_{R\in \overline\Gamma'_\infty\backslash\overline\Gamma'/\Gamma'}
\sum_{\substack{T\in \Gamma'_\infty \backslash\overline\Gamma'\\T\equiv R\bmod{N}}}
e((\trans T\veceta)\vecxi)\int_\R
\wh f_R\left(T\matr{\sqrt y}{x/\sqrt y}0{1/\sqrt y},\veceta\right)h(x)\,dx
\\\notag
+\sum_{\veceta\in B_k}\sum_{R\in \overline\Gamma'/\Gamma'} \sum_{\substack{T\in\overline\Gamma'\\T\equiv R\bmod{N} }}
e((\trans T\veceta)\vecxi)\int_\R
\wh f_R\left(T\matr{\sqrt y}{x/\sqrt y}0{1/\sqrt y},\veceta\right)h(x)\,dx.
\end{align}
Here the change of order of summation and integration will be justified by 
an absolute convergence which holds for any $f$ and $h$ as in
Theorem \ref{MAINTHM1} or Theorem \ref{MAINTHM2};
cf.\ Lemmata \ref{ASUMABSCONVlem} and 
\ref{BPARTTRIVBOUNDLEM}
as well as \eqref{PARTBc0pf1}, \eqref{PARTBc0pf2} below.

Recall that $M\mapsto\wh f(M,\bn)$ is invariant under $\Gamma'=\Gamma(N)$;
hence the first integral in \eqref{MAINSTEP1}
is simply a weighted average along a closed horocycle
in $\Gamma'\bs G'$, %
a case which has been thoroughly studied in the literature (for arbitrary lattices in $G'=\SL(2,\R)$);
cf.\ in particular \cite{Burger}, \cite{FF}, \cite{iha}.
By the bound by Kim and Sarnak \cite{KS} towards the Ramanujan conjecture,
the smallest non-zero eigenvalue of the Laplace operator on the hyperbolic surface $\Gamma(N)\backslash\mathbb{H}$
satisfies $\lambda_1\geq\frac14-(\frac 7{64})^2$.
Using this in \cite[Thm.\ 1, Rem.\ 3.4]{iha}, we obtain
\begin{align}\label{MAINTERM}
\int_\R \wh f\left(\matr{\sqrt y}{x/\sqrt y}0{1/\sqrt y},\bn\right)h(x)\,dx
=\int_{\GaG} f\,d\mu\int_\R h\,dx
+O\Bigl(\|f\|_{\C_{0}^4}\,S_{1,0,1}(h)\,y^{\frac12-\frac7{64}}\Bigr).
\end{align}

\begin{remark}
Note that in the more general setting of Theorem \ref{INEFFECTIVETHM},
we could have e.g.\ $\Gamma=\Lambda\ltimes(\Z^2)^{\oplus k}$
with $\Lambda$ being a \textit{non-congruence} subgroup of $\SL(2,\Z)$.
If we would seek to extend the present methods to that case,
when carrying out this first step of using equidistribution on $\Lambda\backslash\SL(2,\R)$,
we would obtain an analogue of \eqref{MAINTERM} with an error term decaying as $O(y^{c(\Lambda)})$
for some $0<c(\Lambda)\leq\frac12$.   %
However in this case it is known that for certain choices of $\Lambda$ the spectral gap for
$\Lambda\backslash\SL(2,\R)$ can be made arbitrarily small \cite{Selberg65},
meaning that there is no uniform lower bound on the exponent $c(\Lambda)$.
\end{remark}

\section{Cancellation in an exponential sum}
\label{cancellationsec}

In this section, we derive bounds on certain exponential sums which give nontrivial cancellations in various sums that arise frequently in our arguments in the rest of the paper. Recall that $\overline\Gamma'=\SL(2,\Z)$ and $\Gamma'=\Gamma(N)$.
Let $R=\smatr{a_0}{b_0}{c_0}{d_0}\in\overline\Gamma'$ be given.
We set $[R]:=\Gamma'R=R\Gamma'$;
this is the set of all matrices in $\overline\Gamma'$ which are congruent to $R$ modulo $N$.
We let $\Gamma_\infty'\backslash [R]$ be a set of representatives for the right cosets of $\Gamma_\infty'$ 
contained in $[R]$,
and let $\Gamma_\infty'\backslash [R]/\Gamma_\infty'$ be a set of representatives for the double cosets of the
form $\Gamma_\infty' T\Gamma_\infty'$ with $T\in[R]$.  
For any given integer $c\equiv c_0\mod N$, we consider the following subsets:
\begin{align*}
[\Gamma_\infty'\backslash [R]\: ;\:c]:=\left\{\matr {a_1}{b_1}{c_1}{d_1}\in\Gamma_\infty'\backslash [R]\col c_1=c\right\}
\end{align*}
and
\begin{align*}
[\Gamma_\infty'\backslash [R]/\Gamma_\infty'\: ;\:c]
:=\left\{\matr {a_1}{b_1}{c_1}{d_1}\in\Gamma_\infty'\backslash [R]/\Gamma_\infty'\col c_1=c\right\}.
\end{align*}
Note that $[\Gamma_\infty'\backslash [R]/\Gamma_\infty'\: ;\:c]$ is a finite set.
We introduce the symbol $\sum^{(1)}$ \label{SUMONEDEF}
to denote summation over all matrices in $[\Gamma_\infty'\backslash [R]\: ;\:c]$,
and $\sum^{(2)}$ to denote summation over all matrices in $[\Gamma_\infty'\backslash [R]/\Gamma_\infty'\: ;\:c]$.
Note that the summation range in both $\sum^{(1)}$ and $\sum^{(2)}$ depend implicitly on $c$, $N$ and $R$.

\begin{remark}\label{cnegREM}
In the rest of this section we will assume $c\neq0$.
Note that we have an obvious bijection, $T\mapsto -T$, between the two sets
$[\Gamma_\infty'\backslash [R]\: ;\:c]$ and $[\Gamma_\infty'\backslash [-R]\: ;\:-c]$.
Hence without loss of generality we may assume $c>0$.
\end{remark}

For any $N,R,c$ as above with $c>0$, and $m,n\in\Z$,
we introduce the following generalized Kloosterman sum:
\begin{align}\label{SGENdef}
S(m,n;c;R,N)={\sum_{\smatr abcd}}^{\hspace{-5pt}(2)}\: e\Bigl(m\frac d{cN}+n\frac a{cN}\Bigr).
\end{align}
This sum is well-defined, since, for $\smatr abcd\in[\Gamma_\infty'\backslash [R]/\Gamma_\infty'\: ;\:c]$, 
both $d\mod cN$ and $a\mod cN$ are independent of the choice of coset representative. We begin by deriving bounds for the sums $S(m,n;c;R,N)$.

\begin{lem}\label{multiplicative}
Let $c$ and $N$ be positive integers, let $R=\smatr{a_0}{b_0}{c_0}{d_0}\in\overline\Gamma'$ with $c_0\equiv c\mod N$,
and let $M_1,M_2$ be coprime positive integers such that $cN=M_1M_2$. Then
\begin{align}\label{multiplicativeRES}
S(m,n;c;R,N)=S(m,\overline{M}_2^2n;K_3;R_1,K_1)S(m,\overline{M}_1^2n;K_4;R_2,K_2),
\end{align}
where $K_1=(N,M_1),K_2=(N,M_2), K_3=(c,M_1),K_4=(c,M_2)$,
and $\overline{M}_1\in\Z$ is a multiplicative inverse of $M_1\mod M_2$,
$\overline{M}_2\in\Z$ is a multiplicative inverse of $M_2\mod M_1$,
and
\begin{align}\label{multiplicativeR1R2}
R_1\equiv\matr{M_2a_0}{K_4b_0}{K_3}{\overline{M}_2d_0}\mod K_1,\qquad
R_2\equiv\matr{M_1a_0}{K_3b_0}{K_4}{\overline{M}_1d_0}\mod K_2.
\end{align}
\end{lem}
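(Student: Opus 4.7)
\medskip

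\noindent\textbf{Proof plan.} The sum $S(m,n;c;R,N)$ can be parametrized intrinsically by pairs $(a,d)\in(\Z/cN\Z)^2$: indeed, the left action of $\Gamma'_\infty=\smatr{1}{N\Z}{0}{1}$ sends $\smatr abcd\mapsto\smatr{a+Ncm}{*}{c}{d}$ and the right action sends $\smatr abcd\mapsto\smatr{a}{*}{c}{d+Ncn}$, so the double coset class in $[\Gamma'_\infty\backslash[R]/\Gamma'_\infty\,;\,c]$ is determined by $(a\bmod cN,\,d\bmod cN)$. The summation range is then the set of such pairs satisfying $a\equiv a_0$, $d\equiv d_0\pmod N$, together with the compatibility $ad\equiv b_0c+1\pmod{cN}$ (which ensures that the integer $b=(ad-1)/c$ obeys $b\equiv b_0\pmod N$, so that the completed matrix lies in $[R]$). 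I would start by spelling this out as a lemma.

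\medskip

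\noindent Next I would exploit $cN=M_1M_2$ with $(M_1,M_2)=1$ via the Chinese Remainder Theorem. From $N\mid cN$ and $c\mid cN$ together with coprimality one easily verifies that $N=K_1K_2$, $c=K_3K_4$, $M_1=K_1K_3$, $M_2=K_2K_4$, all of these factorizations being into coprime parts. Hence
\[
\Z/cN\Z\;\cong\;\Z/M_1\Z\times\Z/M_2\Z,
\]
and the three conditions on $(a,d)$ split cleanly: writing $a^{(i)}:=a\bmod M_i$, $d^{(i)}:=d\bmod M_i$, the congruences mod $N$ split according to $N=K_1K_2$, and $ad\equiv b_0c+1\pmod{cN}$ splits modulo $M_1$ and $M_2$ separately. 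Meanwhile the exponential factors via the standard identity
\[
\frac{1}{M_1M_2}\;\equiv\;\frac{\overline M_2}{M_1}+\frac{\overline M_1}{M_2}\pmod 1,
\]
which gives
\[
e\!\Bigl(\tfrac{md+na}{cN}\Bigr)
=e\!\Bigl(\tfrac{\overline M_2(md^{(1)}+na^{(1)})}{M_1}\Bigr)\cdot e\!\Bigl(\tfrac{\overline M_1(md^{(2)}+na^{(2)})}{M_2}\Bigr).
\]

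\medskip

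\noindent The central move is then a change of variables designed to absorb one copy of $\overline M_j$ on the $d$-side so as to produce the classical shape of the factor sums: set $\tilde a_1=M_2a^{(1)}$, $\tilde d_1=\overline M_2d^{(1)}$ (mod $M_1$) and $\tilde a_2=M_1a^{(2)}$, $\tilde d_2=\overline M_1d^{(2)}$ (mod $M_2$); these are bijections because $M_2$ (resp.\ $M_1$) is invertible mod $M_1$ (resp.\ $M_2$). In the first factor this converts $\overline M_2d^{(1)}$ into $\tilde d_1$ and $\overline M_2a^{(1)}$ into $\overline M_2^{\,2}\tilde a_1$, yielding exactly
\[
e\!\Bigl(\tfrac{m\tilde d_1+\overline M_2^{\,2}n\tilde a_1}{K_3K_1}\Bigr),
\]
and analogously for the second factor with $\overline M_1^{\,2}$ and denominator $K_4K_2$. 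I would then read off the congruences satisfied by $(\tilde a_1,\tilde d_1)$: from $a^{(1)}\equiv a_0\pmod{K_1}$ and $d^{(1)}\equiv d_0\pmod{K_1}$ one gets $\tilde a_1\equiv M_2a_0$, $\tilde d_1\equiv\overline M_2d_0\pmod{K_1}$, which are precisely the $(1,1)$ and $(2,2)$ entries of $R_1$ in \eqref{multiplicativeR1R2}. The $(2,1)$ entry $K_3$ is the lower-left entry of the matrices being summed over, and the $(1,2)$ entry $K_4b_0$ is forced by $\det R_1\equiv 1\pmod{K_1}$ together with the identity $K_3\cdot K_4b_0=cb_0\equiv a_0d_0-1\pmod{K_1}$ (using $c\equiv c_0\pmod N$ and $K_1\mid N$). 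The analogous calculation handles $R_2$.

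\medskip

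\noindent The step I expect to be the main bookkeeping obstacle is Step 5: verifying that the pairs $(\tilde a_1,\tilde d_1)$ obtained after the change of variables run over \emph{precisely} the pairs parametrizing $[\Gamma'_\infty(K_1)\backslash[R_1]/\Gamma'_\infty(K_1)\,;\,K_3]$---i.e., that the compatibility $\tilde a_1\tilde d_1\equiv(K_4b_0)K_3+1\pmod{M_1}$ follows from the global condition $ad\equiv b_0c+1\pmod{cN}$, and not merely $\tilde a_1\tilde d_1\equiv1\pmod{K_3}$. This will come out of a careful reduction of $ad=1+bc$ modulo $M_1=K_1K_3$, using $b\equiv b_0\pmod N$ and $M_2\overline M_2\equiv1\pmod{M_1}$. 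Once this identification is in place, Fubini over the $(\tilde a_i,\tilde d_i)$ pairs yields the product decomposition \eqref{multiplicativeRES} immediately.
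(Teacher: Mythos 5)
Your proposal is correct and takes essentially the same route as the paper: both proofs first observe that the double-coset sum is parametrized by pairs $\langle a,d\rangle\in(\Z/cN\Z)^2$ satisfying $a\equiv a_0$, $d\equiv d_0\pmod N$ and $ad\equiv 1+b_0c\pmod{cN}$, and then split that parameter set via CRT along $cN=M_1M_2$. Your change of variables $\tilde a_1=M_2a^{(1)}$, $\tilde d_1=\overline M_2d^{(1)}$ (and the analogous one mod $M_2$) is exactly the inverse of the bijection the paper writes down directly, namely $\langle\langle a,d\rangle,\langle a',d'\rangle\rangle\mapsto\langle M_2\overline M_2^{\,2}a+M_1\overline M_1^{\,2}a',\,M_2d+M_1d'\rangle$, and the exponential-factor computation is the same.
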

Note that the existence of matrices $R_1,R_2\in\overline\Gamma'$ satisfying \eqref{multiplicativeR1R2}
is guaranteed; cf. e.g., \cite[Thm.\ 4.2.1]{tM89}. 

\begin{proof}
By a straightforward analysis one verifies that
the map $\smatr abcd\mapsto\langle a\mod cN,d\mod cN\rangle$ gives a bijection from 
$[\Gamma_\infty'\backslash [R]/\Gamma_\infty'\: ;\:c]$ onto the set
$U[c,N;a_0,b_0,d_0]$
consisting of all pairs
$\langle a,d\rangle$ in $(\Z/cN\Z)^2$ satisfying
$a\equiv a_0\mod N$, $d\equiv d_0\mod N$
and $ad\equiv 1+b_0c\mod cN$.
Hence 
\begin{align}\label{SALTFORMULA}
S(m,n;c;R,N)=\sum_{\langle a,d\rangle\in U[c,N;a_0,b_0,d_0]} e\Bigl(m\frac d{cN}+n\frac a{cN}\Bigr).
\end{align}
The formula \eqref{multiplicativeRES} now follows
since the map
taking $\langle\langle a,d\rangle,\langle a',d'\rangle\rangle$ 
to $\langle M_2\overline M_2^2a+M_1\overline M_1^2a',M_2d+M_1d'\rangle$
is a bijection from
$U\bigl[K_3,K_1;M_2a_0,K_4b_0,\overline{M_2}d_0\bigr]
\times U\bigl[K_4,K_2;M_1a_0,K_3b_0,\overline{M_1}d_0\bigr]$
onto $U[c,N;a_0,b_0,d_0]$,
by the Chinese Remainder Theorem.
\end{proof}

For $n$ a positive integer, we write $\sigma(n)$ for the number of (positive) divisors of $n$, and $\sigma_1(n)$ for
their sum: $\sigma(n)=\sum_{d\mid n}1$ and $\sigma_1(n)=\sum_{d\mid n}d$.

We now use the multiplicativity relation to 
prove that the generalized Kloosterman sums satisfy a Weil type bound
(cf.\ \eqref{final decompRES1}),
and to give an explicit formula in the case $n=0$.
\begin{lem}
\label{final decomp}
For any $m,n\in\Z$, $c,N\in\Z^+$ and %
$R=\smatr{a_0}{b_0}{c_0}{d_0}\in\overline\Gamma'$ with $c_0\equiv c\mod N$,
\begin{align}\label{final decompRES1}
|S(m,n;c;R,N) |\ll_N \sigma(c)(m,n,c)^{1/2}c^{1/2},
\end{align}
where $\sigma(c)$ is the number of (positive) divisors of $c$.
Moreover, in the case $n=0$, if we write $c=c_1c_2$, where $c_1\mid N^\infty$ and
$(c_2,N)=1$, then
\begin{align}\label{final decompRES2}
\mathrm S(m,0;c;R,N)=
I(c_1\,|\, m)\,\,\mu\Bigl(\frac{c}{(c,m)}\Bigr)\frac{\phi(c_2)c_1}{ \phi(c/(c, m))}
e\Bigl(m\frac{\overline{c_2}d_0}{c_1N}\Bigr),
\end{align}
where $I(\cdot)$ is the indicator function and $\overline{c_2} $ is a multiplicative inverse of $c_2\bmod{N}$.
In particular,
\begin{align}\label{final decompRES3}
|\mathrm S(m,0;c;R,N)|\leq (c,m).
\end{align}
\end{lem}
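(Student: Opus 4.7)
The plan is to exploit the multiplicativity of Lemma \ref{multiplicative} to reduce the sum to a product over primes, and then treat primes coprime to $N$ via classical Kloosterman/Ramanujan sum theory, while handling primes dividing $N$ separately. Iterating Lemma \ref{multiplicative} along the prime factorization of $cN$ yields a decomposition
\[
S(m,n;c;R,N)=\prod_p S(m,n_p;p^{v_p(c)};R_p,p^{v_p(N)}),
\]
for suitable auxiliary data $n_p$ (differing from $n$ by units) and matrices $R_p\in\overline\Gamma'$ satisfying the appropriate local congruences.

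For the bound \eqref{final decompRES1}: when $p\nmid N$ the corresponding factor has $N_p=1$, and the parameterization \eqref{SALTFORMULA} shows it equals the ordinary Kloosterman sum $\sum_{a\in(\Z/p^{v_p(c)}\Z)^*}e((n_p a+m\bar a)/p^{v_p(c)})$, for which Weil's bound gives $|S|\leq\sigma(p^{v_p(c)})(m,n_p,p^{v_p(c)})^{1/2}p^{v_p(c)/2}$. When $p\mid N$, there are only finitely many such primes (depending on $N$), and for each one I expand the indicator $[a'\equiv a_0\bmod p^{v_p(N)}]$ via the orthogonality relations for characters modulo $p^{v_p(N)}$. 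This converts the restricted sum into $\phi(p^{v_p(N)})^{-1}$ times a sum over Dirichlet characters of \emph{twisted} Kloosterman sums modulo $p^{v_p(c)+v_p(N)}$, each of which satisfies Weil's bound with constant $\sigma(p^{v_p(c)+v_p(N)})$; the finite character sum produces an implicit constant $\ll_N 1$. Multiplying the prime-by-prime estimates and using multiplicativity of $\sigma$ and of $(m,n,\cdot)$ gives \eqref{final decompRES1}.

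For the explicit formula \eqref{final decompRES2}: apply Lemma \ref{multiplicative} with the single coprime splitting $M_1=c_1 N$, $M_2=c_2$. With $n=0$, the twisting parameter vanishes, and the factor at $N=1$ is the classical Ramanujan sum $c_{c_2}(m)=\mu(c_2/(c_2,m))\phi(c_2)/\phi(c_2/(c_2,m))$. For the factor $S(m,0;c_1;R_1,N)$ with $c_1\mid N^\infty$, I parameterize $U[c_1,N;a_0^{(1)},b_0^{(1)},d_0^{(1)}]$ by writing $a'=a_0^{(1)}+N\alpha$, $d'=d_0^{(1)}+N\delta$ with $(\alpha,\delta)\in(\Z/c_1\Z)^2$ and reducing the constraint to
\[
(d_0^{(1)}+N\delta)\alpha\equiv\text{const}-a_0^{(1)}\delta\pmod{c_1}.
\]
The crucial observation is that for every prime $p\mid c_1$ we have $p\mid N$ (by definition of $c_1$), hence $p$ divides the $c$-entry of $R_1$, and so $(d_0^{(1)},p)=1$ from $R_1\in\SL(2,\Z)$; consequently $(d_0^{(1)}+N\delta,c_1)=1$, and $\alpha$ is uniquely determined by $\delta$. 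The sum collapses to a geometric series $\sum_{\delta\in\Z/c_1\Z}e(m\delta/c_1)$, which equals $c_1\cdot I(c_1\mid m)$, producing the phase $e(m d_0^{(1)}/(c_1 N))=e(m\overline{c_2}d_0/(c_1 N))$. Multiplying the two factors and simplifying via $\mu(c/(c,m))=\mu(c_2/(c_2,m))$ and $\phi(c/(c,m))=\phi(c_2/(c_2,m))$ (valid since $c_1\mid m$) yields \eqref{final decompRES2}.

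The bound \eqref{final decompRES3} follows from \eqref{final decompRES2} by the standard inequality $\phi(c_2)/\phi(c_2/(c_2,m))\leq(c_2,m)$, combined with $(c,m)=c_1(c_2,m)$. The main technical obstacle is the character-sum/twisted-Kloosterman step for primes $p\mid N$ in Part 1: one must verify that the twisted Weil bound applies uniformly and that all the auxiliary data (congruence conditions modulo $p^{v_p(N)}$, dependence of $n_p$ on $R$) can be packaged into an implicit constant depending only on $N$. Everything else is bookkeeping with the multiplicativity formula.
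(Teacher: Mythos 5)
Your proof is correct and shares the paper's core strategy (iterate Lemma~\ref{multiplicative}, invoke Weil at the classical part, handle the $N$-primes separately), but it takes a genuinely different route through the bad primes, and it is also more explicit than the paper at the $n=0$ part.

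For the bound \eqref{final decompRES1}, the paper does not split all the way down to primes. It applies Lemma~\ref{multiplicative} twice to obtain exactly three factors: a ``bad'' factor $S(m,n';c_1;R',N')$ with $N'=(c_1^\infty,N)$, a trivial factor of modulus $1$, and a genuine classical Kloosterman sum $S(m,n_2;c_2)$. In the bad factor the congruence $d\equiv d'\bmod N'$ is expanded with \emph{additive} characters, yielding $N'\le N$ \emph{standard} Kloosterman sums modulo $c_1N'$, and the ordinary Weil bound is then applied throughout. You instead split prime-by-prime and, at each $p\mid N$ with $p\mid c$, expand the congruence $a\equiv a_0\bmod p^{v_p(N)}$ with \emph{multiplicative} characters, reducing to twisted (Salié-type) Kloosterman sums modulo $p^{v_p(c)+v_p(N)}$. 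This works, but it requires the Weil bound for \emph{twisted} Kloosterman sums (a correct but strictly stronger ingredient than the paper uses), and it obliges you to observe that the $d$-congruence is automatically satisfied once the $a$-congruence holds (from $a_0d_0-b_0c_0=1$), a step you leave implicit. The paper's additive-shift expansion sidesteps these complications and is the cleaner route for the same conclusion; the extra power of $p^{v_p(N)}$ under the square root that you incur is harmless because those primes divide $N$, exactly as you note.

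For \eqref{final decompRES2} and \eqref{final decompRES3}, the paper's proof is terse, deferring to ``basic facts about Ramanujan sums.'' Your argument is more detailed and correct. Two minor points are worth noting. First, you use the coprime split $M_1=c_1N$, $M_2=c_2$ rather than the paper's $M_1=c_1N'$, $M_2=c_2N''$; your choice is in fact more direct, since the modulus $c_1N$ appears in the phase of \eqref{final decompRES2} and you avoid having to track the phase contributed by the trivial factor $S(m,n'';1;R'',N'')$. Second, the chain $p\mid c_1\Rightarrow p\mid N\Rightarrow p\mid c$-entry of $R_1\Rightarrow (d_0^{(1)},p)=1$, and hence the unique solvability in $\alpha$ and the collapse of the $c_1$-factor to $c_1\,I(c_1\mid m)\,e(md_0^{(1)}/(c_1N))$, is sound; and the substitution $e(md_0^{(1)}/(c_1N))=e(m\overline{c_2}d_0/(c_1N))$ is legitimate precisely because the factor only matters when $c_1\mid m$, as you implicitly use. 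Your deduction of \eqref{final decompRES3} from \eqref{final decompRES2} via $\phi(c_2)/\phi(c_2/(c_2,m))\le(c_2,m)$ and $(c,m)=c_1(c_2,m)$ (the latter valid under $c_1\mid m$, the only nontrivial case) is also correct.
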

\begin{proof}
Let $c_1,c_2$ be as in the statement of the lemma, and set $N'=(c_1^\infty,N)$ and $N''=N/N'$.
Applying Lemma \ref{multiplicative} twice gives
\begin{align}\label{fdecomplempf1}
S(m,n;c;R,N)=S(m,n';c_1;R',N')S(m,n'';1;R'',N'')S(m,n_2;c_2;R_2,1),
\end{align}
for some $n_1,n_2,n',n''\in\Z$ and $R_1,R_2,R',R''\in\overline\Gamma'$.
Here $|S(m,n'';1;R'',N'')|=1$,
and the third factor is a standard Kloosterman sum;
$S(m,n_2;c_2;R_2,1)=S(m,n_2;c_2)$.
Regarding the first factor, elementary arguments give, with $R'=\smatr{a'}{b'}{c'}{d'}$:
\begin{align*}
S(m,n';c_1;R',N')&=
e\Bigl(n'\frac{\overline{d'}b'}{N'}\Bigr)\sum_{\substack{d\in\Z/c_1N'\Z\\ d\equiv d'\mod N'}}
e\Bigl(m\frac d{c_1N'}+n'\frac{\overline{d}}{c_1N'}\Bigr),
\\
&=\frac{e(n'\overline{d'}b'/N')}{N'}\sum_{j\in\Z/N'\Z}e\Bigl(-\frac{jd'}{N'}\Bigr)S(m+jc_1,n';c_1N').
\end{align*}
Now \eqref{final decompRES1} follows 
using Weil's bound on the standard Kloosterman sum \cite{Weil}, \cite[Ch.\ 11.7]{IK}.
Also \eqref{final decompRES2} and \eqref{final decompRES3} follow, using
basic facts about Ramanujan sums (cf., e.g., \cite[Ch.\ 3.2]{IK}).
\end{proof}

We are now set to state and prove the main lemma in this section.

\begin{lem}\label{lem:exp sum}
Let $N,c\in\Z^+$ and $R=\smatr{a_0}{b_0}{c_0}{d_0}\in\overline\Gamma'$, with $c_0\equiv c\mod N$.
Write $c=c_1c_2$, where $c_1\mid N^\infty$ and $(c_2,N)=1$.
Let $F(x_1,x_2)$ be a function in $\C^4(\R\times(\R/N\Z))$
such that $F$ and its derivatives $\partial_{x_1}^{j}\partial_{x_2}^{k}F$ for $j,k\leq2$ 
are in $\L^1(\R\times(\R/N\Z))$.
Then for any subset $K\subset\Z$ and any $\alpha\in \RR$,
\begin{align}
\label{eq:exp sum 1}
&\sumone e(d\alpha)F\Bigl(\frac dc,\frac ac\Bigr)
\\\notag
&=\sum_{\substack{m\in K\\ c_1\mid m}}
\biggl(\int_{ \R\times(\R/\Z) } F(Nx_1 ,Nx_2)e((cN\alpha-m)x_1)\,dx_1\,dx_2\biggr)
\mu\Bigl(\frac{c}{(c,m)}\Bigr)\frac{\phi(c_2)c_1}{\phi(c/(c,m))}e\Bigl(\frac{m\overline{c_2}d_0}{c_1N}\Bigr)
\\\notag
&+O\Bigl(\|F\|_{\L^1}+\|\partial_{x_1}^2F\|_{\L^1}\Bigr)\sum_{m\in
\Z\setminus K}\frac{(c,m)}{1+|m-cN\alpha |^{2} }
+O\Bigl(\|\partial_{x_2}^2F\|_{\L^1}+\|\partial_{x_1}^2\partial_{x_2}^2F\|_{\L^1}\Bigr)\sigma(c)\sqrt{c},
\end{align}
where $\overline{c_2}$ is a multiplicative inverse of $c_2\bmod{N}$.
\end{lem}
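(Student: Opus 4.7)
The plan is to parametrize $\sumone$ as an iterated sum over the residue set $U[c,N;a_0,b_0,d_0]$ (the set from the proof of Lemma \ref{final decomp}) combined with an integer shift, and then to dualize both parts --- the shift by Poisson summation, the $F$-dependence on $a/c$ by Fourier series in its $N$-periodic second argument. Each coset in $[\Gamma_\infty'\bs[R]\,;\,c]$ is uniquely represented by $(d,a)$ with $d\in\Z$ satisfying $d\equiv d_0\mod N$ and $\gcd(c,d)=1$, and with $a\mod cN$ satisfying $a\equiv a_0\mod N$ and $ad\equiv 1+b_0c\mod cN$. Fixing $(a_*,d_*)\in U[c,N;a_0,b_0,d_0]$ and writing $d=d_*+jcN$ with $j\in\Z$ thus splits the sum as
\begin{align*}
\sumone e(d\alpha)F\Bigl(\tfrac dc,\tfrac ac\Bigr)=\sum_{(a_*,d_*)\in U[c,N;a_0,b_0,d_0]}\sum_{j\in\Z} e\bigl((d_*+jcN)\alpha\bigr)\,F\Bigl(\tfrac{d_*+jcN}{c},\tfrac{a_*}{c}\Bigr).
\end{align*}

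Next I apply Poisson summation to the $j$-sum after the change of variable $u=(d_*+jcN)/c$, which yields
\begin{align*}
\sum_{j\in\Z}(\ldots)=\frac{1}{N}\sum_{m\in\Z} e\Bigl(\frac{md_*}{cN}\Bigr)\int_\R F\Bigl(u,\frac{a_*}{c}\Bigr)\,e\Bigl(\frac{u(cN\alpha-m)}{N}\Bigr)\,du.
\end{align*}
Expanding $F(u,\cdot)$ as a Fourier series in its $N$-periodic argument, $F(u,a_*/c)=\sum_{n\in\Z}\widetilde{F}_n(u) e(na_*/(cN))$, and summing over $(a_*,d_*)$ collapses the arising exponential sum $\sum_{(a_*,d_*)\in U}e((md_*+na_*)/(cN))$ to the generalized Kloosterman sum $S(m,n;c;R,N)$ via the alternate formula \eqref{SALTFORMULA}. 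After the rescalings $u=Nx_1$, $y=Nx_2$ in the Fourier coefficients, this yields the identity
\begin{align*}
\sumone e(d\alpha)F\Bigl(\tfrac dc,\tfrac ac\Bigr)=\sum_{m,n\in\Z}S(m,n;c;R,N)\,J_{m,n},
\end{align*}
where $J_{m,n}:=\int_\R\int_{\R/\Z}F(Nx_1,Nx_2)\,e\bigl(x_1(cN\alpha-m)-nx_2\bigr)\,dx_1\,dx_2$; in particular $J_{m,0}$ matches the integral in \eqref{eq:exp sum 1}.

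It then remains to split this double series into three pieces and bound them. (i) For $n=0$, $m\in K$, the explicit formula \eqref{final decompRES2} reproduces the main term on the right of \eqref{eq:exp sum 1}. (ii) For $n=0$, $m\in\Z\setminus K$, the bound \eqref{final decompRES3} combined with two integrations by parts in $x_1$ on $J_{m,0}$ yields the decay $(1+|cN\alpha-m|^2)^{-1}$ with the norms $\|F\|_{\L^1}+\|\partial_{x_1}^2F\|_{\L^1}$. (iii) For $n\neq 0$, Weil's bound \eqref{final decompRES1} combined with two integrations by parts in each of $x_1$ and $x_2$ on $J_{m,n}$ yields the decay $(1+|cN\alpha-m|^2)^{-1} n^{-2}$ and the norms $\|\partial_{x_2}^2F\|_{\L^1}+\|\partial_{x_1}^2\partial_{x_2}^2F\|_{\L^1}$. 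In piece (iii), bounding $(m,n,c)^{1/2}\leq|n|^{1/2}$ and using convergence of $\sum_{n\neq 0}|n|^{-3/2}$ together with $\sum_m(1+|cN\alpha-m|^2)^{-1}\ll 1$ makes the total contribution $\ll_N \sigma(c)\sqrt c$ times the stated norms, matching the second $O$-term.

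The main obstacle is the bookkeeping in the first step --- ensuring the bijection between cosets summed in $\sumone$ and pairs $\bigl((a_*,d_*),j\bigr)\in U[c,N;a_0,b_0,d_0]\times\Z$ is correct, and that the Fourier dualization in $a_*$ meshes with the residue sum \emph{exactly} so as to produce $S(m,n;c;R,N)$ without spurious arithmetic factors depending on $\gcd(c,N)$. Once this identification is clean, the remainder is standard Poisson/integration-by-parts bookkeeping made effective by the Kloosterman bounds in Lemma \ref{final decomp}.
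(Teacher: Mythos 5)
Your proposal is correct and follows essentially the same route as the paper: both arrive at the central identity $\sumone e(d\alpha)F\bigl(\frac dc,\frac ac\bigr)=\sum_{m,n}a_{m,n}\,S(m,n;c;R,N)$ (the paper's \eqref{expsumlemPF8}) by Fourier-dualizing --- the paper packages your Poisson-on-$j$ plus Fourier-series-in-$a_*/c$ as the single step of expanding the periodization $H$ on $\R^2/\Z^2$ in its double Fourier series --- and then bound the resulting pieces using exactly the same Kloosterman-sum estimates \eqref{final decompRES1}--\eqref{final decompRES3} and the same integrations by parts. The only point you leave implicit is the justification that the Fourier expansion holds pointwise (continuity of $H$ and absolute convergence of its Fourier series, handled in the paper via the Sobolev-type inequality \eqref{expsumlemPFn2}); as a plan this is a reasonable omission.
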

We remark that the sum in the left hand side of \eqref{eq:exp sum 1} is well-defined,
since, for $\smatr abcd\in[\Gamma_\infty'\backslash [R]\: ;\:c]$, both $d$ and the congruence
class of $a$ modulo $cN$ are independent of the choice of a coset representative.
\begin{proof}
Set 
\begin{align}\label{expsumlemPF4}
H(x_1,x_2)=\sum_{\ell\in \Z}f(x_1+\ell,x_2),\qquad
\text{where }\:
f(x_1,x_2):=F(Nx_1,Nx_2)e(cN\alpha x_1 ).
\end{align}
Note that since $f\in\C^4\cap\L^1(\R\times(\R/\Z))$;
the sum defining $H(x_1,x_2)$ is absolutely convergent for almost all
$(x_1,x_2)\in\R\times(\R/\Z)$, and $H\in\L^1(\R^2/\Z^2)$.
We will use the notation $F_{j,k}=\partial_{x_1}^{j}\partial_{x_2}^{k}F$
and $f_{j,k}=\partial_{x_1}^{j}\partial_{x_2}^{k}f$.
In order to get a stronger convergence statement, we note that
\begin{align}\label{expsumlemPFn1}
|f(x_1,x_2)|\leq\int_{x_1-\frac12}^{x_1+\frac12}\Bigl(|f(r,x_2)|+|f_{1,0}(r,x_2)|\Bigr)\,dr.
\end{align}
This follows by integrating the inequality
$|f(x_1,x_2)|\leq |f(r,x_2)|+\int_{r}^{x_1}|f_{1,0}(t,x_2)|\,|dt|$
over $r\in(x_1-\frac12,x_1+\frac12)$. %
Similarly, we have 
$|f_{j,0}(r,x_2)|\leq\int_{\R/\Z}(|f_{j,0}(r,s)|+|f_{j,1}(r,s)|)\,ds$,
and using this in \eqref{expsumlemPFn1}, we obtain the following
elementary Sobolev embedding type inequality:
\begin{align}\label{expsumlemPFn2}
|f(x_1,x_2)|\leq\int_{x_1-\frac12}^{x_1+\frac12}\int_{\R/\Z}
\Bigl(|f(r,s)|+|f_{1,0}(r,s)|+|f_{0,1}(r,s)|+|f_{1,1}(r,s)|\Bigr)\,ds\,dr.
\end{align}
Using \eqref{expsumlemPFn2} and the fact that $f_{j,k}\in\L^1(\R\times(\R/\Z))$ for $j,k\leq1$,
we conclude that the sum in \eqref{expsumlemPF4} is absolutely convergent
for \textit{all} $(x_1,x_2)$, uniformly over $(x_1,x_2)$ in any compact set. %
In particular, the function $H(x_1,x_2)$ is defined everywhere on $\R^2/\Z^2$, and is continuous.

Consider the Fourier coefficients of $H$,
\begin{align}\notag
a_{m,n}&=\int_{\R^2/\Z^2}H(x_1,x_2)e(-mx_1-nx_2)\,dx_1\,dx_2
\\\label{expsumlemPF2}
&=\int_{\R\times(\R/\Z)}F(Nx_1,Nx_2)e((cN\alpha-m)x_1-nx_2)\,dx_1\,dx_2.
\end{align}
Note that for any $j\leq1$ and $k\leq2$,
\begin{align}\label{expsumlemPF1}
\int_{\R/\Z}\bigl|F_{j,k}(x_1,Nx_2)\bigr|\,dx_2\to0\qquad\text{as }\: x_1\to\pm\infty.
\end{align}
This follows by applying \eqref{expsumlemPFn1} to $F_{j,k}(x_1,Nx_2)$
and using $F_{j,k},F_{j+1,k}\in\L^1(\R\times(\R/N\Z))$.
We may now integrate by parts repeatedly in \eqref{expsumlemPF2},
using \eqref{expsumlemPF1} to justify convergence,
to obtain
\begin{align}\label{expsumlemPF10}
a_{m,n}=\frac{N^{j+k}}{(2\pi i)^{j+k}(m-cN\alpha)^{j}n^{k}}
\int_{\R\times\R/\Z}F_{j,k}(Nx_1,Nx_2)e((cN\alpha-m)x_1-nx_2)\,dx_1\,dx_2,
\end{align}
for any $0\leq j,k\leq2$ and any integers $m,n$ subject to $m\neq cN\alpha$ if $j>0$ and $n\neq0$ if $k>0$.
Using this formula for $j\in\{0,2\}$ and $k=2$ gives
\begin{align}\label{expsumlemPF6}
|a_{m,n}|\ll_N(\|F_{0,2}\|_{\L^1}+\|F_{2,2}\|_{\L^1}
)\min(1,|m-cN\alpha
|^{-2}
)n^{-2},\qquad \forall
m\in \Z,\: n\in\Z\setminus\{0\}.
\end{align}
Similarly, using \eqref{expsumlemPF10} for $j\in\{0,2\}$ and $k=0$,
\begin{align}\label{expsumlemPF7}
|a_{m,0}|\ll_N (\|F\|_{\L^1}+\|F_{2,0}\|_{\L^1})\min(1,|m-cN\alpha|^{-2} ),\qquad \forall m\in \Z.
\end{align}
These bounds imply that the Fourier series of $H$ is absolutely convergent;
and since $H$ is continuous, $H$ is in fact equal to its Fourier series at every point 
(cf., e.g., \cite[Prop.\ 3.1.14]{grafakos}):
\begin{align}\label{expsumlemPF5}
H(x_1,x_2)=\sum_{m,n\in\Z}a_{m,n}e(mx_1+nx_2).
\end{align}

Now we consider the sum in the left hand side of \eqref{eq:exp sum 1}.
We have
\begin{align}\label{expsumlemPF20}
\sumone e(d\alpha) F\Bigl(\frac dc,\frac ac\Bigr)
=\sumtwo\sum_{\ell \in\Z}  F\Bigl(\frac {d+\ell cN}c,\frac ac\Bigr)e\bigl(\alpha(d+\ell cN)\bigr)
=\sumtwo H\Bigl(\frac d{cN},\frac a{cN}\Bigr).
\end{align}
Here all sums are absolutely convergent, since the sum in
\eqref{expsumlemPF4} is absolutely convergent and $\sum^{(2)}$ runs over a finite set.
Substituting \eqref{expsumlemPF5} in the last sum, and using \eqref{SGENdef}, we obtain
\begin{align}\label{expsumlemPF8}
\sumone e(d\alpha) F\Bigl(\frac dc,\frac ac\Bigr)
=\sum_{m,n\in\Z}a_{m,n}S(m,n;c;R,N).
\end{align}
Now we bound the contribution from all terms with $n\neq0$ in \eqref{expsumlemPF8} using
\eqref{expsumlemPF6}, \eqref{final decompRES1} and %
$\sum_{n\neq0}(m,n,c)^{1/2}n^{-2}\leq\sum_{n\neq0}|n|^{-3/2}\ll1$,
while the terms with $n=0$ are handled using
\eqref{expsumlemPF2} and \eqref{final decompRES2} when $m\in K$,
and using \eqref{expsumlemPF7} and \eqref{final decompRES3} when $m\notin K$.
In this way we obtain \eqref{eq:exp sum 1}. %
\end{proof}

\begin{remark}\label{cnegREM2}
If $c<0$ and $R=\smatr{a_0}{b_0}{c_0}{d_0}\in\overline\Gamma'$, $c_0\equiv c\mod N$,
then we see from Remark \ref{cnegREM}
that the sum $\sum^{(1)} e(d\alpha)F\bigl(\frac dc,\frac ac\bigr)$
remains the same if we replace $\langle c,R,\alpha\rangle$ by $\langle -c,-R,-\alpha\rangle$;
after this replacement, Lemma \ref{lem:exp sum} applies to the sum.
\end{remark}

Lemma \ref{lem:exp sum} %
will suffice for most parts of our discussion.
However, at one step in the treatment of the sum over $B_k$ in \eqref{MAINSTEP1}, we will need a more delicate estimate.
The point here is to obtain a bound which only involves derivatives 
$\partial_{x_1}^{\ell_1}\partial_{x_2}^{\ell_2}F$ with $\ell_2$ as small as possible.
Lemma \ref{lem:exp sum} requires using $\ell_2=2$ but the following lemma will effectively allow us to take
$\ell_2=\frac12+\ve$.
Cf.\ also Remark \ref{L12NORMrem} below.
We define a mixed $\L^1, \L^2$ norm for a functions $F$ on $\RR\times\RR/N\ZZ$ as follows:
$$\|F\|_{\L^{1,2}}=\left(\int_{\RR/N\ZZ}\left(\int_\RR|F(x_1,x_2)|\,dx_1\right)^2\,dx_2\right)^{1/2}.   $$
\begin{lem}\label{expsumlem2}
Let $0<\ve<1$ and let $N,c,R$ be as before. 
Let $F(x_1,x_2)$ be a function in $\C^3(\R\times(\R/N\Z))$
such that $\|\partial_{x_1}^{j}\partial_{x_2}^{k}F\|_{\L^{1,2}}<\infty$ for $j\leq2$, $k\leq1$.
Then for any $\alpha\in \RR$,
 \begin{align}\notag
\Biggl|\:\:\sumone e(d\alpha)F\Bigl(\frac dc,\frac ac\Bigr)\Biggr|
\ll_{\ve} (\|F\|_{\L^1}+\|\partial_{x_1}^2F\|_{\L^1})\sum_{\ell\in
\Z}\frac{(c,\lfloor cN\alpha+\ell\rfloor)}{1+\ell^2}
\hspace{100pt}
\\\label{eq:exp sum 3}
+\bigl(\|F\|_{\L^{1,2}}+\|\partial_{x_1}^2F\|_{\L^{1,2}}\bigr)^{\frac{1-\ve}2}
\bigl(\|\partial_{x_2}F\|_{\L^{1,2}}+\|\partial_{x_1}^2\partial_{x_2}F\|_{\L^{1,2}}\bigr)^{\frac{1+\ve}2}
\sigma(c)^{3/2}\sqrt{c}.
\end{align}
\end{lem}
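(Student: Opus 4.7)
The plan is to follow the Fourier-analytic framework from the proof of Lemma \ref{lem:exp sum}, reusing the periodization $H(x_1,x_2)=\sum_{\ell\in\ZZ}f(x_1+\ell,x_2)$ of $f(x_1,x_2):=F(Nx_1,Nx_2)e(cN\alpha x_1)$ and its Fourier coefficients $a_{m,n}$ on $\RR^2/\ZZ^2$ to arrive at the identity
\begin{equation*}
\sumone e(d\alpha)F\Bigl(\frac{d}{c},\frac{a}{c}\Bigr)=\sum_{m,n\in\ZZ}a_{m,n}\,S(m,n;c;R,N),
\end{equation*}
and split into the contributions from $n=0$ and $n\neq 0$. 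The $n=0$ part will be handled exactly as in Lemma \ref{lem:exp sum}: combining $|S(m,0;c;R,N)|\leq(c,m)$ from \eqref{final decompRES3} with the pointwise bound \eqref{expsumlemPF7} on $a_{m,0}$, and re-indexing $m=\lfloor cN\alpha\rfloor+\ell$ (so that $\min(1,|m-cN\alpha|^{-2})\asymp(1+\ell^2)^{-1}$), produces the first error term of \eqref{eq:exp sum 3}. The essential novelty is the treatment of the $n\neq 0$ terms, where the pointwise bound on $a_{m,n}$ (which in Lemma \ref{lem:exp sum} required two $x_2$-derivatives) is replaced by an $\L^2$-Parseval bound interpolated against its $n^2$-weighted variant.

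For the $n\neq 0$ contribution, Weil's bound \eqref{final decompRES1} reduces matters to estimating $\sigma(c)\sqrt{c}\sum_{m}\sum_{n\neq 0}|a_{m,n}|(m,n,c)^{1/2}$. I would then apply Cauchy--Schwarz in $n$ with weight $|n|^{-1-\ve}$, splitting the inner sum as
\begin{equation*}
\Bigl(\sum_{n\neq 0}|a_{m,n}|^2|n|^{1+\ve}\Bigr)^{1/2}\Bigl(\sum_{n\neq 0}(m,n,c)|n|^{-1-\ve}\Bigr)^{1/2},
\end{equation*}
where the second factor is $\ll_\ve\sigma(c)^{1/2}$ uniformly in $m$, since $(m,n,c)\leq(n,c)$ and a standard divisor grouping gives $\sum_{n\neq 0}(n,c)|n|^{-1-\ve}\ll_\ve\sum_{d\mid c}d^{-\ve}\leq\sigma(c)$.

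To bound the first factor, I fix $m$ and introduce
\begin{equation*}
G_m(x_2):=\int_{\RR/\ZZ}H(x_1,x_2)e(-mx_1)\,dx_1=\int_\RR F(Nx_1,Nx_2)e((cN\alpha-m)x_1)\,dx_1,
\end{equation*}
so that $\{a_{m,n}\}_{n\in\ZZ}$ are precisely the Fourier coefficients of $G_m$ in $x_2$, and Parseval gives $\sum_n|a_{m,n}|^2=\|G_m\|_{\L^2(\RR/\ZZ)}^2$ together with $\sum_n n^2|a_{m,n}|^2=(2\pi)^{-2}\|\partial_{x_2}G_m\|_{\L^2(\RR/\ZZ)}^2$. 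Applying the pointwise Cauchy--Schwarz inequality $|G_m(x_2)|^2\leq(\int_\RR|F(Nx_1,Nx_2)|\,dx_1)^2$ \emph{before} integrating in $x_2$, and changing variables, yields $\|G_m\|_{\L^2}^2\leq N^{-3}\|F\|_{\L^{1,2}}^2$; two integrations by parts in $x_1$ produce a parallel bound in terms of $\partial_{x_1}^2F$ with an extra factor $\ll_N|m-cN\alpha|^{-4}$ (for $m\neq cN\alpha$). Taking the minimum,
\begin{equation*}
\|G_m\|_{\L^2}\ll_N\min(1,|m-cN\alpha|^{-2})\bigl(\|F\|_{\L^{1,2}}+\|\partial_{x_1}^2F\|_{\L^{1,2}}\bigr),
\end{equation*}
and analogously for $\|\partial_{x_2}G_m\|_{\L^2}$ with $\partial_{x_2}F$ and $\partial_{x_1}^2\partial_{x_2}F$ in place of $F$ and $\partial_{x_1}^2F$. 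A H\"older interpolation
\begin{equation*}
\sum_{n\neq 0}|a_{m,n}|^2|n|^{1+\ve}\leq\Bigl(\sum_n|a_{m,n}|^2\Bigr)^{(1-\ve)/2}\Bigl(\sum_n n^2|a_{m,n}|^2\Bigr)^{(1+\ve)/2}
\end{equation*}
followed by summation over $m$ (using $\sum_m\min(1,|m-cN\alpha|^{-2})\ll 1$) will then produce the second error term of \eqref{eq:exp sum 3}.

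The main subtlety lies in the mixed-norm bookkeeping: it is essential to square $|G_m(x_2)|$ \emph{before} integrating against $dx_2$, so as to arrive at the norm $\|F\|_{\L^{1,2}}$ exactly as defined in the statement (inner $\L^1_{x_1}$, outer $\L^2_{x_2}$); a naive application of Minkowski's integral inequality would instead produce the larger mixed norm with the opposite order of integration, which would not match the statement. Once this ordering is respected, the remaining ingredients --- Weil's bound, Cauchy--Schwarz, Parseval in $x_2$, and the H\"older interpolation with exponents $(1-\ve)/2, (1+\ve)/2$ --- combine cleanly to yield \eqref{eq:exp sum 3}.
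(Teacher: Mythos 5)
Your proposal is correct and matches the paper's own proof essentially step for step: the same periodization $H$ and Fourier expansion from Lemma \ref{lem:exp sum}, the same split into $n=0$ (via \eqref{final decompRES3} and \eqref{expsumlemPF7}) and $n\neq0$ (via the Weil bound \eqref{final decompRES1}), the same Cauchy--Schwarz in $n$ with weight $|n|^{-1-\ve}$, the same Parseval-in-$x_2$ bounds on $\sum_n n^{2k}|a_{m,n}|^2$ for $k=0,1$ reduced to $\L^{1,2}$-norms, and the same H\"older interpolation with exponents $2/(1\mp\ve)$. The observation about squaring $|G_m(x_2)|$ before integrating in $x_2$ (rather than invoking Minkowski) is exactly the right point and is what the paper does implicitly in deriving \eqref{expsumlem2pf2}.
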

\begin{proof}
Note that $\|F_{j,k}\|_{\L^1}\leq\sqrt N\|F_{j,k}\|_{\L^{1,2}}$
by Cauchy-Schwarz. Hence as in the proof of Lemma \ref{lem:exp sum},
$H(x_1,x_2)$ in \eqref{expsumlemPF4} is a well-defined continuous function on $\R^2/\Z^2$,
and its Fourier coefficients $a_{m,n}$ satisfy \eqref{expsumlemPF10} for any
$j\leq2$, $k\leq1$; that is,
\begin{align*}
a_{m,n}=\frac{N^{j+k}}{(2\pi i)^{j+k}(m-cN\alpha)^{j}n^{k}}\int_{\R/\Z}F_{m,j,k}(x_2)e(-nx_2)\,dx_2,
\end{align*}
where $F_{m,j,k}(x_2)=\int_{\R}F_{j,k}(Nx_1,Nx_2)e((cN\alpha-m)x_1)\,dx_1$
is a function on $\R/\Z$.
This gives a relation between $a_{m,n}$ and the $n$-th Fourier coefficient of $F_{m,j,k}$.
Using this relation for $j\in\{0,2\}$ and applying Parseval's identity, 
for any $k\geq0$ and $m\in\Z$, we get 
\begin{align}\notag
\sum_{n\in\Z\setminus\{0\}}n^{2k}|a_{m,n}|^2 
\ll_{k,N} &(\|F_{m,0,k}\|_{L^2}^2+\|F_{m,2,k}\|_{L^2}^2)\min(1,|m-cN\alpha|^{-4})
\\\label{expsumlem2pf2}
\ll_N &(\|F_{0,k}\|_{L^{1,2}}^2+\|F_{2,k}\|_{L^{1,2}}^2)\min(1,|m-cN\alpha|^{-4}).
\end{align}
Using this bound, $\sum_{n\neq0}|a_{m,n}|\leq(\sum_{n\neq0}|n|^{-2})^{\frac12}
(\sum_{n\neq0}n^2|a_{m,n}|^2)^{\frac12}$, and \eqref{expsumlemPF7},
we conclude that the Fourier series of $H$ is absolutely convergent,
and hence as in the proof of Lemma \ref{lem:exp sum}, we again have
\begin{align}\label{expsumlemPF8rep}
\sumone e(d\alpha) F\Bigl(\frac dc,\frac ac\Bigr)
=\sum_{m,n\in\Z}a_{m,n}S(m,n;c;R,N).
\end{align}
Using \eqref{expsumlemPF7} and \eqref{final decompRES3} for $n=0$,
and the generalized Weil bound \eqref{final decompRES1} for $n\neq0$, we see that \eqref{expsumlemPF8rep} is
\begin{align}\label{expsumlem2pf1}
\ll(\|F\|_{\L^1}+\|F_{2,0}\|_{\L^1})\sum_{\ell\in
\Z}\frac{(c,\lfloor cN\alpha+\ell\rfloor)}{1+\ell^2}
+\sigma(c)\sqrt c\sum_{m\in\Z}\sum_{n\neq0}|a_{m,n}|\sqrt{(n,c)}.
\end{align}
Note that for any integer $m$,
\begin{align*}
\sum_{n\neq0}|a_{m,n}|\sqrt{(n,c)}
&\leq
\sqrt{\sum_{n\neq0}\frac{(n,c)}{|n|^{1+\ve}}}
\sqrt{\sum_{n\neq0}|a_{m,n}|^2|n|^{1+\ve}}.
\end{align*}
Now, since $0<\ve<1$, we may apply H\"older's inequality with $p=\frac2{1-\ve}$ and $q=\frac2{1+\ve}$,
to get
\begin{align*}
\sum_{n\neq0}|a_{m,n}|^2|n|^{1+\ve}
=\sum_{n\neq0}|a_{m,n}|^{\frac2p}\cdot\bigl(|a_{m,n}|^{\frac2q}|n|^{1+\ve}\bigr)
\leq\biggl(\sum_{n\neq0}|a_{m,n}|^2\biggr)^{\frac1p}
\biggl(\sum_{n\neq0}|a_{m,n}|^2|n|^{(1+\ve)q}\biggr)^{\frac1q}
\\
\ll (\|F\|_{\L^{1,2}}+\|F_{2,0}\|_{\L^{1,2}})^{1-\ve}(\|F_{0,1}\|_{\L^{1,2}}+\|F_{2,1}\|_{\L^{1,2}})^{1+\ve}(1+|cN\alpha-m|)^{-4}.
\end{align*}
Here in the last step we use the Parseval bound, \eqref{expsumlem2pf2}, for $k=0$ and $k=1$.
Furthermore, 
\begin{align*}
\sum_{n\neq0}\frac{(n,c)}{|n|^{1+\ve}}
=2\sum_{d\mid c}\sum_{\substack{m\geq1\\(m,c)=d}}\frac d{m^{1+\ve}}
\leq2\sum_{d\mid c} d\sum_{k=1}^\infty\frac 1{(kd)^{1+\ve}}
\ll_\ve\sum_{d\mid c}d^{-\ve}\leq\sigma(c).
\end{align*}
Hence for any $m$,
\begin{multline*}
\sum_{n\neq0}|a_{m,n}|\sqrt{(n,c)}\\ \ll_\ve
\bigl(\|F\|_{\L^{1,2}}+\|F_{2,0}\|_{\L^{1,2}}\bigr)^{\frac{1-\ve}2}
\bigl(\|F_{0,1}\|_{\L^{1,2}}+\|F_{2,1}\|_{\L^{1,2}}\bigr)^{\frac{1+\ve}2}
(1+|cN\alpha-m|)^{-2}\sqrt{\sigma(c)}.
\end{multline*}
Using this bound in \eqref{expsumlem2pf1}, we obtain \eqref{eq:exp sum 3}.
\end{proof}

\section{The contribution from $A_k$-orbits}
\label{A sum}

\subsection{The case of Diophantine $\vecxi_2$}
We next study the sum in the second line of \eqref{MAINSTEP1}.
This sum will be bound by a generalization of the method in \cite{SASL}.
We first prove a bound which is adequate for \textit{any} $\vecxi=\cmatr{\vecxi_1}{\vecxi_2}\in\R^{2k}$
for which $\vecxi_2$ has good Diophantine 
properties.
This bound will be used in the special case $\vecxi_1=\bn$ in the proof of Theorem \ref{MAINTHM1}.
We note that we allow the special case $k=1$ in the present section, to allow comparison with
\cite[Prop.\ 8.3]{SASL};
cf.\ Remark~ \ref{Keq1remark} below.
\begin{prop}\label{AKBOUNDPROP}
Fix an integer $m\geq\max(8,k+3)$ and real numbers $a\in(\frac k2-\frac12,\frac m2-1)$ and $\ve>0$.
Then for any $f\in \C_a^m(X)$, $h\in\C^2(\R)$ with $S_{1,0,2}(h)<\infty$,
$\vecxi=\cmatr{\vecxi_1}{\vecxi_2}\in\R^{2k}$ and $0<y\leq1$, we have
\begin{align}\notag
\sum_{\veceta\in A_k}\sum_{R\in \overline\Gamma'_\infty\backslash\overline\Gamma'/\Gamma'}
\sum_{T\in \Gamma'_\infty \backslash [R]}
e((\trans T\veceta)\vecxi)\int_\R
\wh f_R\left(T\matr{\sqrt y}{x/\sqrt y}0{1/\sqrt y},\veceta\right)h(x)\,dx
\hspace{60pt}
\\\label{AKBOUNDPROPres}
\ll_{m,a,\ve}
\|f\|_{\C_a^m}S_{1,0,2}(h)\Bigl(\wdelta_{2a+1,\vecxi_2}(y^{-\frac12})+y^{\frac14-\ve}\Bigr).
\end{align}
\end{prop}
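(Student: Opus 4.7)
The plan is to parametrize the summation variable $T=\smatr{a_1}{b_1}{c_1}{d_1}\in\Gamma'_\infty\backslash[R]$ by its bottom row $(c_1,d_1)$ (with $(c_1,d_1)\equiv(c_0,d_0)\bmod N$ and $\gcd(c_1,d_1)=1$), and to split the sum according to whether $c_1=0$ or $c_1\neq0$. The key simplification that makes this decomposition useful is that for $\veceta=\scmatr{\bn}{\vecr}\in A_k$ we have $\trans T\veceta=\scmatr{c_1\vecr}{d_1\vecr}$, so the exponential factor reduces to $e(c_1\vecr\vecxi_1+d_1\vecr\vecxi_2)$ and depends only on the bottom row.

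The case $c_1=0$ is only possible for the single double-coset class with $R\equiv 1_2\bmod N$, and reduces to horocycle-integrals of the form $\int\widehat{f_R}(u(x)a(y),\scmatr{\bn}{\vecr})h(x)\,dx$. Applying Lemma \ref{DERDECAYTFNFROMCMNORMLEM2} at $v=y\leq1$ gives $|\widehat{f_R}|\ll\|f\|_{\C_a^m}\|\vecr\|^{-m}y^{m/2}$; integrating against $h$ and summing over $\vecr\neq\bn$ (convergent as $m\geq k+3$) produces a contribution of order $\|f\|_{\C_a^m}S_{1,0,0}(h)y^{m/2}$, which is absorbed in the stated $y^{1/4-\ve}$ error since $m\geq 8$.

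For $c_1\neq0$ I would perform the substitution $t=c_1x+d_1$, after which $Tu(x)a(y)$ has Iwasawa coordinates
\begin{align*}
u'=\frac{a_1}{c_1}-\frac{t}{c_1(t^2+c_1^2y^2)},\qquad v'=\frac{y}{t^2+c_1^2y^2},
\end{align*}
with $\theta'$ also determined by $(c_1,y,t)$ alone, so that the only $d_1$-dependence of the integrand sits in the factor $h((t-d_1)/c_1)$. This expresses the inner integral as $F_{c_1,\vecr,R}(d_1/c_1,\,a_1/c_1)$ for an explicit function $F_{c_1,\vecr,R}$ on $\R\times(\R/N\Z)$, and the $d_1$-sum then falls precisely under Lemma \ref{lem:exp sum} with $\alpha=\vecr\vecxi_2$ (after pulling out the $d_1$-independent phase $e(c_1\vecr\vecxi_1)$). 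Choosing $K$ to be the singleton consisting of the nearest integer to $c_1N\vecr\vecxi_2$, the ``main term'' in \eqref{eq:exp sum 1} for $m\in K$ is essentially proportional to $(c_1,m)$ times an integral of $F_{c_1,\vecr,R}$ whose size, once $\widehat{f_R}$ is estimated via Lemma \ref{DERDECAYTFNFROMCMNORMLEM2} with the split $\min(v'^{m/2},v'^{m/2-a})$, is of the shape $\|\vecr\|^{-(2a+1)}\bigl(c_1^2+y^{-1/2}c_1\langle c_1\vecr\vecxi_2\rangle\bigr)^{-1}$; summing over $c_1$ and $\vecr$ reproduces exactly the majorant $\wdelta_{2a+1,\vecxi_2}(y^{-1/2})$.

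The off-diagonal terms $m\notin K$ and the generalized Weil bound $\sigma(c_1)\sqrt{c_1}\bigl(\|\partial_{x_2}^2F\|_{\L^1}+\|\partial_{x_1}^2\partial_{x_2}^2F\|_{\L^1}\bigr)$ from Lemma \ref{lem:exp sum} are then balanced against each other at the threshold $c_1^2y\sim 1$ separating the two regimes $v'\leq 1$ and $v'>1$, yielding after summation the residual error $y^{1/4-\ve}$ (with the divisor factor absorbed via $\sigma(c_1)\ll_\ve c_1^\ve$). The main technical obstacle I foresee is the careful bookkeeping of the $\L^1$-norms of $F_{c_1,\vecr,R}$ and its mixed second-order partials in $(x_1,x_2)$, where the joint $(c_1,y,\vecr)$-dependence must be tracked through Lemma \ref{DERDECAYTFNFROMCMNORMLEM2}; the lower bound $a>(k-1)/2$ in the hypothesis is forced by absolute convergence of $\sum_{\vecr}\|\vecr\|^{-(2a+1)}$ in the main term, while $a<m/2-1$ is required to keep the cuspidal $v'^{m/2-a}$-bound integrable along the Iwasawa slice.
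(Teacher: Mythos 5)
Your high-level plan matches the paper's argument: separate the $c_1=0$ terms (treated with the decay of $\wh f_R$ from Lemma~\ref{DERDECAYTFNFROMCMNORMLEM2}, giving $O(y^{m/2})$), pass to Iwasawa coordinates via the substitution $t=c_1x+d_1$ (equivalent to the paper's Lemma~\ref{IWASAWACOORDSLEM1}), reformulate the sum over bottom rows as $F_{c_1,\vecr,R}(d_1/c_1,a_1/c_1)$ with $\alpha=\vecr\vecxi_2$ so Lemma~\ref{lem:exp sum} applies, and balance the Kloosterman-type term against the Weil-bound term at $c_1^2 y\sim1$. The constraints $a>(k-1)/2$ and $m>2a+2$ are correctly identified, and the paper indeed handles the change of order of summation and integration by a separate absolute-convergence argument (Lemma~\ref{ASUMABSCONVlem}), which you omit but which is straightforward.

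However, there is a concrete gap at the step where you claim the contribution from the Kloosterman term "is of the shape $\|\vecr\|^{-(2a+1)}\bigl(c_1^2+y^{-1/2}c_1\langle c_1\vecr\vecxi_2\rangle\bigr)^{-1}$," so that summing over $c_1$ immediately reproduces $\wdelta_{2a+1,\vecxi_2}(y^{-1/2})$. This does not follow. What Lemma~\ref{lem:exp sum} actually produces for each fixed $c_1$ is a divisor-weighted sum of the form $\sum_{\ell\in\Z}(c_1,\lfloor c_1N\vecr\vecxi_2+\ell\rfloor)/(1+\ell^2)$ --- a gcd structure with a \emph{quadratic} denominator in $|m-c_1N\alpha|$, not a linear factor $\langle c_1\vecr\vecxi_2\rangle$. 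The index $j$ appearing in the majorant $\wdelta$ is \emph{not} the original $c_1$; it emerges only after the divisor structure of the gcd's is unpacked and resummed. This is the content of Lemma~\ref{PROOF1LEM1} (inherited from \cite[Lemma~8.2]{SASL}), a genuine arithmetic estimate of the form
\begin{align*}
\sum_{c\geq1} c^{-1}(X+c)^{-\beta}\sum_{k\in\Z}\frac{(c,k)}{1+|k-c\alpha|^2}
\ll_\beta X^{1-\beta}\sum_{j\geq1}\bigl(j^2+Xj\langle j\alpha\rangle\bigr)^{-1},
\end{align*}
and it is the technically demanding step of the whole argument; your proposal reverse-engineers its conclusion without supplying it. A secondary, smaller point: for this proposition the paper applies Lemma~\ref{lem:exp sum} with $K=\emptyset$ and bounds everything; the singleton-$K$ treatment with an extracted "main term" is what the paper uses in the separate case $\vecxi_2=\bn$ (Proposition~\ref{AKBOUNDxi2zeroPROP}), where partial summation via $B_\alpha$ is needed, so you appear to be conflating the two cases.
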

(Recall that the majorant function
$\wdelta_{\beta,\vecxi_2}(T)$ was introduced in \eqref{WMdef}.)

To start with the proof of Proposition \ref{AKBOUNDPROP},
let us fix some $\veceta=\cmatr\bn{\vecr}\in A_k$ and
$R=\matr{a_0}{b_0}{c_0}{d_0}\in\overline{\Gamma}'$. %
Using the notation introduced in Section \ref{cancellationsec},
the corresponding inner sum in \eqref{AKBOUNDPROPres} can be written as 
\begin{align}\label{APARTSTEP1}
\sum_{c\in c_0+N\Z}\:\sumone   \int_\R \widehat f_R\left(\matr abcd \matr{\sqrt y}{x/\sqrt y}0{1/\sqrt
y},\cmatr\bn\vecr\right)
e\left(\cmatr{c\vecr}{d\vecr}\vecxi\right)
h(x)\,dx.
\end{align}
The contribution from the terms with $c=0$ can be
bounded easily.
Indeed, there are at most two such terms in \eqref{APARTSTEP1}, and 
by Lemma \ref{DECAYTFNFROMCMNORMLEM} and the remarks below \eqref{FRDEF}, for any $b\in\Z$ we have
\begin{align}\label{C0BOUND}
\int_\R\biggl|\wh f_R\left(\pm\matr1b01\matr{\sqrt y}{x/\sqrt y}0{1/\sqrt
y},\cmatr\bn\vecr\right)h(x)\biggr|
\,dx
\ll\|h\|_{\L^1}\|f\|_{\C^m_0}y^{m/2}\|\vecr\|^{-m}.
\end{align}
Using this with $m=k+1$ and adding over all $\veceta\in A_k$,
we see that the contribution from all the terms with $T=\matr **0*$ in the second line of \eqref{MAINSTEP1}
is $O(\|h\|_{\L^1}\|f\|_{\C^{k+1}_0}y^{(k+1)/2})$, which is clearly subsumed by the bound in 
\eqref{AKBOUNDPROPres}.

Hence, from now on we focus on the terms with $c\neq0$.
The following lemma expresses the integral 
in \eqref{APARTSTEP1}
in the Iwasawa %
notation (cf.\ \eqref{TFNIWASAWA}).
\begin{lem}\label{IWASAWACOORDSLEM1}
For any $\smatr abcd\in G'$ with $c>0$,
any $y>0$ and any $f\in\C(G')$, %
\begin{align}\notag %
\int_\R
 f\left(\matr abcd\matr{\sqrt y}{x/\sqrt y}0{1/\sqrt
y}\right)h(x)\,dx
=\int_0^\pi 
f\biggl(\frac ac-\frac{\sin2\theta}{2c^2y},\frac{\sin^2\theta}{c^2y},
\theta\biggr)\,
h\biggl(-\frac dc+y\cot\theta\biggr)\,
\frac{y\,d\theta}{\sin^2\theta},
\end{align}
in the sense that if either of the integrals is absolutely convergent then so is the other,
and the equality holds.
\end{lem}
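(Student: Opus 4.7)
\textbf{Proof plan for Lemma \ref{IWASAWACOORDSLEM1}.}
The plan is to perform the Iwasawa decomposition of the matrix
\[
M_x := \matr abcd \matr{\sqrt y}{x/\sqrt y}0{1/\sqrt y}
=\matr{a\sqrt y}{(ax+b)/\sqrt y}{c\sqrt y}{(cx+d)/\sqrt y},
\]
compute its $(u,v,\theta)$ coordinates as explicit functions of $x$, and then change variables from $x$ to $\theta$ to match the right hand side. I will then verify that the absolute convergence of either integral implies that of the other.

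First, recall that if $M=n_u a_v k_\theta$ with $n_u=\smatr 1u01$, $a_v=\smatr{\sqrt v}00{1/\sqrt v}$, $k_\theta=\smatr{\cos\theta}{-\sin\theta}{\sin\theta}{\cos\theta}$, then an elementary computation shows that for $M=\smatr\alpha\beta\gamma\delta\in G'$ the Iwasawa parameters are given by $\gamma=\sin\theta/\sqrt v$, $\delta=\cos\theta/\sqrt v$, $v=(\gamma^2+\delta^2)^{-1}$, $u=(\alpha\gamma+\beta\delta)v$. Applying this to $M_x$ (with $c>0$ fixed) yields
\[
\sin^2\theta=\frac{c^2y^2}{c^2y^2+(cx+d)^2},\qquad
\cot\theta=\frac{cx+d}{cy},\qquad
v=\frac{\sin^2\theta}{c^2y},
\]
so we can parametrise by $\theta\in(0,\pi)$ via $x=y\cot\theta-d/c$, which is a smooth, monotone decreasing bijection from $(0,\pi)$ onto $\R$. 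Using $ad-bc=1$, one computes $(ax+b)=ay\cot\theta-1/c$, whence
\[
\alpha\gamma+\beta\delta
=acy+\frac{(ax+b)(cx+d)}{y}
=\frac{acy}{\sin^2\theta}-\cot\theta,
\]
and therefore
\[
u=(\alpha\gamma+\beta\delta)v=\frac{a}{c}-\frac{\sin\theta\cos\theta}{c^2y}=\frac{a}{c}-\frac{\sin 2\theta}{2c^2y},
\]
which is precisely the first argument appearing in the right hand side of the lemma.

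Next, I will substitute $x=y\cot\theta-d/c$ in the left hand side. This substitution has Jacobian $dx=-(y/\sin^2\theta)\,d\theta$, and as $\theta$ increases from $0$ to $\pi$, $x$ decreases from $+\infty$ to $-\infty$, so the two minus signs cancel and the interval of integration becomes $(0,\pi)$ with positive measure $(y/\sin^2\theta)\,d\theta$. Since $h(x)=h(-d/c+y\cot\theta)$ and $f(M_x)$ equals $f$ evaluated at the Iwasawa coordinates computed above, the right hand side of the lemma is obtained verbatim. Finally, because the change of variables $x\leftrightarrow\theta$ is a $C^1$-diffeomorphism with everywhere nonzero Jacobian, $|{\cdot}|$ transforms in the same way under the substitution; hence the absolute convergence of one integral is equivalent to the absolute convergence of the other, and in either case the two integrals agree.

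There is no real obstacle here; the lemma is a change-of-variables identity and the only care required is the sign bookkeeping associated with $c>0$ (which ensures $\sin\theta>0$ throughout $(0,\pi)$ so that $\theta$ is a well-defined smooth function of $x$) and the verification that the orientation reversal in $dx\mapsto d\theta$ matches the reversal of the endpoints.
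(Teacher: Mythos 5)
Your proof is correct and proceeds exactly as one would expect: explicit Iwasawa decomposition of $M_x$, identification of $u,v,\theta$ as functions of $x$, and the substitution $x=y\cot\theta-d/c$. The paper itself just cites \cite[Lemma 6.1]{SASL} (the $k=1$ analogue) rather than writing this out, so you have in effect supplied the routine computation that reference contains; nothing is missing and the equivalence of absolute convergence is handled correctly via the nonvanishing Jacobian.
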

\begin{remark}\label{IWASAWACOORDSLEMREM2}
In the case $c<0$ one obtains exactly the same formula, except that
$\int_0^\pi$ is replaced by $\int_{-\pi}^0$
in the right hand side. %
\end{remark}
\begin{proof}
Cf.\ \cite[Lemma 6.1]{SASL}.
\end{proof}
We now prove that we have an absolute convergence in the left hand side of \eqref{AKBOUNDPROPres};
this fact is important in order to justify the manipulations which we will carry out later.
\begin{lem}\label{ASUMABSCONVlem}
Set $m=\max(3,k+1)$.
Then for any $f\in \C_0^m(X)$ and any $h\in\C^1(\R)$ with $S_{1,0,1}(h)<\infty$,
the expression
\begin{align}\label{ASUMABSCONVlemRES}
\sum_{\veceta\in A_k}\sum_{R\in \overline\Gamma'_\infty\backslash\overline\Gamma'/\Gamma'}
\sum_{T\in \Gamma'_\infty \backslash [R]}
\int_\R\biggl|\wh f_R\left(T\matr{\sqrt y}{x/\sqrt y}0{1/\sqrt y},\veceta\right)h(x)\biggr|\,dx
\end{align}
is finite for all $y>0$.
If, furthermore, $f\in\C_a^m(X)$ for some $a$ and $m$ subject to $a\geq 0$, $a>\frac k2-1$ and $m>2a+2$,
then the expression in \eqref{ASUMABSCONVlemRES} stays bounded as $y\to0$.
\end{lem}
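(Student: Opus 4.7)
\textit{The plan.} My plan is to apply the Fourier decay bounds of Lemmas \ref{DECAYTFNFROMCMNORMLEM} and \ref{DERDECAYTFNFROMCMNORMLEM2} termwise and then sum. First I would separate out the contribution from matrices $T\in\Gamma'_\infty\backslash[R]$ with bottom-left entry zero. For each such $T=\pm\smatr{1}{b}{0}{1}$, the bottom row of $Tg(x,y)$ is $(0,\pm 1/\sqrt y)$, so Lemma \ref{DECAYTFNFROMCMNORMLEM} gives $|\wh f_R|\ll\|f\|_{\C^m_0}\|\vecr\|^{-m}y^{m/2}$, and since there are at most two such $T$ per $R$, summing over $\vecr\in A_k$ (which runs essentially over $\{\vecr\in\Z^k\setminus\{\bn\}\}/\{\pm1\}$) converges for $m\geq k+1$ and yields $O(\|f\|_{\C^m_0}\|h\|_{L^1}y^{m/2})$, a term that is in fact bounded (and $\to 0$) as $y\to 0$.

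Next I would treat the $c\neq 0$ contributions; by Remark \ref{IWASAWACOORDSLEMREM2} we may assume $c>0$. Setting $\Lambda=c^2y+(cx+d)^2/y$, Lemma \ref{DECAYTFNFROMCMNORMLEM} with $\alpha=0$ gives
\[
|\wh f_R(Tg(x,y),\cmatr{\bn}{\vecr})|\ll\|f\|_{\C^m_0}\|\vecr\|^{-m}\Lambda^{-m/2},
\]
and the substitution $u=cx+d$ shows $\int_\R\Lambda^{-m/2}\,dx\ll_m c^{-m}y^{1-m/2}$ whenever $m\geq 2$. Invoking the Sobolev-type inequality $\|h\|_\infty\leq \|h'\|_{L^1}\leq S_{1,0,1}(h)$ on $\R$, noting that for each $c$ there are at most $c$ admissible residues of $d\bmod cN$, and summing $\sum_c c^{1-m}$ (convergent for $m\geq 3$) and $\sum_{\vecr\in A_k}\|\vecr\|^{-m}$ (convergent for $m>k$), together with the finite set $\overline\Gamma'_\infty\backslash\overline\Gamma'/\Gamma'$, I conclude that the expression \eqref{ASUMABSCONVlemRES} is bounded by $\ll\|f\|_{\C^m_0}S_{1,0,1}(h)y^{1-m/2}$, which is finite for every fixed $y>0$. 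This establishes the first assertion, provided $m\geq\max(3,k+1)$.

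For the second assertion, I would exploit the sharper inequality from Lemma \ref{DECAYTFNFROMCMNORMLEM} with $\alpha=a>0$,
\[
|\wh f_R|\ll\|f\|_{\C^m_a}\|\vecr\|^{-m}\Lambda^{-m/2}\min(1,\Lambda^a),
\]
together with the trivial bound $|\wh f_R|\ll\|f\|_{\C^0_a}\min(1,\Lambda^a)$ that comes from $|f|\leq\|f\|_{\C^0_a}\scrY^{-a}$ and $\scrY\geq 1/\Lambda$. Splitting the $x$-integration at $\Lambda=1$ and the $c$-summation at $c^2y=1$: in the region $\Lambda\geq 1$ the Fourier decay bound is used as before, while in $\Lambda\leq 1$ (which requires $c^2y\leq 1$ and $|cx+d|\leq\sqrt y$, a set of measure $\ll\sqrt y/c$) the trivial bound $\Lambda^a$ is used. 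To bring in the $\|\vecr\|$-decay in the $\Lambda\leq 1$ range, I would interpolate between the two bounds by switching between them at the crossover $\Lambda=\|\vecr\|^{-2m/(m+2a)}$; this yields effective decay $\|\vecr\|^{-2m(a+1)/(m+2a)}$ per $\vecr$, together with a $y$-exponent that is $\geq 0$ in each regime. The $\vecr$-sum then converges provided the exponent exceeds $k$, which is equivalent to $a>k/2-1$; and the intermediate integrals converge given $m>2a+2$, matching the hypotheses.

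The main obstacle is the third step: the interpolation that simultaneously produces $\|\vecr\|^{-\kappa}$ decay with $\kappa>k$ and cancels the negative powers of $y$ arising in the regime $c^2y<1$. The difficulty is that neither the trivial bound nor the full Fourier-decay bound alone suffices in the intermediate range $1\leq v\leq\|\vecr\|^2$, so the combined exponent of $y$ after summing over $c$ and $d$ must be tracked carefully in each of the three regimes $c^2y\geq 1$, $\|\vecr\|^{-2m/(m+2a)}\leq c^2y<1$, and $c^2y<\|\vecr\|^{-2m/(m+2a)}$ to verify that everything balances.
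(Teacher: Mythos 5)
Your overall strategy---applying the Fourier-decay bound of Lemma \ref{DECAYTFNFROMCMNORMLEM} together with the trivial bound coming from the cuspidal decay of $f$, then summing over $c$, $d$ and $\vecr$---is the same strategy the paper uses, the only cosmetic difference being that the paper parametrizes the horocycle by the Iwasawa angle $\theta$ (via Lemma \ref{IWASAWACOORDSLEM1} and Lemma \ref{TECHNICALINTEGRALBOUNDLEM}) while you work directly with $x$ and $\Lambda=c^2y+(cx+d)^2/y$. However, there are two genuine gaps. In the first part, for fixed $c>0$ the sum over $T\in\Gamma'_\infty\backslash[R]$ is an \emph{infinite} set parametrized by all $d$ in an arithmetic progression; the bound ``at most $c$ residues of $d\bmod cN$'' applies to the double coset quotient $\Gamma'_\infty\backslash[R]/\Gamma'_\infty$, not to $\Gamma'_\infty\backslash[R]$. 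Replacing $|h|$ by $\|h\|_\infty$ and multiplying by $c$ therefore does not control the infinite $d$-sum. What one actually needs is a bound on the periodic sum $\sum_{n\in\Z}|h(\delta+n)|\leq S_{1,0,1}(h)$ (or equivalently, after interchanging sum and integral, to integrate $\sum_d|h((u-d)/c)|$ against $F(\Lambda)$, which yields an extra factor $\asymp c$ compared to a single $d$); the inequality $\|h\|_\infty\leq S_{1,0,1}(h)$ does not do this job.

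In the second part, the crossover point is wrong. With the two bounds you write down, namely $\|\vecr\|^{-m}\Lambda^{-m/2}\min(1,\Lambda^a)$ and $\min(1,\Lambda^a)$, the crossover in the regime $\Lambda\leq1$ is at $\Lambda=\|\vecr\|^{-2}$, since $\|\vecr\|^{-m}\Lambda^{a-m/2}=\Lambda^a$ exactly there. The value $\|\vecr\|^{-2m/(m+2a)}$ is the crossover between the weaker pair $\|\vecr\|^{-m}\Lambda^{-m/2}$ (without the $\min$ factor) and $\Lambda^a$. If one carries your computation through with this crossover, the per-$\vecr$ contribution comes out $\ll\|\vecr\|^{-2m(a+1)/(m+2a)}$, and requiring this exponent to exceed $k$ gives $a>\frac{m(k-2)}{2(m-k)}$, which is \emph{strictly} larger than $\frac k2-1$ for every finite $m>k$ (they agree only in the limit $m\to\infty$). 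So your interpolation proves a genuinely weaker statement than the lemma. The correct three-way split is at $\Lambda=\|\vecr\|^{-2}$ and $\Lambda=1$, equivalently $c\sqrt y=\|\vecr\|^{-1}$ and $c\sqrt y=1$; carrying out the $x$-integral and the $c$- and $d$-sums in those regimes (this is precisely what the paper's Lemma \ref{TECHNICALINTEGRALBOUNDLEM} packages) yields a per-$\vecr$ contribution $\ll\min\bigl(\|\vecr\|^{-2a-2},\|\vecr\|^{-m}y^{1+a-\frac m2}\bigr)$, from which both the convergence of the $\vecr$-sum and the $y$-uniformity follow under the stated hypotheses $a>\frac k2-1$ and $m>2a+2$.
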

(Note that the lemma in particular applies to any $f$ and $h$ as in Proposition \ref{AKBOUNDPROP}.)
\begin{proof}
As previously, we write $T=\smatr abcd$.
The contribution from terms with $c=0$ in \eqref{ASUMABSCONVlemRES} is treated %
by \eqref{C0BOUND}.
Thus, we only consider the terms with $c>0$;
the terms with $c<0$ can be dealt with similarly.
By Lemma \ref{IWASAWACOORDSLEM1}, and since $\overline\Gamma'_\infty\backslash\overline\Gamma'/\Gamma'$ is finite,
it suffices to prove that for each fixed 
$R=\smatr{a_0}{b_0}{c_0}{d_0}\in\overline\Gamma'_\infty\backslash\overline\Gamma'/\Gamma'$,
\begin{align}\label{ABSCONVPF1}
\sum_{\vecr\in\Z^k\setminus\{\bn\}}%
\:\sum_{\substack{c\equiv c_0\mod N\\ c>0}}\:\sumone
\int_0^\pi \biggl|\wh f_R\biggl(\frac ac-\frac{\sin2\theta}{2c^2y},\frac{\sin^2\theta}{c^2y},\theta,\cmatr{\bn}{\vecr}\biggr)\,
h\biggl(-\frac dc+y\cot\theta\biggr)\biggr|\,
\frac{y\,d\theta}{\sin^2\theta}
<\infty.
\end{align}
By Lemma \ref{DERDECAYTFNFROMCMNORMLEM2} (and the observations below \eqref{FRDEF}), 
for any $m\geq0$ and $a\in\R_{\geq0}$ we have
\begin{align*}
\biggl|\wh
f_R\left(u,\frac{\sin^2\theta}{c^2y},\theta;\cmatr\bn\vecr\right)\biggr|
\ll\|f\|_{\C^m_a}\|\vecr\|^{-m}\Bigl(\frac{|\sin\theta|}{c\sqrt y}\Bigr)^{m}
\min\Bigl(1,\Bigl(\frac{|\sin\theta|}{c\sqrt y}\Bigr)^{-2a}\Bigr),
\end{align*}
uniformly over $u\in\R$.
Using this bound for both $m=0$ and a general
$m\geq0$, we conclude
\begin{align}\notag
\biggl|\wh
f_R\left(u,\frac{\sin^2\theta}{c^2y},\theta;\cmatr\bn\vecr\right)\biggr|
\hspace{250pt}
\\\label{PROOF1STEP0help}
\ll\|f\|_{\C^m_a}
\min\biggl(\|\vecr\|^{-m}\Bigl(\frac{|\sin\theta|}{c\sqrt y}\Bigr)^m,
\|\vecr\|^{-m}\Bigl(\frac{|\sin\theta|}{c\sqrt y}\Bigr)^{m-2a},
\Bigl(\frac{|\sin\theta|}{c\sqrt y}\Bigr)^{-2a}\biggr).
\end{align}
We decompose the innermost sum in \eqref{ABSCONVPF1} in the same way as in \eqref{expsumlemPF20},
and then use the fact that 
\begin{align}
\sum_{n\in\Z}|h(\delta+n)|\leq S_{1,0,1}(h),\qquad\forall \delta\in\R,
\end{align}
which holds since
$|h(\alpha)|\leq\int_{\alpha-1/2}^{\alpha+1/2}(|h(x)|+|h'(x)|)\,dx$ for all $\alpha\in\R$.
From the proof of Lemma~\ref{multiplicative} we also have
\begin{align*}
\#[\Gamma_\infty'\backslash [R]/\Gamma_\infty'\: ;\:c]
=\#U[c,N;a_0,b_0,d_0]\leq c.
\end{align*}
Hence, we conclude that if $f\in\C_a^m(X)$ and $S_{1,0,1}(h)<\infty$ then the left hand side of \eqref{ABSCONVPF1} is
\begin{align*}
\ll %
\sum_{\vecr\in\Z^k\setminus\{\bn\}}\sum_{c=1}^\infty c
\int_0^\pi 
\min\biggl(\|\vecr\|^{-m}\Bigl(\frac{|\sin\theta|}{c\sqrt y}\Bigr)^{m},
\|\vecr\|^{-m}\Bigl(\frac{|\sin\theta|}{c\sqrt y}\Bigr)^{m-2{a}},
\Bigl(\frac{|\sin\theta|}{c\sqrt y}\Bigr)^{-2{a}}\biggr)
\,\frac{y\,d\theta}{\sin^2\theta}.
\end{align*}
Assuming $m>2a+2$, we get (cf.\ Lemma \ref{TECHNICALINTEGRALBOUNDLEM} below):
\begin{align}\notag
\ll\sum_{\vecr\in\Z^k\setminus\{\bn\}}\sum_{c=1}^\infty cy\left.\begin{cases}
\|\vecr\|^{-m}(c\sqrt y)^{-m}&\text{if }\:1\leq c\sqrt y
\\
\|\vecr\|^{-m}(c\sqrt y)^{2a-m}&\text{if }\:\|\vecr\|^{-1}\leq c\sqrt y\leq1
\\
\|\vecr\|^{-2a-1}(c\sqrt y)^{-1}&\text{if }\:c\sqrt y\leq\|\vecr\|^{-1}
\end{cases}\right\}
\hspace{100pt}
\\\notag
\ll\sum_{\vecr\in\Z^k\setminus\{\bn\}}\min\Bigl(\|\vecr\|^{-2-2a},\|\vecr\|^{-m}y^{1+a-\frac m2}\Bigr).
\end{align}
(Here $m>2a+1$ suffices for the first step, while $m>2a+2$ is needed to get the last bound.)
The last sum converges provided that either $m>k$ or $2a+2>k$;
and if $2a+2>k$ then it also stays bounded as $y\to0$.
\end{proof}

In the proof above, we used the following bound, which we will need again later.
\begin{lem}\label{TECHNICALINTEGRALBOUNDLEM}
Fix $a\geq0$ and $m>2a+1$.
Then for any $u>0$ and $r\geq1$ we have
\begin{align*}
\int_0^\pi\min\biggl(r^{-m}\bigl(u^{-1}\sin\theta\bigr)^m,r^{-m}\bigl(u^{-1}\sin\theta\bigr)^{m-2a},
\bigl(u^{-1}\sin\theta\bigr)^{-2a}\biggr)\,\frac{d\theta}{\sin^2\theta}
\\
\ll\begin{cases}
r^{-m}u^{-m} &\text{if }\: 1\leq u
\\
r^{-m}u^{2a-m}&\text{if }\: r^{-1}\leq u\leq1
\\
r^{-2a-1}u^{-1}&\text{if }\: u\leq r^{-1}.
\end{cases}
\end{align*}
\end{lem}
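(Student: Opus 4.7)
The plan is to identify, in the integrand, which of the three quantities inside the $\min$ is the smallest as a function of $\theta$. Writing $x = u^{-1}\sin\theta$ and setting $A = r^{-m}x^m$, $B = r^{-m}x^{m-2a}$, $C = x^{-2a}$, one checks directly that $A \le B$ iff $x \le 1$ and $B \le C$ iff $x \le r$ (using $r\ge 1$, $a\ge 0$). Consequently the minimum equals $A$ on the region $\sin\theta \le u$, equals $B$ on $u \le \sin\theta \le ru$, and equals $C$ on $\sin\theta \ge ru$. By symmetry around $\theta = \pi/2$ I would restrict to $\theta \in (0,\pi/2]$, substitute $s = \sin\theta$ (so $d\theta/\sin^2\theta = s^{-2}\,ds/\sqrt{1-s^2}$, with the factor $1/\sqrt{1-s^2}$ harmless except for a bounded contribution near $s = 1$), and reduce everything to elementary integrals of $s^\alpha$ on intervals determined by the thresholds $u$ and $ru$.

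I would then split into the three cases matching the statement. If $u \ge 1$, only the $A$-region is present, and the integral is $\asymp r^{-m}u^{-m}\int_0^1 s^{m-2}\,ds \asymp r^{-m}u^{-m}$ (convergent since $m > 2a+1 \ge 1$). If $r^{-1} \le u \le 1$, the $C$-region is empty; the $A$-contribution is $\asymp r^{-m}u^{-m}\int_0^u s^{m-2}\,ds \asymp r^{-m}u^{-1}$, and the $B$-contribution is $r^{-m}u^{2a-m}\int_u^1 s^{m-2a-2}\,ds$, which is $\ll r^{-m}u^{2a-m}$ whether $m-2a-1$ is positive, zero, or negative (the condition $m > 2a+1$ guarantees integrability at $s=0$ should that case arise). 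Since $m > 2a+1$ forces $u^{2a-m} \ge u^{-1}$ for $u \le 1$, the $B$-term dominates and we obtain $r^{-m}u^{2a-m}$. Finally if $u \le r^{-1}$ all three regions appear: the $A$-contribution is again $\asymp r^{-m}u^{-1}$, the $B$-contribution $r^{-m}u^{-(m-2a)}\int_u^{ru} s^{m-2a-2}\,ds \asymp r^{-2a-1}u^{-1}$ (the upper endpoint dominates precisely because $m > 2a+1$), and the $C$-contribution $u^{2a}\int_{ru}^1 s^{-2a-2}\,ds \asymp u^{2a}(ru)^{-2a-1} = r^{-2a-1}u^{-1}$. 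Since $r^{-m} \le r^{-2a-1}$ when $r \ge 1$ and $m \ge 2a+1$, the sum is $\asymp r^{-2a-1}u^{-1}$.

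The only mild obstacle is the $B$-integral in case (ii), where one must track whether $m-2a-1$ is positive or not; the strict inequality $m > 2a+1$ (rather than $\ge$) is what prevents a logarithmic loss at the lower endpoint when $m-2a-1$ would otherwise be zero. Everything else reduces to comparing powers of $u$ and $r$, which is automatic from the placement of $u$ relative to the thresholds $1$ and $r^{-1}$. No special machinery is needed; the entire argument is a direct computation once the three regions above are identified.
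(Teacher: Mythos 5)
Your case-by-case computation is correct and is exactly the ``straightforward case-by-case analysis'' that the paper's one-line proof refers to; the identification of the three regions $\sin\theta\leq u$, $u\leq\sin\theta\leq ru$, $\sin\theta\geq ru$ and the ensuing elementary power-law integrals are the intended argument. One minor remark: in case (ii) the standing hypothesis $m>2a+1$ already forces $m-2a-1>0$, so the hedging about that exponent possibly being zero or negative is superfluous.
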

\begin{proof}
This is a straightforward case-by-case analysis.
\end{proof}

We continue with the proof of Proposition \ref{AKBOUNDPROP}.
Using Lemma \ref{IWASAWACOORDSLEM1} and Remark \ref{IWASAWACOORDSLEMREM2},
the sum in \eqref{APARTSTEP1}, excluding all terms with $c=0$, can be expressed as
\begin{align}\label{CONTRFROMFIXEDN}
&\sum_{\substack{c\equiv c_0\mod N\\ c>0}}
\int_{0}^\pi \sumone
h\biggl(-\frac dc+y\cot\theta\biggr)\,
\wh
f_R\biggl(\frac{a}c-\frac{\sin2\theta}{2c^2y},\frac{\sin^2\theta}{c^2y},
\theta;\cmatr\bn\vecr\biggr)
e\left(\cmatr{c\vecr}{d\vecr}\vecxi\right)\,\frac{y\,d\theta}{
\sin^2\theta}\\
&+\sum_{\substack{c\equiv c_0\mod N\\ c<0}}
\int_{-\pi}^0 \sumone
h\biggl(-\frac dc+y\cot\theta\biggr)\,
\wh
f_R\biggl(\frac{a}c-\frac{\sin2\theta}{2c^2y},\frac{\sin^2\theta}{c^2y},
\theta;\cmatr\bn\vecr\biggr)
e\left(\cmatr{c\vecr}{d\vecr}\vecxi\right)\,\frac{y\,d\theta}{
\sin^2\theta}.\nonumber
\end{align}
Here the change of order of summation and integration is justified by Lemma \ref{ASUMABSCONVlem}.
We will only deal with the first sum in \eqref{CONTRFROMFIXEDN}; the second sum can be dealt with similarly
(cf.\ Remark~\ref{cnegREM2}).
By Lemma \ref{lem:exp sum}, for any positive integer $c\equiv c_0\bmod{N}$ and any $\theta\in(0,\pi)$,  we have:
\begin{align}\notag
\sumone
h\biggl(-\frac dc+y\cot\theta\biggr)\,
\wh
f_R\biggl(\frac{a}c-\frac{\sin2\theta}{2c^2y},\frac{\sin^2\theta}{c^2y},
\theta;\cmatr\bn\vecr\biggr)
e(c\vecr\vecxi_1+d\vecr\vecxi_2)
\hspace{70pt}
\\\label{PROOF1STEP0}
\ll S_{1,0,2}(h)\biggl(\int_{\R/N\Z}\biggl|\wh
f_R\left(u,\frac{\sin^2\theta}{c^2y},\theta;\cmatr\bn\vecr\right)\biggr|\,
du\biggr)
\sum_{\ell\in\Z}\frac{(c,\lfloor cN\vecr\vecxi_2+\ell\rfloor)}{1+\ell^2}
\hspace{40pt}
\\\notag
+S_{1,0,2}(h)\biggl(\int_{\R/N\Z}
\biggl|\frac{\partial^2}{\partial u^2}
\wh
f_R\left(u,\frac{\sin^2\theta}{c^2y},\theta;\cmatr\bn\vecr\right)\biggr|\,
du\biggr)
\sigma(c)\sqrt c.
\end{align}
Here we will use the bound \eqref{PROOF1STEP0help}.
By a similar application of Lemma \ref{DERDECAYTFNFROMCMNORMLEM2} 
as in \eqref{PROOF1STEP0help},
we have for any $m'\in\Z_{\geq6}$ and $a'\in\R_{\geq0}$,
uniformly over $u\in\R$:
\begin{align*}
&\biggl|\frac{\partial^2}{\partial u^2}
\wh f_R\left(u,\frac{\sin^2\theta}{c^2y},\theta;\cmatr\bn\vecr\right)\biggr|
\\
&\ll \|f\|_{\C_{a'}^{m'}}\,\|\vecr\|^{-4}
\min\biggl(\|\vecr\|^{6-m'}\Bigl(\frac{|\sin\theta|}{c\sqrt y}\Bigr)^{m'-6},
\|\vecr\|^{6-m'}\Bigl(\frac{|\sin\theta|}{c\sqrt y}\Bigr)^{m'-6-2a'},
\Bigl(\frac{|\sin\theta|}{c\sqrt y}\Bigr)^{-2a'}\biggr).
\end{align*}
Using these bounds, we conclude that the first sum in \eqref{CONTRFROMFIXEDN} is
\begin{align}\notag
\ll S_{1,0,2}(h)\Biggl\{\|f\|_{\C_a^m}\sum_{c=1}^\infty
\int_0^\pi\min\biggl(\|\vecr\|^{-m}\Bigl(\frac{|\sin\theta|}{c\sqrt y}\Bigr)^m,
\|\vecr\|^{-m}\Bigl(\frac{|\sin\theta|}{c\sqrt y}\Bigr)^{m-2a},
\Bigl(\frac{|\sin\theta|}{c\sqrt y}\Bigr)^{-2a}\biggr)\,\frac{y\,d\theta}{\sin^2\theta}
\\\label{CONTRFROMFIXEDN2}
\times\sum_{\ell\in\Z}\frac{(c,\lfloor cN\vecr\vecxi_2+\ell\rfloor)}{1+\ell^2}
\\\notag
+\|f\|_{\C_{a'}^{m'}}\|\vecr\|^{-4}\sum_{c=1}^\infty
\int_0^\pi\min\biggl(\Bigl(\frac{|\sin\theta|}{\|\vecr\|c\sqrt y}\Bigr)^{{m'}-6},
\|\vecr\|^{6-{m'}}\Bigl(\frac{|\sin\theta|}{c\sqrt y}\Bigr)^{{m'}-6-2a'},
\Bigl(\frac{|\sin\theta|}{c\sqrt y}\Bigr)^{-2a'}\biggr)\,\frac{y\,d\theta}{\sin^2\theta}
\\\notag
\times\sigma(c)\sqrt c\Biggr\}.
\end{align}
By Lemma \ref{TECHNICALINTEGRALBOUNDLEM}, assuming $m>2a+1$ and $m'>2a'+7$,
\eqref{CONTRFROMFIXEDN2} is 
\begin{align}\notag
\ll\|f\|_{\C_a^m}S_{1,0,2}(h)\|\vecr\|^{-m}y^{1+a-\frac m2} 
\sum_{c=1}^\infty c^{-1}\bigl((\|\vecr\|\sqrt y)^{-1}+c\bigr)^{1+2a-m}
\sum_{\ell\in\Z}\frac{(c,\lfloor cN\vecr\vecxi_2+\ell\rfloor)}{1+\ell^2}
\\\label{CONTRFROMFIXEDN2a}
+\|f\|_{\C_{a'}^{m'}}S_{1,0,2}(h)
\|\vecr\|^{2-m'}y^{4+{a'}-\frac{m'}2}\sum_{c=1}^\infty
\bigl((\|\vecr\|\sqrt y)^{-1}+c\bigr)^{7+2{a'}-m'}\frac{\sigma(c)}{\sqrt c}.
\end{align}

\begin{lem}\label{PROOF1LEM1}
Fix $\beta>1$. Then for any $\alpha\in\R$ and $X>0$,
\begin{align}\label{PROOF1LEM1RES}
\sum_{c=1}^{\infty} c^{-1}(X+c)^{-\beta}
\sum_{k\in\Z}\frac{(c,k)}{1+|k-c\alpha|^2}
\ll_{\beta}  \begin{cases} 
X^{1-\beta}\,\sum_{j=1}^\infty\bigl(j^2+Xj\langle j\alpha\rangle\bigr)^{-1}  %
&\text{if }\: X\geq1
\\
1&\text{if }\:X<1.
\end{cases}
\end{align}
\end{lem}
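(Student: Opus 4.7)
The plan is to expand the gcd via Euler's identity $(c,k)=\sum_{d\mid(c,k)}\phi(d)$ and interchange the order of summation. Writing $k=dm$ with $d\mid c$ and $m\in\Z$, the inner sum becomes
\[
\sum_{k\in\Z}\frac{(c,k)}{1+|k-c\alpha|^2}
=\sum_{d\mid c}\phi(d)\sum_{m\in\Z}\frac{1}{1+d^2|m-c\alpha/d|^2}.
\]
A standard lattice-point estimate (isolate the closest integer to $c\alpha/d$; the remaining terms contribute $\ll 1/d^2$ since $\sum_{n\geq 1}(1+d^2(n-1/2)^2)^{-1}\ll d^{-2}$) yields
\[
\sum_{m\in\Z}\frac{1}{1+d^2|m-c\alpha/d|^2}\ll \frac{1}{1+d^2\langle c\alpha/d\rangle^2}+\frac{1}{d^2}.
\]
Substituting $c=jd$ (so that $c\alpha/d=j\alpha$), the left-hand side of the lemma splits as $S\ll S_1+S_2$, where
\[
S_1=\sum_{j,d\geq1}\frac{\phi(d)}{jd(X+jd)^\beta\bigl(1+d^2\langle j\alpha\rangle^2\bigr)},
\qquad
S_2=\sum_{j,d\geq1}\frac{\phi(d)}{jd^3(X+jd)^\beta}.
\]

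The bound for $S_2$ is routine: collecting by $c=jd$, we have $S_2\leq\sum_{c\geq 1}\sigma_{-1}(c)\,c^{-1}(X+c)^{-\beta}$ with $\sigma_{-1}(c):=\sum_{d\mid c}1/d\ll\log(2c)$. For $X\geq 1$, splitting the $c$-sum at $c=X$ gives $S_2\ll_\beta X^{-\beta}\log(2+X)$, while for $X<1$ the trivial bound $(X+c)^\beta\asymp c^\beta$ gives $S_2\ll_\beta 1$. The first of these bounds is absorbed into the target RHS because $\langle j\alpha\rangle\leq 1/2$ forces
\[
\sum_{j\geq1}\frac{1}{j^2+Xj\langle j\alpha\rangle}\;\geq\;\sum_{j\geq1}\frac{1}{j^2+Xj}\;\gg\;\frac{\log(2+X)}{X},
\]
so that $X^{1-\beta}\sum_j(j^2+Xj\langle j\alpha\rangle)^{-1}\gg X^{-\beta}\log(2+X)$ in all cases.

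The main work is the bound for $S_1$. Fix $j\geq1$, set $\delta_j:=\langle j\alpha\rangle$, and let $T_j:=\sum_{d\geq1}\phi(d)\bigl[d(X+jd)^\beta(1+d^2\delta_j^2)\bigr]^{-1}$, so that $S_1=\sum_j T_j/j$. Using $\phi(d)\leq d$ and the monotonicity of the remaining integrand in $d$, comparison with an integral and the rescaling $u=jt$ give
\[
T_j\;\leq\;\frac{1}{j}\int_0^\infty\frac{du}{(X+u)^\beta\bigl(1+u^2\eta^2\bigr)},\qquad \eta:=\delta_j/j.
\]
A direct case analysis of this elementary integral, based on whether $X\eta\geq 1$ or $X\eta<1$ (i.e.\ on which of the two factors in the denominator controls the decay on $u\in[0,1/\eta]$ versus $u\in[1/\eta,\infty)$), yields
\[
\int_0^\infty\frac{du}{(X+u)^\beta(1+u^2\eta^2)}\;\ll_\beta\;
\begin{cases} X^{-\beta}/\eta & (X\eta\geq 1),\\ X^{1-\beta} & (X\eta<1); \end{cases}
\]
the case $\delta_j=0$ is handled directly since $\int_0^\infty(X+u)^{-\beta}du\ll_\beta X^{1-\beta}$. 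Using $j\eta=\delta_j$ and translating back, in every case one gets
\[
T_j/j\;\ll_\beta\;\frac{X^{1-\beta}}{\max(j^2,\,Xj\delta_j)}\;\asymp\;\frac{X^{1-\beta}}{j^2+Xj\langle j\alpha\rangle},
\]
and summing on $j$ gives the bound claimed for $S_1$.

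For $X<1$, the corresponding $S_1$ is dominated by $\sum_{j,d\geq 1}(jd)^{-\beta-1}$, which is finite because $\beta>1$; combined with the earlier bound on $S_2$ this gives $S\ll_\beta 1$. The one non-routine step in the whole argument is the integral evaluation that produces the sharp dependence on both $X$ and $\delta_j$; once that form is in hand, the rest is bookkeeping and the passage to $j^2+Xj\langle j\alpha\rangle$ is automatic.
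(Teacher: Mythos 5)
Your proof is correct. The paper itself gives no argument for this lemma: it simply cites Lemma~8.2 of Str\"ombergsson's earlier paper \cite{SASL} (taking $\eta=1$, $m=\beta+1$ there), remarking that the proof carries over from integral $\beta$ to arbitrary real $\beta>1$. You supply the self-contained argument that the present paper omits. Your route---unfolding $(c,k)=\sum_{d\mid(c,k)}\phi(d)$, isolating the nearest-integer term in the shifted quadratic-decay sum, reorganizing the $c$-sum along $c=jd$, then bounding the $d$-sum by the integral $\frac1j\int_0^\infty(X+u)^{-\beta}(1+u^2\eta^2)^{-1}\,du$ with $\eta=\langle j\alpha\rangle/j$ and doing a two-case evaluation according to $X\eta\geq1$ or $X\eta<1$---is the natural strategy and is presumably what underlies the cited lemma. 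The two-case integral estimate is exactly the content-bearing step; your absorption of the $S_2$ contribution into the target via the lower bound $\sum_j(j^2+Xj\langle j\alpha\rangle)^{-1}\gg X^{-1}\log(2+X)$ is clean and needs to be noted explicitly as you did, since $S_2$ does carry a genuine $\log$ factor. One microscopic slip: for $X<1$ the crude upper bound on $S_1$ after applying $\phi(d)\leq d$, $(X+jd)^{-\beta}\leq(jd)^{-\beta}$ and $(1+d^2\delta_j^2)^{-1}\leq1$ is $\sum_{j,d}j^{-\beta-1}d^{-\beta}$, not $\sum_{j,d}(jd)^{-\beta-1}$; both converge for $\beta>1$, so the conclusion is unaffected.
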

\begin{proof}
If $\beta\in\Z$ then this is \cite[Lemma 8.2]{SASL} (with $\eta=1$ and $m=\beta+1$).
The proof extends without changes to the case of an arbitrary real $\beta>1$.
\end{proof}

\begin{lem}\label{SIGMASUMLEM}
For any $X>0$ and $\beta>\frac12$,
\begin{align*}
\sum_{c=1}^\infty(X+c)^{-\beta}\,\frac{\sigma(c)}{\sqrt c}\ll_\beta
\begin{cases}
X^{\frac12-\beta}\log(1+X)&\text{if }X\geq1
\\
1&\text{if }\: X<1.
\end{cases}
\end{align*}
\end{lem}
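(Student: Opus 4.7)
The plan is to reduce everything to the classical Dirichlet divisor estimate
\[
\sum_{c\leq Y}\sigma(c)\ll Y\log(2+Y),
\]
via a dyadic decomposition of the summation range $c\in[1,\infty)$. Write $c\in[2^j,2^{j+1})$ for $j\geq0$, and bound $(X+c)^{-\beta}$ and $c^{-1/2}$ by their values at the left endpoint of each dyadic block (up to absolute constants). This gives the dyadic contribution
\[
\sum_{c\in[2^j,2^{j+1})}(X+c)^{-\beta}\frac{\sigma(c)}{\sqrt c}\ll (X+2^j)^{-\beta}\,2^{-j/2}\,\sum_{c\leq 2^{j+1}}\sigma(c)\ll (X+2^j)^{-\beta}\,2^{j/2}\,(j+1),
\]
where the last step uses the Dirichlet estimate.

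Next, I would break the outer sum over $j$ at the threshold $j_0:=\lfloor\log_2\max(X,1)\rfloor$. For $j\leq j_0$ (the ``$c\leq X$'' regime), $(X+2^j)^{-\beta}\asymp(1+X)^{-\beta}$, and summing $\sum_{j\leq j_0}2^{j/2}(j+1)$ yields a geometric series dominated by its top term, giving a bound $\ll(1+X)^{-\beta}\cdot(1+X)^{1/2}\log(2+X)$. For $j>j_0$ (the ``$c>X$'' regime), $(X+2^j)^{-\beta}\asymp 2^{-j\beta}$, so the contribution is $\ll\sum_{j>j_0}2^{j(1/2-\beta)}(j+1)$, which is a convergent geometric sum thanks to the hypothesis $\beta>1/2$, and is controlled by its first term $\asymp(1+X)^{1/2-\beta}\log(2+X)$.

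Combining the two regimes yields $\ll(1+X)^{1/2-\beta}\log(2+X)$ uniformly in $X>0$. When $X\geq1$ this is exactly the claimed bound $X^{1/2-\beta}\log(1+X)$, and when $X<1$ it collapses to $O(1)$, as required. The implied constants depend on $\beta$ only through the geometric ratio $2^{1/2-\beta}$ (for the tail sum) and through the absolute constant in the Dirichlet divisor estimate, matching the statement.

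The argument has no real obstacle; the only thing to be careful about is that the dependence on $\beta$ in the geometric sum deteriorates as $\beta\downarrow\frac12$, which is consistent with the ``$\ll_\beta$'' in the statement. One could alternatively factor $\sigma(c)/\sqrt c=\sum_{de=c}(de)^{-1/2}$ and estimate $\sum_{d,e\geq1}(X+de)^{-\beta}(de)^{-1/2}$ by splitting at $de\lessgtr X$, using $\sum_{n\leq Y}d(n)/\sqrt n\ll\sqrt Y\log(2+Y)$ (obtained by Abel summation from Dirichlet); this gives the same bound via essentially the same mechanism.
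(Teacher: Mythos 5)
Your proof is correct and rests on the same key input as the paper's: the Dirichlet divisor-sum estimate $\sum_{c\leq x}\sigma(c)\ll x\log(1+x)$. The difference is purely in how that partial-sum bound is converted into the weighted sum bound. The paper does it by a single application of Abel summation (``integration by parts''), integrating $S(t)=\sum_{c\le t}\sigma(c)$ against $-d\bigl[(X+t)^{-\beta}t^{-1/2}\bigr]$ and then splitting the resulting Stieltjes integral at $t\asymp X$. You instead decompose dyadically in $c$, bound each block by Dirichlet's estimate at the right endpoint, and sum the two geometric series on either side of $c\asymp X$. These are two standard packagings of the same mechanism and they give the same bound with the same $\beta$-dependence (through the ratio $2^{1/2-\beta}$). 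Your second suggested route, writing $\sigma(c)/\sqrt c=\sum_{de=c}(de)^{-1/2}$ and using $\sum_{n\le Y}d(n)/\sqrt n\ll\sqrt Y\log(2+Y)$, is likewise fine and is in fact what the paper's cited reference \cite[Lemma 8.1]{SASL} more or less does. All three versions are equivalent in content; the paper's is the most compact, yours is the most elementary to verify step by step.
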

\begin{proof}
(Cf.\ \cite[Lemma 8.1]{SASL}.)
This follows by using $\sum_{1\leq c\leq x}\sigma(c)\ll x\log(1+x)$,
$\forall x\geq1$ (cf., e.g., \cite[(1.75)]{IK}),
and integration by parts.
\end{proof}

Using Lemma \ref{PROOF1LEM1} and Lemma \ref{SIGMASUMLEM},
and assuming from now on $m>2a+2$ and $m'>2a'+\frac{15}2$, we find that \eqref{CONTRFROMFIXEDN2a} 
(and thus \eqref{CONTRFROMFIXEDN2}) is
\begin{align}\notag
\ll\|f\|_{\C_a^m}S_{1,0,2}(h)\|\vecr\|^{-m}y^{1+a-\frac m2} 
\left.\begin{cases}
(\|\vecr\|\sqrt y)^{m-2a-2}\sum_{j=1}^\infty\min(j^{-2},\frac{\|\vecr\|\sqrt y}{j\langle j \vecr\vecxi_2\rangle})
&\text{if }\:\|\vecr\|\sqrt y\leq1
\\
1&\text{if }\: \|\vecr\|\sqrt y>1
\end{cases}\right\}
\hspace{15pt}
\\\notag
+\|f\|_{\C_{a'}^{m'}}S_{1,0,2}(h)
\|\vecr\|^{2-m'}y^{4+{a'}-\frac{m'}2}
\left.\begin{cases}
(\|\vecr\|\sqrt y)^{m'-2{a'}-\frac{15}2}\log(1+(\|\vecr\|\sqrt y)^{-1})&\text{if }\:\|\vecr\|\sqrt y\leq1
\\
1&\text{if }\:\|\vecr\|\sqrt y>1
\end{cases}\right\}
\\\label{PROOF1STEP1}
\ll\|f\|_{\C_a^m}S_{1,0,2}(h)\|\vecr\|^{-2a-1}
\left.\begin{cases}
\sum_{j=1}^\infty\min(j^{-2},\frac{\sqrt y}{j\langle j \vecr\vecxi_2\rangle})
&\text{if }\:\|\vecr\|\sqrt y\leq1
\\
(\|\vecr\|\sqrt y)^{2a+1-m}\sqrt y&\text{if }\: \|\vecr\|\sqrt y>1
\end{cases}\right\}
\hspace{115pt}
\\\notag
+\|f\|_{\C_{a'}^{m'}}S_{1,0,2}(h)
\|\vecr\|^{2-m'}y^{4+{a'}-\frac{m'}2}
\left.\begin{cases}
(\|\vecr\|\sqrt y)^{m'-2{a'}-\frac{15}2-\ve}&\text{if }\:\|\vecr\|\sqrt y\leq1
\\
1&\text{if }\:\|\vecr\|\sqrt y>1
\end{cases}\right\}.
\hspace{80pt}
\end{align}

In order to obtain a bound on the left hand side of \eqref{AKBOUNDPROPres}, %
we have to add over $R$ running through
the finite set $\overline\Gamma'_\infty\backslash\overline\Gamma'/\Gamma'$,
and add over all $\veceta\in A_k$, which means that $\vecr$ runs through a subset of $\Z^k\setminus\{\bn\}$.
For this to give a satisfactory result, we have to assume $2a+1>k$,
while in the second bound we choose $a'=\max(\frac k2-\frac{11}4,0)$;
with this choice, $m'=\max(8,k+3)$ satisfies the condition $m'>2a'+\frac{15}2$.
Adding now over $R$ and $\veceta$, we conclude that the left hand side of \eqref{AKBOUNDPROPres}
is $\ll \|f\|_{\C_a^m}S_{1,0,2}(h)\wdelta_{2a+1,\vecxi_2}(y^{-\frac12})+\|f\|_{\C_{a'}^{m'}}S_{1,0,2}(h)y^{\frac14-\ve}$.
Finally we note that $a>a'$, and so if we also assume $m\geq m'$ then $\|f\|_{\C_{a'}^{m'}}\ll\|f\|_{\C_a^m}$,
and we obtain the bound stated in Proposition \ref{AKBOUNDPROP}.
\hfill$\square$

\begin{remark}\label{Keq1remark}
In \eqref{PROOF1STEP1} the somewhat crude inequality
$\min\bigl(j^{-2},\frac{\|\vecr\|\sqrt y}{j\langle j \vecr\vecxi_2\rangle}\bigr)
\leq\|\vecr\|\min\bigl(j^{-2},\frac{\sqrt y}{j\langle j \vecr\vecxi_2\rangle}\bigr)$
was used.
In the special case $k=1$,
by avoiding using this bound one can keep $a=0$ in the treatment,
i.e.\ no cuspidal decay of $f$ has to be required;
cf.\ \cite[Prop.\ 8.3]{SASL}.
Note also that \cite[Prop.\ 8.3]{SASL}
has a better dependence on the test function $h$
(called ``$\nu$'' in \cite{SASL}) than Prop.~ \ref{AKBOUNDPROP},
namely, essentially, ``$S_{1,0,1+\ve}(h)$'' in place of $S_{1,0,2}(h)$.
We have avoided this in the present paper for simplicity of presentation.
\end{remark}

\subsection{The case $\vecxi_2=\bn$}
In this case, we prove the following bound.
\begin{prop}\label{AKBOUNDxi2zeroPROP}
Fix an integer $m\geq\max(8,k+3)$ and real numbers $a\in(\frac k2-\frac12,\frac m2-1)$ and $\ve>0$.
Then for any $f\in \C_a^m(X)$, $h\in\C^2(\R)$ with $S_{1,0,2}(h)<\infty$,
$\vecxi=\cmatr{\vecxi_1}{\bn}\in\R^{2k}$ and $0<y\leq1$, we have
\begin{align}\notag
\sum_{\veceta\in A_k}\sum_{R\in \overline\Gamma'_\infty\backslash\overline\Gamma'/\Gamma'}
\sum_{T\in \Gamma'_\infty \backslash[R]}
e((\trans T\veceta)\vecxi)\int_\R
\wh f_R\left(T\matr{\sqrt y}{x/\sqrt y}0{1/\sqrt y},\veceta\right)h(x)\,dx
\hspace{60pt}
\\\label{AKBOUNDxi2zeroPROPres}
\ll_{m,a,\ve}
\|f\|_{\C_a^m}S_{1,0,2}(h)\Bigl(\delta_{2a+1,\vecxi_1}(y^{-\frac12})+y^{\frac14-\ve}\Bigr).
\end{align}
\end{prop}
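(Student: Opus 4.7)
The proof will parallel that of Proposition \ref{AKBOUNDPROP}, with one essential structural modification: because $\vecxi_2=\bn$, the phase $e((\trans T\veceta)\vecxi)=e(c\vecr\vecxi_1)$ now depends only on the ``outer'' variable $c$ (the bottom-left entry of $T$), rather than on the ``modular'' variable $d$ used in Prop.\ \ref{AKBOUNDPROP}. Consequently Lemma \ref{lem:exp sum} can no longer be invoked with a nonzero $\alpha$ to extract Diophantine cancellation from the inner $\sum^{(1)}$; instead, the cancellation must be obtained from the outer summation over $c$, weighted by the Ramanujan-type arithmetic factors that emerge from Lemma \ref{lem:exp sum}.

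The initial reductions are unchanged. The $c=0$ contribution is absorbed into the $y^{1/4-\ve}$ error via Lemma \ref{DECAYTFNFROMCMNORMLEM}; for $c\neq 0$ we pass to Iwasawa coordinates using Lemma \ref{IWASAWACOORDSLEM1}, exactly as in \eqref{CONTRFROMFIXEDN}; the absolute-convergence statement of Lemma \ref{ASUMABSCONVlem} (which applies under the same hypotheses $a>\frac{k}{2}-\frac12$, $m>2a+2$) justifies the rearrangement of sums.

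To the inner sum $\sum^{(1)}$ (with $c>0$ fixed), I apply Lemma \ref{lem:exp sum} with $\alpha=0$ and the maximal range $K=\Z$, so that the first error term vanishes. This yields a main term
\begin{align*}
\sum_{m:\,c_1\mid m}\hat F_c(m)\,\mu\bigl(c/(c,m)\bigr)\,\frac{\phi(c_2)c_1}{\phi(c/(c,m))}\,e\bigl(m\overline{c_2}d_0/(c_1N)\bigr)
\end{align*}
plus the residual error $\sigma(c)\sqrt{c}\bigl(\|F_{0,2}\|_{\L^1}+\|F_{2,2}\|_{\L^1}\bigr)$. The integrand $F(x_1,x_2)$ and the derivative estimates (Lemmas \ref{DECAYTFNFROMCMNORMLEM}--\ref{DERDECAYTFNFROMCMNORMLEM2}) are identical to the setup in Prop.\ \ref{AKBOUNDPROP}. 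Bounding $|e(c\vecr\vecxi_1)|\leq 1$ on the error contribution and summing over $c$ via Lemma \ref{SIGMASUMLEM} reproduces verbatim the $y^{1/4-\ve}$ portion of the bound in \eqref{AKBOUNDPROPres}; the constraints $m\geq\max(8,k+3)$ and $a<\frac{m}{2}-1$ enter exactly as before.

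The genuinely new analysis concerns the main term. Interchanging the $c$- and $m$-summations, for each fixed $m$ one is left with an outer exponential sum of the shape
\begin{align*}
\sum_{c\equiv c_0(N)}e(c\vecr\vecxi_1)\,\mu\bigl(c/(c,m)\bigr)\,\frac{\phi(c_2)c_1}{\phi(c/(c,m))}\,\hat F_c(m)\cdot e\bigl(m\overline{c_2}d_0/(c_1N)\bigr),
\end{align*}
integrated over $\theta\in(0,\pi)$. Here $\hat F_c(m)$ is an essentially smooth function of $c$ (through the Iwasawa coordinates $v=\sin^2\theta/(c^2y)$ and the first variable of $\wh{f_R}$), whose size is controlled by Lemma \ref{DERDECAYTFNFROMCMNORMLEM2}. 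Treating the multiplicative weights by a standard M\"obius/Ramanujan decomposition (splitting $c=c_1c_2$ and using the explicit form of the coefficient from \eqref{final decompRES2}) and then applying an Abel/Mellin-type argument to the remaining smooth $c$-sum, the Diophantine phase $e(c\vecr\vecxi_1)$ produces cancellation governed by Lemma \ref{MXIBOUNDLEM}; after summing over $\vecr\in\Z^k\setminus\{\bn\}$ and $R$, this yields the Diophantine majorant in the form $\delta_{2a+1,\vecxi_1}(y^{-1/2})$. The additional factor $1+\log^+\!\bigl(T\langle j\vecr\vecxi\rangle/j\bigr)$ in \eqref{MMXIDEF}, distinguishing $\delta$ from $\wdelta$, arises naturally from the logarithmic loss in this outer-$c$ argument (cf.\ \eqref{wMMcomp}).

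The principal obstacle is this last step: the outer-$c$ analysis interleaves multiplicative number theory (the Ramanujan-like factors $\mu(c/(c,m))\phi(c_2)c_1/\phi(c/(c,m))$ and the phase $e(m\overline{c_2}d_0/(c_1N))$) with Diophantine approximation on $\vecr\vecxi_1$, and the bound on the resulting cross-terms must remain uniform in $m$ (summed against the decay of $\hat F_c(m)$) in order to produce an estimate proportional to $\delta_{2a+1,\vecxi_1}(y^{-1/2})$ times $\|f\|_{\C_a^m}S_{1,0,2}(h)$. Once this outer-sum bound is established, the rest of the proof—the choice of exponents $a'=\max(\frac{k}{2}-\frac{11}{4},0)$, $m'=\max(8,k+3)$ in the second (``cuspidal'') error, and the assembly of the final estimate—follows the endgame of the proof of Prop.\ \ref{AKBOUNDPROP} verbatim.
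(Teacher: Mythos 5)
Your high-level diagnosis is right — since $\vecxi_2=\bn$, the phase becomes $e(c\vecr\vecxi_1)$, Lemma~\ref{lem:exp sum} can only be applied with $\alpha=0$ (giving no cancellation from the inner modular sum $\sum^{(1)}$), and the Diophantine saving must come instead from the outer sum over $c$. That is exactly the strategy the paper follows. However, the specific route you sketch diverges from the paper's in a way that introduces a genuine gap which you yourself acknowledge but do not close.

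First, the paper takes $K=\{0\}$ in Lemma~\ref{lem:exp sum}, not $K=\Z$. This is not cosmetic. With $K=\{0\}$ the ``main term'' is only the $m=0$ Ramanujan sum $\mu(c/(c,0))\phi(c_2)c_1/\phi(c/(c,0))=c_1\phi(c_2)$, a clean completely multiplicative weight; the $m\neq 0$ contributions go into the first error term where they are bounded by $\sum_{\ell\neq0}(c,\ell)/(1+\ell^2)\ll\sigma(c)$ and dispatched by purely trivial (non-oscillatory) estimates as in Lemma~\ref{SIGMASUMLEM}. Your choice $K=\Z$ instead forces you to treat, for each nonzero $m$ with $c_1\mid m$, the weighted outer sum with coefficient $\mu(c/(c,m))\phi(c_2)c_1/\phi(c/(c,m))\,e(m\overline{c_2}d_0/(c_1N))$ — a $c$-dependent multiplicative weight plus an additional additive character in $c_2\bmod N$ — and to sum this against the decaying $\hat F_c(m)$. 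You name this the ``principal obstacle'' and do not resolve it; indeed there is no analogue of Lemma~\ref{BALPHALEM} provided for this more complicated coefficient, and it is not clear one exists with the uniformity in $m$ you need.

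Second, the lemma you invoke for the outer cancellation is the wrong one. Lemma~\ref{MXIBOUNDLEM} is a statement about the majorant function $\sum_j(j^2+Tj\langle j\eta\rangle)^{-1}$ under a Diophantine hypothesis on $\eta$; it does not bound an exponential sum over $c$. The paper's actual tool for the $m=0$ main term is Lemma~\ref{BALPHALEM}, which bounds $B_\alpha(X)=\sum_{c\leq X}e(c\alpha)c_1\phi(c_2)$ via a M\"obius decomposition of $\phi$ and the elementary estimate $\sum_{k\leq n}k\,e(jk\alpha)\ll\min(n^2,n/\langle j\alpha\rangle)$. This bound is then fed into an Abel summation against the smooth factor $\wh f_R(Nx_2,\sin^2\theta/(X^2y),\theta;\scmatr{\bn}{\vecr})$, and it is Lemma~\ref{UXINTLEM} — not a generic ``logarithmic loss'' — that produces the precise $1+\log^+(T\langle j\vecr\vecxi_1\rangle/j)$ weight distinguishing $\delta$ from $\wdelta$. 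Both of these lemmas are essential and absent from your sketch. So while your structural instincts about where the cancellation must live are correct, the argument as written is incomplete at precisely the step you flag, and would need to be rerouted through $K=\{0\}$, Lemma~\ref{BALPHALEM}, and Lemma~\ref{UXINTLEM} to actually produce the bound $\delta_{2a+1,\vecxi_1}(y^{-1/2})$.
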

\begin{proof}
Arguing as in the proof of Proposition \ref{AKBOUNDPROP} we arrive again at the expression in \eqref{CONTRFROMFIXEDN},
where we now have $\cmatr{c\vecr}{d\vecr}\vecxi=c\vecr\vecxi_1$.
Applying Lemma \ref{lem:exp sum} with $\alpha=0$ and $K=\{0\}$ gives,
for any positive integer $c\equiv c_0\bmod N$,
decomposed as $c=c_1c_2$ where $c_1\mid N^\infty$ and $(c_2,N)=1$, and any $\theta\in(0,\pi)$:
\begin{align}\notag
\sumone &
h\biggl(-\frac dc+y\cot\theta\biggr)\,
\wh
f_R\biggl(\frac{a}c-\frac{\sin2\theta}{2c^2y},\frac{\sin^2\theta}{c^2y},
\theta;\cmatr\bn\vecr\biggr)
\\\label{PROOF1STEP01}
&=c_1\phi(c_2)\int_{\R}
h\bigl(-Nx_1+y\cot\theta\bigr) dx_1 \int_{\R/\Z}\wh
f_R\biggl(Nx_2,\frac{\sin^2\theta}{c^2y},
\theta;\cmatr\bn\vecr\biggr)dx_2 
\\\notag %
&\hspace{30pt}+O\biggl(S_{1,0,2}(h)\int_{\R/N\Z}\biggl|\wh
f_R\left(u,\frac{\sin^2\theta}{c^2y},\theta;\cmatr\bn\vecr\right)\biggr|\,du\biggr)
\sum_{\ell\in\Z\setminus\{0\}}\frac{(c,\ell)}{1+\ell^2}
\\\notag %
&\hspace{60pt}+O\biggl(S_{1,0,2}(h)\int_{\R/N\Z}\biggl|\frac{\partial^2}{\partial u^2}
\wh f_R\left(u,\frac{\sin^2\theta}{c^2y},\theta;\cmatr\bn\vecr\right)\biggr|\,
du\biggr) \sigma(c)\sqrt c.
\end{align}
Note here that the error term in last line is the same as the last line in \eqref{PROOF1STEP0};
hence it can be bounded as before;
cf.\ \eqref{CONTRFROMFIXEDN2}, \eqref{PROOF1STEP1}.
In the remaining error term in \eqref{PROOF1STEP01} we have
\begin{align*}
\sum_{\ell\in\Z\setminus\{0\}}\frac{(c,\ell)}{1+\ell^2}
\leq 2\sum_{m|c}\sum_{n=1}^\infty \frac{m}{(mn)^2}
\ll\sum_{m\mid c}\frac1m\leq\sigma(c).
\end{align*}
We can now argue as in the proof of Proposition \ref{AKBOUNDPROP}, but instead of Lemma \ref{PROOF1LEM1}
using the simple bound
\begin{align*}
&\|\vecr\|^{-m}y^{1+a-\frac m2} 
\sum_{c=1}^\infty c^{-1}\bigl((\|\vecr\|\sqrt y)^{-1}+c\bigr)^{1+2a-m}\sigma(c)
\ll \begin{cases}
\|\vecr\|^{-1-2a-\ve}y^{\frac{1-\ve}2}&\text{if }\:\|\vecr\|\sqrt y\leq1
\\
\|\vecr\|^{-m}y^{1+a-\frac m2}&\text{if }\:\|\vecr\|\sqrt y\geq1,
\end{cases}
\end{align*}
which is valid under the assumption that $m>2a+1$, and for any fixed $\ve>0$.
This leads to the conclusion that the contribution from the error terms in the last two lines of \eqref{PROOF1STEP01} to
the left hand side of \eqref{AKBOUNDxi2zeroPROPres} is
\begin{align*}
\ll \|f\|_{\C_a^m}S_{1,0,2}(h) y^{\frac12-\ve}
+\|f\|_{\C_{a'}^{m'}}S_{1,0,2}(h) y^{\frac14-\ve},
\end{align*}
with $a=\frac k2-\frac12$, $m=k+1$, $a'=\max(\frac k2-\frac{11}4,0)$ and $m'=\max(8,k+3)$.
This is clearly subsumed by the right hand side of \eqref{AKBOUNDxi2zeroPROPres}.

Now, it only remains to consider the first line in the right hand side of \eqref{PROOF1STEP01}.
The contribution from this line to the expression in the first line of \eqref{CONTRFROMFIXEDN}
can be written as follows,
after expressing the indicator function of $c\equiv c_0\mod N$ as $N^{-1}\sum_{b\mod N}e(b(c-c_0)/N)$:
\begin{align}\label{PROOF1STEP001}
\frac1{N^2}\int_{\R}h(x)\,dx\sum_{b\mod N}e\Bigl(-\frac{bc_0}N\Bigr)\int_0^\pi
\int_{\R/\Z}\sum_{c>0}e(c\alpha)c_1\phi(c_2)\wh f_R\biggl(Nx_2,\frac{\sin^2\theta}{c^2y},
\theta;\cmatr\bn\vecr\biggr)\,dx_2\,\frac{y\,d\theta}{\sin^2\theta},
\end{align}
where $\alpha:=\vecr\vecxi_1+b/N$.
We will use integration by parts to handle the sum over $c$.
Thus we let
\begin{align*}
B_\alpha(X)=\sum_{1\leq c\leq X} e(c\alpha)c_1\phi(c_2)=
\sum_{\substack{1\leq c_2\leq X\\ (c_2,N)=1 }}\sum_{\substack{1\leq c_1\leq X/c_2\\ c_1|N^\infty}}e(c_1c_2\alpha)c_1\phi(c_2).
\end{align*}
We have the following bound, analogous to \cite[Lemma 9.2]{SASL}.
\begin{lem}\label{BALPHALEM}
For any $\alpha\in \RR$, and $X\geq 1$, 
 \begin{align*}
 B_\alpha(X)\ll X^2\sum_{1\leq j\leq X}\min\left(\frac1{j^2},\frac1{Xj\langle j\alpha\rangle} \right).
\end{align*}
\end{lem}
\begin{proof}
 For any $c_2>0$, we have $\phi(c_2)=\sum_{d|c_2}\mu(c_2/d)d.$ Using this formula and substituting $c_2=jd$, we get
 \begin{align}\label{eq:Bvecrbound}
B_\alpha(X)&=\sum_{\substack{1\leq j\leq X\\ (j,N)=1 }}\mu(j)\sum_{\substack{1\leq d\leq X/j\\ (d,N)=1
}}\sum_{\substack{1\leq c_1\leq X/(jd)\\ c_1|N^\infty}} dc_1 e(jdc_1\alpha)
=\sum_{\substack{1\leq j\leq X\\ (j,N)=1 }}\mu(j)\sum_{1\leq k\leq X/j} k e(jk\alpha).
 \end{align}
However, for any $j,n\in \ZZ^+$, 
\begin{align*}
 \sum_{1\leq k\leq n} k e(jk\alpha)\ll \min\left(n^2,\frac{n}{\langle j\alpha\rangle}\right).
\end{align*}
(Cf.\ the proof of \cite[Lemma 9.2]{SASL}.) Applying this bound to \eqref{eq:Bvecrbound}, we get the lemma.
\end{proof}

For any $m\geq0$ and $a\in\R_{\geq0}$,
by Lemma \ref{DERDECAYTFNFROMCMNORMLEM2} we have (in a similar way as in \eqref{PROOF1STEP0help})
\begin{align}\notag
\frac{\partial}{\partial X}\wh f_R & \biggl(Nx_2,\frac{\sin^2\theta}{X^2y},\theta;\cmatr\bn\vecr\biggr)
\\\label{PROOFSTEP1STEP002}
&\ll
\|f\|_{\C^{m+1}_{a}}X^{-1}\min\left(\|\vecr\|^{-m}\left(\frac{
|\sin\theta|}{X\sqrt{y}}\right)^{m},\|\vecr\|^{-m}\left(\frac{|\sin\theta|}{X\sqrt{y}}\right)^{m-2a},\left(\frac{
|\sin\theta|}{X\sqrt{ y}}\right)^{-2a} \right).
\end{align}
Using integration by parts in \eqref{PROOF1STEP001}
(justified using \eqref{PROOFSTEP1STEP002} 
and $B_\alpha(X)\ll X^2$), we have:
\begin{align*}
\sum_{c>0}e(c\alpha)c_1\phi(c_2)\wh f_R\biggl(Nx_2,\frac{\sin^2\theta}{c^2y},\theta;\cmatr\bn\vecr\biggr)
=-\int_1^\infty \biggl(\frac{\partial}{\partial X}\wh f_R & \biggl(Nx_2,\frac{\sin^2\theta}{X^2y},\theta;\cmatr\bn\vecr\biggr)
\biggr)\, B_\alpha(X)\, dX.
\end{align*}
Furthermore,
Lemma \ref{TECHNICALINTEGRALBOUNDLEM} implies that for $m>2a+1$,
\begin{align*}
\int_0^\pi
\min\left(\|\vecr\|^{-m}\left(\frac{
|\sin\theta|}{X\sqrt{y}}\right)^{m},\|\vecr\|^{-m}\left(\frac{|\sin\theta|}{X\sqrt{y}}\right)^{m-2a},\left(\frac{
|\sin\theta|}{X\sqrt{ y}}\right)^{-2a} \right)\frac{y\,d\theta}{\sin^2\theta}
\hspace{60pt}
\\
\ll
X^{-1}\|\vecr\|^{-m}y^{1+a-\frac m2}\bigl((\|\vecr\|\sqrt y)^{-1}+X\bigr)^{1+2a-m}.
\end{align*}
Hence, also using Lemma \ref{BALPHALEM} and 
$\langle j\alpha\rangle=\langle j(\vecr\vecxi_1+b/N)\rangle\geq N^{-1}\langle jN\vecr\vecxi_1\rangle$, 
we find that the expression in \eqref{PROOF1STEP001} is
\begin{align}\label{PROOF1STEP002}
\ll S_{1,0,0}(h)\|f\|_{\C^{m+1}_a}\frac{y^{1+a-\frac m2}}{\|\vecr\|^m}
\int_1^\infty \bigl((\|\vecr\|\sqrt y)^{-1}+X\bigr)^{1+2a-m} %
\sum_{1\leq j\leq X}
\min\biggl(\frac1{j^2},\frac1{Xj\langle jN\vecr\vecxi_1\rangle} \biggr)\,dX.
\end{align}

\begin{lem}\label{UXINTLEM}
Assume $m>2a+2$. Then for any $\beta\in\R$ and $U>0$ we have
\begin{align}\notag
\int_1^\infty (U+X)^{1+2a-m}\sum_{1\leq j\leq X}\min\Bigl(\frac1{j^2},\frac1{Xj\langle j\beta\rangle} \Bigr)\,dX
\hspace{150pt}
\\\label{UXINTLEMres}
\ll_{m,a} (U+1)^{2+2a-m}\sum_{j=1}^\infty \min\Bigl(\frac1{j^2},\frac1{Uj\langle j\beta\rangle} \Bigr)
\Bigl(1+\log^+\Bigl(\frac{U\langle j\beta\rangle}j\Bigr)\Bigr).
\end{align}
\end{lem}
\begin{proof}
Changing order of summation and integration, the left hand side of \eqref{UXINTLEMres} becomes
\begin{align*}
\sum_{j=1}^\infty\int_j^\infty(U+X)^{1+2a-m}
\min\Bigl(\frac1{j^2},\frac1{Xj\langle j\beta\rangle} \Bigr)\,dX.
\end{align*}
Here for each $j\geq U$ we use 
$(U+X)^{1+2a-m}\leq X^{1+2a-m}$ and
$\min(\frac1{j^2},\frac1{Xj\langle j\beta\rangle})\leq j^{-2}$, to see that 
$\int_j^{\infty}\cdots\,dX\leq j^{2a-m}.$
On the other hand, for $j<U$ we have
\begin{align*}
\int_j^{\infty}\cdots\,dX &\leq U^{1+2a-m}\int_j^U \min\Bigl(\frac1{j^2},\frac1{Xj\langle j\beta\rangle} \Bigr)\,dX
+\min\Bigl(\frac1{j^2},\frac1{Uj\langle j\beta\rangle} \Bigr)\int_j^\infty X^{1+2a-m}\,dX
\\
&\ll U^{2+2a-m}\min\Bigl(\frac1{j^2},\frac1{Uj\langle j\beta\rangle}\Bigr)
\Bigl(1+\log^+\Bigl(\frac{U\langle j\beta\rangle}j\Bigr)\Bigr),
\end{align*}
where the last bound is proved by splitting into the two cases
$U\leq\frac j{\langle j\beta\rangle}$ and $U>\frac j{\langle j\beta\rangle}$
and evaluating the integrals.
The proof of the lemma is completed by 
adding up our bounds over all positive integers $j$, and noticing that $\sum_{j\geq U}j^{2a-m}\ll(U+1)^{1+2a-m}$,
which is bounded above by the contribution from $j=1$ in the right hand side of \eqref{UXINTLEMres}.
\end{proof}

Assuming now $m>2a+2$,
using the lemma we get, via \eqref{PROOF1STEP002}, that the expression in \eqref{PROOF1STEP001} is
\begin{align}\notag
\ll S_{1,0,0}(h)\|f\|_{\C^{m+1}_a}
\frac{(1+\|\vecr\|\sqrt y)^{2+2a-m}}{\|\vecr\|^{2+2a}}
\sum_{j=1}^\infty \min\Bigl(\frac1{j^2},\frac{\|\vecr\|\sqrt y}{j\langle jN\vecr\vecxi_1\rangle} \Bigr)
\Bigl(1+\log^+\Bigl(\frac{\langle jN\vecr\vecxi_1\rangle}{\|\vecr\|\sqrt yj}\Bigr)\Bigr)
\\\label{PROOF1STEP003}
\ll S_{1,0,0}(h)\|f\|_{\C^{m+1}_a}
\frac{(1+\|\vecr\|\sqrt y)^{2+2a-m}}{\|\vecr\|^{2+2a}}
\sum_{j=1}^\infty \min\Bigl(\frac1{j^2},\frac{\|\vecr\|\sqrt y}{j\langle j\vecr\vecxi_1\rangle} \Bigr)
\Bigl(1+\log^+\Bigl(\frac{\langle j\vecr\vecxi_1\rangle}{\|\vecr\|\sqrt yj}\Bigr)\Bigr).
\end{align}
(Indeed, the last bound holds even if the last sum over $j$ is restricted to $j=N,2N,3N,\ldots$.)
Finally we have to add this bound over all $R$ in the finite set
$\overline{\Gamma}'_\infty\backslash\overline{\Gamma}'/\tGamma $,
and over all $\veceta\in A_k$, which means that $\vecr$ runs through a subset of $\Z^k\setminus\{\bn\}$.
Comparing with the definition \eqref{MMXIDEF}, 
assuming now $a>\frac{k-1}2$ (viz., $2a+1>k$),
we immediately find that the sum of the bound in \eqref{PROOF1STEP003} over all $\vecr\in\Z^k$ with 
$0<\|\vecr\|<y^{-1/2}$ is
\begin{align}\label{PROOF1STEP004}
\ll S_{1,0,0}(h)\|f\|_{\C^{m+1}_a}\,\delta_{2a+1,\vecxi_1}(y^{-\frac12}).
\end{align}
On the other hand, 
for $\vecr$ with $\|\vecr\|\geq y^{-1/2}$, the sum over $j$ in \eqref{PROOF1STEP003} equals $\sum_{j=1}^\infty j^{-2}=\pi^2/6$,
and hence the sum of the bound in \eqref{PROOF1STEP003} over all such $\vecr$ is, assuming $m>k$
\begin{align*}
\ll S_{1,0,0}(h)\|f\|_{\C^{m+1}_a}\sum_{\substack{\vecr\in\Z^k\\(\|\vecr\|\geq y^{-1/2})}}\|\vecr\|^{-m}y^{1+a-\frac m2}
\ll S_{1,0,0}(h)\|f\|_{\C^{m+1}_a}\, y^{1+a-\frac k2}.
\end{align*}
However this is subsumed by the bound \eqref{PROOF1STEP004}, since
$a>\frac{k-1}2$ and $\delta_{\mu,\vecxi}(T)\geq(T+1)^{-1}$ for all $T>0$
(as is clear by taking $\vecr=\vece_1$, $j=1$ in \eqref{MMXIDEF}).
Hence for any fixed $a>\frac{k-1}2$ and $m>2a+2$, $m\in\Z$, we have proved that 
the contribution from the first line in the right hand side of \eqref{PROOF1STEP01}
to the left hand side of \eqref{AKBOUNDxi2zeroPROPres} is bounded by \eqref{PROOF1STEP004}.
This completes the proof of Proposition~\ref{AKBOUNDxi2zeroPROP}.
\end{proof}

\section{The contribution from $B_k$-orbits}
\label{BKSEC}

\subsection{The case $\vecxi_1=\bn$}
In this section we will bound the sum in the third line of \eqref{MAINSTEP1}.
We will assume $k\geq2$ throughout this section,
since $B_k$ is empty for $k=1$.
We will prove:
\begin{prop}\label{BKBOUNDPROP}
Let $k\geq2$.
Fix a real number $\ve>0$ and an integer $m\geq\max(8,2k+1)$.
For any $f\in \C_0^{3m+3}(X)$, $h\in\C^1(\R)$ with $S_{\infty,2+\ve,1}(h)<\infty$,
$\vecxi_2\in\R^k$ and $0<y\leq1$, we have
\begin{align}\notag
\sum_{\veceta\in B_k}\sum_{R\in \overline\Gamma'/\Gamma'} \sum_{T\in [R]}
e\left((\trans T\veceta)\cmatr{\bn}{\vecxi_2}\right)\int_\R
\wh f_R\left(T\matr{\sqrt y}{x/\sqrt y}0{1/\sqrt y},\veceta\right)h(x)\,dx
\hspace{60pt}
\\\label{BKBOUNDPROPres}
\ll_{m,\ve}\|f\|_{\C_0^{3m+3}}S_{\infty,2+\ve,1}(h)\Bigl(\delta_{m-k,\vecxi_2}(y^{-\frac12})+y^{\frac14-\ve}\Bigr).
\end{align}
\end{prop}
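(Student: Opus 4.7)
The argument generalises the treatment of $A_k$-orbits from Section~\ref{A sum}, but requires additional ingredients to extract cancelation in the B-orbit setting. Terms with $c=0$ are handled by a direct application of Lemma~\ref{PARTBDECAYFNLEM}: the hypothesis $\vecq\neq\bn$ for $\veceta\in B_k$ produces rapid decay in $\|T\|$, giving a contribution subsumed by the right-hand side of \eqref{BKBOUNDPROPres}.

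For $c\neq0$ I convert to Iwasawa coordinates via Lemma~\ref{IWASAWACOORDSLEM1} and confront the central new obstacle: unlike in the A-orbit case, $\wh f_R(\cdot,\veceta)$ is \emph{not} left $\Gamma'_\infty$-invariant when $\veceta\in B_k$; instead Lemma~\ref{BASICFOURIERLEM} yields the twisted transformation
\begin{align*}
\wh f_R(\smatr{1}{Nn}{0}{1}M,\veceta)=\wh f_R(M,\veceta_n),
\qquad
\veceta_n:=\cmatr{\vecq}{\vecr+Nn\vecq}.
\end{align*}
Parametrising each $T\in[R]$ with bottom-left entry $c$ as $T=\smatr{1}{Nn}{0}{1}T_0$ with $T_0\in\Gamma'_\infty\backslash[R;c]$ and $n\in\Z$, the phase factor decomposes as $e((\trans T\veceta)\cmatr\bn{\vecxi_2})=e((b_0\vecq+d\vecr)\vecxi_2)\cdot e(Nnd\vecq\vecxi_2)$, and the algebraic identity $(b_0\vecq+d\vecr)\vecxi_2+Nnd\vecq\vecxi_2=(b_0\vecq+d\vecr')\vecxi_2$, with $\vecr'=\vecr+Nn\vecq$ the second component of $\veceta_n$, allows the joint sum over $(\veceta,n)\in B_k\times\Z$ to be reindexed as a sum over $\veceta'=\veceta_n$, ranging over the disjoint union $\bigsqcup_{\veceta\in B_k}\{\veceta_n\col n\in\Z\}$ (which is disjoint by Lemma~\ref{STABLEM}). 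Absolute convergence of this reindexed sum follows from Lemma~\ref{PARTBDERDECAYTFNFROMCMNORMLEM2} applied with a Sobolev exponent $\beta$ slightly below $\tfrac12$, exploiting that $\|\veceta_n\|\gtrsim N|n|\cdot\|\vecq\|-\|\vecr\|$ for large $|n|$.

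After this reindexing, the remaining inner sum $\sum^{(1)}_{T_0}$ has phase $e(b_0\vecq\vecxi_2+d\vecr'\vecxi_2)$. Using the relation $b_0=(a_0d-1)/c$, I recast this as $e(d\alpha)\cdot F(d/c,a_0/c)$ with $\alpha=\vecr'\vecxi_2$, where the cross-term $e((a_0/c)\cdot d\vecq\vecxi_2)$ and the constant $e(-\vecq\vecxi_2/c)$ are absorbed into the definition of $F$. I then apply the refined exponential-sum estimate of Lemma~\ref{expsumlem2}; the use of the mixed $\L^{1,2}$-norm there (in place of the $\L^1$-norm in Lemma~\ref{lem:exp sum}) is essential, since $\wh f_R(\cdot,v,\theta;\veceta)$ enjoys only polynomial (rather than rapid) decay in its Iwasawa-$u$ variable and the absorbed oscillatory factor introduces extra $\vecq$-dependent costs upon differentiation in $x_2$. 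Summing the resulting bound over $c\in\Z^+$ via analogues of Lemmata~\ref{PROOF1LEM1} and \ref{SIGMASUMLEM}, and finally over $\vecq\in\Z^k\setminus\{\bn\}$, produces the Diophantine majorant $\delta_{m-k,\vecxi_2}(y^{-\frac12})$ from the terms with $\langle d\vecr'\vecxi_2\rangle$ small, together with the Kloosterman-type residual $y^{\frac14-\ve}$.

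The main obstacle is the careful bookkeeping of the polynomial decay of $\wh f_R$ in the Iwasawa-$u$ variable against the norm growth $\|\veceta_n\|$ along the reindexing direction, while preserving enough smoothness to apply Lemma~\ref{expsumlem2}; this balancing is what forces the requirements $m\geq\max(8,2k+1)$ and the large smoothness $f\in\C_0^{3m+3}(X)$.
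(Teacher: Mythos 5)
Your structural outline matches the paper at the level of the opening and closing moves (handling $c=0$ via Lemma~\ref{PARTBDECAYFNLEM}/\eqref{PARTBDECAYFNLEMREMEXPL2}, passing to Iwasawa coordinates via Lemma~\ref{IWASAWACOORDSLEM1}, invoking Lemma~\ref{expsumlem2} with its $\L^{1,2}$-norms, and summing over $c$ via Lemmata~\ref{PROOF1LEM1}, \ref{SIGMASUMLEM} and analogues), and your reindexing of the pair $(\veceta,n)$ to $\veceta_n=\cmatr{\vecq}{\vecr+Nn\vecq}$ via Lemma~\ref{BASICFOURIERLEM} and Lemma~\ref{STABLEM} is a correct, and rather natural, manipulation. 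However, the central step --- getting the inner sum over cosets into the form $\sum^{(1)}e(d\alpha)F(d/c,a/c)$ with an admissible $F$ --- is where your approach diverges from the paper's and, as written, breaks.

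The problem is the cross-term you absorb into $F$. Expanding $b_0=(a_0d-1)/c$ gives the factor $e\bigl(c\,x_1x_2\,\vecq\vecxi_2\bigr)$ (with $x_1=d/c$, $x_2=a_0/c$) multiplying the core integrand. Each $\partial_{x_1}$ brings out a factor of size $\asymp c|x_2|\,|\vecq\vecxi_2|\ll cN\|\vecq\|$, and each $\partial_{x_2}$ a factor $\asymp c|x_1|\,|\vecq\vecxi_2|$. Lemma~\ref{expsumlem2} requires $\partial_{x_1}^2\partial_{x_2}F$ in $\L^{1,2}$, so the resulting bounds carry an extra factor growing like $c^3\|\vecq\|^3$ (up to the $|x_1|$-weight, which the core decay can just barely absorb). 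Tracking this through the $\theta$-integral and the $c$-sum (where in the paper the margin is already thin: the effective exponent $\beta=\min(\tfrac12+a,\,m-4)$ needs only to exceed $\tfrac52$), one finds the $c$-sum no longer converges to a quantity tending to zero with $y$ --- schematically, the bound from \eqref{FDTHETALEMappl1}--\eqref{FDTHETALEMappl2} gets multiplied by a positive power of $c$, and after summing the leading power of $y$ becomes negative. A second, related obstruction is that the core $F_0(x_1,x_2)=\wh f_R(x_2-\tfrac{\sin2\theta}{2c^2y},\tfrac{\sin^2\theta}{c^2y},\theta;\veceta')\,h(-x_1+y\cot\theta)$ is \emph{not} $N$-periodic in $x_2$ for a fixed $\veceta'$: the shift $x_2\mapsto x_2+N$ in $\wh f_R$ transforms $\veceta'\mapsto\cmatr{\vecq}{\vecr'+N\vecq}$ by Lemma~\ref{BASICFOURIERLEM}, so $F$ does not descend to $\R\times(\R/N\Z)$ as Lemma~\ref{expsumlem2} requires; restoring periodicity forces you to re-bundle the $\veceta'$-sum with the $a_0$-shift, which undoes your reindexing.

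What you are missing is the key algebraic move that makes the cancelation step clean: first replace $\tfrac ac$ by $\tfrac bd$ in the $\wh f_R$-argument, at a cost controlled by Lemma~\ref{BPARTAPPRBOUNDLEM} (using $\tfrac ac-\tfrac bd=\tfrac1{dc}$ and \eqref{PARTBDECAYFNLEMREMEXPL2}), and then perform the index substitution $\langle a,b,c,d\rangle\mapsto\langle-b,d,-a,c\rangle$, which sends $[R]$ to $[\widetilde R]$ and turns the phase into $e\bigl((d\vecq+c\vecr)\vecxi_2\bigr)$ in the new variables. After this swap the outer variable is the new $c$, the Diophantine phase is simply $e(d\vecq\vecxi_2)$ with no cross-term, and the leftover degree of freedom (the new $a$) is folded into the auxiliary function $F_{c,\theta}$ of \eqref{FDTHETADEF}, which \emph{is} $N$-periodic in $x_2$ by construction. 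This is precisely the device that makes Lemma~\ref{expsumlem2} applicable with bounds that decay fast enough in $c$ (Lemmata~\ref{FDTHETADERLEM2MD}, \ref{FDTHETADERLEM2MD2}). Without it, I do not see how your version of the argument closes.
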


Note that Theorem \ref{MAINTHM1} follows from Proposition \ref{BKBOUNDPROP}
together with Proposition \ref{AKBOUNDPROP} and the relations \eqref{MAINSTEP1}, \eqref{MAINTERM}.
\vspace{5pt}

To start the proof of Proposition \ref{BKBOUNDPROP},
note that taking $\beta=\frac13$ in Lemma \ref{PARTBDERDECAYTFNFROMCMNORMLEM2}, replacing $m$ by $3m$
and using the remarks below \eqref{FRDEF}, we get
\begin{align}\label{PARTBDECAYFNLEMREMEXPL2}
\left|\partial_u^{\ell_1}\partial_v^{\ell_2}\partial_\theta^{\ell_3}\wh f_R\left(u,v,\theta;\veceta\right)\right|
\ll_{m,\ell}
\|f\|_{\C_0^{3m+\ell}}\,\|\veceta\|^{-m}\, v^{-\ell_1-\ell_2}\Bigl(\frac v{u^2+v^2+1}\Bigr)^{m/2},
\end{align}
for all $R\in\overline{\Gamma}'$, $u\in\R$, $v>0$, $\theta\in\R/2\pi\Z$,
$\veceta\in B_k$ and $\ell_1,\ell_2,\ell_3\geq0$, with $\ell=\ell_1+\ell_2+\ell_3$.

Any $T=\smatr abcd\in\overline{\Gamma}'$ with $c=0$ can be expressed as 
$T=\ve\smatr1n01$, where $\ve\in\{-1,1\}$ and $n\in\Z$,
and the contribution from these $T$ to the left hand side of \eqref{BKBOUNDPROPres} is
\begin{align}\label{PARTBc0pf1}
\ll\sum_{\veceta\in B_k}\sum_{\ve\in\{-1,1\}}\sum_{n\in\Z}
\int_\R \left|\wh f_R\left(\ve\matr{\sqrt y}{(x+n)/\sqrt y}0{1/\sqrt
y},\veceta\right)
h(x)\right|\,dx,
\end{align}
wherein $R$ %
denotes the unique element in our chosen system of
representatives
$\overline{\Gamma}'/\tGamma$
satisfying $R\equiv\ve\smatr1n01 \mod N$.
Using \eqref{PARTBDECAYFNLEMREMEXPL2} and $m\geq2k+1$, we get that the sum in consideration is
\begin{align}\notag
\ll \|f\|_{\C_0^{3m}}\sum_{\veceta\in B_k}\|\veceta\|^{-m}\int_\R\sum_{n\in\Z}
\frac{y^{m/2}}{(1+|x+n|)^{m}}\bigl| h(x)\bigr|\,dx
\ll \|f\|_{\C_0^{3m}}y^{\frac m2}\sum_{\veceta\in B_k}\|\veceta\|^{-m}
\int_\R \bigl| h(x)\bigr|\,dx
\\\label{PARTBc0pf2}
\ll\|f\|_{\C_0^{3m}}\|h\|_{\L^1}y^{\frac m2}.
\end{align}
This is clearly subsumed by the bound in \eqref{BKBOUNDPROPres}.

Hence from now on we focus on the terms for $T=\smatr abcd$ with $c\neq0$ in
the left hand side of \eqref{BKBOUNDPROPres}.
We will restrict to the case $c>0$; the case $c<0$ can be handled completely analogously.
We fix some $\veceta=\scmatr\vecq\vecr\in B_k$ and 
$R=\smatr{a_0}{b_0}{c_0}{d_0}\in\overline{\Gamma}'$.   %
Using Lemma \ref{IWASAWACOORDSLEM1}, the inner sum can be expressed as:
\begin{align}\label{BPART2}
\sum_{\substack{\smatr abcd\in[R]\\[2pt] c>0}}
e\bigl((b\vecq+d\vecr)\vecxi_2\bigr)
\int_0^\pi \wh f_R\left(\frac
ac-\frac{\sin2\theta}{2c^2y},\frac{\sin^2\theta}{c^2y},\theta;\veceta\right)
h\Bigl(-\frac dc+y\cot\theta\Bigr)\,\frac{y\,d\theta}{\sin^2\theta}.
\end{align}
Let us first record a trivial upper bound on \eqref{BPART2},
variants of which will be used repeatedly below.
\begin{lem}\label{BPARTTRIVBOUNDLEM}
For any $\veceta\in B_k$ and $R\in\overline{\Gamma}'/\tGamma$,
\begin{align}\label{BPARTTRIVBOUNDLEMRES}
\sum_{\substack{\smatr abcd\in[R]\\[2pt] c>0}}
\int_0^\pi \biggl|\wh f_R\left(\frac
ac-\frac{\sin2\theta}{2c^2y},\frac{\sin^2\theta}{c^2y},\theta;\veceta\right)
h\Bigl(-\frac dc+y\cot\theta\Bigr)\biggr|\,\frac{y\,d\theta}{\sin^2\theta}
\hspace{50pt}
\\ \notag
\ll_m\|f\|_{\C_0^{3m}} S_{\infty,2,0}(h)\|\veceta\|^{-m}.
\end{align}
\end{lem}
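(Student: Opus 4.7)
The plan is to apply the pointwise bound of Lemma \ref{PARTBDERDECAYTFNFROMCMNORMLEM2} with $\beta=\tfrac13$, $\ell_1=\ell_2=\ell_3=0$, and $m$ replaced by $3m$. Since $v/(u^2+v^2+1)=\|g\|^{-2}$ in Iwasawa coordinates (cf.\ Remark \ref{FROBINIWASAWAREM}, with $\|\cdot\|$ the Frobenius norm), this will give the clean estimate
\begin{align*}
|\wh f_R(g;\veceta)|\ll_m\|f\|_{\C_0^{3m}}\|\veceta\|^{-m}\|g\|^{-m}\qquad(g\in G',\;\veceta\in B_k).
\end{align*}
Inverting Lemma \ref{IWASAWACOORDSLEM1} and bounding $|h(x)|\le S_{\infty,2,0}(h)(1+|x|)^{-2}$, the claim will reduce to showing
\begin{align*}
\sum_{T\in[R],\,c>0}\int_\R\|Tg(x,y)\|^{-m}(1+|x|)^{-2}\,dx\ll_{N,m}1,
\end{align*}
where $g(x,y)=\smatr{\sqrt y}{x/\sqrt y}0{1/\sqrt y}$ and $T=\smatr abcd$.

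Next I will carry out the sum over $a$ for fixed $c,d$. Parametrising $T\in[R]$ by $c>0$ and $d$ in fixed residue classes mod $N$ (with $\gcd(c,d)=1$), and by $a$ running through an arithmetic progression of step $\mathrm{lcm}(c,N)$ (determined by $ad\equiv1\bmod c$ together with $T\equiv R\bmod N$), I will observe that the Iwasawa coordinate $v'$ of $Tg(x,y)$ equals $y/\alpha$ with $\alpha:=(cx+d)^2+c^2y^2$, independently of $a$, whereas $u'$ runs through an arithmetic progression of step $N/\gcd(c,N)$ as $a$ varies. A standard Riemann-sum estimate will then yield
\begin{align*}
\sum_a\|Tg(x,y)\|^{-m}\ll_{N,m}\frac{v'^{m/2}}{(v'^2+1)^{(m-1)/2}},
\end{align*}
which is $\le y^{m/2}/\alpha^{m/2}$ when $\alpha\ge y$ and $\le(\alpha/y)^{(m-2)/2}\le1$ when $\alpha\le y$ (the latter being possible only when $c^2y\le1$, i.e.\ $c\le1/\sqrt y$).

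Finally I will integrate over $x$ and sum over $c,d$. After changing variables $u=cx+d$ and applying the uniform Poisson-type estimate $\sum_{d\in\Z}(1+|(u-d)/c|)^{-2}\ll c$, and splitting the $x$-range according to $\alpha\gtrless y$ and the $c$-range according to $c\gtrless1/\sqrt y$, I anticipate contributions bounded by $O_m(\sqrt y)$ per $c\le1/\sqrt y$ (using, for the region $\alpha\le y$, that the set $\bigcup_d\{x:|cx+d|\le\sqrt y\}$ has density $\asymp\sqrt y$ in $\R$, and, for the region $\alpha\ge y$, the estimate $u^2+c^2y^2\asymp u^2$ on $|u|\ge\sqrt y$), together with contributions bounded by $O_m(y^{1-m/2}/c^{m-1})$ per $c\ge1/\sqrt y$ (via the full integral $\int_\R y^{m/2}(u^2+c^2y^2)^{-m/2}\,du\ll y^{1-m/2}/c^{m-1}$). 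Summing these will yield $O_{N,m}(1)$ for $m\ge3$, and the finite sum over $R\in\overline{\Gamma}'/\tGamma$ costs only a constant factor.

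The main obstacle will be the careful bookkeeping required to decompose both the $x$-integral and the $c$-sum into regimes each producing a $y$-independent bound: naive estimates on the integrand yield spurious $y^{1-m/2}$ factors that are not summable over $c$, and the Riemann-sum cancellation in the $a$-sum is what absorbs these factors.
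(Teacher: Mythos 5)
Your proposal is correct and follows essentially the same route as the paper's proof: both apply the pointwise bound of Lemma \ref{PARTBDERDECAYTFNFROMCMNORMLEM2} (with $\beta=1/3$, $m\mapsto 3m$), sum first over $a$ (obtaining $\min(v'^{m/2},v'^{1-m/2})$), then over $d$ (gaining a factor $c$ via the decay of $h$), then integrate and sum over $c$. The only difference is that you work in the $x$-variable by inverting Lemma \ref{IWASAWACOORDSLEM1}, with the substitution $u=cx+d$, while the paper stays in the $\theta$-variable; the two are related by exactly that change of variables, and all the intermediate estimates match up to notation.
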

\begin{proof}
We overestimate the sum by letting $\langle a,c,d\rangle$ run through \textit{all} integer triples with 
$c>0$ and $ad\equiv1\mod c$. Using \eqref{PARTBDECAYFNLEMREMEXPL2} we then get that the left hand side of 
\eqref{BPARTTRIVBOUNDLEMRES} is
\begin{align}\label{BPARTTRIVBOUNDLEMPF1}
\ll\|f\|_{\C_0^{3m}}\|\veceta\|^{-m}\sum_{c=1}^\infty\sum_{\substack{d\in\Z\\(d,c)=1}}\int_0^\pi
\sum_{n\in\Z}\Bigl(\frac v{u_n^2+v^2+1}\Bigr)^{m/2}\,
\biggl|h\Bigl(-\frac dc+y\cot\theta\Bigr)\biggr|\,\frac{y\,d\theta}{\sin^2\theta},
\end{align}
where $v=v(y,c,\theta)=\frac{\sin^2\theta}{c^2y}$ and
$u_n=u_n(y,c,d,\theta)=n+\frac{\alpha}c-\frac{\sin2\theta}{2c^2y}$,
with $\alpha=\alpha(c,d)$ being the unique integer between $1$ and $c$ satisfying $\alpha d\equiv1\mod c$.
But here
\begin{align}\label{BPARTTRIVBOUNDLEMPF4}
\sum_{n\in\Z}\Bigl(\frac v{u_n^2+v^2+1}\Bigr)^{m/2}
\ll\sum_{\substack{n\in\Z\\|u_n|\leq1+v}}\Bigl(\frac v{v^2+1}\Bigr)^{m/2}+
\sum_{\substack{n\in\Z\\|u_n|>1+v}}\Bigl(\frac v{u_n^2}\Bigr)^{m/2}
\ll\min\bigl(v^{\frac m2},v^{1-\frac m2}\bigr),
\end{align}
where we used the fact that $m>2k\geq2$.
Furthermore, if $S_{\infty,2,0}(h)<\infty$ then we have
\begin{align}\label{BPARTTRIVBOUNDLEMPFtomodify}
&\sum_{d\in\Z}\Bigl|h\Bigl(-\frac dc+y\cot\theta\Bigr)\Bigr|
\leq S_{\infty,2,0}(h)\sum_{d\in\Z}\Bigl(1+\Bigl|-\frac dc+y\cot\theta\Bigr|\Bigr)^{-2}
\ll S_{\infty,2,0}(h)c.
\end{align}
Hence we obtain that \eqref{BPARTTRIVBOUNDLEMPF1} is
\begin{align}\label{BPARTTRIVBOUNDLEMPF2}
\ll\|f\|_{\C_0^{3m}} S_{\infty,2,0}(h)\|\veceta\|^{-m}y\sum_{c=1}^\infty c
\int_0^\pi \min\bigl(v^{\frac m2},v^{1-\frac m2}\bigr)\,\frac{\,d\theta}{\sin^2\theta}.
\end{align}
However,
\begin{align}\label{BPARTTRIVBOUNDLEMPF3}
\int_0^\pi \min\bigl(v^{\frac m2},v^{1-\frac m2}\bigr)\,\frac{\,d\theta}{\sin^2\theta}
\ll \min\bigl((c\sqrt y)^{-1},(c\sqrt y)^{-m}\bigr),
\end{align}
as one verifies by treating the two cases $c^2y\geq1$ and
$c^2y<1$ separately, and in the latter case,
splitting the interval for $\theta$ into the parts $\{\theta:|\sin\theta|<c\sqrt y\}$ and
$\{\theta:|\sin\theta|\geq c\sqrt y\}$.
Now the lemma follows by using \eqref{BPARTTRIVBOUNDLEMPF3} in \eqref{BPARTTRIVBOUNDLEMPF2}.
\end{proof}

Adding the bound in Lemma \ref{BPARTTRIVBOUNDLEM} over all $R\in\overline{\Gamma}'/\tGamma$
and $\veceta\in B_k$ (again using $m>2k$),
we immediately see that the sum in the left hand side of \eqref{BKBOUNDPROPres}
stays \textit{bounded} as $y\to0$.
In order to show that the sum actually \textit{decays} as $y\to0$,
we have to establish cancellation in \eqref{BPART2}.

It will be convenient later (cf.\ Lemma \ref{FDTHETADERLEM2MD} below)
to note that we may restrict the integral in \eqref{BPART2} to those $\theta\in(0,\pi)$
which satisfy $y|\cot\theta|\leq1$.
Indeed, if $y|\cot\theta|>1$ then $|\sin\theta|<y$, and we note that
for any $c\geq1$ we have, with $v %
=\frac{\sin^2\theta}{c^2y}$ as in the proof of Lemma \ref{BPARTTRIVBOUNDLEM},
\begin{align*}
\int_{\substack{0<\theta<\pi\\ (|\sin\theta|<y)}}\min\bigl(v^{\frac m2},v^{1-\frac m2}\bigr)\,\frac{\,d\theta}{\sin^2\theta}
=\int_{\substack{0<\theta<\pi\\ (|\sin\theta|<y)}} v^{\frac m2}\,\frac{\,d\theta}{\sin^2\theta}
\ll_m c^{-m}y^{\frac m2-1}.
\end{align*}
Using this bound in place of \eqref{BPARTTRIVBOUNDLEMPF3} in the proof of Lemma \ref{BPARTTRIVBOUNDLEM},
we conclude that the contribution from $\theta$ with $y|\cot\theta|>1$ in \eqref{BPART2} is
$\ll\|f\|_{\C_0^{3m}} S_{\infty,2,0}(h)\|\veceta\|^{-m}y^{m/2}$.
Adding this over $R$ and $\veceta$ as in the left hand side of \eqref{BKBOUNDPROPres},
we again obtain a bound which is (by far) subsumed by the bound in \eqref{BKBOUNDPROPres}.

Let us also note that if $T=\smatr abcd$ in \eqref{BPART2} has $d=0$ then necessarily $c=1$,
and inspecting the proof of Lemma \ref{BPARTTRIVBOUNDLEM} we see that 
the contribution from all such $T$ in \eqref{BPART2} is
$\ll\|f\|_{\C_0^{3m}} S_{\infty,2,0}(h)\|\veceta\|^{-m}\sqrt y$.
This gives a contribution $\ll\|f\|_{\C_0^{3m}} S_{\infty,2,0}(h)\sqrt y$ in the left hand side of \eqref{BKBOUNDPROPres},
which is ok.
Hence from now on we may consider the sum in \eqref{BPART2} restricted by $d\neq0$.

Next we will make use of the approximation $\frac ac=\frac{1+bc}{dc}\approx\frac bd$.
The error in doing so %
is controlled by the following lemma.
\begin{lem}\label{BPARTAPPRBOUNDLEM}
Assuming that $m\geq4$, we have
\begin{align}\notag
\sum_{\substack{\smatr abcd\in[R]\\[2pt] c>0,\: d\neq0}}
\int_0^\pi \left|
\wh f_R\left(\frac ac-\frac{\sin2\theta}{2c^2y},\frac{\sin^2\theta}{c^2y},\theta;\veceta\right)
-\wh f_R\left(\frac bd-\frac{\sin2\theta}{2c^2y},\frac{\sin^2\theta}{c^2y},\theta;\veceta\right)\right|
\hspace{50pt}
\\[-10pt] \label{BPARTAPPRBOUNDLEMRES}
\times
\biggl|h\Bigl(-\frac dc+y\cot\theta\Bigr)\biggr|\,\frac{y\,d\theta}{\sin^2\theta}
\ll_m\|f\|_{\C_0^{3m+1}} S_{\infty,2,0}(h)\frac{\sqrt y\log(2+y^{-1})}{\|\veceta\|^{m}}.
\end{align}
\end{lem}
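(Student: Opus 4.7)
The key identity to exploit is $\frac{a}{c}-\frac{b}{d}=\frac{ad-bc}{cd}=\frac{1}{cd}$, valid for any $T=\smatr abcd\in[R]$ with $d\neq 0$, since $\det T=1$. By the mean value theorem applied in the first argument of $\wh f_R$,
\begin{align*}
\left|\wh f_R\Bigl(\tfrac{a}{c}-\tfrac{\sin2\theta}{2c^2y},v,\theta;\veceta\Bigr)-\wh f_R\Bigl(\tfrac{b}{d}-\tfrac{\sin2\theta}{2c^2y},v,\theta;\veceta\Bigr)\right|\leq\tfrac{1}{|cd|}\sup_{u\in I_{c,d,\theta}}\bigl|\partial_u\wh f_R(u,v,\theta;\veceta)\bigr|,
\end{align*}
where $I_{c,d,\theta}$ is an interval of length $\tfrac{1}{|cd|}\leq 1$. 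I would then apply \eqref{PARTBDECAYFNLEMREMEXPL2} with $\ell_1=1$, $\ell_2=\ell_3=0$ to bound the derivative: compared with the pure decay bound \eqref{PARTBDECAYFNLEMREMEXPL} used in the proof of Lemma \ref{BPARTTRIVBOUNDLEM}, this produces an extra factor $v^{-1}$ and raises the $\C$-norm requirement on $f$ from $\C_0^{3m}$ to $\C_0^{3m+1}$, matching the statement.

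From here the plan is to rerun the proof of Lemma \ref{BPARTTRIVBOUNDLEM}, carefully tracking the two new factors $\tfrac{1}{|cd|}$ and $v^{-1}$. The analogue of \eqref{BPARTTRIVBOUNDLEMPF4} becomes
\begin{align*}
\sum_{n\in\Z}v^{-1}\Bigl(\tfrac{v}{u_n^2+v^2+1}\Bigr)^{m/2}\ll\min\bigl(v^{\frac{m}{2}-1},\,v^{-\frac{m}{2}}\bigr)\qquad(m\geq 4),
\end{align*}
where the supremum over the short interval $I_{c,d,\theta}$ is absorbed into the implied constant because each such interval meets only a bounded number of cells of the shifted unit lattice $\{u_n\}$. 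Integrating over $\theta\in(0,\pi)$, split at $|\sin\theta|=c\sqrt y$ (and restricted to $y|\cot\theta|\leq 1$, as justified just above the statement of this lemma), a case analysis parallel to \eqref{BPARTTRIVBOUNDLEMPF3} yields
\begin{align*}
\int_0^\pi\min\bigl(v^{\frac{m}{2}-1},\,v^{-\frac{m}{2}}\bigr)\,\tfrac{y\,d\theta}{\sin^2\theta}\ll\min\bigl(c^{-1}\sqrt y,\,c^{2-m}y^{2-m/2}\bigr).
\end{align*}
For the $d$-sum, using $|h(x)|\leq S_{\infty,2,0}(h)(1+x^2)^{-1}$ and splitting at $|d|=2c$, the contribution is $\sum_{d\neq 0,(d,c)=1}|h(-d/c+y\cot\theta)|/|d|\ll S_{\infty,2,0}(h)\log(c+2)$, the logarithm arising from the harmonic range $|d|\leq 2c$.

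Combining these estimates, the total is dominated by $\|f\|_{\C_0^{3m+1}}S_{\infty,2,0}(h)\|\veceta\|^{-m}$ times $\sum_{c\geq 1}\tfrac{\log(c+2)}{c}\min(c^{-1}\sqrt y,\,c^{2-m}y^{2-m/2})$. Splitting at $c\asymp y^{-1/2}$, the small-$c$ range contributes $O(\sqrt y)$ (since $\sum\log(c+2)/c^2$ converges), while the large-$c$ range contributes $O(y\log(y^{-1}))$; both are comfortably bounded by $\sqrt y\log(2+y^{-1})$, yielding \eqref{BPARTAPPRBOUNDLEMRES}. The whole argument is a direct variant of the proof of Lemma \ref{BPARTTRIVBOUNDLEM} and presents no serious obstacle; the only mildly delicate point is ensuring that the extra $v^{-1}$-weight from differentiation does not worsen the $\theta$-integral by more than the expected factor, which is precisely why the hypothesis $m\geq 4$ (keeping $\min(v^{m/2-1},v^{-m/2})$ integrable against $d\theta/\sin^2\theta$ in the appropriate sense) is imposed.
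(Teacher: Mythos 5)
Your proof is correct and follows essentially the same route as the paper: use $\frac ac-\frac bd=\frac1{cd}$, the mean value theorem, and \eqref{PARTBDECAYFNLEMREMEXPL2} with $\ell_1=1$ to pick up the extra $v^{-1}$ at the cost of one derivative, then redo the Lemma~\ref{BPARTTRIVBOUNDLEM} computation with the modified $\theta$-integral $\ll\min\bigl((c\sqrt y)^{-1},(c\sqrt y)^{2-m}\bigr)$. The only minor variation is that the paper discards the factor $|d|^{-1}$ via $|d|^{-1}\leq1$ and reuses \eqref{BPARTTRIVBOUNDLEMPFtomodify} to get a factor $c$ from the $d$-sum, whereas you retain $|d|^{-1}$ and extract a harmonic $\log(c+2)$; both yield the stated bound (your version in fact has a little extra room to spare).
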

\begin{proof}
For any $\smatr abcd\in\overline\Gamma'$ with $c,d\neq0$ we have,
letting $J$ be the interval with endpoints $\frac ac-\frac{\sin2\theta}{2c^2y}$ and $\frac bd-\frac{\sin2\theta}{2c^2y}$,
and using $\frac ac-\frac bd=\frac1{dc}$ and \eqref{PARTBDECAYFNLEMREMEXPL2}, 
\begin{align*}
&\left|\wh f_R\left(\frac ac-\frac{\sin2\theta}{2c^2y},\frac{\sin^2\theta}{c^2y},\theta;\veceta\right)
-\wh f_R\left(\frac bd-\frac{\sin2\theta}{2c^2y},\frac{\sin^2\theta}{c^2y},\theta;\veceta\right)\right|
\\
&\ll\frac1{|dc|}\sup_{x\in J}\left|\partial_{x}\wh f_R\left(x,\frac{\sin^2\theta}{c^2y},\theta;\veceta\right)\right|
\ll\|f\|_{\C_0^{3m+1}}\|\veceta\|^{-m} |c|^{-1} v^{-1}\Bigl(\frac v{u^2+v^2+1}\Bigr)^{m/2},
\end{align*}
with $v=\frac{\sin^2\theta}{c^2y}$ and $u=\frac ac-\frac{\sin2\theta}{2c^2y}$.
(We used the crude bound $|d|^{-1}\leq1$,
and the fact that $(u+\xi)^2+1\asymp u^2+1$ for all $u\in\R$, $|\xi|\leq1$.)
Hence, arguing as in the proof of Lemma \ref{BPARTTRIVBOUNDLEM},
and using the same notation ``$u_n$'' as there,
we find that the left hand side of \eqref{BPARTAPPRBOUNDLEMRES} is
\begin{align*}
\ll\|f\|_{\C_0^{3m+1}}\|\veceta\|^{-m}\sum_{c=1}^\infty c^{-1}\sum_{\substack{d\in\Z\\(d,c)=1}}\int_0^\pi
\sum_{n\in\Z}v^{-1}\Bigl(\frac v{u_n^2+v^2+1}\Bigr)^{m/2}\,
\biggl|h\Bigl(-\frac dc+y\cot\theta\Bigr)\biggr|\,\frac{y\,d\theta}{\sin^2\theta},
\end{align*}
The rest of the proof is very similar to Lemma \ref{BPARTTRIVBOUNDLEM}, except that we now use
\begin{align*}
\int_0^\pi \min\bigl(v^{\frac m2-1},v^{-\frac m2}\bigr)\,\frac{\,d\theta}{\sin^2\theta}
\ll \min\bigl((c\sqrt y)^{-1},(c\sqrt y)^{2-m}\bigr)
\end{align*}
in place of \eqref{BPARTTRIVBOUNDLEMPF3}.
\end{proof}

Adding the bound in Lemma \ref{BPARTAPPRBOUNDLEM} over all $R\in\overline{\Gamma}'/\tGamma$
and $\veceta\in B_k$ gives a bound\linebreak
$\|f\|_{\C_0^{3m+1}} S_{\infty,2,0}(h)\sqrt y\log(2+y^{-1})$,
and this is subsumed by the bound in \eqref{BKBOUNDPROPres}.
Hence from now on we may replace $\frac ac$ by $\frac bd$ in \eqref{BPART2}.
Restricting the summation to $d>0$
(the case $d<0$ being completely analogous),
and writing $I_y:=\{\theta\in(0,\pi)\col y|\cot\theta|\leq1\}$,
the resulting sum is:
\begin{align}\notag
&\sum_{\substack{\smatr abcd\in[R]\\[2pt] c>0,\: d>0}}
e\bigl((b\vecq+d\vecr)\vecxi_2\bigr)
\int_{I_y} \wh f_R\left(\frac
bd-\frac{\sin2\theta}{2c^2y},\frac{\sin^2\theta}{c^2y},\theta;\veceta\right)
h\Bigl(-\frac dc+y\cot\theta\Bigr)\,\frac{y\,d\theta}{\sin^2\theta}.
\end{align}
Replacing $\langle a,b,c,d\rangle$ by $\langle -b,d,-a,c\rangle$ in this sum gives,
with $\widetilde R:=\smatr{-c_0}{-a_0}{d_0}{b_0}$:
\begin{align}\notag
&=\sum_{\substack{\smatr abcd\in[\widetilde R]\\[2pt] a<0,\: c>0}}
e\bigl((d\vecq+c\vecr)\vecxi_2\bigr)
\int_{I_y} \wh f_R\left(\frac dc-\frac{\sin2\theta}{2a^2y},\frac{\sin^2\theta}{a^2y},\theta;\veceta\right)
h\Bigl(\frac ca+y\cot\theta\Bigr)\,\frac{y\,d\theta}{\sin^2\theta}
\\\label{BPART5}
&=\sum_{\substack{c\equiv d_0\mod N\\ c>0}}e(c\vecr\vecxi_2)\int_{I_y}\:\:\sumtone e(d\vecq\vecxi_2) 
F_{c,\theta}\Bigl(\frac dc,\frac ac\Bigr)\,\frac{y\,d\theta}{\sin^2\theta},
\end{align}
where $\sum^{(\widetilde 1)}$ is the same as $\sum^{(1)}$ (cf.\ p.\ \pageref{SUMONEDEF}) but using $\widetilde R$
in place of $R$, and for any $c\in\Z^+$ and $\theta\in(0,\pi)$, $F_{c,\theta}(x_1,x_2)$
is the function on $\R\times(\R/N\Z)$ given by
\begin{align}\label{FDTHETADEF}
F_{c,\theta}(x_1,x_2):=\sum_{\substack{s\in x_2+N\Z\\ s<0}}
\wh f_R\biggl(x_1-\frac{\sin2\theta}{2yc^2s^2},\frac{\sin^2\theta}{yc^2s^2},\theta;\veceta\biggr)
h\Bigl(\frac1s+y\cot\theta\Bigr).
\end{align}
(Note that $F_{c,\theta}$ also depends on $N,y,R,\veceta$.)
Using $|\wh f_R(u,v,\theta;\veceta)|\ll\min(v,v^{-1})^{m/2}$, cf.\ \eqref{PARTBDECAYFNLEMREMEXPL2},
we see that the sum defining $F_{c,\theta}(x_1,x_2)$ is %
absolutely convergent,
and that $F_{c,\theta}(x_1,x_2)$ is continuous on $\R\times(\R/N\Z)$.
If $F_{c,\theta}$ is sufficiently differentiable with the first few derivatives being in $\L^{1,2}$, 
then we may apply Lemma \ref{expsumlem2},
to see that, for any $0<\ve<\frac12$,
\begin{align}\notag
\sumtone e(d\vecq\vecxi_2) F_{c,\theta}\Bigl(\frac dc,\frac ac\Bigr)
\ll_{\ve}(\|F_{c,\theta}\|_{\L^1}+\|\partial_{x_1}^2F_{c,\theta}\|_{\L^1})
\sum_{\ell\in\Z}\frac{(c,\lfloor cN\vecq\vecxi_2+\ell\rfloor)}{1+\ell^2}
\hspace{30pt}
\\\label{BPART4}
+\bigl(\|F_{c,\theta}\|_{\L^{1,2}}+\|\partial_{x_1}^2F_{c,\theta}\|_{\L^{1,2}}\bigr)^{\frac12-\ve}
\bigl(\|\partial_{x_2}F_{c,\theta}\|_{\L^{1,2}}+\|\partial_{x_1}^2\partial_{x_2}F_{c,\theta}\|_{\L^{1,2}}\bigr)^{\frac12+\ve}
\sigma(c)^{3/2}\sqrt{c}.
\end{align}

\vspace{4pt}

Bounds on the $\L^{1,2}$-norms of derivatives of $F_{c,\theta}$ are provided by the following lemma.

\begin{lem}\label{FDTHETADERLEM2MD}
For any integer $\ell$ with $1\leq \ell<\frac12(m-1)$, we have
$F_{c,\theta}\in\C^\ell(\R\times(\R/N\Z))$ 
provided that $f\in\C_0^{3m+\ell}(X)$ and $h\in\C^\ell(\R)$ with $S_{\infty,0,\ell}(h)<\infty$.
Furthermore for any integers $\ell_1,\ell_2\geq0$, and $a\in\R_{\geq0}$, $0<\ve<1$,
if $\ell=\ell_1+\ell_2$, $m>2\ell+1$,
$f\in\C_0^{3m+\ell}$, $h\in\C^{\ell_2}(\R)$, $S_{\infty,a,\ell_2}(h)<\infty$ and $y|\cot\theta|\leq1$, then
\begin{align}\notag
\biggl(\int_{\R/N\Z}\biggl(\int_{\R}
\biggl|\frac{\partial^{\ell_1+\ell_2}}{\partial x_1^{\ell_1}\partial x_2^{\ell_2}}F_{c,\theta}(x_1,x_2)\biggr|\,
\,dx_1\biggr)^2\,dx_2\biggr)^{1/2}
\ll_{m,\ell,\ve}
\|f\|_{\C_0^{3m+\ell}}S_{\infty,a,\ell_2}(h)\|\veceta\|^{-m}
\hspace{40pt}
\\\label{FDTHETADERLEM2MDRES1}
\times \begin{cases}
|\sin\theta|^{-\ell_2}\bigl(\frac{|\sin\theta|}{c\sqrt y}\bigr)^{1-\ell_2+\frac{\ve}2}
&\text{if }\: c\sqrt y\leq|\sin\theta|
\\[10pt]
|\sin\theta|^{-\ell_2}\bigl(\frac{|\sin\theta|}{c\sqrt y}\bigr)^{\frac12+a-\ell_2}
\left\{1+\bigl(\frac{|\sin\theta|}{c\sqrt y}\bigr)^{m-\frac12-a-2\ell_1+\ell_2}\right\}
&\text{if }\: |\sin\theta|\leq c\sqrt y\leq1
\\[10pt]
\bigl(\frac{|\sin\theta|}{c\sqrt y}\bigr)^{\frac12+a-2\ell_2}
\left\{1+|\sin\theta|^{-\ell_2}\bigl(\frac{|\sin\theta|}{c\sqrt y}\bigr)^{m-\frac12-a-2\ell_1+2\ell_2}\right\}
&\text{if }\: c\sqrt y\geq1.
\end{cases}
\end{align}
\end{lem}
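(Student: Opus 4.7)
The plan is to differentiate $F_{c,\theta}$ term-by-term, bound each summand pointwise using \eqref{PARTBDECAYFNLEMREMEXPL2} together with the weighted sup bound on $h$ from $S_{\infty,a,\ell_2}(h)$, and then carry out the $L^{1,2}$-analysis by integrating in $x_1$ first and handling the resulting $L^2$-in-$x_2$ norm of a periodic sum. Writing the summand as $G(x_1, s) = \wh f_R(x_1-A(s), B(s), \theta; \veceta)\, h(C(s))$ with $A(s) = \sin 2\theta/(2yc^2s^2)$, $B(s) = \sin^2\theta/(yc^2s^2)$, $C(s) = s^{-1}+y\cot\theta$, one has $F_{c,\theta}(x_1, x_2) = \sum_{n\leq -1} G(x_1, x_2+Nn)$, and $\partial_{x_2}=\partial_s$ on each summand. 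I would first verify $F_{c,\theta}\in\C^\ell$ by checking uniform convergence on compacta of the formally differentiated series. The chain rule expresses $\partial_s^{\ell_2}G$ as a finite combination of terms $\partial_u^{i_1}\partial_v^{i_2}\wh f_R \cdot h^{(i_3)}(C(s)) \cdot P(s)$ with $i_1+i_2+i_3=\ell_2$, where $P(s)$ is a polynomial in derivatives of $A,B,C$ satisfying $|P(s)|\ll|s|^{-\ell_2}$ up to factors of $v:=B(s)$, since $|A'|,|B'|\ll v/|s|$ and $|C'|=|s|^{-2}$. Combined with \eqref{PARTBDECAYFNLEMREMEXPL2} this gives the pointwise bound
\[
\bigl|\partial_{x_1}^{\ell_1}\partial_s^{\ell_2}G(x_1,s)\bigr|\ll \|f\|_{\C_0^{3m+\ell}}\,S_{\infty,a,\ell_2}(h)\,\|\veceta\|^{-m}\,v^{-\ell_1}|s|^{-\ell_2}\Bigl(\tfrac{v}{u^2+v^2+1}\Bigr)^{m/2}\bigl(1+|C(s)|\bigr)^{-a},
\]
with $u=x_1-A(s)$. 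Convergence as $|s|\to\infty$ follows from the decay of $v^{m/2}$ (requiring $m>2\ell+1$), and near $|s|\to 0$ from $(1+|C(s)|)^{-a}\asymp|s|^a$ (using $y|\cot\theta|\leq 1$ from $\theta\in I_y$), which tempers the $v^{-m/2}$-blow-up of $\wh f_R$ when $v\geq 1$.

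Next, absorbing absolute values into the sum reduces the $L^{1,2}$-norm of $\partial_{x_1}^{\ell_1}\partial_{x_2}^{\ell_2}F_{c,\theta}$ to $\|\Psi\|_{L^2(\R/N\Z)}$, where $\Psi(x_2):=\sum_{s<0,\,s\equiv x_2\bmod N}\phi(s)$ and $\phi(s)$ is the $x_1$-integral of the pointwise bound above. Using $\int_\R(u^2+v^2+1)^{-m/2}\,du\ll(v^2+1)^{(1-m)/2}$ for $m>1$, one gets
\[
\phi(s)\ll\|f\|_{\C_0^{3m+\ell}}\,S_{\infty,a,\ell_2}(h)\,\|\veceta\|^{-m}\,|s|^{-\ell_2}(1+|C(s)|)^{-a}\,v^{m/2-\ell_1}(v^2+1)^{(1-m)/2}.
\]
Setting $s_0:=|\sin\theta|/(c\sqrt y)$, so $v(s)=(s_0/s)^2$, the factor $v^{m/2-\ell_1}(v^2+1)^{(1-m)/2}$ is unimodal in $|s|$ with peak at $|s|=s_0$. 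The $L^2$-in-$x_2$ norm of $\Psi$ is then estimated either by the straightforward bound $\|\Psi\|_{L^2}\leq\sqrt N\,\|\Psi\|_{L^\infty}$, controlling $\Psi$ pointwise by a Sobolev-type comparison of the discrete sum with $\int_{-\infty}^0\phi(s)\,ds$, or in the delicate case by Parseval applied to the periodization of $\phi\cdot\chi_{(-\infty,0)}$.

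Finally, the three cases in \eqref{FDTHETADERLEM2MDRES1} correspond to (i) $s_0\geq 1$, where the peak lies in $|s|\geq 1$ and the sum scales like $s_0$ times polynomial corrections; (ii) $s_0\leq 1\leq 1/(c\sqrt y)$, where the peak falls at $|s|\leq 1$ and the $h$-decay factor $|s|^a$ competes with the $v^{-m/2}$-tail of $\wh f_R$, giving the two competing terms in the middle line; (iii) $c\sqrt y\geq 1$, where the global factor $|\sin\theta|^{-\ell_2}$ is extracted from the chain-rule bound $|s|^{-\ell_2}$ and the $|s|$-integration shifts further. Each case reduces to a one-variable estimate analogous to Lemma~\ref{TECHNICALINTEGRALBOUNDLEM}, performed for $|s|$ rather than $\theta$, and carefully tracking the factor $(1+|C(s)|)^{-a}$ near $s=0$. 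The main obstacle is obtaining the $\ve$-flexibility in case (i) (the $+\tfrac{\ve}{2}$ in the exponent): this should come from estimating the $L^2$-in-$x_2$ norm via Parseval rather than the crude $\sqrt N\cdot L^\infty$-bound, with the $\ve$ used to ensure convergence of a sum of squared Fourier coefficients that would otherwise be critically divergent. Collecting the three cases produces \eqref{FDTHETADERLEM2MDRES1}.
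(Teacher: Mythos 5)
Your overall strategy — differentiate term by term, apply the decay bound \eqref{PARTBDECAYFNLEMREMEXPL2}, integrate in $x_1$, control the $\L^2$-in-$x_2$ norm of the periodic sum, and finish with a one-variable estimate in $|s|$ — is the same as the paper's. However, there is a genuine error in your pointwise bound that propagates through the argument.

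You claim $|A'(s)|,|B'(s)|\ll v/|s|$. This is false for $A$. Since $A(s)=\sin2\theta/(2yc^2s^2)=\cot\theta\cdot B(s)$, one has $A'(s)=-2A(s)/s=-2\cot\theta\,v/s$, so $|A'(s)|\asymp|\cot\theta|\,v/|s|$, and the relevant range $\theta\in I_y$ only guarantees $|\cot\theta|\leq y^{-1}$, not $|\cot\theta|\ll1$. Each application of $\partial_s$ that hits the $A$-slot of $\wh f_R$ therefore produces an uncontrolled factor of $\cot\theta$. When you combine $(A')^\alpha(B')^\beta$ with the $v^{-(\ell_1+\alpha)-\beta}$ gain from \eqref{PARTBDECAYFNLEMREMEXPL2}, the $v$-powers cancel as you describe, but a factor $(\cos\theta)^\alpha|\sin\theta|^{-\alpha}$ survives and does not. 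Since $\alpha$ can be as large as $\ell_2$, the correct pointwise bound carries an additional factor of size up to $|\sin\theta|^{-\ell_2}$, matching the $\sum_{\gamma}|\sin\theta|^{\gamma-\ell_2}|s|^{-\gamma}|h^{(\gamma)}(\cdot)|$ structure the paper obtains after the chain rule. This factor is visible in all three cases of \eqref{FDTHETADERLEM2MDRES1}; your stated pointwise bound, which omits it, is strictly stronger than what is true, and your later remark that "$|\sin\theta|^{-\ell_2}$ is extracted from the chain-rule bound $|s|^{-\ell_2}$" does not repair this, since those are independent factors.

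A secondary point: the paper does not use Parseval or the crude $\sqrt N\,\|\cdot\|_{\L^\infty}$ bound for the $\L^2$-in-$x_2$ step. It uses a weighted Cauchy--Schwarz,
$\bigl(\sum_n a_n\bigr)^2\leq\bigl(\sum_n a_n^2 w_n\bigr)\bigl(\sum_n w_n^{-1}\bigr)$ with $w_n=(1+|s_n|)^{1+\ve}$,
followed by unfolding to a single integral over $s<0$. This is where the $\ve$ in \eqref{FDTHETADERLEM2MDRES1} is incurred, uniformly across all three cases and not specifically in case (i) as you suggest. Your Parseval idea is not obviously wrong, but it is sketchier and would require controlling aliasing terms; the weighted Cauchy--Schwarz is both simpler and exactly what the $\L^{1,2}$ norm is designed to exploit.
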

\begin{proof}
By repeated differentiation we obtain, for any $\ell_1\geq0$ and $\ell_2\geq1$,
\begin{align}\notag
&\frac{\partial^{\ell}}{\partial x_1^{\ell_1}\partial s^{\ell_2}}
\biggl(\wh f_R\biggl(x_1-\frac{\sin2\theta}{2yc^2s^2},\frac{\sin^2\theta}{yc^2s^2},\theta;\veceta\biggr)
h\Bigl(\frac1s+y\cot\theta\Bigr)\biggr)
\\\label{FDTHETADERLEMPF1}
&=\sum_{1\leq {\alpha}+{\beta}+{\gamma}\leq\ell_2}K_{{\alpha},{\beta},{\gamma}}^{(\ell_2)}\,\frac{(\sin2\theta)^{\alpha}(\sin\theta)^{2{\beta}}}{(yc^2s^2)^{{\alpha}+{\beta}}s^{{\gamma}+\ell_2}}
\bigl[\partial_1^{\ell_1+{\alpha}}\partial_2^{\beta}\wh f_R\bigr]
\biggl(x_1-\frac{\sin2\theta}{2yc^2s^2},\frac{\sin^2\theta}{yc^2s^2},\theta;\veceta\biggr)
h^{({\gamma})}\Bigl(\frac1s+y\cot\theta\Bigr),
\end{align}
where $\langle {\alpha},{\beta},{\gamma}\rangle$ runs through all triples of nonnegative integers satisfying 
$1\leq \alpha+{\beta}+{\gamma}\leq\ell_2$,
each coefficient $K_{a,{\beta},{\gamma}}^{(\ell_2)}$ is an integer,
and $\partial_1$ and $\partial_2$ denote differentiation with respect to the first and second argument of $\wh f_R$.
Using \eqref{PARTBDECAYFNLEMREMEXPL2},
we find that the absolute value of \eqref{FDTHETADERLEMPF1} is
\begin{align}\label{FDTHETADERLEMPF2}
\ll_{m,\ell}\|f\|_{\C_0^{3m+\ell}}\|\veceta\|^{-m}
\biggl(\Bigl(x_1-\frac{\sin2\theta}{2yc^2s^2}\Bigr)^2+\Bigl(\frac{\sin^2\theta}{yc^2s^2}\Bigr)^2+1\biggr)^{-\frac m2}
\Bigl(\frac{\sin^2\theta}{yc^2s^2}\Bigr)^{\frac m2-\ell_1}
|s|^{-\ell_2}
\\\notag
\times\sum_{1\leq \alpha+\beta+\gamma\leq\ell_2}|\sin\theta|^{-\alpha}|s|^{-\gamma}
\Bigl|h^{(\gamma)}\Bigl(\frac1s+y\cot\theta\Bigr)\Bigr|.
\end{align}
Here the sum in the second line is $\asymp_{\ell_2}\sum_{\gamma=0}^{\ell_2}|\sin\theta|^{\gamma-\ell_2}
|s|^{-\gamma}|h^{(\gamma)}(s^{-1}+y\cot\theta)|$.
On the other hand for %
$\ell_2=0$,
the left hand side of \eqref{FDTHETADERLEMPF1} trivially equals
$[\partial_1^{\ell_1}\wh f_R](\ldots)\,h(s^{-1}+y\cot\theta)$,
and thus the bound in \eqref{FDTHETADERLEMPF2} is again valid,
with the last sum replaced by ``$|h(s^{-1}+y\cot\theta)|$''.

Now assume $S_{\infty,0,\ell_2}(h)<\infty$. 
Then the bound in \eqref{FDTHETADERLEMPF2} is $\ll|s|^{-m+2\ell_1-\ell_2}$ for $|s|\geq1$ and 
$\ll|s|^{m+2\ell_1-2\ell_2}$ for $0<|s|\leq1$, uniformly with respect to $x_1\in\R$ 
when keeping all other parameters fixed.
Hence if $m>\max(2\ell_1-\ell_2+1,-2\ell_1+2\ell_2)$, then the sum obtained by 
term-wise application of $\partial^\ell/(\partial x_1^{\ell_1}\partial x_2^{\ell_2})$ %
in \eqref{FDTHETADEF} is absolutely convergent, uniformly with respect to $(x_1,x_2)\in\R\times(\R/N\Z)$,
and defines a continuous function of $(x_1,x_2)$.
(The continuity along the line $x_2=0$ holds since the bound in \eqref{FDTHETADERLEMPF2} tends to $0$ as $|s|\to0$.)
In particular, if $m>2\ell+1$ and $S_{\infty,0,\ell}(h)<\infty$, then it follows that $F_{c,\theta}\in\C^\ell$
and that $(\partial^{\ell_1+\ell_2}/(\partial x_1^{\ell_1}\partial x_2^{\ell_2}))F_{c,\theta}$
may be computed by term-wise differentiation in the sum in \eqref{FDTHETADEF}, for any $\ell_1,\ell_2\geq0$
with $\ell_1+\ell_2\leq\ell$.

We now turn to the proof of the bound \eqref{FDTHETADERLEM2MDRES1}.
Using $S_{\infty,a,\ell_2}(h)<\infty$, $y|\cot\theta|\leq1$ and \eqref{FDTHETADERLEMPF2},
we see that \eqref{FDTHETADERLEMPF1} is
$\ll_{m,\ell}B(x_1,s)$, where
\begin{align}\notag
B(x_1,s):=\|f\|_{\C_0^{3m+\ell}}S_{\infty,a,\ell_2}(h)\|\veceta\|^{-m}
\biggl(\Bigl(x_1-\frac{\sin2\theta}{2yc^2s^2}\Bigr)^2+\Bigl(\frac{\sin^2\theta}{yc^2s^2}\Bigr)^2+1\biggr)^{-\frac m2}
\Bigl(\frac{\sin^2\theta}{yc^2s^2}\Bigr)^{\frac m2-\ell_1}
|s|^{-\ell_2}
\\\label{FDTHETADERLEM2PF1}
\times\sum_{\gamma=0}^{\ell_2}|\sin\theta|^{\gamma-\ell_2}|s|^{-\gamma+a}(1+|s|)^{-a}.
\end{align}
This bound is also valid when $\ell_2=0$.
Next, using the fact that $\int_{\R}(u^2+A)^{-m/2}\,du\ll A^{(1-m)/2}$
for all $A\geq1$, we have $\int_\R B(x_1,s)\,dx_1\ll_{m,\ell} B_1(s)$, where
\begin{align}\notag
B_1(s):=\|f\|_{\C_0^{3m+\ell}}S_{\infty,a,\ell_2}(h)\|\veceta\|^{-m}
\left(1+\Bigl(\frac{\sin^2\theta}{yc^2s^2}\Bigr)^2\right)^{\frac{1-m}2}
\Bigl(\frac{\sin^2\theta}{yc^2s^2}\Bigr)^{\frac m2-\ell_1}
|s|^{-\ell_2}
\hspace{70pt}
\\\label{FDTHETADERLEM2PFmodify1}
\times\sum_{\gamma=0}^{\ell_2}|\sin\theta|^{\gamma-\ell_2}|s|^{-\gamma+a}(1+|s|)^{-a}.
\end{align}
It follows that the left hand side of \eqref{FDTHETADERLEM2MDRES1}, after squaring, is
\begin{align}\notag
\ll_{m,\ell}\int_{\R/N\Z}\biggl(\int_{\R}\sum_{\substack{s\in x_2+N\Z\\s<0}}B(x_1,s)\,dx_1\biggr)^2\,dx_2
\ll_{m,\ell}\int_{\R/N\Z}\biggl(\sum_{\substack{s\in x_2+N\Z\\s<0}}B_1(s)\biggr)^2\,dx_2
\hspace{60pt}
\\\notag
\ll_{\ve}\int_{\R/N\Z}\sum_{\substack{s\in x_2+N\Z\\ s<0}}B_1(s)^2(1+|s|)^{1+\ve}\,dx_2
=\int_{-\infty}^0 B_1(s)^2(1+|s|)^{1+\ve}\,ds
\hspace{85pt}
\\\notag
=\|f\|_{\C_0^{3m+\ell}}^2S_{\infty,a,\ell_2}(h)^2\|\veceta\|^{-2m}\sum_{\gamma=0}^{\ell_2}|\sin\theta|^{2(\gamma-\ell_2)}
\hspace{200pt}
\\\label{FDTHETADERLEM2MDPF2}
\times
\int_0^{\infty}\Bigl(1+\Bigl(\frac{\sin^2\theta}{yc^2s^2}\Bigr)^2\Bigr)^{1-m}
\Bigl(\frac{\sin^2\theta}{yc^2s^2}\Bigr)^{m-2\ell_1}s^{-2\ell_2-2\gamma+2a}(s+1)^{1+\ve-2a}\,ds.
\end{align}
Using $m>2\ell+\frac32>2\ell+1+\frac{\ve}2$, we find that the integral in the last line of \eqref{FDTHETADERLEM2MDPF2} is
\begin{align*}
\ll_{m,\ell,\ve}
\begin{cases}
\bigl(\frac{|\sin\theta|}{\sqrt yc}\bigr)^{2-2\ell_2-2\gamma+\ve}
&\text{if }\: \frac{|\sin\theta|}{\sqrt yc}\geq1
\\[10pt]
\bigl(\frac{|\sin\theta|}{\sqrt yc}\bigr)^{-2\ell_2-2\gamma+2a+1}
+\bigl(\frac{|\sin\theta|}{\sqrt yc}\bigr)^{2m-4\ell_1}
&\text{if }\: \frac{|\sin\theta|}{\sqrt yc}\leq1.
\end{cases}
\end{align*}
Carrying out the addition over $\gamma$, %
we obtain the bound in \eqref{FDTHETADERLEM2MDRES1}.
\end{proof}

Note that Lemma \ref{FDTHETADERLEM2MD} also applies to give a bound on 
$\|\partial_{x_1}^{\ell_1}\partial_{x_2}^{\ell_2}F_{c,\theta}\|_{\L^1}$,
since $\|F\|_{\L^1}\leq\sqrt N\|F\|_{\L^{1,2}}$ for any function $F$ on $\R\times(\R/N\Z)$, by Cauchy-Schwarz.
However, in the case $c\sqrt y\leq|\sin\theta|$, we need to get rid of the $\ve$-power in
\eqref{FDTHETADERLEM2MDRES1}.
Thus we prove:
\begin{lem}\label{FDTHETADERLEM2MD2}
For any integers $\ell_1\geq0$ and $m>2\ell_1+1$,
for any $f\in \C_0^{3m+\ell_1}(X)$ and $h\in\C^{\ell_1}(\R)$ with $S_{\infty,0,0}(h)<\infty$,
if $0<c\sqrt y\leq|\sin\theta|$, then
\begin{align}\label{FDTHETADERLEM2MD2RES}
\int_{\R/N\Z}\int_\R\biggl|\frac{\partial^{\ell_1}}{\partial x_1^{\ell_1}}F_{c,\theta}(x_1,x_2)\biggr|\,dx_1\,dx_2
\ll_{m,\ell_1}
\|f\|_{\C_0^{3m+\ell_1}}S_{\infty,0,0}(h)\|\veceta\|^{-m}
\frac{|\sin\theta|}{c\sqrt y}.
\end{align}
\end{lem}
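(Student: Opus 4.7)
\textbf{Proof plan for Lemma \ref{FDTHETADERLEM2MD2}.} The point of this lemma, compared to the weaker bound one could deduce from Lemma \ref{FDTHETADERLEM2MD} by Cauchy--Schwarz, is to eliminate the loss of $(|\sin\theta|/c\sqrt y)^{\ve/2}$ in the regime $c\sqrt y\leq|\sin\theta|$. The plan is to estimate the integral directly in $\L^1$ (rather than passing through the $\L^{1,2}$-norm), by unfolding the sum over $s\in x_2+N\Z$ into an integral over $\R_{<0}$.

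First I would observe that since $m>2\ell_1+1$, the series defining $F_{c,\theta}$ and its $x_1$-derivatives of order $\leq\ell_1$ converge absolutely and uniformly in compacta (this follows as in the proof of Lemma \ref{FDTHETADERLEM2MD}, using the pointwise bound \eqref{PARTBDECAYFNLEMREMEXPL2}). Hence one may differentiate term by term, and then unfold:
\begin{align*}
\int_{\R/N\Z}\int_\R\Bigl|\partial_{x_1}^{\ell_1}F_{c,\theta}(x_1,x_2)\Bigr|\,dx_1\,dx_2
\leq\int_{-\infty}^0\int_\R\Bigl|\bigl[\partial_1^{\ell_1}\wh f_R\bigr]\bigl(x_1-\tfrac{\sin2\theta}{2yc^2s^2},\tfrac{\sin^2\theta}{yc^2s^2},\theta;\veceta\bigr)\Bigr|\,dx_1\,\bigl|h(s^{-1}+y\cot\theta)\bigr|\,ds.
\end{align*}

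Next I would apply the pointwise estimate from Lemma \ref{PARTBDERDECAYTFNFROMCMNORMLEM2} (with $\beta=\frac13$, replacing $m$ by $3m$, exactly as in \eqref{PARTBDECAYFNLEMREMEXPL2}) to get
\begin{align*}
\bigl|[\partial_1^{\ell_1}\wh f_R](u,v,\theta;\veceta)\bigr|\ll_{m,\ell_1}\|f\|_{\C_0^{3m+\ell_1}}\|\veceta\|^{-m}\,v^{-\ell_1}\Bigl(\frac v{u^2+v^2+1}\Bigr)^{m/2}.
\end{align*}
Then, translating in $x_1$ and using $\int_\R(u^2+v^2+1)^{-m/2}\,du\ll(v^2+1)^{(1-m)/2}$ (which holds since $m>2\ell_1+1\geq 2$), one finds
\begin{align*}
\int_\R\Bigl|[\partial_1^{\ell_1}\wh f_R]\bigl(x_1-\tfrac{\sin2\theta}{2yc^2s^2},v(s),\theta;\veceta\bigr)\Bigr|\,dx_1\ll_{m,\ell_1}\|f\|_{\C_0^{3m+\ell_1}}\|\veceta\|^{-m}\,v(s)^{m/2-\ell_1}\bigl(v(s)^2+1\bigr)^{(1-m)/2},
\end{align*}
where $v(s)=\sin^2\theta/(yc^2s^2)$. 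Bounding $|h(s^{-1}+y\cot\theta)|\leq S_{\infty,0,0}(h)$ and setting $A=|\sin\theta|/(c\sqrt y)\geq 1$ by hypothesis, the integral to be estimated reduces to
\begin{align*}
\int_0^\infty\Bigl(\tfrac{A^2}{t^2}\Bigr)^{m/2-\ell_1}\Bigl(\tfrac{A^4}{t^4}+1\Bigr)^{(1-m)/2}\,dt.
\end{align*}

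Finally I would split this integral at $t=A$. For $t\leq A$ one has $A^4/t^4\geq1$, and the integrand is $\asymp t^{2\ell_1+m-2}A^{2-2m}$, whose integral over $(0,A)$ is $\asymp A^{2\ell_1-m+1}$ (the exponent $2\ell_1+m-2>0$ since $m>2\ell_1+1>2-2\ell_1$). For $t\geq A$ the integrand is $\asymp t^{2\ell_1-m}$, and the integral over $(A,\infty)$ is again $\asymp A^{2\ell_1-m+1}$ thanks to $m>2\ell_1+1$. Multiplying by the prefactor $A^{m-2\ell_1}$ (from the substitution implicit above) yields a total of $\ll A=|\sin\theta|/(c\sqrt y)$, which is exactly the bound claimed in \eqref{FDTHETADERLEM2MD2RES}. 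There is no real obstacle here; the only technical point worth checking is the convergence at $t=0^+$, which is secured by $m>2\ell_1+1$ after the cutoff $A^4/t^4\geq1$ is used.
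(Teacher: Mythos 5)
Your proposal is correct and takes essentially the same approach as the paper: unfold the $s$-sum, apply the pointwise bound \eqref{PARTBDECAYFNLEMREMEXPL2}, integrate out $x_1$ to reach $\int_{-\infty}^0 B_1(s)\,ds$ with $a=\ell_2=0$, and evaluate that integral by splitting at $t=A$; the paper labels this last step ``a direct computation'' and you have filled it in correctly. One very minor slip in the commentary: the hypothesis $m>2\ell_1+1$ is actually the condition needed for convergence at $t\to\infty$ (the exponent $2\ell_1-m<-1$), while convergence at $t\to0^+$ only requires the weaker $2\ell_1+m-2>-1$, i.e.\ $m>1-2\ell_1$, which is automatic.
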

\begin{proof}
Following the proof of Lemma \ref{FDTHETADERLEM2MD},
we see that the left hand side of \eqref{FDTHETADERLEM2MD2RES} is
\linebreak
$\ll\int_{-\infty}^0B_1(s)\,ds$,
where $B_1(s)$ is given by \eqref{FDTHETADERLEM2PFmodify1} (with $a=\ell_2=0$).
This integral is bounded by a direct computation,
and we obtain the bound in \eqref{FDTHETADERLEM2MD2RES}.
\end{proof}
We are now ready to complete the proof of Proposition \ref{BKBOUNDPROP}.
Take $m\geq\max(8,2k+1)$, $a>2$ and $\ve,\ve'\in(0,\frac12)$;
also take $f\in\C_0^{3m+3}(X)$ and $h\in\C^1(\R)$ with $S_{\infty,a,1}(h)<\infty$.
Let $\vecxi_2\in\R^k$ and $0<y\leq1$ be given.
By Lemmata \ref{FDTHETADERLEM2MD}  and \ref{FDTHETADERLEM2MD2},
we have for every $c\in\Z^+$ and every $\theta$ with $y|\cot\theta|\leq1$, 
\begin{align}\notag
\|F_{c,\theta}\|_{\L^1}+\|\partial_{x_1}^2F_{c,\theta}\|_{\L^1}
\ll_{m,\ve'}\|f\|_{\C_0^{3m+2}}S_{\infty,a,0}(h)\|\veceta\|^{-m}
\hspace{150pt}
\\\notag
\times \begin{cases}
\frac{|\sin\theta|}{c\sqrt y}
&\text{if }\: c\sqrt y\leq|\sin\theta|
\\[10pt]
\bigl(\frac{|\sin\theta|}{c\sqrt y}\bigr)^{\frac12+a}
\left\{1+\bigl(\frac{|\sin\theta|}{c\sqrt y}\bigr)^{m-a-\frac92}\right\}
&\text{if }\: |\sin\theta|\leq c\sqrt y.
\end{cases}
\end{align}
Therefore,
\begin{align}\notag
\int_{I_y} %
\bigl(\|F_{c,\theta}\|_{\L^1}+\|\partial_{x_1}^2F_{c,\theta}\|_{\L^1}\bigr)
\,\frac{y\,d\theta}{\sin^2\theta}
\hspace{210pt}
\\\label{FDTHETALEMappl1}
\ll_{m,a}
\|f\|_{\C_0^{3m+2}}S_{\infty,a,0}(h)\|\veceta\|^{-m} y
\begin{cases}
(c\sqrt y)^{-1}(1+\log((c\sqrt y)^{-1})) & \text{if }\: c\sqrt y\leq 1
\\
(c\sqrt y)^{-\beta} & \text{if }\: c\sqrt y\geq 1,
\end{cases}
\end{align}
where $\beta:=\min(\frac12+a,m-4)>\frac52$.
Lemma \ref{FDTHETADERLEM2MD} also gives
\begin{align}\notag
&\bigl(\|F_{c,\theta}\|_{\L^{1,2}}+\|\partial_{x_1}^2F_{c,\theta}\|_{\L^{1,2}}\bigr)^{\frac12-\ve}
\bigl(\|\partial_{x_2}F_{c,\theta}\|_{\L^{1,2}}+\|\partial_{x_1}^2\partial_{x_2}F_{c,\theta}\|_{\L^{1,2}}\bigr)^{\frac12+\ve}
\\[10pt] \notag
&\hspace{30pt}\ll_{m,\ve'}\|f\|_{\C_0^{3m+3}}S_{\infty,a,1}(h)\|\veceta\|^{-m}
\\\label{FDTHETALEMappl2pre}
&\hspace{60pt}
\times \begin{cases}
|\sin\theta|^{-\frac12-\ve}\bigl(\frac{|\sin\theta|}{c\sqrt y}\bigr)^{\frac12+\ve'-\ve}
&\text{if }\: c\sqrt y\leq|\sin\theta|
\\[10pt]
|\sin\theta|^{-\frac12-\ve}\bigl(\frac{|\sin\theta|}{c\sqrt y}\bigr)^{a-\ve}
\left\{1+\bigl(\frac{|\sin\theta|}{c\sqrt y}\bigr)^{m-a-4+\ve}\right\}
&\text{if }\: |\sin\theta|\leq c\sqrt y\leq1
\\[10pt]
\bigl(\frac{|\sin\theta|}{c\sqrt y}\bigr)^{a-\frac12-2\ve}
\left\{1+|\sin\theta|^{-\frac12-\ve}\bigl(\frac{|\sin\theta|}{c\sqrt y}\bigr)^{m-a-\frac72+2\ve}\right\}
&\text{if }\: c\sqrt y\geq1,
\end{cases}
\end{align}
which leads to (using $a>2>\frac32+\ve$)
\begin{align}\notag
\int_{I_y} %
\bigl(\|F_{c,\theta}\|_{\L^{1,2}}+\|\partial_{x_1}^2F_{c,\theta}\|_{\L^{1,2}}\bigr)^{\frac12-\ve}
\bigl(\|\partial_{x_2}F_{c,\theta}\|_{\L^{1,2}}+\|\partial_{x_1}^2\partial_{x_2}F_{c,\theta}\|_{\L^{1,2}}\bigr)^{\frac12+\ve}
\,\frac{y\,d\theta}{\sin^2\theta}
\\\label{FDTHETALEMappl2}
\ll_{m,a,\ve}\|f\|_{\C_0^{3m+3}}S_{\infty,a,1}(h)\|\veceta\|^{-m}
y\begin{cases}
(c\sqrt y)^{-\frac32-\ve}&\text{if }\: c\sqrt y\leq1
\\
(c\sqrt y)^{-\delta} &\text{if }\: c\sqrt y\geq1,
\end{cases}
\end{align}
where $\delta:=\min(a-\frac12-2\ve,m-4)$.
Let us now also assume $\ve<\frac{a-2}3$.
Then $\delta>\frac32+\ve$, %
and using \eqref{BPART4}, \eqref{FDTHETALEMappl1} and \eqref{FDTHETALEMappl2},
it follows that the expression in \eqref{BPART5} %
is
\begin{align}\notag
\ll_{m,a,\ve}\|f\|_{\C_0^{3m+3}}S_{\infty,a,1}(h)\|\veceta\|^{-m} 
\sum_{c=1}^\infty \biggl\{y^{1-\frac{\beta}2} c^{-1}(y^{-\frac12}+c)^{1-\beta}
\Bigl(1+\log^+\Bigl(\frac1{c\sqrt y}\Bigr)\Bigr)
\hspace{60pt}
\\\label{BPART6}
\times \sum_{\ell\in\Z}\frac{(c,\lfloor cN\vecq\vecxi_2+\ell\rfloor)}{1+\ell^2}
\:+\: y^{\frac14-\frac{\ve}2}c^{-1-\frac{\ve}2}
\biggr\}.
\end{align}

We now need the following modification of 
Lemma \ref{PROOF1LEM1}:
\begin{lem}\label{ASLLEM8P2mod}
Fix $\beta>2$. Then for any $\alpha\in\R$ and $X\geq1$ we have
\begin{align}\label{ASLLEM8P2modres}
\sum_{c=1}^{\infty} c^{-1}(X+c)^{1-\beta}
&\Bigl(1+\log^+\Bigl(\frac Xc\Bigr)\Bigr)
\sum_{k\in\Z}\frac{(c,k)}{1+|k-c\alpha|^2}
\hspace{100pt}
\\\notag
&\ll_{\beta}  
X^{2-\beta}\sum_{j=1}^\infty\min\Bigl(\frac1{j^2},\frac1{Xj\langle j\alpha\rangle}\Bigr)
\biggl(1+\log^+\Bigl(\frac{X\langle j\alpha\rangle}{j}\Bigr)\biggr).
\end{align}
\end{lem}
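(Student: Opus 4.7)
The plan is to prove Lemma \ref{ASLLEM8P2mod} by adapting the proof of Lemma \ref{PROOF1LEM1} (i.e.\ \cite[Lemma 8.2]{SASL}) so as to track the extra logarithmic factors on each side. Structurally, the new lemma is obtained from Lemma \ref{PROOF1LEM1} by replacing $\beta$ with $\beta-1$ (hence the new hypothesis $\beta>2$ corresponds to the old hypothesis $\beta-1>1$) and by inserting the weights $1+\log^+(X/c)$ on the LHS and $1+\log^+(X\langle j\alpha\rangle/j)$ on the RHS. The exponent shift is harmless; the main task is to propagate the logarithms correctly.

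First I would reduce the inner $k$-sum by writing $(c,k)=\sum_{d\mid(c,k)}\phi(d)$ and substituting $j=c/d$. Combined with the standard one-variable estimate $\sum_{m\in\Z}(1+d^2(m-y)^2)^{-1}\ll d^{-1}+(1+d^2\langle y\rangle^2)^{-1}$, this expresses the LHS of \eqref{ASLLEM8P2modres} as a sum over pairs $(j,d)$ that splits into a ``diagonal'' piece (from $d^{-1}$) and a ``resonant'' piece (from $(1+d^2\langle j\alpha\rangle^2)^{-1}$). The diagonal piece is a divisor-type sum
\[
\sum_{n\geq 1}\frac{\sigma(n)}{n}(X+n)^{1-\beta}\bigl(1+\log^+(X/n)\bigr),
\]
which a dyadic decomposition (using $\sigma(n)\ll n^{\ve}$ and $\beta-\ve>2$) bounds by $O(X^{2-\beta})$ for $X\geq 1$, and which is absorbed into the RHS exactly as in the proof of \cite[Lemma 8.2]{SASL}.

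For the resonant piece, I would fix $j$ and estimate
\[
T_j:=\sum_{d\geq 1}\frac{(X+jd)^{1-\beta}\bigl(1+\log^+(X/(jd))\bigr)}{1+d^2\langle j\alpha\rangle^2}.
\]
Splitting at $d=X/j$, the tail $d>X/j$ has vanishing log and is handled exactly as in Lemma \ref{PROOF1LEM1} (using $\beta-1>1$), giving a contribution $\ll j^{-1}X^{2-\beta}\min(1,(j/(X\langle j\alpha\rangle))^2)$. On the range $d\leq X/j$ one has $(X+jd)^{1-\beta}\asymp X^{1-\beta}$, and the key estimate is
\[
\sum_{d\leq X/j}\frac{1+\log((X/j)/d)}{1+d^2\langle j\alpha\rangle^2}\ll \min\bigl(X/j,\,1/\langle j\alpha\rangle\bigr)\bigl(1+\log^+(X\langle j\alpha\rangle/j)\bigr),
\]
proved by a further split at $d=1/\langle j\alpha\rangle$ (the case $(X/j)\langle j\alpha\rangle\leq 1$ being immediate by dropping the denominator) and evaluation of the two resulting sums via the integrals $\int_1^{T}(1+\log(T/u))/u^2\,du\asymp 1+\log^+T$ and $\int_0^1(1+\log(T/u))\,du\asymp 1+\log^+T$. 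Multiplying $T_j$ by $j^{-1}$, summing over $j$, and using the identity $\min(X/j,1/\langle j\alpha\rangle)/j=X\min(1/j^2,1/(Xj\langle j\alpha\rangle))$ produces the RHS of \eqref{ASLLEM8P2modres}.

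The only genuinely new step, compared with \cite[Lemma 8.2]{SASL}, is the inner log-weighted sub-estimate above: on the LHS the log argument is $(X/j)/d$, whereas on the RHS it must become $X\langle j\alpha\rangle/j$, and the correct bookkeeping is supplied by the change of variables $u=d\langle j\alpha\rangle$, which converts $\log((X/j)/d)$ into $\log((X\langle j\alpha\rangle/j)/u)$ and matches the boundary $d\asymp 1/\langle j\alpha\rangle$ to $u\asymp 1$. This matching is the step I expect to require the most careful calculation; once it is in place, the remainder of the argument is essentially verbatim the $\beta\mapsto\beta-1$ version of the proof of Lemma \ref{PROOF1LEM1}.
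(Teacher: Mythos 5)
Your proposal is correct and takes essentially the same route as the paper. The paper's proof also defers to \cite[Lemma 8.2]{SASL} for the overall framework and identifies, as the single new technical point, the log-weighted one-variable estimate
\begin{align*}
\sum_{d=1}^\infty
\frac{(X+\ell d)^{1-\beta}}{1+(d\langle\ell\alpha\rangle)^2}\Bigl(1+\log^+\Bigl(\frac X{\ell d}\Bigr)\Bigr)
\ll_\beta
\begin{cases}
X^{2-\beta}\ell^{-1}&\text{if } 1\leq X/\ell\leq\langle\ell\alpha\rangle^{-1},\\
X^{1-\beta}\langle\ell\alpha\rangle^{-1}\bigl(1+\log\bigl(\tfrac{X\langle\ell\alpha\rangle}{\ell}\bigr)\bigr)&\text{if }\langle\ell\alpha\rangle^{-1}<X/\ell,\\
\ell^{1-\beta}&\text{if } X/\ell<1,
\end{cases}
\end{align*}
which is exactly what your bound for $T_j$ (split at $d=X/j$ and, on the main range, at $d=\langle j\alpha\rangle^{-1}$) produces after combining the ``main'' and ``tail'' contributions. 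Your verification of the key log-weighted sub-sum (via $u=d\langle j\alpha\rangle$ and the integrals $\int_1^T(1+\log(T/u))u^{-2}\,du\asymp 1+\log^+T$ and $\int_0^1(1+\log(T/u))\,du\asymp 1+\log^+T$) is correct and matches the three cases above. One small caveat: your stated bound $O(X^{2-\beta})$ for the diagonal divisor sum, while true, is much too coarse on its own to be absorbed into the RHS (which for badly approximable $\alpha$ is only of size $\asymp X^{1-\beta}\log^{O(1)}X$), so this step really does rely on the finer comparison carried out in \cite[Lemma 8.2]{SASL} rather than on the crude $X^{2-\beta}$ bound; but since you explicitly defer to that proof, this is a presentation point rather than a gap.
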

\begin{proof}
The proof of \cite[Lemma 8.2]{SASL} carries over with easy modifications.
The main new technicality is to verify the bound
\begin{align*}
\sum_{d=1}^\infty
\frac{(X+\ell d)^{1-\beta}}{1+(d\langle\ell\alpha\rangle)^2}\biggl(1+\log^+\Bigl(\frac X{\ell d}\Bigr)\biggr)
\hspace{150pt}
\\
\ll_{\beta}
\begin{cases}
X^{2-\beta}\ell^{-1}&\text{if }\:1\leq X/\ell\leq\langle\ell\alpha\rangle^{-1}
\\
X^{1-\beta}\langle\ell\alpha\rangle^{-1}\bigl(1+\log\bigl(\frac{X\langle\ell\alpha\rangle}{\ell}\bigr)\bigr)
&\text{if }\:\langle\ell\alpha\rangle^{-1}<X/\ell
\\
\ell^{1-\beta}&\text{if }\:X/\ell<1,
\end{cases}
\end{align*}
valid for all $d,\ell\in\Z^+$.
\end{proof}

Using Lemma \ref{ASLLEM8P2mod},
it follows that \eqref{BPART6}, and hence also \eqref{BPART5}, is
\begin{align*}
&\ll_{\beta,\ve}\|f\|_{\C_0^{3m+3}}S_{\infty,a,1}(h)\|\veceta\|^{-m} 
\biggl\{\sum_{j=1}^\infty\min\Bigl(\frac1{j^2},\frac{\sqrt y}{j\langle j\vecq\vecxi_2\rangle}\Bigr)
\Bigl(1+\log^+\Bigl(\frac{\langle j\vecq\vecxi_2\rangle}{j\sqrt y}\Bigr)\Bigr)
+y^{\frac14-\frac{\ve}2}\biggr\}.
\end{align*}
(We replaced ``$N\vecq\vecxi_2$'' by $\vecq\vecxi_2$ through the same type of estimate as in \eqref{PROOF1STEP003}.)
Adding the last bound over $R\in \overline\Gamma'/\Gamma'$ and $\veceta\in B_k$,
using $\sum_{\veceta\in\Z^{2k}\setminus\{\bn\}}\|\veceta\|^{-m}<\infty$ %
and $\sum_{\vecr\in\Z^k}\|\scmatr{\vecq}{\vecr}\|^{-m}\ll\|\vecq\|^{k-m}$
for every $\vecq\in\Z^k\setminus\{\bn\}$,
and noticing that $a$ and $\ve$ can be taken arbitrarily near $2$ and $0$, respectively,
we obtain the bound in Proposition \ref{BKBOUNDPROP}.
This completes the proof of Proposition \ref{BKBOUNDPROP},
and also of Theorem \ref{MAINTHM1}.
\hfill$\square$

\begin{remark}\label{L12NORMrem}
We now explain why we had to use %
Lemma \ref{expsumlem2} in place of Lemma \ref{lem:exp sum} in the above proof of Proposition \ref{BKBOUNDPROP}.
One can prove a bound for the $\L^1$-norm of $\partial_{x_1}^{\ell_1}\partial_{x_2}^{\ell_2}F_{c,\theta}$
which is very similar to the bound in Lemma \ref{FDTHETADERLEM2MD},
and in the case $c\sqrt y\leq1$ this leads to a bound
\begin{align*}
\int_{\substack{0<\theta<\pi\\ (y|\cot\theta|\leq1)}}
\|\partial_{x_1}^{\ell_1}\partial_{x_2}^{\ell_2}F_{c,\theta}\|_{\L^{1}}\,\frac{y\,d\theta}{\sin^2\theta}
\ll y(c\sqrt y)^{-1-\ell_2}.
\end{align*}
Multiplying this with $\sigma(c)^{3/2}\sqrt c$ and adding over $c$
(cf.\ \eqref{BPART5}, \eqref{BPART4}) gives (if $\ell_2>\frac12$) a bound $y^{(1-\ell_2)/2}$,
which is insufficient.
Indeed, Lemma \ref{lem:exp sum} requires us to take $\ell_2$ as large as $2$.
Using instead the $\L^{1,2}$-norm and Lemma \ref{expsumlem2} 
means that we can effectively take $\ell_2$ to be as small as $\frac12+\ve$,
leading to the final bound $y^{\frac14-\frac{\ve}2}$.
(One could sharpen Lemma \ref{lem:exp sum} 
to a bound of the same style as in Lemma \ref{expsumlem2} but only involving the $\L^1$-norm;
this would allow us to use ``$\ell_2=1+\ve$'';
however this would still not be sufficient.)
\end{remark}

\subsection{The case $\vecxi_2=\bn$}
The treatment in this case is quite a bit easier than for $\vecxi_1=\bn$. %
We %
prove the following bound:
\begin{prop}\label{BKX1BOUNDPROP}
Let $k\geq2$.
Fix a real number $\ve>0$ and an integer $m\geq\max(7,2k+1)$.
For any $f\in \C_0^{3m+2}(X)$, $h\in\C^2(\R)$ with $S_{1,0,2}(h)<\infty$,
$\vecxi_1\in\R^k$ and $0<y\leq1$, we have
\begin{align}\notag
\sum_{\veceta\in B_k}\sum_{R\in \overline\Gamma'/\Gamma'} \sum_{T\in[R]} %
e\left((\trans T\veceta)\cmatr{\vecxi_1}{\bn}\right)\int_\R
\wh f_R\left(T\matr{\sqrt y}{x/\sqrt y}0{1/\sqrt y},\veceta\right)h(x)\,dx
\hspace{60pt}
\\\label{BKX1BOUNDPROPres}
\ll_{m,\ve}\|f\|_{\C_0^{3m+2}}S_{1,0,2}(h)\Bigl(\wdelta_{m-k,\vecxi_1}(y^{-\frac12})+y^{\frac14-\ve}\Bigr).
\end{align}
\end{prop}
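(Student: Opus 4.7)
The plan is to mirror the proof of Proposition \ref{BKBOUNDPROP}, exploiting the fact that the absence of a logarithmic factor in $\wdelta$ (as opposed to $\delta$) allows us to use the cleaner Lemma \ref{lem:exp sum} in place of the more delicate Lemma \ref{expsumlem2}.

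First, the preliminary reductions from the proof of Proposition \ref{BKBOUNDPROP} carry over without change: the contributions from $T=\smatr abcd$ with $c=0$ (via Lemma \ref{PARTBDECAYFNLEM}), with $c<0$ (by symmetry), with $d=0$, from $\theta\in(0,\pi)$ satisfying $y|\cot\theta|>1$, and from replacing $a/c$ by $b/d$ (via Lemma \ref{BPARTAPPRBOUNDLEM}), are each individually dominated by the right-hand side of \eqref{BKX1BOUNDPROPres}. In the remaining sum the exponential factor $e((\trans T\veceta)\scmatr{\vecxi_1}{\bn})=e((a\vecq+c\vecr)\vecxi_1)$ now depends on $a$ and $c$ (rather than on $b$ and $d$ as in Proposition \ref{BKBOUNDPROP}). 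After the substitution $\langle a,b,c,d\rangle\to\langle -b,d,-a,c\rangle$ this exponential reads $e((c\vecr-d\vecq)\vecxi_1)$ (up to a sign convention), so the $c$-dependent factor $e(c\vecr\vecxi_1)$ can be pulled outside the $\sumtone$ over $d$, leaving an expression of the form
\begin{align*}
\sum_{\substack{c\equiv d_0\bmod N\\ c>0}} e(c\vecr\vecxi_1)\int_{I_y}\:\:\sumtone e(-d\vecq\vecxi_1)\,F_{c,\theta}\Bigl(\tfrac dc,\tfrac ac\Bigr)\,\frac{y\,d\theta}{\sin^2\theta},
\end{align*}
with $F_{c,\theta}$ exactly as in \eqref{FDTHETADEF}.

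Next, apply Lemma \ref{lem:exp sum} to the inner $\sumtone$ with $\alpha=-\vecq\vecxi_1$ and $K=\emptyset$, yielding two error terms: one involving $\|F_{c,\theta}\|_{\L^1}+\|\partial_{x_1}^2 F_{c,\theta}\|_{\L^1}$ multiplied by $\sum_{\ell\in\Z}\frac{(c,\lfloor cN\vecq\vecxi_1+\ell\rfloor)}{1+\ell^2}$, and one involving $\|\partial_{x_2}^2 F_{c,\theta}\|_{\L^1}+\|\partial_{x_1}^2\partial_{x_2}^2 F_{c,\theta}\|_{\L^1}$ multiplied by $\sigma(c)\sqrt c$. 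The $\L^1$-norms of derivatives of $F_{c,\theta}$ are then controlled by an argument modeled on that of Lemma \ref{FDTHETADERLEM2MD}, but dropping the Cauchy--Schwarz step that introduced the $(1+|s|)^{1+\ve}$ weight in the $\L^{1,2}$-bound (and hence the $\ve$-loss). Two derivatives of $h$ from the hypothesis $S_{1,0,2}(h)<\infty$ handle the $\partial_{x_2}^2$ contribution.

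Then, after integrating over $\theta\in I_y$ and summing over $c\equiv d_0\bmod N$, apply Lemma \ref{PROOF1LEM1} to the first-error contribution to obtain a factor $\sum_{j=1}^\infty(j^2+y^{-\frac12}j\langle j\vecq\vecxi_1\rangle)^{-1}$, and apply Lemma \ref{SIGMASUMLEM} to the second-error contribution to obtain a factor $y^{\frac14-\ve}$. The result is bounded by $\|f\|_{\C_0^{3m+2}}\,S_{1,0,2}(h)\,\|\veceta\|^{-m}(\sum_j(j^2+y^{-\frac12}j\langle j\vecq\vecxi_1\rangle)^{-1}+y^{\frac14-\ve})$. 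Finally, sum over $R\in\overline\Gamma'/\Gamma'$ (a finite set) and $\veceta=\scmatr{\vecq}{\vecr}\in B_k$, using $\sum_{\vecr\in\Z^k}\|\scmatr\vecq\vecr\|^{-m}\ll\|\vecq\|^{k-m}$ (valid for $m>k$, which follows from $m\geq 2k+1$), to extract $\|\vecq\|^{k-m}$ from the $\vecr$-sum and then recognize the remaining sum as $\wdelta_{m-k,\vecxi_1}(y^{-\frac12})$ via \eqref{WMdef}.

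The main obstacle will be the careful bookkeeping of the Sobolev orders so that the hypothesis $f\in\C_0^{3m+2}$ (rather than $\C_0^{3m+4}$ as one might naively expect from the $\ell=4$ regularity nominally required by Lemma \ref{lem:exp sum}) and $h\in\C^2$ with $S_{1,0,2}(h)<\infty$ genuinely suffice; the key is that in our regime only the $\L^1$-norm of $F_{c,\theta}$ and its derivatives is needed (not $\L^{1,2}$), and the convergence of the relevant one-dimensional integrals $\int_0^\infty(1+t^4)^{(1-m)/2}t^k\,dt$ holds under the milder condition $m\geq\max(7,2k+1)$. This is precisely the manifestation of the absence of the logarithmic factor that separates $\wdelta$ from $\delta$, and of our ability here to dispense with the $L^{1,2}$-device of Lemma \ref{expsumlem2}.
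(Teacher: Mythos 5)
Your approach has a genuine error in the variable bookkeeping after the substitution $\langle a,b,c,d\rangle\to\langle -b,d,-a,c\rangle$. Under that replacement, $a\mapsto-b$ and $c\mapsto-a$, so the exponential $e\bigl((a\vecq+c\vecr)\vecxi_1\bigr)$ becomes $e\bigl(-(b\vecq+a\vecr)\vecxi_1\bigr)$ — \emph{not} $e\bigl((c\vecr-d\vecq)\vecxi_1\bigr)$ as you claim. This is fatal: the new exponential depends on $a$ and $b$, which vary over $\Gamma'_\infty$-cosets ($a\to a+Nnc$, $b\to b+Nnd$), so it is \emph{not} constant on the cosets that the $F_{c,\theta}$ device packages into its sum over $s\in x_2+N\Z$. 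In particular you cannot factor out a $c$-only exponential and apply Lemma~\ref{lem:exp sum} to $\sumtone e(-d\vecq\vecxi_1)F_{c,\theta}(\frac dc,\frac ac)$. The analogy with Proposition~\ref{BKBOUNDPROP} breaks precisely here: there the exponential is $e\bigl((b\vecq+d\vecr)\vecxi_2\bigr)$, coming from the \emph{second} row of $\trans T\veceta$, which the substitution carries into $e\bigl((d\vecq+c\vecr)\vecxi_2\bigr)$, a function of the retained variables $c,d$. For $\vecxi_1$ the exponential comes from the \emph{first} row of $\trans T\veceta$, and the same substitution does not put it in the right form.

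The paper's actual proof is much more economical and avoids the $F_{c,\theta}$ machinery entirely. Starting directly from \eqref{eq:Bpartbound2} (no $I_y$ restriction, no $a/c\to b/d$ replacement via Lemma~\ref{BPARTAPPRBOUNDLEM}), one simply \emph{swaps $a\leftrightarrow d$}: the map $\smatr abcd\mapsto\smatr dbca$ is a bijection from $[R]$ to $[\widetilde R]$ with $\widetilde R=\smatr{d_0}{b_0}{c_0}{a_0}$, preserving $c$. After this swap the exponential reads $e\bigl((d\vecq+c\vecr)\vecxi_1\bigr)$, the $\wh f_R$-factor depends only on $d/c$ (the Lemma~\ref{lem:exp sum} variable $x_1$), and the $h$-factor becomes a function of $a/c$, which after summing over $\Gamma'_\infty$-translates is just the $N$-periodization $\nu(x)=\sum_n h(x+nN)$ on $\R/N\Z$ — exactly the $x_2$-variable. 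Since the resulting $F(x_1,x_2)$ is a clean product, Lemma~\ref{lem:exp sum} applies directly, the two $\L^1$-norms factor as $\|\wh f_R\|$ and $\|\nu\|$ (resp.\ $\|\nu''\|$), and the rest is a short computation via \eqref{PARTBDECAYFNLEMREMEXPL2}, Lemma~\ref{PROOF1LEM1}, and Lemma~\ref{SIGMASUMLEM}. Your intuition that Lemma~\ref{lem:exp sum} suffices and $\L^{1,2}$ is unnecessary is correct, but the right move to enable it is the direct $a\leftrightarrow d$ interchange, not the BKBOUNDPROP substitution chain.
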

Note that Theorem \ref{MAINTHM2} follows from Proposition \ref{BKX1BOUNDPROP}
together with Proposition \ref{AKBOUNDxi2zeroPROP} and the relations \eqref{MAINSTEP1}, \eqref{MAINTERM}.

\vspace{5pt}

\begin{proof}
The beginning of the proof of Proposition \ref{BKBOUNDPROP} carries over without changes;
the first difference is that in place of \eqref{BPART2} we get:
\begin{align}\label{eq:Bpartbound2}
\sum_{\substack{\smatr abcd\in[R]\\[2pt] c>0}}
e\bigl((a\vecq+c\vecr)\vecxi_1\bigr)
\int_0^\pi \wh f_R\left(\frac
ac-\frac{\sin2\theta}{2c^2y},\frac{\sin^2\theta}{c^2y},\theta;\veceta\right)
h\Bigl(-\frac dc+y\cot\theta\Bigr)\,\frac{y\,d\theta}{\sin^2\theta}.
\end{align}
Interchanging the roles of  $a$ and $d$ in the summation, we see that 
\eqref{eq:Bpartbound2} can be alternatively expressed as:
\begin{align}\label{eq:Bpartbound2alt}
\sum_{\substack{c\equiv c_0\mod N\\c>0}}e(c\vecr\vecxi_1)\int_0^\pi\sumtone
e(d\vecq\vecxi_1)\wh f_R\left(\frac
dc-\frac{\sin2\theta}{2c^2y},\frac{\sin^2\theta}{c^2y},\theta;\veceta\right)
\nu\Bigl(-\frac ac+y\cot\theta\Bigr)\,\frac{y\,d\theta}{\sin^2\theta},
\end{align}
where $\nu(x)=\sum_{n\in\Z}h(x+nN)$ (a function on $\R/N\Z$) and where 
$\sum^{(\widetilde 1)}$ is the same as $\sum^{(1)}$ (cf.\ p.\ \pageref{SUMONEDEF}) but using 
$\widetilde R:=\matr{d_0}{b_0}{c_0}{a_0}$ in place of $R=\matr{a_0}{b_0}{c_0}{d_0}$.
Now by Lemma \ref{lem:exp sum} we have, for any $c$ and $\theta$ appearing above:
\begin{align*}
&\sumtone
e\Bigl(d\vecq\vecxi_1\Bigr) \wh f_R\left(\frac
dc-\frac{\sin2\theta}{2c^2y},\frac{\sin^2\theta}{c^2y},\theta;\veceta\right)
\nu\Bigl(-\frac{a}{c}+y\cot\theta\Bigr)\\
&\ll \biggl\{\int_{\R}
\biggl|\wh
f_R\left(u,\frac{\sin^2\theta}{c^2y},\theta;\veceta\right)\biggr|\,du+
\int_{\R}\biggl|\frac{\partial^2}{\partial u^2}\wh
f_R\left(u,\frac{\sin^2\theta}{c^2y},\theta;\veceta\right)\biggr|\,
du\biggr\}\\
&\hspace{60pt}\times\biggl(\|\nu\|_{\L^1(\R/N\ZZ)}
\sum_{\ell\in\Z}\frac{(c,\lfloor cN\vecq\vecxi_1+\ell\rfloor)}{1+\ell^2}
+\|\nu''\|_{\L^1(\R/N\ZZ)}
\sigma(c)\sqrt c\biggr).
\end{align*}
Using \eqref{PARTBDECAYFNLEMREMEXPL2} and writing $v=\frac{\sin^2\theta}{c^2y}$,
for any $\ell\geq0$, %
we get:
\begin{align*}
\int_{\R}\biggl|\frac{\partial^\ell}{\partial u^\ell}
\wh f_R\left(u,\frac{\sin^2\theta}{c^2y},\theta;\veceta\right)\biggr|\,du
&\ll \|f\|_{\C_0^{3m+\ell}}\|\veceta\|^{-m}
v^{-\ell+\frac m2}\int_{\R}(u^2+v^2+1)^{-\frac m2}\,du
\\
&\ll \|f\|_{\C_0^{3m+\ell}}\|\veceta\|^{-m}
\min\biggl(\Bigl(\frac{|\sin\theta|}{c\sqrt y}\Bigr)^{-2\ell+m},\Bigl(\frac{|\sin\theta|}{c\sqrt y}\Bigr)^{2-2\ell-m}\biggr),
\end{align*}
and thus
\begin{align*}
\sum_{\ell\in\{0,2\}}\int_{\R}\biggl|\frac{\partial^\ell}{\partial u^\ell}
\wh f_R\left(u,\frac{\sin^2\theta}{c^2y},\theta;\veceta\right)\biggr|\,du
&\ll \|f\|_{\C_0^{3m+2}}\|\veceta\|^{-m}
\begin{cases}
\bigl(\frac{|\sin\theta|}{c\sqrt y}\bigr)^{m-4}&\text{if }\:\frac{|\sin\theta|}{c\sqrt y}\leq1
\\[5pt]
\bigl(\frac{|\sin\theta|}{c\sqrt y}\bigr)^{2-m}&\text{if }\:\frac{|\sin\theta|}{c\sqrt y}\geq1.
\end{cases}
\end{align*}
Using also
\begin{align*}
\int_0^\pi\left.\begin{cases}
\bigl(\frac{|\sin\theta|}{c\sqrt y}\bigr)^{m-4}&\text{if }\:\frac{|\sin\theta|}{c\sqrt y}\leq1
\\[5pt]
\bigl(\frac{|\sin\theta|}{c\sqrt y}\bigr)^{2-m}&\text{if }\:\frac{|\sin\theta|}{c\sqrt y}\geq1
\end{cases}
\right\}\,\frac{y\,d\theta}{\sin^2\theta}
\ll y\min\bigl((c\sqrt y)^{-1},(c\sqrt y)^{4-m}\bigr)
\\
\ll y^{3-\frac m2}c^{-1}(y^{-\frac12}+c)^{5-m}.
\end{align*}
we conclude that \eqref{eq:Bpartbound2} is %
\begin{align*}
\ll \|f\|_{\C_0^{3m+2}}\|\veceta\|^{-m}y^{3-\frac m2}\biggl\{
S_{1,0,0}(h)\sum_{c=1}^\infty c^{-1}(y^{-\frac12}+c)^{5-m}
\sum_{\ell\in\Z}\frac{(c,\lfloor cN\vecq\vecxi_1+\ell\rfloor)}{1+\ell^2}
\hspace{50pt}
\\
+S_{1,0,2}(h)\sum_{c=1}^\infty (y^{-\frac12}+c)^{5-m}\frac{\sigma(c)}{\sqrt c}\biggr\},
\end{align*}
and by Lemma \ref{PROOF1LEM1} and Lemma \ref{SIGMASUMLEM} (using $m\geq7$), this is 
\begin{align*}
\ll \|f\|_{\C_0^{3m+2}}S_{1,0,2}(h)\|\veceta\|^{-m}\biggl\{
\sum_{j=1}^\infty\frac1{j^2+y^{-1/2}j\langle j\vecq\vecxi_1\rangle}+y^{\frac14-\ve}\biggr\},
\end{align*}
Adding this bound over $R\in \overline\Gamma'/\Gamma'$ and $\veceta\in B_k$,
using $\sum_{\vecr\in\Z^k}\|\scmatr{\vecq}{\vecr}\|^{-m}\ll\|\vecq\|^{k-m}$
for every $\vecq\in\Z^k\setminus\{\bn\}$,
and $\sum_{\veceta\in\Z^{2k}\setminus\{\bn\}}\|\veceta\|^{-m}<\infty$ %
(these hold since $m>2k$), we obtain the bound in Proposition \ref{BKX1BOUNDPROP}.
This also completes the proof of Theorem \ref{MAINTHM2}.
\end{proof}
\newpage

\section{Application to a quantitative Oppenheim result}
\label{APPLSEC}

Our goal in this section is to prove Theorem \ref{MAINAPPLTHM},
by making Marklof's approach from \cite{MarklofpaircorrI} effective.
This will involve an application of Theorem \ref{MAINTHM1} at a key step.

\subsection{Set-up}

Let $\HH=\{\tau=u+iv\in\CC\col v>0\}$, the Poincar\'e upper half plane.
Let $k$ be a positive integer and let $\Sw(\RR^k)$
be the Schwartz space of functions on $\R^k$ which, together with their derivatives, decrease rapidly at infinity.
A central role in the approach of  %
\cite{MarklofpaircorrI}
is played by the \textit{Jacobi theta sum},
$\Theta_f(\tau,\phi;\vecxi)$.
It is defined by the following formula,
for any $f\in\Sw(\RR^k)$, $\tau=u+iv\in\HH$, $\phi\in\R$ and $\vecxi=\scmatr{\vecxi_1}{\vecxi_2}\in\R^{2k}$:
\begin{align}\label{ThetafDEF}
\Theta_f(\tau,\phi;\vecxi) = v^{k/4} 
\sum_{\vecm\in\ZZ^k} 
f_\phi( (\vecm-\vecxi_2) v^{1/2})\, e(\tfrac12\|\vecm-\vecxi_2\|^2 u 
+ \vecm\cdot \vecxi_1) ,
\end{align}
where, for $\phi$ in any interval $\nu\pi<\phi<(\nu+1)\pi$ ($\nu\in\ZZ$),
$f_\phi$ is given by the formula
$$
f_\phi(\vecw)= \int_{\RR^k} G_\phi(\vecw,\vecw')  f(\vecw') \,d\vecw' ,
$$
with the integral kernel
\begin{align}\label{GPHIdef}
G_\phi(\vecw,\vecw')=e\biggl(-\frac{k(2\nu+1)}8\biggr) %
|\sin\phi|^{-k/2}
e\left[\frac{\tfrac12(\|\vecw\|^2+\|\vecw'\|^2)
\cos\phi- \vecw\cdot\vecw'}{\sin\phi}\right],
\end{align}
while for $\phi=\nu\pi$ ($\nu\in\Z$) we have
$f_\phi(\vecw)=e(-\frac{k\nu}4)f((-1)^{\nu}\vecw)$.
The operators $U^\phi: f \mapsto f_\phi$ form a one-parameter group of unitary operators on $\L^2(\R^k)$;
in particular, $U^\phi \circ U^{\phi'}=U^{\phi+\phi'}$ for any $\phi,\phi'\in\R$.
Cf.\ \cite[Sec.\ 3-4]{MarklofpaircorrI}.   %

For any $f,g\in \Sw(\RR^k)$, the product
$\Theta_f\left(\tau,\phi;\vecxi\right)\overline{\Theta_g\left(\tau,\phi;\vecxi\right)}$ 
depends only on $\phi\mod2\pi$ and may thus be viewed as a function on $G=\SLR\ltimes\RR^{2k}$
through the Iwasawa parametrization (cf.\ \eqref{TFNIWASAWA})
\begin{align*}
(\tau,\phi,\vecxi)\mapsto
\left(\matr 1u01\matr{\sqrt v}00{1/\sqrt v}\matr{\cos\phi}{-\sin\phi}{\sin\phi}{\cos\phi},\vecxi\right),
\qquad \text{where }\:\tau=u+iv.
\end{align*}
By \cite[Proposition 4.9]{MarklofpaircorrI}, 
this function %
$\Theta_f\overline\Theta_g\in\C^\infty(G)$
is in fact left $\Gamma^k$ invariant, where
\begin{align}\label{GAMMAkdef}
\Gamma^k=\left\{\left(\matr abcd,\cmatr{ab\vecs}{cd\vecs}+\vecm\right)\col
\matr abcd\in \SLZ,\: \vecm\in \ZZ^{2k}\right\}
\end{align}
with $\vecs:=\trans(1/2,\ldots,1/2)\in \RR^k$.
The group $\Gamma^k$ is a finite index subgroup of $\SLZ\ltimes (\frac12\ZZ)^{2k}$,
and contains $\Gamma_\theta\ltimes \ZZ^{2k}$ as an index $3$ subgroup,
where $\Gamma_\theta$ is the theta group, i.e.
\begin{align*}
\Gamma_\theta=\left\{\matr abcd\in\SL(2,\Z)\col ab\equiv cd\equiv0\mod 2\right\}.
\end{align*}
Cf.\ \cite[Lemmata 4.11, 4.12]{MarklofpaircorrI}.

For the proof of Theorem \ref{MAINAPPLTHM},
we will eventually specialize to $k=2$:
The starting point for the method developed in \cite{MarklofpaircorrI} 
is the following identity\footnote{Cf.\ \cite[Sec.\ 2.3]{MarklofpaircorrI},
where the identity \eqref{MARKLOFkeyid}
appears in the special case when $f(\vecx)\equiv\psi_1(\|\vecx\|^2)$, $g(\vecx)\equiv\psi_2(\|\vecx\|^2)$ 
and using a slightly different notation than in \eqref{MARKLOFkeyid}.
Note that 
we write $\hh(s)=\int_{\R}h(u)e(-su)\,du$
in \eqref{MARKLOFkeyid},
in line with previous definitions in our paper,
whereas a different normalization of $\hh$ is used in
\cite[p.\ 423(top)]{MarklofpaircorrI}.},
valid for any $f,g\in\scrS(\R^2)$, $h\in \L^1(\R)$, $T>0$ and $\vecxi_2\in\R^2$:
\begin{align}\notag
\int_{\R}\Theta_f\biggl(u+T^{-2}i, 0 ;\cmatr{\bn}{\vecxi_2}\biggr)\,
\overline{\Theta_g\biggl(u+T^{-2}i, 0 ;\cmatr{\bn}{\vecxi_2}\biggr)}\, h(u)\,du
\hspace{160pt}
\\\label{MARKLOFkeyid}
=\frac1{T^2}\sum_{\vecm_1\in\Z^2}\sum_{\vecm_2\in\Z^2}
f\bigl(T^{-1}(\vecm_1-\vecxi_2)\bigr)\,\overline{g\bigl(T^{-1}(\vecm_2-\vecxi_2)\bigr)}
\,\,\hh\biggl(-\tfrac12\, Q\hspace{-3pt}\cmatr{\vecm_1}{\vecm_2}\biggr),
\end{align}
where $Q$ is the inhomogeneous quadratic form on $\R^4$ given by \eqref{Qdef}
with $\vecxi_2=\cmatr{\alpha}{\beta}\in\R^2$, i.e., %
\begin{align}\label{MARKLOFkeyidpf1}
Q\hspace{-2pt}\cmatr{\vecx_1}{\vecx_2}=\|\vecx_1-\vecxi_2\|^2-\|\vecx_2-\vecxi_2\|^2,\qquad\forall \vecx_1,\vecx_2\in\R^2.
\end{align}
The formula \eqref{MARKLOFkeyid} follows by replacing $\Theta_f$ and $\Theta_g$ by their defining sums
(cf.\ \eqref{ThetafDEF})
and changing the order of summation and integration. %

The key step in \cite{MarklofpaircorrI}
is then to determine the limit of the left hand side of \eqref{MARKLOFkeyid} as $T\to\infty$,
by %
using the invariance properties of the function
$\Theta_f\overline{\Theta_g}$
and an equidistribution result as in Theorem \ref{INEFFECTIVETHM} above 
(with $\vecxi_1=\bn$);
this is where we will apply our effective result, Theorem \ref{MAINTHM1}, instead.
A central difficulty in \cite{MarklofpaircorrI}
comes from the fact that the theta functions $\Theta_f,\Theta_g$ are unbounded;
thus one needs to truncate the function 
$\Theta_f\overline{\Theta_g}$ in the cusp before the equidistribution result can be applied,
and then bound %
the error caused by the truncation.
In fact it turns out that one picks up an explicit extra contribution from the
part of the integral in \eqref{MARKLOFkeyid} over a tiny interval $|u|\ll T^{-(1+\ve)}$,
whereas the error caused by the truncation for the remaining part of the integral can be
proved to be appropriately small, 
provided that $\vecxi_2$ is Diophantine.
The treatment of these matters in 
\cite{MarklofpaircorrI} is already in principle effective,
and so %
our work concerning the truncation error 
will essentially only consist in
keeping more explicit track on how the bounds in 
\cite{MarklofpaircorrI} depends on various parameters;
cf.\ in particular Proposition~\ref{opp:upper bound} below.
Also, for the application of Theorem \ref{MAINTHM1},
we require precise bounds on derivatives of the function $\Theta_f\overline{\Theta_g}$;
this is worked out in Lemma \ref{lem:betatheta} below.

\subsection{Bounds for the derivatives of $\Theta_f\overline{\Theta_g}$}

Although we will eventually specialize to $k=2$,
we will consider a general $k\in\Z^+$ as long as this causes no extra work.
We will use the same notation $S_{p,a,n}$ as introduced in the introduction
also for the corresponding weighted Sobolev norm
of a function $f\in\C^n(\R^k)$ with $k\geq2$; namely
\begin{align}\label{genSobdef}
S_{p,a,n}(f)=\sum_{|\gamma|\leq n}\|(1+\|\vecx\|)^a\,\partial^\gamma f(\vecx) \|_{\L^p}.
\end{align}
Here we use standard multi-index notation, i.e.\ $\gamma$ runs through $k$-tuples of
nonnegative integers, %
$|\gamma|=\gamma_1+\ldots+\gamma_k$ and $\partial^\gamma=\partial^{\gamma_1}_{x_1}\cdots\partial^{\gamma_k}_{x_k}$.
It will be convenient to work with the Sobolev norms $S_{2,a,a}$ on functions in $\C^a(\R^k)$,
and we introduce the notation $\|\cdot\|_{\L_a^2}$ for these.
Thus for any integer $a\geq0$ and $f\in\C^a(\R^k)$,
\begin{align}\label{eq:Spandef}
\|f\|_{\L^2_a}:=S_{2,a,a}(f)
=\sum_{|\beta|\leq a}\|(1+\|\vecx\|)^a\partial^\beta f(\vecx)\|_{\L^2}.
\end{align}
We note that (cf., e.g., \cite[Ch.\ 8.1, Exc.\ 1]{gF99})
\begin{align}\label{LN2BASICFACT1}
\|f\|_{\L_a^2}\asymp \sum_{|\beta|\leq a}\sum_{|\beta'|\leq a}\|\vecx^{\beta'}\partial^\beta f(\vecx)\|_{\L^2}
\asymp \sum_{|\beta|\leq a}\sum_{|\beta'|\leq a}\|\partial^\beta(\vecx^{\beta'} f(\vecx))\|_{\L^2}.
\end{align}
Combining this relation with the Plancherel Theorem we also have
\begin{align} \label{eq:Plancherel}
\|\widehat f\|_{\L_a^2}\asymp\|f\|_{\L_a^2},
\end{align}
where $\widehat f(\vecy)=\int_{\RR^k} f(\vecx)e(-\vecx\vecy)\,d\vecx$ is the Fourier transform of $f$. 
In \eqref{LN2BASICFACT1} and \eqref{eq:Plancherel}, the implied constants only depend on $k$ and $a$.

Given $f\in \scrS(\RR^k)$, we view $f_\phi(\vecw)$ as a function on the space 
$\RR^{k+1} $, given by the coordinates $(\vecw,\phi)$.
Thus $\partial^\beta f_\phi(\vecw)$ for $\beta\in(\Z_{\geq0})^{k+1}$ denotes
$\partial_{w_1}^{\beta_1}\cdots\partial_{w_k}^{\beta_k}\partial_\phi^{\beta_{k+1}} f_\phi(\vecw)$.
The following lemma corresponds to
\cite[lemma 4.3]{MarklofpaircorrI}, but extended to arbitrary derivatives of $f_\phi$ and with the
implied constant made more precise.

\begin{lem}\label{schwartz}
Let $A\in\Z_{\geq0}$, $\beta\in(\Z_{\geq0})^{k+1}$ and $a\in\Z$, $a>A+\frac k2+4|\beta|$.
Then for any $f\in\scrS(\RR^k)$, $\vecw\in\R^k$ and $\phi\in\R$,
\begin{align*}
|\partial^\beta f_\phi(\vecw)| \ll_{A,\beta} \|f\|_{\L^2_a}(1+\|\vecw\|)^{-A}.
\end{align*}
\end{lem}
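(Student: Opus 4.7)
The plan is to reduce to the case $\beta=0$ via the fact that $\{U^\phi\}_{\phi\in\R}$ is a unitary one-parameter group (the metaplectic representation restricted to rotations), and then apply a standard Sobolev embedding to the result. The two structural inputs I will use are: (a) the kernel identity \eqref{GPHIdef} together with integration by parts yields the rotation formulae
\begin{align*}
\partial_{w_j} f_\phi=U^\phi\bigl(-2\pi\i\sin\phi\cdot x_j f+\i\cos\phi\cdot\partial_{x_j}f\bigr),
\qquad
w_jf_\phi=U^\phi\bigl(\cos\phi\cdot x_j f+\tfrac{\sin\phi}{2\pi}\partial_{x_j}f\bigr),
\end{align*}
(these are the familiar intertwining relations $U^{-\phi}x_jU^\phi=\cos\phi\,x_j+\tfrac{\sin\phi}{2\pi}\partial_{x_j}$, etc.); and (b) the generator of $\{U^\phi\}$ is (up to a constant) the Hermite operator $A$, which is second order in $\partial_{x_j}$ with polynomial coefficients of degree $2$, so $\partial_\phi U^\phi=\i A U^\phi$.

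First I would iterate (a) and (b) to obtain, for each multi-index $\beta\in(\Z_{\geq0})^{k+1}$, a representation
\begin{align*}
\partial^\beta f_\phi(\vecw)=U^\phi(L_\beta f)(\vecw),
\end{align*}
where $L_\beta$ is a linear differential operator in $\vecx\in\R^k$, of differential order $\leq 2|\beta|$ and with polynomial coefficients of degree $\leq 2|\beta|$. (Here each spatial derivative $\partial_{w_j}$ contributes a first order factor with linear coefficients, each $\partial_\phi$ contributes a second order factor with quadratic coefficients, and there are extra lower order terms coming from differentiating the $\phi$-dependence of the earlier factors.) By the characterization \eqref{LN2BASICFACT1} (which allows one to trade multiplications by $x_j$ for weights $(1+\|\vecx\|)$), it follows that
\begin{align*}
\|L_\beta f\|_{\L^2_{a'}}\ll_{\beta,a'}\|f\|_{\L^2_{a'+4|\beta|}}
\qquad(a'\in\Z_{\geq0}),
\end{align*}
where the exponent $4|\beta|$ safely absorbs both the $\leq2|\beta|$ derivatives and the $\leq2|\beta|$ polynomial weights introduced by $L_\beta$.

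Next I would show that $U^\phi$ preserves every weighted Sobolev norm: $\|U^\phi g\|_{\L^2_{a'}}\ll_{a'}\|g\|_{\L^2_{a'}}$. Using \eqref{LN2BASICFACT1}, this reduces to the statement that operators of the form $\vecx^{\beta'}\partial^\beta$ with $|\beta|,|\beta'|\leq a'$ stay bounded in $\L^2$ after conjugation by $U^\phi$, which follows by iterating (a) (and the fact that $U^\phi$ is $\L^2$-unitary). Finally, a standard weighted Sobolev embedding gives, for any integer $a'>A+\tfrac k2$ and any $g\in\L^2_{a'}(\R^k)$, the pointwise estimate
\begin{align*}
|g(\vecw)|(1+\|\vecw\|)^A\ll_{A,a'}\|g\|_{\L^2_{a'}},
\end{align*}
by writing $(1+\|\vecw\|)^Ag(\vecw)$ as an element of an ordinary $\L^2$-Sobolev space of order $a'-A>k/2$ and invoking the embedding into $\L^\infty$. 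Combining, if $a>A+\tfrac k2+4|\beta|$ then we may take $a'=\lceil A+\tfrac k2\rceil+1$ and conclude
\begin{align*}
|\partial^\beta f_\phi(\vecw)|(1+\|\vecw\|)^A
\ll \|U^\phi(L_\beta f)\|_{\L^2_{a'}}
\ll \|L_\beta f\|_{\L^2_{a'}}
\ll \|f\|_{\L^2_{a'+4|\beta|}}
\ll \|f\|_{\L^2_a},
\end{align*}
as claimed. The only non-routine step is the bookkeeping for $L_\beta$ and the verification that $U^\phi$ preserves $\|\cdot\|_{\L^2_{a'}}$; both follow in a mechanical (if slightly tedious) way from the two explicit rotation formulae above and the elementary equivalence \eqref{LN2BASICFACT1}.
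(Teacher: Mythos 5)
Your proof is correct in spirit but takes a genuinely different route from the paper. The paper works directly with the oscillatory kernel $G_\phi$: one shows by induction that $\partial^\beta G_\phi = G_\phi\cdot P_\beta(\vecw,\vecw',\sin\phi,\cos\phi)/(\sin\phi)^{2|\beta|}$ with $P_\beta$ a polynomial of controlled degree, then gains decay in $\|\vecw\|$ by integrating by parts in $\vecw'$, and finally handles the degenerate range $\phi$ near $\nu\pi$ separately via $f_{\phi+\pi/2}=e^{-\pi k i/4}U^\phi\widehat f$ together with \eqref{eq:Plancherel}. Your proposal instead exploits the metaplectic covariance: the intertwining relations $U^{-\phi}x_jU^\phi=\cos\phi\,x_j+\frac{\sin\phi}{2\pi i}\partial_{x_j}$ and $U^{-\phi}\partial_{x_j}U^\phi=-2\pi i\sin\phi\,x_j+\cos\phi\,\partial_{x_j}$ (watch the $i$'s --- your two formulae have a couple of typos, e.g.\ $\i\cos\phi$ should be $\cos\phi$), combined with $\partial_\phi U^\phi=iAU^\phi$ for the harmonic-oscillator generator $A$, give $\partial^\beta f_\phi = U^\phi(L_\beta f)$ with $L_\beta$ a polynomial-coefficient differential operator whose order and coefficient degree are both controlled by $|\beta|$. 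Then it remains to show $U^\phi$ preserves the weighted norms $\|\cdot\|_{\L^2_{a'}}$ uniformly in $\phi$ (which also follows from the intertwining relations plus the $\L^2$-unitarity of $U^\phi$ and \eqref{LN2BASICFACT1}), and to invoke a standard weighted Sobolev embedding. The gain of your approach is that it is uniform in $\phi$ --- the $\sin\phi,\cos\phi$ coefficients are bounded, so there is no need for a separate argument near $\phi=\nu\pi$ --- and it makes the underlying group-theoretic structure explicit. The cost is that it rests on establishing the intertwining relations and the $\phi$-differentiability $\partial_\phi U^\phi f=iAU^\phi f$ on Schwartz space; these are standard facts about the metaplectic representation but require a bit of care to set up cleanly from the kernel \eqref{GPHIdef}, whereas the paper's argument stays entirely elementary (direct differentiation and integration by parts). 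Both routes are of comparable length once written out in full, and both give the stated exponent $a>A+\frac k2+4|\beta|$ (your route in fact leaves a bit of slack, since by your own bookkeeping $2|\beta|$ rather than $4|\beta|$ would suffice).
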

\begin{proof}
For $\phi$ in any interval $\nu\pi+\frac1{100}<\phi<(\nu+1)\pi-\frac1{100}$, $\nu\in\ZZ$, we use
\begin{align*}
\partial^\beta f_\phi(\vecw)= \int_{\RR^k} \bigl(\partial^\beta G_\phi(\vecw,\vecw')\bigr)  f(\vecw') \,d\vecw' ,
\end{align*}
with $G_\phi(\vecw,\vecw')$ as in \eqref{GPHIdef}, and with $\partial^\beta$ acting on the $k+1$ variables
$w_1,\ldots,w_k,\phi$.
One proves by induction that
\begin{align*}
\partial^\beta G_\phi(\vecw,\vecw')=G_\phi(\vecw,\vecw')\frac{P_\beta(\vecw,\vecw',\sin\phi,\cos\phi)}{(\sin\phi)^{2|\beta|}},
\end{align*}
where $P_\beta$ is a polynomial in $2k+2$ variables,
with complex coefficients which only depend on $k$ and $\beta$,
and only containing terms $w_1^{\alpha_1}\cdots w_k^{\alpha_k}{w_1'}^{\alpha_{k+1}}\cdots {w_k'}^{\alpha_{2k}}
(\sin\phi)^{\alpha_{2k+1}}(\cos\phi)^{\alpha_{2k+2}}$
with $\sum_1^{2k}\alpha_j\leq2|\beta|$.
Integrating by parts $n\geq0$ times with respect to $w_j'$ for some $j$, it follows that
\begin{align*}
&\partial^\beta f_\phi(\vecw)= \frac{e\bigl(-\frac18k(2\nu+1)\bigr)}{|\sin\phi|^{k/2}}
\Bigl(\frac{\sin\phi}{2\pi i w_j}\Bigr)^n
\int_{\R^k}K(\vecw,\vecw',\phi)\,e\biggl[-\frac{\vecw\cdot\vecw'}{\sin\phi}\biggr]\,d\vecw'
\end{align*}
where
\begin{align*}
K(\vecw,\vecw',\phi) &=\Bigl(\frac{\partial}{\partial w_j'}\Bigr)^n
\biggl(e\biggl[\frac{\tfrac12(\|\vecw\|^2+\|\vecw'\|^2)\cos\phi}{\sin\phi}\biggr]
\frac{P_\beta(\vecw,\vecw',\sin\phi,\cos\phi)}{(\sin\phi)^{2|\beta|}}f(\vecw')\biggr)
\end{align*}
and so
\begin{align*}
|K(\vecw,\vecw',\phi)|\ll_{\beta,n} 
(1+\|\vecw\|)^{2|\beta|}\sum_{\ell=0}^{n}(1+\|\vecw'\|)^{2|\beta|+\ell}
\,\,\biggl|\Bigl(\frac{\partial}{\partial w_j'}\Bigr)^{n-\ell} f(\vecw')\biggr|
\end{align*}
for all $\vecw,\vecw'\in\R^k$ and $\phi\in(\nu\pi+\frac1{100},(\nu+1)\pi-\frac1{100})$.
If $\|\vecw\|\geq1$, then we apply the above with $n=A+2|\beta|$ and $j$ being the index for which
$|w_j|=\max(|w_1|,\ldots,|w_k|)$;
if $\|\vecw\|<1$ then we instead use $n=0$.
The desired bound follows using the Cauchy-Schwarz inequality combined with the fact that 
$(1+\|\vecw'\|)^{-\frac{k+\ve}2}\in\L^2(\R^k)$  %
for any $\ve>0$.

To treat the remaining values of $\phi$, we use the fact that
$f_{\phi+\frac\pi2}=e^{-\frac14 \pi ki}U^\phi\widehat f$;
hence by what we have already proved, $|\partial^\beta f_\phi(\vecw)| \ll\|\widehat f\|_{\L^2_a}(1+\|\vecw\|)^{-A}$
for $\phi$ in any interval $(\nu-\frac12)\pi+\frac1{100}<\phi<(\nu+\frac12)\pi-\frac1{100}$, $\nu\in\Z$,
and the desired bound follows using \eqref{eq:Plancherel}.
\end{proof}

Using Lemma \ref{schwartz}, we now obtain bounds on arbitrary derivatives of the function
$\Theta_f\overline\Theta_g\in\C^\infty(G)$.
Recall that we write $\sum_{\ord(D)\leq m}$ to denote a sum over all monomials $D$ of degree $\leq m$
in the fixed basis $X_1,\ldots,X_{3+2k}$ of $\ig$ (cf.\ \eqref{FIXEDBASIS}).

\begin{lem}
\label{lem:betatheta}
Let $f,g\in\scrS(\RR^k)$.
Let $m$ and $a$ be integers satisfying $m\geq0$ and $a>\frac32k+6m+1$.
Then for any $(\tau,\phi,\vecxi)\in G$ with $v=\tim \tau\geq\frac12$,
\begin{align}\label{eq:thetaderivbound}
\sum_{\ord(D)\leq m}\bigl|\bigl(D(\Theta_f\overline\Theta_g)\bigr)(\tau,\phi;\vecxi)\bigr|
\ll_m \|f\|_{\L^2_a}\|g\|_{\L^2_a}\,v^{m+\frac12k}.
\end{align}
Next let $A$ and $a$ be integers satisfying $A\geq1$ and $a>\frac32k+2A$.
Then for any $(\tau,\phi,\vecxi)\in G$ with $v=\tim \tau\geq\frac12$,
\begin{align}\label{eq:vecmsum}
&\biggl|(\Theta_f\overline\Theta_g)(\tau,\phi;\vecxi)
- v^{k/2} \sum_{\vecm\in\ZZ^k}f_\phi((\vecm-\vecxi_2) v^{1/2}) \overline{g_\phi((\vecm-\vecxi_2) v^{1/2})}\biggr|
\ll_A \|f\|_{\L^2_a}\|g\|_{\L^2_a}v^{-A},
\end{align}
and if furthermore $\vecxi_2\in\vecn+[-\tfrac12,\tfrac12]^k$, $\vecn\in\ZZ^k$, then
\begin{align}\label{eq:m=0term}
&\biggl|(\Theta_f\overline\Theta_g)(\tau,\phi;\vecxi)
- v^{k/2} f_\phi((\vecn-\vecxi_2) v^{1/2}) \overline{g_\phi((\vecn-\vecxi_2) v^{1/2})}\biggr|
\ll_A \|f\|_{\L^2_a}\|g\|_{\L^2_a}v^{-A}.
\end{align}
\end{lem}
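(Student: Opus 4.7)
The plan is to prove the three bounds in sequence; \eqref{eq:thetaderivbound} is the delicate one, while \eqref{eq:vecmsum} and \eqref{eq:m=0term} follow from standard rapid-decay estimates via Lemma \ref{schwartz}.

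For \eqref{eq:thetaderivbound}, since $\Theta_f\overline{\Theta_g}$ (by \cite[Prop.\ 4.9]{MarklofpaircorrI}) and any left-invariant monomial $D$ are $\Gamma^k$-invariant, I reduce to $(\tau,\phi,\vecxi)$ in a fundamental domain for $\Gamma^k$, where $u$ and $\vecxi$ are bounded. In Iwasawa coordinates, the $\lsl(2,\R)$-basis elements $X_1, X_2, X_3$ translate into first-order operators in $(u, v, \phi)$ with coefficients of degree at most one in $v$, while $X_{3+\ell}$ and $X_{3+k+\ell}$ act through $\partial_{\xi_{1,\ell}}, \partial_{\xi_{2,\ell}}$ with coefficients of size $O(v^{1/2})$ (explicit from computing $n(u)a(v)k(\phi)\scmatr{\vece_\ell}{\bn}$ and $n(u)a(v)k(\phi)\scmatr{\bn}{\vece_\ell}$). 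Hence a degree-$m$ monomial $D$ expands into a finite sum of coordinate operators of differentiation order $\leq m$ with coefficients of size $O_m(v^m)$. Differentiating the series \eqref{ThetafDEF} term-wise and applying the Leibniz rule for the product, the generic $\vecm$-indexed summand is a polynomial in $(\|\vecm-\vecxi_2\|, v^{1/2})$ of bounded total degree, multiplied by $\partial^\beta f_\phi((\vecm-\vecxi_2)v^{1/2})$ and an analogous factor for $g$ with $|\beta|, |\beta'| \leq m$. Setting $r := \|\vecm-\vecxi_2\|v^{1/2}$, the rescaling $\|\vecm-\vecxi_2\|^j = r^j v^{-j/2}$ collects all explicit $v$-growth cleanly into $v^{k/2+m}\cdot (1+r)^{O(m)}$. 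Lemma \ref{schwartz} then yields $|\partial^\beta f_\phi(\cdot)| \ll \|f\|_{\L^2_a}(1+r)^{-A}$ provided $A > k+2m+1$ and $a > A + \tfrac{k}{2} + 4m$; the choice $A = k+2m+1$ matches the hypothesis $a > \tfrac{3k}{2} + 6m + 1$. Absorbing $(1+r)^{O(m)}$ into the decay and summing over $\vecm$, using $\sum_\vecm (1+r)^{-(A-O(m))} = O_m(1)$ for $v \geq \tfrac12$, yields the stated bound.

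For \eqref{eq:vecmsum}, I expand $\Theta_f\overline{\Theta_g}$ as a double sum over $(\vecm,\vecm')$ and substitute $\vecn := \vecm-\vecm'$. The $\vecn = \vecnull$ contribution collapses (the $u$- and $\vecxi_1$-dependent exponentials cancel) precisely to the subtracted sum. For $\vecn \neq \vecnull$, the triangle inequality forces $\max(\|\vecm-\vecxi_2\|, \|\vecm-\vecn-\vecxi_2\|) \geq \|\vecn\|/2$, so Lemma \ref{schwartz} with decay exponent $A^* := 2A+k$ (requiring $a > A^* + \tfrac{k}{2} = \tfrac{3k}{2} + 2A$, matching the hypothesis) gives the bound $|f_\phi(\cdot)\,\overline{g_\phi(\cdot)}| \ll \|f\|_{\L^2_a}\|g\|_{\L^2_a}(1+\|\vecn\|v^{1/2}/2)^{-A^*}(1+\|\vecm-\vecxi_2\|v^{1/2})^{-A^*}$ (or the symmetric form with the other argument). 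Summing over $\vecm$ yields $O(v^{-k/2})$, and summing over $\vecn \neq \vecnull$ yields $O(v^{-A})$ using $v \geq \tfrac12$; the prefactor $v^{k/2}$ cancels one factor to produce the claimed $v^{-A}$. For \eqref{eq:m=0term}, applying \eqref{eq:vecmsum} first reduces the problem to bounding $v^{k/2}\sum_{\vecm \neq \vecn}|f_\phi g_\phi|((\vecm-\vecxi_2)v^{1/2})$; the assumption $\vecxi_2 \in \vecn + [-\tfrac12, \tfrac12]^k$ forces $\|\vecm-\vecxi_2\| \geq \tfrac12$ for every $\vecm \neq \vecn$, so Lemma \ref{schwartz} with the same $A^*$ gives each factor $\ll (1+v^{1/2}/2)^{-A^*}$, and summing over $\vecm \neq \vecn$ (convergent for $2A^* > k$) produces the $O(v^{-A})$ bound.

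The main obstacle is \eqref{eq:thetaderivbound}: the bookkeeping needed to establish the polynomial growth of the Iwasawa-coordinate expressions of the basis elements and to verify that after the rescaling $r = \|\vecm-\vecxi_2\|v^{1/2}$ all polynomial growth is either collected into the explicit factor $v^{m+k/2}$ or absorbed into the rapid decay at the price $a > \tfrac{3k}{2}+6m+1$. The other two bounds are standard rapid-decay estimates enabled by the separation conditions $\|\vecn\| \geq 1$ and $\|\vecm-\vecxi_2\| \geq \tfrac12$, respectively.
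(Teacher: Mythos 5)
Your proof is essentially correct and follows the same basic approach as the paper: expand the Lie-algebra basis in Iwasawa coordinates, differentiate the defining series, apply Lemma~\ref{schwartz} with a decay exponent large enough to dominate the polynomial growth coming from the $u$- and $\vecxi$-derivatives, and sum.

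The one genuine (if minor) difference lies in the treatment of \eqref{eq:thetaderivbound}. You differentiate $\Theta_f\overline{\Theta_g}$ directly as a double sum over $(\vecm_1,\vecm_2)$, so the $u$-derivative brings down the factor $\tfrac12\bigl(\|\vecm_1-\vecxi_2\|^2-\|\vecm_2-\vecxi_2\|^2\bigr)$, which you then rescale via $r_j=\|\vecm_j-\vecxi_2\|v^{1/2}$ and absorb into the Schwartz decay. The paper instead first reparametrizes via $\vecm:=\vecm_1$, $\vecm':=\vecm_1-\vecm_2$, which turns the $u$- and $\vecxi_1$-exponential into $e\bigl(\vecm'\cdot((\vecm-\tfrac12\vecm'-\vecxi_2)u+\vecxi_1)\bigr)$; the $u,\vecxi_1$-derivatives then produce polynomial factors $(1+\|\vecm\|+\|\vecm'\|)^{2d_1+d_2}$ that are manifestly independent of $v$ and of the precise rescaling, which makes the bookkeeping a touch cleaner, especially once one uses $(1+\|\vecm\|+\|\vecm'\|)\ll(1+\|\vecm\|)(1+\|\vecm-\vecm'\|)$ to factor the double sum. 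Both routes lead to the same choice $A=2m+k+1$ and hence the same threshold $a>\tfrac32 k+6m+1$. Two small accounting remarks you should be careful to make explicit in a full write-up: (i) when expanding a degree-$m$ monomial in $X_1,\ldots,X_{3+2k}$ into coordinate operators, the commutator terms with derivatives landing on the coefficients must be checked to have coefficients $\ll v^{d_1+\tfrac12 d_2}$ (which holds because those coefficients and all their derivatives are bounded by these powers of $v$ on the region $|u|\leq\tfrac12$, $v\geq\tfrac12$); (ii) in your bound for \eqref{eq:vecmsum} the factor $(1+\|\vecn\|v^{1/2}/2)^{-A^*}$ can only replace whichever of $(1+\|\vecw_1\|)^{-A^*}$ or $(1+\|\vecw_2\|)^{-A^*}$ corresponds to the larger argument, as you acknowledge parenthetically — this yields a case split that changes nothing in the final bound. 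Your derivation of \eqref{eq:m=0term} from \eqref{eq:vecmsum} is also fine; the paper proves it directly, which is equivalent.
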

\begin{proof}
Recall that we write $\tau=u+iv$. We have
\begin{align*}
&\Theta_f\overline\Theta_g\left(\tau,\phi;\vecxi\right)
= v^{k/2} 
\sum_{\vecm_1,\vecm_2\in\ZZ^k} 
f_\phi( (\vecm_1-\vecxi_2) v^{1/2})\overline{g_\phi((\vecm_2-\vecxi_2) v^{1/2})}\, 
\\[-5pt]
&\hspace{185pt}\times e\bigl(\tfrac12(\|\vecm_1-\vecxi_2\|^2-\|\vecm_2-\vecxi_2\|^2) u 
+ (\vecm_1-\vecm_2)\cdot \vecxi_1\bigr)
\\[10pt]
&= v^{k/2} 
\sum_{\vecm_1,\vecm_2\in\ZZ^k}  
f_\phi( (\vecm_1-\vecxi_2) v^{1/2})\overline{g_\phi((\vecm_2-\vecxi_2) v^{1/2})}\, 
e\bigl(\tfrac12(\vecm_1-\vecm_2)((\vecm_1+\vecm_2-2\vecxi_2) u +2 \vecxi_1)\bigr)\\
&= \sum_{\vecm,\vecm'\in\ZZ^k}  v^{k/2} 
f_\phi( (\vecm-\vecxi_2) v^{1/2})\overline{g_\phi((\vecm-\vecm'-\vecxi_2) v^{1/2})}\, 
e\left(\vecm'\cdot((\vecm-\tfrac12\vecm'-\vecxi_2) u 
+\vecxi_1)\right)\\
&:=\sum_{\vecm,\vecm'\in\ZZ^k} F_{\vecm,\vecm'}(\tau,\phi;\vecxi),
\end{align*}
say. Note that $F_{\vecm,\vecnull}(\tau,\phi;\vecxi)
=v^{k/2} f_\phi((\vecm-\vecxi_2) v^{1/2}) \overline{g_\phi((\vecm-\vecxi_2) v^{1/2})} $.

\vspace{2pt}

In the $(u+iv,\phi;\vecxi)$-coordinates with $\vecxi=\trans(\xi_1,\ldots,\xi_{2k})$,
the Lie derivatives $X_1,\ldots,X_{3+2k}$ are given by
\begin{align*}
&X_1=v(\cos 2\phi)\partial_u-v(\sin 2\phi)\partial_v-(\sin\phi)^2\partial_\phi;
\\
&X_2=v(\cos 2\phi)\partial_u-v(\sin 2\phi)\partial_v+(\cos\phi)^2\partial_\phi;
\\
&X_3=2v(\sin 2\phi)\partial_u+2v(\cos 2\phi)\partial_v+(\sin 2\phi)\partial_\phi;
\\
&\begin{cases}X_{3+\ell}=\bigl(\frac{v\cos\phi+u\sin\phi}{\sqrt v}\bigr)\partial_{\xi_\ell}
+\bigl(\frac{\sin\phi}{\sqrt v}\bigr)\partial_{\xi_{k+\ell}}
\\
X_{3+k+\ell}=\bigl(\frac{-v\sin\phi+u\cos\phi}{\sqrt v}\bigr)\partial_{\xi_\ell}
+\bigl(\frac{\cos\phi}{\sqrt v}\bigr)\partial_{\xi_{k+\ell}}
\end{cases}
\qquad(\ell\in\{1,2,\ldots,k\}).
\end{align*}
(The formulas for $X_1,X_2,X_3$ are standard and may e.g.\ easily be derived using
the formulas in the proof of \cite[Lemma 6.1]{SASL} and $X_2-X_1=\smatr0{-1}10=\partial_\phi$.
Regarding $X_{3+\ell}$ and $X_{3+k+\ell}$, cf.\ \eqref{DECAYTFNFROMCMNORMLEMpf1}.)
Using the automorphy of $\Theta_f\overline\Theta_g$, it is enough to prove \eqref{eq:thetaderivbound} when 
$|u|\leq\frac12$ and $\|\vecxi\|\ll 1$.
Using the above formulas we then get, for any monomial $D$ in $X_1,\ldots,X_{3+2k}$
with $d_1$ factors in $\{X_1,X_2,X_3\}$ and $d_2$ factors in $\{X_4,\ldots,X_{3+2k}\}$:
 \begin{align}
 \label{eq:Thetabound}
  |D F_{\vecm,\vecm'}(\tau,\phi;\vecxi )|&\ll v^{d_1+\frac12d_2}\sum_{|\alpha|\leq d_1}\sum_{|\beta|\leq d_2} 
\left|\partial^{\alpha_1}_u\partial^{\alpha_2}_v\partial^{\alpha_3}_\phi
\partial^{\beta_1}_{\xi_1}\cdots\partial^{\beta_{2k}}_{\xi_{2k}}
 F_{\vecm,\vecm'}(u+iv,\phi;\vecxi )\right| ,
 \end{align}
where $\alpha$ runs through multi-indices in $(\Z_{\geq0})^3$
and $\beta$ runs through multi-indices in $(\Z_{\geq0})^{2k}$.
Next, from the definition of $F_{\vecm,\vecm'}(\tau,\phi;\vecxi)$,
by a standard computation, we obtain:
\begin{align*}
\left|\partial^{\alpha_1}_u\partial^{\alpha_2}_v\partial^{\alpha_3}_\phi
\partial^{\beta_1}_{\xi_1}\cdots\partial^{\beta_{2k}}_{\xi_{2k}}
 F_{\vecm,\vecm'}(u+iv,\phi;\vecxi )\right|
\ll\bigl(1+\|\vecm\|+\|\vecm'\|\bigr)^{2|\alpha|+|\beta|}v^{(k+|\beta|)/2}
\hspace{50pt}
\\
\times\sum_{|\beta'|+|\beta''|\leq|\alpha|+|\beta|}\Bigl|
(\partial^{\beta'}f_\phi)((\vecm-\vecxi_2) 
v^{1/2})(\partial^{\beta''}g_\phi)((\vecm-\vecm'-\vecxi_2) v^{1/2})\Bigr|,
\end{align*}
where $\beta'$ and $\beta''$ run through multi-indices in $(\Z_{\geq0})^{k+1}$,
with $\partial^{\beta'}$ and $\partial^{\beta''}$ having the same meaning as in Lemma \ref{schwartz}.
Applying now Lemma \ref{schwartz}, with any fixed integers $A$ and $a$ subject to $A\geq0$ and 
$a>A+\frac k2+4(d_1+d_2)$, we get
\begin{align}\notag
|D F_{\vecm,\vecm'}(\tau,\phi;\vecxi )|
\ll \|f\|_{\L_a^2}\|g\|_{\L_a^2} v^{d_1+d_2+\frac12k}\bigl(1+\|\vecm\|+\|\vecm'\|\bigr)^{2d_1+d_2}
\hspace{100pt}
\\\notag
\times (1+\|\vecm-\vecxi_2\|v^{1/2})^{-A}(1+\|\vecm-\vecm'-\vecxi_2\|v^{1/2})^{ -A }.
\end{align}
Let $d=d_1+d_2=\deg(D)$.
Using $1+\|\vecm\|+\|\vecm'\|\ll (1+\|\vecm\|)(1+\|\vecm-\vecm'\|)$ we obtain:
\begin{align*}
\bigl|\bigl(D(\Theta_f\overline\Theta_g)\bigr)(\tau,\phi;\vecxi)\bigr|
\ll\|f\|_{\L_a^2}\|g\|_{\L_a^2} v^{d+\frac12k}
\sum_{\vecm,\vecm'}(1+\|\vecm\|)^{2d-A}(1+\|\vecm-\vecm'\|)^{2d-A}.
\end{align*}
Taking here $A=2d+k+1$ we obtain \eqref{eq:thetaderivbound}.

For the remaining bounds, we apply Lemma \ref{schwartz} with fixed integers $A'\geq0$ and $a>A'+\frac k2$.
For any $(\vecm,\vecm')$ satisfying $\|\vecm-\vecxi_2\|+\|\vecm-\vecm'-\vecxi_2\|\geq\frac12$ this leads to
\begin{align*}
\bigl|F_{\vecm,\vecm'}(\tau,\phi;\vecxi )\bigr|
&=v^{k/2}\Bigl|f_\phi( (\vecm-\vecxi_2) v^{1/2})g_\phi((\vecm-\vecm'-\vecxi_2) v^{1/2})\Bigr|
\\
&\ll \|f\|_{\L_a^2}\|g\|_{\L_a^2}v^{(k-A')/2}
(1+\|\vecm-\vecxi_2\|)^{-A'}(1+\|\vecm-\vecm'-\vecxi_2\|)^{-A'}.
\end{align*}
In particular, this holds for all $(\vecm,\vecm')$ with $\vecm'\neq\bn$.
Hence, taking $A'=k+2A$, we obtain \eqref{eq:vecmsum}.
Similarly, if $\vecxi_2\in\vecn+[-\tfrac12,\tfrac12]^k$ then we note that 
$\|\vecm-\vecxi_2\|+\|\vecm-\vecm'-\vecxi_2\|\geq\frac12$ holds for all $(\vecm,\vecm')$
except $(\vecm,\vecm')=(\vecn,\vecnull)$,
and we thus obtain \eqref{eq:m=0term}.
\end{proof}

\subsection{Bounds on the truncation error}

Let us fix, once and for all, a $\C^\infty$ function $g_1:\R_{>0}\to[0,1]$
satisfying $g_{1|(0,1]}\equiv0$ and $g_{1|[2,\infty)}\equiv1$.
For any $Y\geq1$ we then define $g_Y:\R_{>0}\to[0,1]$, $g_Y(y):=g_1(y/Y)$,
so that $g_{Y|(0,Y]}\equiv0$ and $g_{Y|[2Y,\infty)}\equiv1$.
Next, we define the function $\scrX_Y:G\to\R_{\geq0}$ through
\begin{align}\label{XYdef}
\scrX_Y(\tau,\phi;\vecxi)=\scrX_Y(\tau)= 
\sum_{\gamma\in (\pm\oGamma'_\infty)\backslash\SL(2,\Z)}
g_Y\bigl(\tim\gamma\tau\bigr).
\end{align}
Note here that $(\pm\oGamma_\infty')=\bigl\{\pm\smatr1n01\col n\in\Z\bigr\}$;
cf.\ \eqref{GAMMAinfdef}.
The function $\scrX_Y$ is smooth and $\SL(2,\Z)$-invariant. %
For any $\tau\in\HH$, there is (since $Y\geq1$) at most one term in the sum in 
\eqref{XYdef} which gives a non-zero contribution.
In particular, $\scrX_Y(g)\in[0,1]$ for all $g\in G$.
Also, in terms of the cuspidal height function $\scrY$, 
cf.\ \eqref{CHdef},
we have $\scrX_Y(g)=0$ whenever $\scrY(g)\leq Y$ and $\scrX_Y(g)=1$ whenever $\scrY(g)\geq2Y$.
\begin{lem}\label{XYDERlem}
For any $Y\geq1$ and any monomial $D$ in $X_1,\ldots,X_{3+2k}$ of degree $\leq m$,
$D\scrX_Y$ is a bounded function on $G$ with 
$\|D\scrX_Y\|_{\L^\infty}\ll_m1$.
\end{lem}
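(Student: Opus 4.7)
The plan is to exploit the left $\SL(2,\Z)$-invariance of $\scrX_Y$ to reduce the problem to bounding derivatives of the single function $g_Y(v)$ on the standard fundamental domain, and then to show that the left-invariant vector fields $X_1,X_2,X_3$ interact nicely with such functions in Iwasawa coordinates.

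First I would note that $\scrX_Y$ depends only on $\tau\in\HH$, so $X_j\scrX_Y=0$ for $j\geq 4$ (since those vector fields act only in the $\vecxi$-directions); thus we may assume $D$ is a monomial in $X_1,X_2,X_3$ only. Next, a direct check using the bijection $\gamma\mapsto\gamma\alpha$ on $(\pm\oGamma_\infty')\backslash\SL(2,\Z)$ shows that $\scrX_Y(\alpha\tau)=\scrX_Y(\tau)$ for all $\alpha\in\SL(2,\Z)$, i.e.\ $\scrX_Y$ is left $\SL(2,\Z)$-invariant as a function on $G$. Since the $X_i$ are left-invariant, so is $D\scrX_Y$, and therefore it suffices to bound $|D\scrX_Y(g)|$ for $g=(\tau,\phi,\vecxi)$ with $\tau$ in the standard fundamental domain $\scrF\subset\HH$. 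On $\scrF$ we have $\tim\gamma\tau\leq v$ for every $\gamma\in\SL(2,\Z)$, so at most the trivial coset can contribute to \eqref{XYdef}; thus $\scrX_Y(g)=g_Y(v)$ on $\scrF$ (and this formula continues to hold on a neighborhood of any point of $\scrF$ whose $v$-coordinate lies strictly in $(Y,2Y)$, which is the only range where the bound is nontrivial).

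The core computation is then the following claim, which I would prove by induction on $m=\deg D$: for any monomial $D$ in $X_1,X_2,X_3$ of degree $\leq m$, and any $F\in\C^\infty(\R_{>0})$,
\begin{align*}
D[F(v)] = \sum_{j=0}^{m} v^{j}\,Q_{D,j}(\phi)\,F^{(j)}(v),
\end{align*}
where each $Q_{D,j}$ is a trigonometric polynomial in $\phi$ whose size depends only on $D$. The inductive step is immediate from the explicit formulas for $X_1,X_2,X_3$ recalled in Section~\ref{A sum}: each $X_i$ ($i=1,2,3$) is of the form $vA(\phi)\partial_u+vB(\phi)\partial_v+C(\phi)\partial_\phi$ with $A,B,C$ bounded trigonometric polynomials, and applied to $v^{j}Q_{D,j}(\phi)F^{(j)}(v)$ it produces a sum of terms of the same shape with $j$ replaced by $j$ or $j+1$.

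Applying this with $F=g_Y$ and noting that $g_Y^{(j)}(v)=Y^{-j}g_1^{(j)}(v/Y)$ is supported on $v/Y\in[1,2]$, we get $v^{j}|g_Y^{(j)}(v)|=(v/Y)^{j}|g_1^{(j)}(v/Y)|\ll_{j}1$ uniformly in $Y\geq 1$. Combining this with the boundedness of $Q_{D,j}(\phi)$ yields $|D\scrX_Y(g)|\ll_{m}1$ for every $g$ in the appropriate neighborhood of $\scrF$, and hence everywhere on $G$ by invariance. There is no real obstacle here; the only place requiring some care is the reduction to $\scrF$, where one uses that $\tim\gamma\tau$ is maximized on the orbit at $\gamma=\id$ throughout $\scrF$, so that the sum \eqref{XYdef} collapses to its trivial term.
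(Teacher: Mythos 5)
Your argument is correct, but the paper reaches the same conclusion by a considerably slicker route. After the identical reduction via $\SL(2,\Z)$-invariance to the standard fundamental domain (with $v>1$), the paper observes that $g_Y(v)=g_1(v/Y)=g_1\bigl(\tim\smatr{Y^{-1/2}}00{Y^{1/2}}(\tau)\bigr)$; that is, on this region $\scrX_Y$ coincides with the fixed $Y$-independent function $\tscrX_1(\tau,\phi,\vecxi):=g_1(\tim\tau)$ pre-composed with left translation by $a(Y^{-1})$. Since the operators $D$ are left invariant, $\|D\scrX_Y\|_{\L^\infty}=\|D\tscrX_1\|_{\L^\infty}$, which is finite and manifestly $Y$-independent, with no computation required. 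Your inductive Leibniz computation in Iwasawa coordinates establishes the same fact by hand: the crucial cancellation $v^{j}\,Y^{-j}\ll_{j}1$ on the support of $g_Y^{(j)}$ is exactly the concrete expression of that left invariance under the diagonal subgroup. So the two proofs rest on the same mechanism; yours carries it out at the coordinate level, which is more work but equally valid, and has the minor merit of making the structure of $D\scrX_Y$ fully explicit.

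One small point worth spelling out: your reduction to monomials in $X_1,X_2,X_3$ only is correct but needs a sentence beyond ``$X_j\scrX_Y=0$ for $j\geq4$''. For a monomial such as $X_1X_4X_1$ one also needs the observation that $X_1,X_2,X_3$ send functions of $(\tau,\phi)$ alone to functions of $(\tau,\phi)$ alone, so that the first $X_j$ with $j\geq4$ encountered when applying $D$ from the right annihilates the result. This is easy to add; note that the paper's formulation avoids this reduction entirely, since $\|D\tscrX_1\|_{\L^\infty}<\infty$ holds for every monomial $D$ regardless.
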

\begin{proof}
Since $\scrX_Y$ (and thus $(D\scrX_Y)$) is $\SL(2,\Z)$-invariant, it suffices to consider points $(\tau,\phi,\vecxi)$ with
$\tau=u+iv$ belonging to the standard fundamental domain for $\SL(2,\Z)$,
i.e.\ $|u|\leq\frac12$ and $|v|\geq1$.
Then we may in fact assume $v>1$, since otherwise $(\tau,\phi,\vecxi)$ is not in the support of $\scrX_Y$.
However, for $v>1$ we have:
\begin{align*}
\scrX_Y(\tau,\phi;\vecxi)=g_Y(v)=g_1\biggl(\tim\matr{Y^{-1/2}}00{Y^{1/2}}(\tau)\biggr).
\end{align*}
Since $D$ is left invariant, this implies that
$\|D\scrX_Y\|_{\L^\infty}=\|D\tscrX_1\|_{\L^\infty}$,
where $\tscrX_1$ is the function $\tscrX_1:G\to[0,1]$,
$(\tau,\phi,\vecxi)\mapsto g_1(\tim\tau)$.
This $\L^\infty$-norm is clearly finite, 
and independent of $Y$.
\end{proof}

For 
$\vecxi=\scmatr{\vecxi_1}{\vecxi_2}\in\R^{2k}$ and $\gamma=\smatr{*}{*}{c}{d}\in \SLZ $,
we introduce the short-hand notation $\vecxi_\gamma:=c\vecxi_1+d\vecxi_2$.
We also write $v_\gamma:=\tim \gamma\tau$ when $\tau=u+iv\in\HH$.
Given $Y\geq1$ and $f\in\C(\R^k)$ with $S_{\infty,A,0}(f)<\infty$ for some $A>k$, 
we define the function $F_{f,Y}:G\to\CC$ by
(cf.\ \cite[6.2]{MarklofpaircorrI}):
\begin{align}\label{FYdef}
F_{f,Y}(\tau,\phi;\vecxi)= 
F_{f,Y}(\tau;\vecxi):= 
\sum_{\gamma\in\oGamma'_\infty\backslash\SL(2,\ZZ)}
\sum_{\vecm\in\ZZ^k} 
f\left( (\vecxi_\gamma+\vecm) v_\gamma^{1/2} \right)\,
v_\gamma^{k/2} \, g_Y(v_\gamma).
\end{align}
This series is absolutely convergent,
and $F_{f,Y}$ is left $\oGamma$ invariant.
In fact we will only use
$F_{f,Y}$ for functions $f\geq0$;
then of course $F_{f,Y}\geq0$.

As in \cite[6.4]{MarklofpaircorrI}, \cite[6.3]{MarklofpaircorrII}, 
we have the explicit formula
\begin{align}\notag
F_{f,Y}(\tau;\vecxi)=
\sum_{\vecm\in\ZZ^k} \big\{ f\big( (\vecxi_2+\vecm) v^{1/2} \big) 
+ f\big( (-\vecxi_2+\vecm) v^{1/2} \big) \big\}
v^{k/2} g_Y(v) 
\hspace{70pt}
\\ \label{FYexpansion}
+
\sum_{\vecm\in\ZZ^k} 
\bigg\{ f\bigg((\vecxi_1 + \vecm) \frac{v^{1/2}}{|\tau|} \bigg) +
f\bigg((-\vecxi_1 + \vecm) \frac{v^{1/2}}{|\tau|} \bigg) \bigg\}
\,\frac{v^{k/2}}{|\tau|^{k}} g_Y\Bigl(\frac{v}{|\tau|^2}\Bigr) 
\hspace{30pt}
\\ \notag
+
\sum_{\substack{(c,d)\in\ZZ^2\\ \gcd(c,d)=1 \\ c,d\neq 0}}
\sum_{\vecm\in\ZZ^k} 
f\bigg((c \vecxi_1 + d \vecxi_2 + \vecm) \frac{v^{1/2}}{|c\tau+d|} \bigg)
\frac{v^{k/2}}{|c\tau+d|^{k}} g_Y\Bigl(\frac{v}{|c\tau+d|^2}\Bigr).
\end{align}
The following lemma shows that for an appropriate choice of $f^*$, the function $F_{f^*,Y}$
controls the error when truncating $\Theta_f\overline{\Theta_g}$ at height $\asymp Y$.
\begin{lem}\label{thetaboundLEM}
Let $f,g\in\scrS(\R^k)$ and let $A$ and $a$ be integers satisfying $A\geq1$ and $a>\frac32k+2A$.
Set $f^*(\vecw)=\sup_{\phi\in\R}\bigl|f_\phi(\tfrac12\vecw)g_\phi(\tfrac12\vecw)\bigr|$.
Then for any $Y\geq1$,
\begin{align}\label{thetaboundLEMRES}
\scrX_Y(\tau)\,\,\bigl|(\Theta_f\overline\Theta_g)(\tau,\phi;\vecxi)\bigr|
\leq F_{f^*,Y}(\tau;2\vecxi)+O_A\Bigl(\|f\|_{\L^2_a}\|g\|_{\L^2_a}Y^{-A}\Bigr),
\qquad
\forall (\tau,\phi,\vecxi)\in G.
\end{align}
\end{lem}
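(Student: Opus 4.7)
The plan is to exploit the left $\Gamma^k$-invariance of $\Theta_f\overline{\Theta_g}$ to translate the point $\tau$ to a cuspidal neighborhood where $\tim\tau\geq Y$, apply the large-$v$ estimate \eqref{eq:vecmsum} from Lemma~\ref{lem:betatheta} there, and then match the resulting main term against a single summand of the explicit majorant $F_{f^*,Y}(\tau;2\vecxi)$ defined by \eqref{FYdef}.

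To start, assume $\scrX_Y(\tau)>0$, the case $\scrX_Y(\tau)=0$ being trivial since $F_{f^*,Y}\geq 0$. Because $Y\geq 1$, at most one summand in \eqref{XYdef} is nonzero, so there is a unique class $\gamma_0=\smatr{a_0}{b_0}{c_0}{d_0}\in(\pm\oGamma'_\infty)\backslash\SL(2,\Z)$ with $v_0:=\tim(\gamma_0\tau)\geq Y\geq 1$, and then $\scrX_Y(\tau)=g_Y(v_0)\in[0,1]$. Setting $\vecw_0:=\scmatr{a_0b_0\vecs}{c_0d_0\vecs}$, the pair $(\gamma_0,\vecw_0)$ lies in $\Gamma^k$ by \eqref{GAMMAkdef}. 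The left $\Gamma^k$-invariance combined with the left action sending $(\tau,\phi;\vecxi)$ to $(\gamma_0\tau,\phi^*;\vecxi^*)$ with $\phi^*:=\phi+\arg(c_0\tau+d_0)$ and $\vecxi^*:=\gamma_0\vecxi+\vecw_0$, together with \eqref{eq:vecmsum} applied at the translated point (where $v_0\geq\tfrac12$), yields
\[
|(\Theta_f\overline{\Theta_g})(\tau,\phi;\vecxi)|
\leq v_0^{k/2}\sum_{\vecm\in\Z^k}\bigl|f_{\phi^*}((\vecm-\vecxi_2^*)v_0^{1/2})\,g_{\phi^*}((\vecm-\vecxi_2^*)v_0^{1/2})\bigr|
+ O_A\bigl(\|f\|_{\L^2_a}\|g\|_{\L^2_a}Y^{-A}\bigr),
\]
using $v_0\geq Y$ to pass from $v_0^{-A}$ to $Y^{-A}$ in the error.

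Next I would bound each summand in the main term by $f^*(2(\vecm-\vecxi_2^*)v_0^{1/2})$, which is immediate from the definition of $f^*$. A direct computation gives $\vecxi_2^*=c_0\vecxi_1+d_0\vecxi_2+c_0d_0\vecs=\vecxi_{\gamma_0}+c_0d_0\vecs$, so that
\[
2(\vecm-\vecxi_2^*)=-2\vecxi_{\gamma_0}+\bigl(2\vecm-c_0d_0(1,\ldots,1)\bigr).
\]
Since $c_0d_0(1,\ldots,1)\in\Z^k$, as $\vecm$ ranges over $\Z^k$ the vector $\vecm':=2\vecm-c_0d_0(1,\ldots,1)$ ranges over a single coset of $2\Z^k$ inside $\Z^k$. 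Noting that $-2\vecxi_{\gamma_0}=2\vecxi_{-\gamma_0}$ and using $f^*\geq 0$ to extend the index set from that coset to all of $\Z^k$, then multiplying by $\scrX_Y(\tau)=g_Y(v_0)\leq 1$, the main term is bounded by
\[
v_0^{k/2}g_Y(v_0)\sum_{\vecm'\in\Z^k}f^*\bigl((2\vecxi_{-\gamma_0}+\vecm')v_0^{1/2}\bigr).
\]
This is precisely the summand of \eqref{FYdef} associated with the coset of $-\gamma_0$ (with $\vecxi$ replaced by $2\vecxi$), and is therefore at most $F_{f^*,Y}(\tau;2\vecxi)$ since every summand in \eqref{FYdef} is nonnegative.

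The main subtlety is the bookkeeping in this last identification: the half-integer shift $\vecw_0\in\tfrac12\Z^{2k}$ forced upon us by membership in $\Gamma^k$ is precisely the reason the argument of the majorant in the statement must be $2\vecxi$ rather than $\vecxi$, as only after doubling do the shifted indices $-2\vecxi_{\gamma_0}+(2\vecm-c_0d_0(1,\ldots,1))$ land in a coset of $\Z^k$ and match the summation structure of \eqref{FYdef}.
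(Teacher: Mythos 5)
Your proof is correct. It uses the same key ingredients as the paper (the bound \eqref{eq:vecmsum}, the definition of $f^*$, the $\Gamma^k$-invariance of $\Theta_f\overline{\Theta_g}$, and the structure of $F_{f^*,Y}$), but organizes the argument slightly differently. The paper verifies the inequality for $\tau$ in the standard fundamental domain $\scrF$ --- where $\gamma_0=1$, so the left-hand side is compared directly against the first line of the explicit expansion \eqref{FYexpansion} --- and then extends to all of $G$ by appealing to the fact that both sides of \eqref{thetaboundLEMRES} are left $\Gamma^k$-invariant (citing Marklof for the invariance of $\hF_{f^*,Y}(\tau;\vecxi):=F_{f^*,Y}(\tau;2\vecxi)$). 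You instead take a general $\tau$ with $\scrX_Y(\tau)>0$, explicitly identify the unique cuspidal coset $\gamma_0$, translate via $(\gamma_0,\vecw_0)\in\Gamma^k$ using only the invariance of $\Theta_f\overline{\Theta_g}$, apply \eqref{eq:vecmsum} at the translated point, and then match the resulting sum against the single summand of \eqref{FYdef} corresponding to the coset of $-\gamma_0$. The bookkeeping with the half-integer shift $c_0d_0\vecs$ and the coset $2\Z^k$ is handled correctly, and the observation $-2\vecxi_{\gamma_0}=2\vecxi_{-\gamma_0}$ properly identifies the relevant term. Your route has the small advantage of being self-contained: it does not require the (cited, not re-derived) $\Gamma^k$-invariance of $\hF_{f^*,Y}$, at the cost of tracking the Iwasawa cocycle and the $\Gamma^k$-translation explicitly.
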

(Here $F_{f^*,Y}(\tau;2\vecxi)$ %
is well-defined, since $S_{\infty,A',0}(f^*)<\infty$ for all $A'>0$ by Lemma \ref{schwartz}.)
\begin{proof}
(Cf.\ \cite[8.4.3]{MarklofpaircorrI}.)
For any $(\tau,\phi,\vecxi)$, with 
$\tau$ lying in the standard fundamental domain $\scrF$
for $\SL(2,\Z)$, $\scrF=\{\tau=u+iv\in\HH\col |u|\leq\frac12,\:|\tau|\geq1\}$,
it follows from the definition of $f^*$ together with \eqref{eq:vecmsum} in Lemma \ref{lem:betatheta} that
\begin{align*}
\bigl|(\Theta_f\overline\Theta_g)(\tau,\phi;\vecxi)\bigr|
\leq v^{k/2}\sum_{\vecm\in\ZZ^k}f^*\bigl(2(\vecm-\vecxi_2) v^{1/2}\bigr)
+O_A\Bigl(\|f\|_{\L^2_a}\|g\|_{\L^2_a}v^{-A}\Bigr).
\end{align*}
Multiplying this inequality with $g_Y(v)$ and comparing with \eqref{FYexpansion},
we obtain that \eqref{thetaboundLEMRES} holds for all $(\tau,\phi,\vecxi)\in G$ with $\tau\in\scrF$, %
since $\scrX_Y(\tau)=g_Y(v)$ for all such $\tau$.
But both sides in \eqref{thetaboundLEMRES} are functions of $(\tau,\phi,\vecxi)\in G$ which are 
$\Gamma^k$ left invariant
(for the function $\hF_{f^*,Y}(\tau;\vecxi):=F_{f^*,Y}(\tau;2\vecxi)$ this is noted in
\cite[7.5-6]{MarklofpaircorrI}, \cite[6.9-10]{MarklofpaircorrII});
hence the inequality holds for \textit{all} $(\tau,\phi,\vecxi)\in G$.
\end{proof}

The following lemma is a more explicit version of
\cite[lemma 6.5]{MarklofpaircorrII}.
Recall that our $\kappa$ corresponds to ``$\kappa-1$'' in \cite{MarklofpaircorrII}.

\begin{lem}\label{lem:bound upper}
Let $A>k$.
Then for any $[\kappa;c]$-Diophantine $\vecalf\in\R^k$, and any $D,T\geq1$,
\begin{align}\label{lem:bound upperRES}
\sum_{d=1}^D \sum_{\vecm\in\ZZ^k} \bigl(1+T\|d\vecalf+\vecm\|\bigr)^{-A}
\ll_{k,A}
\begin{cases}
D^{A\kappa+1}(cT)^{-A}
&\text{if }\:D^{\kappa+A^{-1}}\leq cT
\\
1&\text{if }\:D^{\kappa}\leq cT\leq D^{\kappa+A^{-1}}
\\
D(cT)^{-1/\kappa}&\text{if }\:cT\leq D^{\kappa}.
\end{cases}
\end{align}
\end{lem}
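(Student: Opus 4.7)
Plan. The key quantity is $\delta_d := \min_{\vecm \in \Z^k}\|d\vecalf + \vecm\|$, which by the $[\kappa;c]$-Diophantine hypothesis satisfies $\delta_d \geq cd^{-\kappa}$ for every positive integer $d$. First I would reduce the inner sum to a single term in this quantity: choosing the closest lattice point $\vecm_0(d)$ to $-d\vecalf$ (so that the residual vector has norm $\delta_d \leq \sqrt k/2$), and bounding the contribution from all other lattice points by $\sum_{\vecm'\in\Z^k\setminus\{\bn\}}(1+T\|\vecm'\|/(2\sqrt k))^{-A} \ll_{k,A} T^{-A}$, which converges because $A > k$, one obtains
\begin{align*}
\sum_{\vecm \in \Z^k}(1 + T\|d\vecalf + \vecm\|)^{-A}\ll_{k,A}(1+T\delta_d)^{-A} + T^{-A}.
\end{align*}
Summed over $d \leq D$, the $T^{-A}$ remainder contributes $DT^{-A}$, which is straightforwardly dominated by the right-hand side of \eqref{lem:bound upperRES} in every regime (using that $c \ll_k 1$, since $\delta_1 \leq \sqrt k/2$).

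The substantive work is to estimate $S := \sum_{d=1}^D(1+T\delta_d)^{-A}$, and I would do this in two complementary ways. The first (\emph{pointwise}) approach uses $\delta_d \geq cd^{-\kappa}$ directly: whenever $cT \geq D^\kappa$, every $d \leq D$ satisfies $cTd^{-\kappa} \geq 1$, so $(1+T\delta_d)^{-A} \leq (cTd^{-\kappa})^{-A}$, and summation yields $S \ll (cT)^{-A} D^{A\kappa+1}$. The second (\emph{gap}) approach observes that if $d_1 < d_2$ both have $\delta_{d_i} \leq \epsilon$, then $\delta_{d_2 - d_1} \leq 2\epsilon$, so the Diophantine bound forces the spacing $d_2 - d_1 \gg (c/\epsilon)^{1/\kappa}$; hence $N(\epsilon) := \#\{d \leq D : \delta_d \leq \epsilon\} \ll D(\epsilon/c)^{1/\kappa} + 1$. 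A layer-cake computation
\begin{align*}
S = AT\int_0^\infty N(t)(1+Tt)^{-A-1}\,dt,
\end{align*}
combined with the fact that the resulting $s$-integral $\int_0^\infty s^{1/\kappa}(1+s)^{-A-1}\,ds$ converges since $A > 1/\kappa$ (which holds because $A > k \geq 1/\kappa$), then gives $S \ll_{k,A} D(cT)^{-1/\kappa} + 1$.

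Matching the three regimes is bookkeeping: in Case 1 ($cT \geq D^{\kappa + 1/A}$) the pointwise bound gives exactly $D^{A\kappa+1}(cT)^{-A}$; in Case 2 ($D^\kappa \leq cT \leq D^{\kappa + 1/A}$) the gap bound gives $\ll 1$, since $cT \geq D^\kappa$ makes $D(cT)^{-1/\kappa} \leq 1$; and in Case 3 ($cT \leq D^\kappa$) the gap bound gives $D(cT)^{-1/\kappa}$ as the dominant term. There is no serious obstacle here --- the lemma is essentially a packaged Diophantine transference estimate. The one mildly delicate point is producing the correct exponent $1/\kappa$ in the gap bound for $N(\epsilon)$, i.e.\ confirming that near-integer multiples of $\vecalf$ are genuinely spaced like $(c/\epsilon)^{1/\kappa}$ rather than by a weaker power; this is what makes the pointwise and gap bounds meet cleanly at the transition $cT \asymp D^{\kappa + 1/A}$ between Cases 1 and 2.
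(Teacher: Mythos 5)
Your proof is correct and follows the same two-pronged strategy as the paper: a pointwise bound using $\delta_d\geq c d^{-\kappa}$ directly, plus a second bound exploiting Diophantine separation, with a final comparison of regimes. The first prong is essentially identical to the paper's (apart from the bookkeeping of the $T^{-A}$ remainder, which you handle separately rather than absorbing it into the displayed chain of inequalities). The execution of the second prong differs: you first collapse the inner $\vecm$-sum to the nearest lattice point, then estimate the counting function $N(\epsilon)=\#\{d\leq D:\delta_d\leq\epsilon\}$ via the gap observation $\delta_{d_2-d_1}\leq 2\epsilon$, and finish with a layer-cake integral. The paper instead treats the double sum over $(d,\vecm)$ globally, observing that the set $M_b=\{T((b+d)\vecalf+\vecm):0\leq d\leq(cT)^{1/\kappa},\ \vecm\in\Z^k\}$ is $1$-separated in $\R^k$ (so $\sum_{\vecx\in M_b}(1+\|\vecx\|)^{-A}\ll_{k,A}1$, using $A>k$), and then covers $\{1,\dots,D\}$ by $\ll 1+D(cT)^{-1/\kappa}$ blocks of length $(cT)^{1/\kappa}$. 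Both arguments encode precisely the same input — the Diophantine lower bound applied to the difference $q=d_2-d_1$ — so they are essentially equivalent; your layer-cake formulation makes the counting function explicit, while the sparse-set formulation is somewhat shorter and avoids integrating. One small inaccuracy in your reduction step: if $\vecm_0$ is the coordinate-wise nearest lattice point (so $d\vecalf+\vecm_0\in[-\tfrac12,\tfrac12]^k$), the clean lower bound for $\vecm'\neq\bn$ is $\|d\vecalf+\vecm_0+\vecm'\|\geq\|\vecm'\|/2$, obtained coordinate-by-coordinate from $|m'_i|-\tfrac12\geq|m'_i|/2$; the bound $\|\vecm'\|/(2\sqrt k)$ does not follow from a naive triangle inequality with $\delta_d\leq\sqrt k/2$ when $\|\vecm'\|$ is small, though this does not affect the conclusion since what is needed is any lower bound $\gg_k\|\vecm'\|$.
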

\begin{proof}
Since $\vecalf$ is  $[\kappa;c]$-Diophantine,
$\|d\vecalf+\vecm\|\geq cd^{-\kappa}$ for all integers $d\geq1$ and $\vecm\in\Z^k$.
Also for each fixed $d$ there is at most one $\vecm\in\Z^k$
in the box $-d\vecalf+(-\frac12,\frac12)^k$,
and in particular there is at most one $\vecm\in\Z^k$ with $\|d\vecalf+\vecm\|<\frac12$.
Hence for $d\in\{1,\ldots,D\}$,
\begin{align*}
\sum_{\vecm\in\ZZ^k} \bigl(1+T\|d\vecalf+\vecm\|\bigr)^{-A}
\ll_{k,A} (1+Tcd^{-\kappa})^{-A}+T^{-A}\ll_A \Bigl(\frac{D^{\kappa}}{cT}\Bigr)^A, %
\end{align*}
where in the last inequality we use the fact that $D^{\kappa}/c>1$
(note that $\vecalf$ being  $[\kappa;c]$-Diophantine implies $c\leq\frac12$).
Adding the above bound over $d=1,\ldots,D$, we obtain that the left hand side of
\eqref{lem:bound upperRES} is $\ll D^{A\kappa+1}(cT)^{-A}$.

To prove another bound on the same sum,
for any fixed $b\in\Z$, 
we start by considering
the set 
\begin{align}\label{lem:bound upperPF2}
M_b=\bigl\{T((b+d)\vecalf+\vecm)\col d\in\Z,\:0\leq d \leq (cT)^{1/\kappa},\:\vecm\in\ZZ^k\bigr\}.
\end{align}
The distance between any two distinct points in this set is bounded from below by 
\begin{align*}
\min\Bigl\{
T\| q\vecalf+\vecn \| 
\col
q\in\Z,\:\vecn\in\ZZ^k,\:|q| \leq (cT)^{1/\kappa},\:
[q\neq0\text{ or }\vecn\neq\bn]\Bigr\}
\hspace{40pt}
\\
\geq \min\Bigl(T,
\min_{0<q \leq (cT)^{1/\kappa}} Tcq^{-\kappa}\Bigr)\geq 1,
\end{align*}
where the first inequality holds since 
$\vecalf$ is $[\kappa;c]$-Diophantine.
(Note also %
that there is no double representation in \eqref{lem:bound upperPF2},
i.e.\ $T((b+d)\vecalf+\vecm)$ is an injective function of $\langle d,\vecm\rangle\in\Z\times\Z^k$.)
Hence for any $R\geq1$, $M_b$ contains $\ll_k R^k$ points with $\|\vecx\|\leq R$, 
and so by a standard dyadic decomposition we have
\begin{align}\label{SPARSESUMlemPF2}
\sum_{\vecx\in M_b}(1+\|\vecx\|)^{-A}\ll_{k,A}1.
\end{align}
Now by appropriate choices of $b$,
the sum in \eqref{lem:bound upperRES} can be majorized by $1+D(cT)^{-1/\kappa}$
sums as in \eqref{SPARSESUMlemPF2}.

We have thus proved that the left hand side of \eqref{lem:bound upperRES}
is always $\ll D^{A\kappa+1}(cT)^{-A}$
and also $\ll 1+D(cT)^{-1/\kappa}$.
Splitting into cases depending on which bound is strongest,
we obtain the statement in \eqref{lem:bound upperRES}.
\end{proof}

The following proposition is an effective version of \cite[Prop.\ 6.5]{MarklofpaircorrI},
and is the central result needed to bound the error caused 
by truncating the function $\Theta_f\overline{\Theta_g}$
in the integral \eqref{MARKLOFkeyid}.
We here specialize to the case $k=2$;
the case $k\geq3$ involves in principle the same computations however
there are several differences in the detailed analysis %
(cf.\ \cite[Prop. 6.4]{MarklofpaircorrII}).

\begin{prop}\label{opp:upper bound}
Let $k=2$ and let $\vecxi_2\in\R^2$ be $[\kappa;c_0]$-Diophantine.
Let $Y\geq1$, $0<v\leq Y$, $A>2$, $B\geq1$, and $H\geq1$.
Then for any $f\in\C(\R^2)$ with $S_{\infty,A,0}(f)<\infty$,
and any bounded function $h:\R\to\R$ with support contained in $[-H,H]$,
\begin{align}\notag
\int_{|u|> Bv} F_{f,Y}\biggl(u+i v;\cmatr{\bn}{\vecxi_2}\biggr) \, h(u)\; du 
\hspace{240pt}
\\\label{opp:upper boundRES}
\ll_{A} S_{\infty,A,0}(f)\|h\|_{\L^\infty}
\biggl\{B^{-1}+H\bigl(H^{-1}c_0^{-1}v^{\frac12}\bigr)^{\frac1{\kappa+1+A^{-1}}}
+H \kappa c_0^{-\frac1{\kappa}}Y^{-\frac1{2\kappa}}
\biggr\}.
\end{align}
\end{prop}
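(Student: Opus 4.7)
The plan is to apply the explicit formula \eqref{FYexpansion} for $F_{f,Y}(\tau;\scmatr{\bn}{\vecxi_2})$. The first line vanishes, since $v\leq Y$ forces $g_Y(v)=0$. With $\vecxi_1=\bn$, the second line collapses to
$2\sum_{\vecm\in\Z^2}f(\vecm v^{1/2}/|\tau|)(v/|\tau|^2)g_Y(v/|\tau|^2)$;
on the support of $g_Y(v/|\tau|^2)$ one has $v^{1/2}/|\tau|\geq\sqrt Y\geq 1$, so using $|f|\leq S_{\infty,A,0}(f)(1+\|\cdot\|)^{-A}$ and splitting off $\vecm=\bn$ (whose complement contributes $\ll T^{-A}\sum_{\vecm\neq\bn}\|\vecm\|^{-A}\ll 1$ since $A>k=2$), the integrand is dominated by $S_{\infty,A,0}(f)\cdot v/|\tau|^2\leq S_{\infty,A,0}(f)\cdot v/u^2$. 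Integrating against $h$ on $\{|u|>Bv\}$ then produces the $B^{-1}$ term in \eqref{opp:upper boundRES}.

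The main work is the third line of \eqref{FYexpansion}. For each coprime pair $(c,d)$ with $c\geq 1$ and $d\neq 0$ (the case $c\leq -1$ being symmetric), I would write $|c\tau+d|^2=c^2((u+d/c)^2+v^2)$. On the support of $g_Y(v/|c\tau+d|^2)$ the ratio $v^{1/2}/|c\tau+d|$ exceeds $\sqrt Y$, so
\begin{align*}
\bigl(1+\|d\vecxi_2+\vecm\|v^{1/2}/|c\tau+d|\bigr)^{-A}\leq\bigl(1+\|d\vecxi_2+\vecm\|\sqrt Y\bigr)^{-A}.
\end{align*}
The $u$-integral of $v/|c\tau+d|^2$ over $\R$ equals $\pi/c^2$; geometric analysis of the support $|c\tau+d|^2<v/Y$ intersected with $[-H,H]$ forces $|d|\leq cH+1$, while $c^2v^2\leq|c\tau+d|^2<v/Y$ forces $c\leq 1/\sqrt{vY}$. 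Thus the Line 3 contribution is majorised by
\begin{align*}
S_{\infty,A,0}(f)\|h\|_{\L^\infty}\sum_{1\leq c\leq 1/\sqrt{vY}}\frac{1}{c^2}\sum_{0<|d|\leq cH+1}\sum_{\vecm\in\Z^2}\bigl(1+\|d\vecxi_2+\vecm\|\sqrt Y\bigr)^{-A}.
\end{align*}

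Now I would apply Lemma \ref{lem:bound upper} with $D=cH+1\asymp cH$ and $T=\sqrt Y$, splitting according to the position of $D$ relative to the thresholds $D_1:=(c_0\sqrt Y)^{1/(\kappa+A^{-1})}$ and $D_0:=(c_0\sqrt Y)^{1/\kappa}$. Summing each regime over $c$ with the active cutoff $c\leq 1/\sqrt{vY}$ then produces three contributions: the regime $D<D_1$ (small $c$) is absorbed since its per-$c$ bound at the endpoint matches that of the next regime; the intermediate regime $D_1\leq D\leq D_0$ combined with $\sum_c 1/c^2\asymp 1/c_{\min}$ produces, after accounting for possible truncation by $c\leq 1/\sqrt{vY}$, the middle term $H(H^{-1}c_0^{-1}v^{1/2})^{1/(\kappa+1+A^{-1})}$; and the regime $D>D_0$ combined with the logarithmic sum $\sum_c 1/c$ (weighted against $H/c$) up to $c=1/\sqrt{vY}$ yields the last term $H\kappa c_0^{-1/\kappa}Y^{-1/(2\kappa)}$, with the factor $\kappa$ absorbing the remaining logarithm via the elementary bound $\log X\leq \kappa X^{1/\kappa}$ applied judiciously.

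The hard part is the bookkeeping in this last step: the denominator $\kappa+1+A^{-1}$ (rather than the naive $\kappa+A^{-1}$ obtained from the intermediate regime alone) arises from the interplay between the per-$c$ bound from Lemma \ref{lem:bound upper} and the cutoff $c\leq 1/\sqrt{vY}$; the extra $+1$ in the denominator reflects the $\sum_c 1/c^2$ weighting together with the fact that when the intermediate regime is truncated prematurely by the support cutoff, one must continue into the $D>D_0$ regime where an additional power of $c$ (equivalently, a power of $1/\sqrt{vY}$) enters. A careful case split depending on whether $D_1/(2H)$ lies below or above $1/\sqrt{vY}$ should suffice to reconcile the two possible dominant behaviours and recover the exact exponent in the middle term of \eqref{opp:upper boundRES}.
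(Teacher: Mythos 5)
Your skeleton --- the substitution $u=vt-d/c$, the support analysis giving $|d|\leq cH+1$ and $c\leq(vY)^{-1/2}$, and the appeal to Lemma \ref{lem:bound upper} --- is the right structure and matches the paper's proof up to that point. However, the step where you replace the $(c,d,u)$-dependent quantity $v^{1/2}/|c\tau+d|$ by its uniform lower bound $\sqrt Y$ on the support of $g_Y$, and then perform the $u$-integral crudely as $\pi/c^2$, is too lossy. In the regime where Lemma~\ref{lem:bound upper} gives the bound $D(c_0 T)^{-1/\kappa}$, your per-$c$ term becomes $c^{-2}\cdot 2Hc\cdot(c_0\sqrt Y)^{-1/\kappa}=2Hc^{-1}(c_0\sqrt Y)^{-1/\kappa}$, and summing over $c$ up to $(vY)^{-1/2}$ yields a factor $\log\bigl((vY)^{-1/2}\bigr)$ which diverges as $v\to 0$. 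But the third term in \eqref{opp:upper boundRES} is independent of $v$, and the second tends to zero as $v\to 0$, so such a log cannot be absorbed by the right-hand side. The inequality $\log X\leq\kappa X^{1/\kappa}$ does not help: applied to $X\asymp(vY)^{-1/2}$ it introduces an extra factor $\gg(vY)^{-1/(2\kappa)}$, which makes the bound strictly \emph{worse}, not better.

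The paper escapes this by retaining the $t$-dependent $T=1/\sqrt{c^2v(t^2+1)}$ (which is exactly $v^{1/2}/|c\tau+d|$ after the substitution) and then interchanging the $c$-sum with the $t$-integral. In the large-$c$ regime the per-$(c,t)$ weight is $\propto c^{\delta-1}(t^2+1)^{\delta/2-1}$ with $\delta=1/\kappa$, and for each fixed $t$ the $c$-sum is cut off at $c<(vY(t^2+1))^{-1/2}$, giving $\sum_c c^{\delta-1}\ll(vY(t^2+1))^{-\delta/2}/\delta$. The factor $v^{\delta/2}$ that comes from $T^{-\delta}$ then cancels the $(vY)^{-\delta/2}$ exactly, leaving $Y^{-\delta/2}$, while the residual $(t^2+1)^{-1}$ is integrable --- no logarithm appears. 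This joint $(c,t)$ analysis is precisely the information your uniform replacement $T\geq\sqrt Y$ discards, and it is essential to obtain the stated bound.
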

We remark that the integral in \eqref{opp:upper boundRES} \textit{vanishes}
if $Y^{-1}\leq v\leq Y$;
hence the bound in \eqref{opp:upper boundRES} is mainly relevant when $v<Y^{-1}$.

\begin{proof}
Without loss of generality,
let us assume  that $f$ is positive and even, 
i.e., $f\geq 0$ and $f(-\vecw)=f(\vecw)$.
Recall the expansion \eqref{FYexpansion},
and note that the terms with $g_Y(v)$ vanish since $v\leq Y$;
hence we are left with
\begin{align}\notag
F_{f,Y}\biggl(\tau;\cmatr{\bn}{\vecxi_2}\biggr)= 
2 \sum_{\vecm\in\ZZ^2} 
f\Big(\vecm \frac{v^{1/2}}{|\tau|} \Big) 
\frac{v}{|\tau|^2} g_Y\Bigl(\frac{v}{|\tau|^2}\Bigr) 
\hspace{160pt}
\\\label{opp:upper boundPF3}
+2\sum_{\substack{(c,d)\in\ZZ^2\\ \gcd(c,d)=1 \\ c>0,d\neq 0}}\sum_{\vecm\in\ZZ^2} 
f\Big((d \vecxi_2 + \vecm) \frac{v^{1/2}}{|c\tau+d|} \Big)
\frac{v}{|c\tau+d|^2} g_Y\Bigl(\frac{v}{|c\tau+d|^2}\Bigr).
\end{align}

The contribution from the first sum in \eqref{opp:upper boundPF3}
to the integral in \eqref{opp:upper boundRES} is
\begin{align}\notag
&\int_{|u|>Bv}  \sum_{\vecm\in\ZZ^2} 
f\Big(\vecm \frac{v^{1/2}}{|\tau|} \Big) 
\frac{v}{|\tau|^2} g_Y\Bigl(\frac{v}{|\tau|^2}\Bigr)
\,h(u)\, du 
\\\label{opp:upper boundPF4}
&= \int_{|t|>B}\sum_{\vecm\in\ZZ^2} 
f\Big(\frac{\vecm}{v^{1/2} (t^2+1)^{1/2}} \Big) 
\frac{1}{t^2+1} 
g_Y\Bigl(\frac{1}{v(t^2+1)}\Bigr)\, h(vt)\, dt.
\end{align}
Using $|f(\vecx)|\leq S_{\infty,A,0}(f)(1+\|\vecx\|)^{-A}$ and $A>2$ we have
\begin{align*}
\sum_{\vecm\in\Z^2}f\Big(\frac{\vecm}{v^{1/2} (t^2+1)^{1/2}} \Big) \ll_{A} S_{\infty,A,0}(f)
\end{align*}
uniformly over all $v,t$ subject to $v^{-1/2}(t^2+1)^{-1/2}\geq 1$;
and for all other pairs $v,t$ the factor 
$g_Y\bigl(\frac{1}{v(t^2+1)}\bigr)$ vanishes (since $Y\geq1$).
Hence \eqref{opp:upper boundPF4} is
\begin{align*}
\ll_A S_{\infty,A,0}(f)\|h\|_{\L^\infty}\int_{|t|>B}\frac{dt}{t^2+1}
\ll S_{\infty,A,0}(f)\|h\|_{\L^\infty}B^{-1}.
\end{align*}

The contribution from the remaining double sum in \eqref{opp:upper boundPF3}
to the integral in \eqref{opp:upper boundRES}
is bounded above by (we drop the condition $|u|>Bv$ in the integral):
\begin{align}\notag
&\int_{\R}\sum_{\substack{(c,d)\in\ZZ^2\\ \gcd(c,d)=1 \\ c>0,d\neq 0}}\sum_{\vecm\in\ZZ^2} 
f\Big((d \vecxi_2 + \vecm) \frac{v^{1/2}}{|c\tau+d|} \Big)
\frac{v}{|c\tau+d|^2} g_Y\Bigl(\frac{v}{|c\tau+d|^2}\Bigr)\,h(u)\,du
\\\label{opp:upper boundPF5}
&=\sum_{\substack{(c,d)\in\ZZ^2\\ \gcd(c,d)=1 \\ c>0,d\neq 0}}\frac{1}{c^2}\int_{\R}\sum_{\vecm\in\ZZ^2} 
f\Big(\frac{d \vecxi_2 + \vecm}{\sqrt{c^2 v(t^2+1)}} \Big)
g_Y\Big(\frac{1}{c^2v(t^2+1)}\Big) 
\, h\Big(vt-\frac{d}{c}\Big)\, \frac{dt}{t^2+1},
\end{align}
where we changed the order of integration and summation and then substituted $u=vt-\frac dc$.
The $g_Y$-factor in the above expression vanishes unless $t$ belongs to the set
\begin{align*}
I_{c}=\biggl\{t\in\R\col 
\frac{1}{c^2 v(t^2+1)}>Y\biggr\}
=
\bigl\{t\in\R\col \sqrt{t^2+1}<(vY)^{-\frac12}c^{-1}\bigr\}.
\end{align*}
Furthermore, the factor $h(vt-\frac dc)$ vanishes unless
$|vt-\frac dc|\leq H$,
and for $t\in I_c$ this forces
\begin{align*}
\Bigl|\frac dc\Bigr|\leq H+\frac{v}{c\sqrt Y}\leq H+1\leq 2H.
\end{align*}
Also using $f(\vecx)\leq S_{\infty,A,0}(f)(1+\|\vecx\|)^{-A}$,
we conclude that the expression in \eqref{opp:upper boundPF5} is
\begin{align*}
\leq S_{\infty,A,0}(f)\|h\|_{\L^\infty}\sum_{c=1}^\infty 
\frac{1}{c^2}
\int_{I_c}
\sum_{0<|d|\leq 2Hc}
\;\sum_{\vecm\in\ZZ^2} 
\biggl(1+\frac{\|d \vecxi_2 + \vecm\|}{\sqrt{c^2 v(t^2+1)}} \biggr)^{-A}
\frac{dt}{t^2+1}.
\end{align*}
Applying Lemma \ref{lem:bound upper} with
\begin{align*}
D=2Hc
\qquad\text{and}\qquad
T=\frac{1}{\sqrt{c^2 v(t^2+1)}},
\end{align*}
and both $\vecalf=\vecxi_2$ and $\vecalf=-\vecxi_2$,
we get %
\begin{align}\notag
\ll_A S_{\infty,A,0}(f)\|h\|_{\L^\infty}
\sum_{c=1}^\infty c^{-2} \biggl(
\int_{I_{1,c}}D^{A\kappa+1}(c_0T)^{-A}\frac{dt}{t^2+1}
+\int_{I_{2,c}}\frac{dt}{t^2+1}
\hspace{70pt}
\\\label{opp:upper boundPF7}
+\int_{I_{3,c}}D(c_0T)^{-\delta}\frac{dt}{t^2+1}\biggr)
\end{align}
where $\delta:=1/\kappa$ and 
\begin{align*}
&I_{1,c}=\bigl\{t\in I_c\col \sqrt{t^2+1}\leq c_0(v^{\frac12}c)^{-1}D^{-(\kappa+A^{-1})}\bigr\};
\\
&I_{2,c}=\bigl\{t\in I_c\col c_0(v^{\frac12}c)^{-1}D^{-(\kappa+A^{-1})}<\sqrt{t^2+1}\leq 
c_0(v^{\frac12}c)^{-1}D^{-\kappa}\bigr\};
\\
&I_{3,c}=\bigl\{t\in I_c\col c_0(v^{\frac12}c)^{-1}D^{-\kappa}\leq\sqrt{t^2+1}\bigr\}.
\end{align*}

We discuss the three integrals in \eqref{opp:upper boundPF7} one by one.
Firstly, note that $I_{1,c}\neq\emptyset$ implies
$c\leq C_1:=\bigl((2H)^{-(\kappa+A^{-1})}c_0v^{-\frac12}\bigr)^{1/(\kappa+1+A^{-1})}$,
and for each such $c$, $t\in I_{1,c}$ implies
$\sqrt{t^2+1}\leq (C_1/c)^{\kappa+1+A^{-1}}$.
Hence
\begin{align*}
&\sum_{c=1}^\infty c^{-2}\int_{I_{1,c}}D^{A\kappa+1}(c_0T)^{-A}\frac{dt}{t^2+1}
\\
&\leq\sum_{1\leq c\leq C_1} c^{-2}\cdot D^{A\kappa+1}c_0^{-A}v^{\frac A2}c^A\int_{I_{1,c}} (t^2+1)^{\frac A2-1}\,dt
\\
&\ll_A\sum_{1\leq c\leq C_1} c^{-2}\cdot D^{A\kappa+1}c_0^{-A}v^{\frac A2}c^A\bigl(C_1/c\bigr)^{(\kappa+1+A^{-1})(A-1)}
\\
&=(2H)^{\kappa+A^{-1}}c_0^{-1}v^{\frac12}\sum_{1\leq c\leq C_1} c^{\kappa+A^{-1}-1}
\ll(2H)^{\kappa+A^{-1}}c_0^{-1}v^{\frac12}C_1^{\kappa+A^{-1}}
=C_1^{-1}.
\end{align*}

Turning to the integral over $I_{2,c}$,
the fact that
$t\in I_{2,c}$ forces $\sqrt{t^2+1}>(C_1/c)^{\kappa+1+A^{-1}}$,
with $C_1$ as above.
Therefore, we see that
\begin{align*}
&\sum_{c=1}^\infty c^{-2}\int_{I_{2,c}}\frac{dt}{t^2+1}
\leq\sum_{1\leq c\leq C_1}c^{-2}(C_1/c)^{-(\kappa+1+A^{-1})}
+\sum_{c>C_1}c^{-2}\ll C_1^{-1}.
\end{align*}

Finally for the integral over $I_{3,c}$ we have,
using only $I_{3,c}\subset I_c$,
\begin{align*}
\sum_{c=1}^\infty c^{-2}\int_{I_{3,c}}D(c_0T)^{-\delta}\frac{dt}{t^2+1}
\leq2Hc_0^{-\delta}v^{\frac{\delta}2}
\int_{\R}\biggl(\sum_{1\leq c<(vY(t^2+1))^{-1/2}}c^{\delta-1}\biggr)(t^2+1)^{\frac{\delta}2-1}\,dt
\\
\ll Hc_0^{-\delta}v^{\frac{\delta}2}\int_{\R}\frac{(vY(t^2+1))^{-\frac{\delta}2}}{\delta}(t^2+1)^{\frac{\delta}2-1}\,dt
\ll H\kappa c_0^{-\delta}Y^{-\frac{\delta}2}.
\end{align*}
Hence we obtain the bound in \eqref{opp:upper boundRES}.
\end{proof}

Next, we note that Proposition \ref{opp:upper bound} can be extended in a straightforward
manner to the case of functions $h$ which do not have compact support
but decay appropriately at infinity:
\begin{cor}\label{opp:upper bound2}
Let $\vecxi_2\in\R^2$ be $[\kappa;c_0]$-Diophantine.
Let $Y\geq1$, $0<v\leq Y$, $A>2$ and $B\geq1$.
Then for any $f\in\C(\R^2)$ with $S_{\infty,A,0}(f)<\infty$
and any function $h:\R\to\R$ with $S_{\infty,2,0}(h)<\infty$,
\begin{align}\notag
\int_{|u|> Bv} F_{f,Y}\biggl(u+i v;\cmatr{\bn}{\vecxi_2}\biggr) \, h(u)\; du 
\hspace{240pt}
\\\label{opp:upper bound2RES}
\ll_{A} S_{\infty,A,0}(f)S_{\infty,2,0}(h)
\biggl\{B^{-1}+\bigl(c_0^{-1}v^{\frac12}\bigr)^{\frac1{\kappa+1+A^{-1}}}
+ \kappa c_0^{-1/\kappa}Y^{-\frac1{2\kappa}}
\biggr\}.
\end{align}
\end{cor}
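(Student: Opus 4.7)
The plan is to reduce Corollary \ref{opp:upper bound2} to Proposition \ref{opp:upper bound} by a dyadic decomposition of $h$ in the variable $u$. The hypothesis $S_{\infty,2,0}(h)<\infty$ supplies the pointwise bound $|h(u)|\leq S_{\infty,2,0}(h)(1+|u|)^{-2}$, so the tails of $h$ decay quickly enough to be controlled by geometric summation.

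Concretely, I would fix a smooth partition of unity $1\equiv\sum_{j\geq0}\chi_j$ on $\R$, with $\chi_0\in\C_c^\infty((-2,2))$ and, for $j\geq1$, $\chi_j$ supported on $\{|u|\in[2^{j-1},2^{j+1}]\}$. Setting $h_j:=h\chi_j$ and $H_j:=2^{j+1}$, one has $\supp(h_j)\subset[-H_j,H_j]$, $H_j\geq 1$, and
\begin{align*}
\|h_j\|_{\L^\infty}\ll S_{\infty,2,0}(h)\cdot 4^{-j}.
\end{align*}
Applying Proposition \ref{opp:upper bound} to each $h_j$ (with $H=H_j$) then yields
\begin{align*}
&\biggl|\int_{|u|>Bv} F_{f,Y}\Bigl(u+iv;\cmatr{\bn}{\vecxi_2}\Bigr)\,h_j(u)\,du\biggr|
\\
&\quad\ll_A S_{\infty,A,0}(f)\,S_{\infty,2,0}(h)\,4^{-j}
\biggl\{B^{-1}+H_j\bigl(H_j^{-1}c_0^{-1}v^{1/2}\bigr)^{\frac{1}{\kappa+1+A^{-1}}}+H_j\kappa c_0^{-1/\kappa}Y^{-\frac{1}{2\kappa}}\biggr\}.
\end{align*}

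Summing over $j\geq 0$, the $B^{-1}$ contribution gives $\sum_j 4^{-j}\ll 1$; the $Y^{-1/(2\kappa)}$ contribution gives $\sum_j 4^{-j}H_j=\sum_j 2^{1-j}\ll 1$; and the middle contribution gives $(c_0^{-1}v^{1/2})^{\eta}\sum_j 4^{-j}H_j^{1-\eta}$ with $\eta:=(\kappa+1+A^{-1})^{-1}\in(0,1)$. The latter series has general term of size $2^{-j(1+\eta)}$ and therefore converges to a quantity $\ll 1$. Combining these three estimates produces exactly the bound claimed in \eqref{opp:upper bound2RES}. Absolute convergence of $\sum_j\int|F_{f,Y}h_j|$, which follows from the same geometric control, justifies the interchange of summation and integration.

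There is no real obstacle: Proposition \ref{opp:upper bound} already does all the substantive work, and the extension is a routine dyadic summation. The only point to verify is that the parameter $A$ (and hence $\eta$) can be kept fixed across all dyadic pieces so that the relevant geometric series converges uniformly in $j$, which is automatic since $A$ is the same parameter in both statements.
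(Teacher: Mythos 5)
Your proof is correct and uses essentially the same method as the paper, which also decomposes $h$ into dyadic pieces supported on annuli of radius $\approx 2^j$ and applies Proposition \ref{opp:upper bound} to each, using $\|h_j\|_{\L^\infty}\ll S_{\infty,2,0}(h)\,4^{-j}$ and a geometric summation. The only cosmetic difference is that you use a smooth partition of unity where the paper uses sharp cutoffs; this is immaterial since Proposition \ref{opp:upper bound} imposes no regularity on $h$ beyond boundedness and compact support.
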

\begin{proof}
Decompose the function $h$ as $h=h_0+h_1+\cdots$ where $h_0=h\cdot\chi_{[-1,1]}$
and $h_j=h\cdot\bigl(\chi_{[-2^j,-2^{j-1})}+\chi_{(2^{j-1},2^j]}\bigr)$ for $j\geq1$;
then apply Proposition \ref{opp:upper bound2} to bound the contribution from each function $h_j$,
using $\supp h_j\subset[-2^j,2^j]$ and $\|h_j\|_{\L^\infty}\ll S_{\infty,2,0}(h) 2^{-2j}$.
\end{proof}

\subsection{Proof of Theorem \ref{MAINAPPLTHM}}
We are now ready to give the proof of Theorem \ref{MAINAPPLTHM}.
The first step is to give
an effective rate %
for the convergence of the integral in  \eqref{MARKLOFkeyid} to its limit;
this is obtained in Proposition \ref{MAINAPPLPROP1} below.
The proof of this proposition is divided into two lemmas,
Lemma \ref{|v|>}
which concerns the part of the integral where $u$ is not very near zero,
and Lemma~\ref{|v|<} which concerns the remaining part.
These two lemmas are (in principle) 
effective versions of \cite[Cor.\ 7.4]{MarklofpaircorrI}
and \cite[Lemma 8.3]{MarklofpaircorrI}, respectively.

Throughout this section, we let $\Gamma=\Gamma(2)\ltimes \ZZ^{4}$, and $G=\SL(2,\RR)\ltimes (\RR^{2})^{\oplus 2}$.
Recall that $\Theta_f\overline{\Theta_g}$
is a left $\Gamma^2$ invariant function on $G$,
with $\Gamma^2$ as in \eqref{GAMMAkdef};
thus in particular, it is left invariant under $\Gamma=\Gamma(2)\ltimes\Z^{4}$.
As always, we let $\mu$ be the probability measure on $\GaG$ induced by an appropriately normalized Haar measure on $G$
(which we also denote by $\mu$).

\begin{lem}\label{|v|>}
Let $f,g\in\scrS(\RR^2)$
and $h\in\C^2(\R)$, and assume $S_{\infty,3,2}(h)<\infty$.
Let $\vecxi_2\in\R^2$ be $[\kappa;c]$-Diophantine.
Then for any $v\in(0,1]$ and any real number $B$ subject to 
\begin{align}\label{|v|>REQ}
1\leq B\leq \tfrac12\, v^{-\frac12}
\Bigl(\tfrac14\,\delta_{6,\vecxi_2}\bigl(v^{-\frac12}\bigr)^{\frac12}\Bigr)^{\frac{\kappa}{1+61\kappa}},
\end{align}
we have
\begin{align}\notag
\biggl|\int_{|u|> Bv} \Theta_f\biggl(u+i v, 0 ;\cmatr{\bn}{\vecxi_2}\biggr)\,
\overline{\Theta_g\biggl(u+i v, 0 ;\cmatr{\bn}{\vecxi_2}\biggr)}\, h(u)\,du-
\int_{\Gamma\backslash G}\Theta_f\overline{\Theta_g} \, d\mu\int_{\R}h\,du \biggr|
\hspace{30pt}
\\\label{|v|>RES}
\ll \|f\|_{\L_{167}^2}\|g\|_{\L_{167}^2}
S_{\infty,3,2}(h)\,\Bigl(\kappa c^{-\frac1{\kappa}}
\,\delta_{6,\vecxi_2}\!\bigl(v^{-\frac12}\bigr)^{\frac1{127\,\kappa}}+B^{-1}\Bigr).
\end{align}
\end{lem}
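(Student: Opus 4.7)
The plan is to apply Theorem \ref{MAINTHM1} to a cuspidal truncation of $\Theta_f\overline{\Theta_g}$ and control the truncation errors by means of Lemma \ref{thetaboundLEM} and Corollary \ref{opp:upper bound2}. Since $\Gamma(2)\subset\Gamma_\theta$, the group $\Gamma=\Gamma(2)\ltimes\Z^4$ is contained in $\Gamma^2$, so $\Theta_f\overline{\Theta_g}$ descends to a function on $X=\Gamma\backslash G$, and its value at the Iwasawa point $(u+iv,0;\scmatr{\bn}{\vecxi_2})$ coincides with its value at $\Gamma(1_2,\scmatr{\bn}{\vecxi_2})u(u)a(v)$ as in Theorem \ref{MAINTHM1}. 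Fix a height $Y\geq 1$ (to be chosen) and decompose $\Theta_f\overline{\Theta_g}=F_Y+G_Y$ with $F_Y:=(1-\scrX_Y)\Theta_f\overline{\Theta_g}$ and $G_Y:=\scrX_Y\Theta_f\overline{\Theta_g}$. The left-hand side of \eqref{|v|>RES} then decomposes naturally into the equidistribution discrepancy of $F_Y$ along the orbit, the small-$u$ remainder $-\int_{|u|\leq Bv}F_Y\,h\,du$, the cuspidal tail $\int_{|u|>Bv}G_Y\,h\,du$, and the cuspidal mean $-\int_X G_Y\,d\mu\,\int h\,du$.

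For the equidistribution discrepancy I would apply Theorem \ref{MAINTHM1} with $k=2$, $\beta=6$ (so that $\beta\geq\max(8-k,1+k)$), whence $m=3(\beta+k+1)=27$ and $a=(\beta-1)/2=5/2$. Combining the derivative estimate \eqref{eq:thetaderivbound} in Lemma \ref{lem:betatheta} with Lemma \ref{XYDERlem} via Leibniz's rule, and using that $F_Y$ is supported in $\scrY\leq 2Y$ with $\scrY\asymp v$ on a fundamental domain for $\SL(2,\Z)$, one obtains
\[
\|F_Y\|_{\C_a^m}\ll \|f\|_{\L^2_{167}}\|g\|_{\L^2_{167}}\,Y^{m+k/2+a},
\]
where the Sobolev index $167$ is the smallest integer exceeding $\tfrac32 k+6m+1=166$. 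Theorem \ref{MAINTHM1} then bounds the equidistribution discrepancy by $\ll Y^{30.5}\|f\|_{\L^2_{167}}\|g\|_{\L^2_{167}}\,S_{\infty,3,2}(h)\bigl(\delta_{6,\vecxi_2}(v^{-1/2})+v^{1/4-\ve}\bigr)$. The $m=0$ case of Lemma \ref{lem:betatheta} yields $\|F_Y\|_{\L^\infty}\ll Y\|f\|_{\L^2_{a_0}}\|g\|_{\L^2_{a_0}}$, so the small-$u$ remainder is bounded trivially by $\ll YBv\|h\|_{\L^\infty}$.

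For the cuspidal tail and mean, Lemma \ref{thetaboundLEM} pointwise majorizes $\scrX_Y|\Theta_f\overline{\Theta_g}|$ by $F_{f^\ast,Y}(\tau;2\vecxi)+O_A(Y^{-A})$, where $f^\ast(\vecw)=\sup_\phi|f_\phi(\tfrac12\vecw)g_\phi(\tfrac12\vecw)|$. Since $2\vecxi_2$ is $[\kappa;2^{-\kappa}c]$-Diophantine (from $\|q(2\vecxi_2)-\vecm\|=\|(2q)\vecxi_2-\vecm\|\geq c(2q)^{-\kappa}$), Corollary \ref{opp:upper bound2}, applied with $A$ sufficiently large, bounds $\int_{|u|>Bv}G_Y\,h\,du$ by
\[
\ll S_{\infty,A,0}(f^\ast)\,S_{\infty,2,0}(h)\,\bigl\{B^{-1}+(c^{-1}v^{1/2})^{1/(\kappa+1+A^{-1})}+\kappa c^{-1/\kappa}Y^{-1/(2\kappa)}\bigr\}.
\]
The cuspidal mean $\int_X G_Y\,d\mu$ is in turn controlled by $\int_X F_{f^\ast,Y}\,d\mu$, which is evaluated by a standard unfolding based on \eqref{FYexpansion} and passing to the fundamental domain for $\SL(2,\Z)$; this yields a comparable contribution that decays in $Y$.

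Finally, I would choose $Y$ so as to balance the equidistribution error $Y^{30.5}\delta_{6,\vecxi_2}(v^{-1/2})$ against the truncation decay $\kappa c^{-1/\kappa}Y^{-1/(2\kappa)}$, producing the stated exponent $1/(127\kappa)$ on $\delta_{6,\vecxi_2}(v^{-1/2})$. The constraint \eqref{|v|>REQ} on $B$ is precisely what is needed so that the remainder $YBv$ is dominated by this same quantity. The principal obstacle is the careful bookkeeping of exponents: verifying that the Sobolev indices forced by Lemmata \ref{schwartz} and \ref{lem:betatheta} are compatible with Theorem \ref{MAINTHM1}, tracking the factor $2^{\kappa}$ inserted into the Diophantine constant under $\vecxi_2\mapsto 2\vecxi_2$, and confirming that the auxiliary terms $v^{1/4-\ve}$ from Theorem \ref{MAINTHM1} and $(c^{-1}v^{1/2})^{1/(\kappa+1+A^{-1})}$ from Corollary \ref{opp:upper bound2} are both absorbed into the final bound, using $v\leq 1$ together with the Diophantine lower bound $\delta_{6,\vecxi_2}(v^{-1/2})\gtrsim v^{1/2}$.
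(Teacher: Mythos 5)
Your proposal is correct and follows essentially the same route as the paper: truncate $\Theta_f\overline{\Theta_g}$ in the cusp using $\scrX_Y$, apply Theorem \ref{MAINTHM1} (with $\beta=6$, $m=27$, $a=5/2$) to the truncated function, bound the truncation errors via Lemma \ref{thetaboundLEM} and Corollary \ref{opp:upper bound2}, and optimize over $Y$; the Sobolev index $167$ and the growth rate $Y^{61/2}$ of the $\C^{27}_{5/2}$-norm both agree with the paper. The one small packaging difference: the paper uses the hypothesis \eqref{|v|>REQ} to ensure $Y\leq\frac1{2(B^2+1)v}$, so that $\scrX_Y(u+iv)=1$ on $|u|\leq Bv$ and the truncated function $\tF=F_Y$ \emph{vanishes identically} on that range, eliminating your ``small-$u$ remainder'' outright; you instead bound that remainder by $\ll YBv\|h\|_\infty$ and observe it is $\ll B^{-1}$, which is an equivalent (slightly less tidy) use of the same constraint. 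Either way the bookkeeping closes, and the remaining details (the $\frac12-2\ve$ exponent absorbing $v^{1/4-\ve}$ via $\delta_{6,\vecxi_2}(v^{-1/2})\geq v^{1/2}$, the factor $2^{-\kappa}$ in the Diophantine constant for $2\vecxi_2$, the unfolding of $\int_X F_{f^*,Y}\,d\mu$) are exactly those the paper carries out.
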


\begin{remark}\label{|v|>REM}
As will be seen in the proof, %
the (quite small) power $\frac1{127\kappa}$
which we obtain in \eqref{|v|>RES}
depends strongly 
on which $\C_a^m$-norm %
of the test function (i.e., $\rule{0pt}{10pt}\tF$ below)
is required to bound in the effective equidistribution result of Theorem \ref{MAINTHM1}.
Since we did not make any effort to optimize the $a$ and $m$ in Theorem \ref{MAINTHM1},
we do not attempt %
to optimize the decay rate with respect to $v$ in Lemma \ref{|v|>}
nor in Lemma \ref{|v|<} or Proposition \ref{MAINAPPLPROP1}.
Instead we focus on giving results which are simple to state, yet explicit.
\end{remark}
\begin{proof}
Let $F=\Theta_f\overline{\Theta}_g$.
Also, for $Y\geq1$ a real number which we will choose below
(cf.\ \eqref{|v|>PF3}),
let $\tF=(1-\scrX_Y)\cdot F$,
i.e.
\begin{align*}
\tF(\tau,\phi,\vecxi)=\bigl(1-\scrX_Y(\tau)\bigr)\cdot(\Theta_f\overline{\Theta}_g)(\tau,\phi;\vecxi).
\end{align*}
Then both $F$ and $\tF$ are $\Gamma^2$ left invariant functions on $G$;
in particular, they are left invariant under $\Gamma=\Gamma(2)\ltimes\Z^{4}$.
Our choice of $Y$ will be such that
\begin{align}\label{|v|>PF5}
Y\leq\frac1{2(B^2+1)v}.
\end{align}
Then for all $u$ with $|u|\leq Bv$ we have
\begin{align*}
\tim\left(\matr0{-1}10 (u+iv)\right) %
=\frac v{u^2+v^2}\geq\frac 1{(1+B^2)v}\geq2Y\geq2,
\quad\text{and thus }\:\scrX_Y(u+iv)=1
\end{align*}
(cf.\ the discussion below \eqref{XYdef}).
Hence $\int_{|u|>Bv}\tF(u+iv,0;\vecxi)\,h(u)\,du=\int_{\R}\tF(u+iv,0;\vecxi)\,h(u)\,du$,
and so by Theorem \ref{MAINTHM1},
applied with $\beta=6$, $m=27$ and $a=5/2$, 
we have
\begin{align}\notag
\int_{|u|>Bv} \tF &\biggl(u+iv,0;\cmatr{\bn}{\vecxi_2}\biggr)\,h(u)\,du
\\\label{|v|>PF1}
&=\int_{\GaG}\tF\,d\mu \int_{\R} h(u)\,du+O_{\ve}\Bigl(\|\tF\|_{\C^{27}_{5/2}} S_{\infty,3,2}(h) 
\,\Bigl(\delta_{6,\vecxi_2}(v^{-\frac12})+v^{\frac14-\ve}\Bigr)\Bigr).
\end{align}

In order to bound $\|\tF\|_{\C^{27}_{5/2}}$,
we apply
Lemma \ref{XYDERlem} and \eqref{eq:thetaderivbound} in Lemma \ref{lem:betatheta},
together with the fact that $F$ and $\tF$ are $\Gamma^2$-invariant.
It follows that for any integer $a>4+6\cdot 27=166$,
\begin{align*}
\sum_{\ord(D)\leq 27}\bigl|(D\tF)(M,\vecxi)\bigr|
\ll\|f\|_{\L_a^2}\|g\|_{\L_a^2}\,\scrY(M)^{28},
\qquad\forall (M,\vecxi)\in G.
\end{align*}
Hence, since the support of $\tF$ is contained in $\{\scrY(M)\leq 2Y\}$,   
\begin{align*}
\|\tF\|_{\C^{27}_{5/2}}\ll \|f\|_{\L_a^2}\|g\|_{\L_a^2}\,Y^{61/2}.
\end{align*}

Next, we bound the error caused by replacing $\tF$ by $F$ in the two integrals
$\int_{\GaG}\tF\,d\mu$ and $\int_{|u|>Bv} \tF \bigl(u+iv,0;\scmatr{\bn}{\vecxi_2}\bigr)\,h(u)\,du$ 
in \eqref{|v|>PF1}.
First note that
by Lemma \ref{thetaboundLEM}
we have
\begin{align}\label{|v|>PF2}
\bigl|F(\tau,\phi;\vecxi)-\tF(\tau,\phi;\vecxi)\bigr|
\leq F_{f^*,Y}(\tau;2\vecxi)+O\Bigl(\|f\|_{\L^2_6}\|g\|_{\L^2_6}Y^{-1}\Bigr),
\qquad
\forall (\tau,\phi;\vecxi)\in G,
\end{align}
with $f^*(\vecw):=\sup_{\phi\in\R}\bigl|f_\phi(\tfrac12\vecw)g_\phi(\tfrac12\vecw)\bigr|$.
Hence 
\begin{align*}
&\biggl|\int_{|u|>Bv} \tF\biggl(u+iv,0;\cmatr{\bn}{\vecxi_2}\biggr)\,h(u)\,du
-\int_{|u|>Bv} F\biggl(u+iv,0;\cmatr{\bn}{\vecxi_2}\biggr)\,h(u)\,du\biggr|
\\
&\leq\int_{|u|>Bv} F_{f^*,Y}\biggl(u+iv;\cmatr{\bn}{2\vecxi_2}\biggr)\,|h(u)|\,du
+O\Bigl(\|f\|_{\L^2_6}\|g\|_{\L^2_6}\|h\|_{\L^1}\,Y^{-1}\Bigr).
\end{align*}
Note that since $\vecxi_2$ is $[\kappa;c]$-Diophantine,
$2\vecxi_2$ is $[\kappa;2^{-\kappa}c]$-Diophantine.
Hence, applying Corollary \ref{opp:upper bound2} with $A=3$, 
and noticing that $S_{\infty,3,0}(f^*)\ll\|f\|_{\L^2_4}\|g\|_{\L^2_4}$
by Lemma \ref{schwartz}, we get
\begin{align*}
\int_{|u|>Bv}F_{f^*,Y} & \biggl(u+iv;\cmatr{\bn}{2\vecxi_2}\biggr)\,|h(u)|\,du
\\
&\ll_{\ve} \|f\|_{\L^2_4}\|g\|_{\L^2_4}S_{\infty,2,0}(h)
\biggl\{B^{-1}+\bigl(c^{-1}v^{\frac12}\bigr)^{1/(\kappa+\frac43)}
+\kappa\, c^{-\frac1{\kappa}}Y^{-\frac1{2\kappa}}
\biggr\}.
\end{align*}
Also by \eqref{|v|>PF2} we have
\begin{align*}
\int_{\GaG}\bigl|F-\tF\bigr|\,d\mu\leq
\int_{\GaG}F_{f^*,Y}(\tau;2\vecxi)\,d\mu(\tau,\phi;\vecxi)+O\Bigl(\|f\|_{\L^2_6}\|g\|_{\L^2_6}Y^{-1}\Bigr).
\end{align*}
Here one computes, by a standard unfolding argument (cf.\ \cite[6.2]{MarklofpaircorrII}),
\begin{align}\notag %
\int_{\GaG}F_{f^*,Y}(\tau;2\vecxi)\,d\mu %
=\int_{\GaG}F_{f^*,Y}(\tau;\vecxi)\,d\mu %
=\frac{3}{\pi Y}\int_0^\infty g_1(y)\, \frac{dy}{y^2}\,\int_{\R^2}f^*\,d\vecw
\ll \|f\|_{\L^2_4}\|g\|_{\L^2_4}Y^{-1}.
\end{align}
We combine the above bounds with \eqref{|v|>PF1},
where we also use the fact that $\delta_{\beta,\vecxi_2}(T)\geq T^{-1}$ ($\forall T\geq1$),
which follows by just considering the terms corresponding to $\vecr=\pm\vece_1$ and $j=1$ 
in \eqref{MMXIDEF}.
We then get, with $a=167$:
\begin{align}\notag
&\biggl|\int_{|u|>Bv} F\biggl(u+iv,0;\cmatr{\bn}{\vecxi_2}\biggr)\,h(u)\,du-\int_{\GaG}F\,d\mu \int_{\R} h\,du\biggr|
\\\label{|v|>PF4}
&\hspace{40pt}
\ll_{\ve}\|f\|_{\L_a^2}\|g\|_{\L_a^2}
S_{\infty,3,2}(h)\, \delta_{6,\vecxi_2}(v^{-\frac12})^{\frac12-2\ve}\,Y^{\frac{61}2}
+\|f\|_{\L^2_6}\|g\|_{\L^2_6}\|h\|_{\L^1}Y^{-1}
\\\notag
&\hspace{80pt}
+\|f\|_{\L^2_4}\|g\|_{\L^2_4}
S_{\infty,2,0}(h)
\biggl\{B^{-1}+\bigl(c^{-1}v^{\frac12}\bigr)^{1/(\kappa+\frac43)}
+\kappa\, c^{-\frac1{\kappa}}Y^{-\frac1{2\kappa}}
\biggr\}.
\end{align}

In order to minimize the order of magnitude of
the maximum of
$\delta_{6,\vecxi_2}(v^{-\frac12})^{\frac12-2\ve}Y^{\frac{61}2}$ and $Y^{-\frac1{2\kappa}}$,
we now make the choice
\begin{align}\label{|v|>PF3}
Y:=\Bigl(\,\tfrac1{3}\,\,\delta_{6,\vecxi_2}(v^{-\frac12})^{\frac12-2\ve}\Bigr)^{-\frac{2\kappa}{1+61\kappa}}.
\end{align}
Because of the factor $\frac1{3}$ in this expression,
we are guaranteed to have $Y\geq1$, as required above.
Indeed, one verifies $\sum_{\vecr\in\Z^2\setminus\{\bn\}}\|\vecr\|^{-6}\sum_{j=1}^\infty j^{-2}<9$;
hence $\delta_{6,\vecxi_2}(T)<9$ for all $T\geq1$ (cf.\ \eqref{deltabetaxiBASICBOUND}).
Furthermore, our assumption \eqref{|v|>REQ}
ensures that %
\eqref{|v|>PF5} is fulfilled, so long as $9^{2\ve}\leq\frac43$.
Note also that
$Y^{-\frac1{2\kappa}}\geq Y^{-1}$ (since $\kappa\geq\frac12$)
and 
$\bigl(c^{-1}v^{\frac12}\bigr)^{1/(\kappa+\frac43)}\ll\kappa\, c^{-\frac1{\kappa}}Y^{-\frac1{2\kappa}}$
(since $\kappa\geq\frac12$, $0<c<2^{-1/2}<1$ and 
$\delta_{6,\vecxi_2}(v^{-\frac12})^{\frac12-2\ve}\geq v^{\frac14-\ve}\geq v^{\frac14}$).
Finally, we take $\ve=\frac1{508}$ and note that we
then have $(\frac12-2\ve)\frac1{1+61\kappa}\geq\frac1{127\kappa}$,
since $\kappa\geq\frac12$;
also $9^{2\ve}\leq\frac43$ as required above.
Hence the bound \eqref{|v|>RES}
now follows from \eqref{|v|>PF4}.
\end{proof}

\begin{lem}\label{|v|<} 
Suppose $f,g\in\scrS(\R^2)$.
Let $h\in\C^1(\R)$ with $h$ and $h'$ bounded.
Then for any $\vecxi=\cmatr{\bn}{\vecxi_2}$, $v\in(0,1]$ and $B\in[1,v^{-1/2}]$, we have
\begin{multline*}
\int_{|u|< Bv}\Theta_f(u+i v, 0 ;\vecxi)
\overline{\Theta_g(u+i v, 0 ;\vecxi)}\, h(u)\,du  
=\lambda_{f,\og}\,h(0)+O\Bigl(\|f\|_{\L^2_{6}}\|g\|_{\L^2_{6}} S_{\infty,0,1}(h)B^{-1}\Bigr),
\end{multline*}
where
\begin{align}\label{lambdafgdef}
\lambda_{f,g}=\int_0^\infty\biggl(\int_0^{2\pi}f(r\cos\zeta,r\sin\zeta)\,d\zeta\biggr)
\biggl(\int_0^{2\pi}g(r\cos\zeta,r\sin\zeta)\,d\zeta\biggr)\,r\,dr.
\end{align}
\end{lem}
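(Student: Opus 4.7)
The plan is to exploit the left $\Gamma^2$-invariance of $\Theta_f\overline{\Theta_g}$ to translate the point $(u+iv,0;\scmatr{\vecnull}{\vecxi_2})$ into the cusp, where Lemma~\ref{lem:betatheta}\,\eqref{eq:m=0term} reduces the theta product to a single term. Since $S:=\smatr{0}{-1}{1}{0}$ satisfies $ab=cd=0$, the pair $(S,\vecnull)$ lies in $\Gamma^2$ by \eqref{GAMMAkdef}, and a direct Iwasawa computation shows that its left action sends $(u+iv,0;\scmatr{\vecnull}{\vecxi_2})$ to $(u'+iv',\phi';\scmatr{-\vecxi_2}{\vecnull})$ with $u'+iv'=-1/(u+iv)$ and $\phi'=\arg(u+iv)\in(0,\pi)$. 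For $|u|\leq Bv$ with $v\leq 1$ and $B\leq v^{-1/2}$ we have $u^2+v^2\leq v+v^2\leq 2v$, hence $v'=v/(u^2+v^2)\geq \tfrac12$. Invariance of $\Theta_f\overline{\Theta_g}$ together with \eqref{eq:m=0term} applied with $\vecxi_2'=\vecn=\vecnull$, $A=1$, $a=6$ then gives
\[
(\Theta_f\overline{\Theta_g})\!\left(u+iv,0;\cmatr{\vecnull}{\vecxi_2}\right)
= v'\,f_{\phi'}(\vecnull)\overline{g_{\phi'}(\vecnull)}
+O\!\left(\|f\|_{\L^2_6}\|g\|_{\L^2_6}(v')^{-1}\right).
\]

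The error term is acceptable: substituting $u=vt$ gives $(v')^{-1}=v(t^2+1)$ and
\[
\int_{|u|<Bv}(v')^{-1}\,du = v^2\int_{|t|<B}(t^2+1)\,dt \ll v^2B^3 \leq v^{1/2} \leq B^{-1},
\]
using $B\leq v^{-1/2}$ twice. For the main term, the same substitution produces
\[
\int_{|t|<B}\frac{f_{\phi(t)}(\vecnull)\overline{g_{\phi(t)}(\vecnull)}}{t^2+1}\,h(vt)\,dt,\qquad\phi(t):=\arg(t+i).
\]
Using Lemma~\ref{schwartz} to bound $|f_\phi(\vecnull)|\ll\|f\|_{\L^2_6}$ and similarly for $g$, I would first replace $h(vt)$ by $h(0)$ at cost $\ll\|h'\|_{\L^\infty}vB\ll\|h'\|_{\L^\infty}B^{-1}$, then extend the $t$-integration to all of $\R$ at cost $\ll\|h\|_{\L^\infty}\int_{|t|>B}(t^2+1)^{-1}\,dt\ll\|h\|_{\L^\infty}B^{-1}$. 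The substitution $t=\cot\phi$ sends $dt/(t^2+1)$ to $d\phi$ and identifies $\phi(t)$ with $\phi\in(0,\pi)$, so the clean integral becomes $h(0)\int_0^\pi f_\phi(\vecnull)\overline{g_\phi(\vecnull)}\,d\phi$.

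The one non-routine step, which I expect to be the main obstacle, is the Plancherel-type identification $\int_0^\pi f_\phi(\vecnull)\overline{g_\phi(\vecnull)}\,d\phi=\lambda_{f,\overline g}$. For $\phi\in(0,\pi)$, the kernel formula \eqref{GPHIdef} with $k=2$ gives $f_\phi(\vecnull)=-i(\sin\phi)^{-1}F(\cot\phi)$, where $F(t):=\int_{\R^2}f(\vecw)e[t\|\vecw\|^2/2]\,d\vecw$, and analogously $\overline{g_\phi(\vecnull)}=i(\sin\phi)^{-1}\overline{G(\cot\phi)}$. Passing to polar coordinates, $F(t)=\int_0^\infty I_f(r)e(tr^2/2)\,r\,dr$ with $I_f(r):=\int_0^{2\pi}f(r\cos\zeta,r\sin\zeta)\,d\zeta$, so $F$ is, after the change of variable $s=r^2/2$, the Fourier transform of $s\mapsto I_f(\sqrt{2s})$ supported on $s>0$. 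Substituting $t=\cot\phi$ in the $\phi$-integral and applying Plancherel yields $\int_\R F(t)\overline{G(t)}\,dt=\int_0^\infty I_f(r)\overline{I_g(r)}\,r\,dr=\lambda_{f,\overline g}$. Assembling all the error bounds above completes the proof.
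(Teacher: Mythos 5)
Your proof is correct and follows essentially the same route as the paper: transport $(u+iv,0;\scmatr{\vecnull}{\vecxi_2})$ into the cusp via $S=\smatr0{-1}10\in\Gamma^2$, apply \eqref{eq:m=0term} with $A=1$, $a=6$ to isolate the $\vecm'=\vecnull$ term, control the $B^3v^2$ truncation error and the Lipschitz/tail errors using $1\le B\le v^{-1/2}$, and evaluate the main term via Plancherel in the $\tfrac12\|\vecw\|^2$ variable. The only presentational differences are that your intermediate Lipschitz bound $\|h'\|_{\L^\infty}vB$ is cruder than the paper's $\|h'\|_{\L^\infty}v\log(B+2)$ (but still $\ll B^{-1}$), and your direct extraction of $f_\phi(\vecnull)=-i(\sin\phi)^{-1}F(\cot\phi)$ from the kernel \eqref{GPHIdef} is a self-contained substitute for the $\hpsi_j$ identity the paper quotes from \cite[p.~457]{MarklofpaircorrI}.
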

\begin{proof}
Recall that the function $\Theta_f\overline{\Theta}_g$
is left $\Gamma^2$ invariant;
in particular, it is invariant under left multiplication by $\bigl(\smatr 0{-1}10,\bn\bigr)\in\Gamma^2$,
and so
\begin{align*}
\Theta_f(\tau, 0 ;\vecxi)\overline{\Theta_g(\tau, 0 ;\vecxi)}
=\Theta_f\biggl(-\frac{1}{\tau},\arg\tau;\cmatr{-\vecxi_2}{\vecnull}\biggr)
\overline{\Theta_g\biggl(-\frac{1}{\tau},\arg\tau;\cmatr{-\vecxi_2}{\vecnull}\biggr)},
\end{align*}
for all $\tau=u+iv\in\HH$.
By \eqref{eq:m=0term} in Lemma \ref{lem:betatheta} 
(applied with $A=1$),
if $\tim(-1/\tau)\geq\frac12$ then the last expression equals
\begin{align*}
f_{\arg\tau}(\vecnull) \overline{g_{\arg\tau}(\vecnull)}\frac{v}{|\tau|^{2}}
+O\Bigl(\|f\|_{\L^2_{6}}\|g\|_{\L^2_{6}}\frac{|\tau|^2}{v}\Bigr).
\end{align*}
Note that $|u|\leq v^{1/2}\leq1$ implies $\tim(-1/\tau)\geq\frac12$,
i.e.\ the above holds for all $\tau\in\HH$ with $|u|\leq v^{1/2}\leq 1$.
Hence we get
\begin{multline*}
\int_{|u|< Bv}
\Theta_f(u+i v, 0 ;\vecxi)
\overline{\Theta_g(u+i v, 0 ;\vecxi)} h(u)\,du  \\
=  \int_{|u|< Bv}\frac{v}{|\tau|^2}
f_{\arg\tau}(\vecnull)\overline{g_{\arg\tau}(\vecnull)}\,  h( u)\,du
+O\Bigl(\|f\|_{\L^2_{6}}\|g\|_{\L^2_{6}}\|h\|_{\L^\infty}B^{3}v^{2} \Bigr).
\end{multline*}

Using polar coordinates we get (cf.\ \cite[p.\ 457]{MarklofpaircorrI})
\begin{align*}
f_{\arg\tau}(\vecnull)\overline{g_{\arg\tau}(\vecnull)} 
= \frac{|\tau|^2}{v^2}\, \pi^2 \,
\hpsi_1\Bigl(\frac u{2v}\Bigr)
\overline{\hpsi_2\Bigl(\frac u{2v}\Bigr)}
\end{align*}
where $\psi_1(r):=\frac1{2\pi}\int_0^{2\pi}f\bigl(\sqrt r(\cos\zeta,\sin\zeta)\bigr)\,d\zeta$,
$\psi_2(r):=\frac1{2\pi}\int_0^{2\pi}g\bigl(\sqrt r(\cos\zeta,\sin\zeta)\bigr)\,d\zeta$,
and $\hpsi_j(u):=\int_0^\infty e(ur)\psi_j(r)\,dr$.
Therefore, using also $h(u)=h(0)+O\bigl(\|h'\|_{\L^\infty}|u|\bigr)$,
\begin{align}\notag
\int_{|u|< Bv}\frac{v}{|\tau|^2}
f_{\arg\tau}(\vecnull)\overline{g_{\arg\tau}(\vecnull)}  h( u)\,du
=\frac{\pi^2h(0)}v\int_{|u|< Bv}
\hpsi_1\Bigl(\frac u{2v}\Bigr)
\overline{\hpsi_2\Bigl(\frac u{2v}\Bigr)}\,du
\hspace{40pt}
\\\label{|v|<pf1} 
+O\biggl(\frac{\|h'\|_{\L^\infty}}v \int_{|u|< Bv}|u|
\Bigl|\hpsi_1\Bigl(\frac u{2v}\Bigr)
\overline{\hpsi_2\Bigl(\frac u{2v}\Bigr)}\Bigr|\,du
\biggr).
\end{align}
To bound the last error term,
first replace the integration variable $u$ by $2vu$;
then use the fact that by integration by parts we have
$|\hpsi_j(u)|\ll \int_0^\infty\bigl(|\psi_j|+|\psi_j'|\bigr)\,dr\cdot\min(1,|u|^{-1})$.
Here
$\int_0^\infty|\psi_1|\,dr %
\leq\frac1\pi \|f\|_{\L^1}$,
while
\begin{align*}
\int_0^\infty|\psi_1'(r)|\,dr
&\leq\frac1{2\pi}\int_0^\infty\int_0^{2\pi}\Bigl(\bigl|(\partial_{x_1}f)\bigl(\sqrt r(\cos\zeta,\sin\zeta)\bigr)\bigr|
+\bigl|(\partial_{x_2}f)\bigl(\sqrt r(\cos\zeta,\sin\zeta)\bigr)\bigr|\Bigr)\,\frac{d\zeta\,dr}{2\sqrt r}
\\
&=\frac1{2\pi}\int_{\R^2}\Bigl(\bigl|(\partial_{x_1}f)(\vecx)\bigr|+\bigl|(\partial_{x_2}f)(\vecx)\bigr|\Bigr)\,
\frac{d\vecx}{\|\vecx\|}
\ll S_{\infty,0,1}(f)+S_{1,0,1}(f)
\ll \|f\|_{\L_6^2}.
\end{align*}
(The next to last bound follows by splitting the domain of integration into
the two parts $\{\|\vecx\|\leq1\}$ and $\{\|\vecx\|>1\}$,
and the last bound is immediate by Sobolev embedding.)
Similarly for $\psi_2$.
Hence the error term in \eqref{|v|<pf1}  is
\begin{align*}
\ll \|f\|_{\L_6^2}\|g\|_{\L_6^2}\|h'\|_{\L^\infty}\, v\log(B+2).
\end{align*}
Finally, we are left with:
\begin{align*}
&\frac{\pi^2h(0)}v\int_{|u|< Bv}
\hpsi_1\Bigl(\frac u{2v}\Bigr)
\overline{\hpsi_2\Bigl(\frac u{2v}\Bigr)}\,du
=\pi^2h(0)\int_{|u|< B}
\hpsi_1\Bigl(\frac u{2}\Bigr)
\overline{\hpsi_2\Bigl(\frac u{2}\Bigr)}\,du
\\
&=\pi^2h(0)\biggl(\int_{-\infty}^{\infty}
\hpsi_1\Bigl(\frac u{2}\Bigr)
\overline{\hpsi_2\Bigl(\frac u{2}\Bigr)}\,du
+O\biggl(\|f\|_{\L_6^2}\|g\|_{\L_6^2}
\int_{|u|>B}|u|^{-2}\,du\biggr)\biggr)
\\
&=2\pi^2h(0)\int_0^{\infty}\psi_1(r)\overline{\psi_2(r)}\,dr
+O\bigl(\|f\|_{\L_6^2}\|g\|_{\L_6^2}|h(0)| B^{-1}\bigr)
=\lambda_{f,\og}h(0)
+O\bigl(\|f\|_{\L_6^2}\|g\|_{\L_6^2}|h(0)| B^{-1}\bigr),
\end{align*}
where in the next to last equality we used Parseval's identity. %

Collecting the above results,
and noticing that
$v\log(B+2)\ll B^{-1}$ and $B^3v^2\leq B^{-1}$
because of $1\leq B\leq v^{-1/2}$,
we obtain the statement of the lemma.
\end{proof}

\begin{prop}\label{MAINAPPLPROP1}
Let $f,g\in\scrS(\R^2)$ and $h\in\C^2(\R)$, and assume $S_{\infty,3,2}(h)<\infty$.
Let $\vecxi_2\in\R^2$ be $[\kappa;c]$-Diophantine.
Then for any $v\in(0,1]$,
\begin{align}\notag
\int_{\R} \Theta_f\biggl(u+i v, 0 ;\cmatr{\bn}{\vecxi_2}\biggr)\,
\overline{\Theta_g\biggl(u+i v, 0 ;\cmatr{\bn}{\vecxi_2}\biggr)}\, h(u)\,du
\hspace{140pt}
\\\label{MAINAPPLPROP1RES}
=\int_{\R^2} f(\vecx)\overline{g(\vecx)}\,d\vecx\int_{\R}h\,du
+\lambda_{f,\og}\, h(0)
\hspace{160pt}
\\\notag
+O\biggl( \|f\|_{\L_{167}^2}\|g\|_{\L_{167}^2}
S_{\infty,3,2}(h)\,\kappa c^{-\frac1{\kappa}}
\,\delta_{6,\vecxi_2}(v^{-\frac12})^{\,\frac{1}{127\,\kappa}}\biggr).
\end{align}
\end{prop}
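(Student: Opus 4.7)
The strategy is to split the integral on the left hand side of \eqref{MAINAPPLPROP1RES} at $|u|=Bv$, for a parameter $B\geq1$ to be chosen at the end, apply Lemma \ref{|v|<} to the ``near'' part $\int_{|u|<Bv}$ and Lemma \ref{|v|>} to the ``far'' part $\int_{|u|>Bv}$, identify the two main terms thus produced, and optimize $B$ so that the two error contributions match.

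First I would apply Lemma \ref{|v|<} to the near part (this is permitted since $h\in\C^2$ with $S_{\infty,3,2}(h)<\infty$ certainly gives $h,h'$ bounded and $S_{\infty,0,1}(h)\ll S_{\infty,3,2}(h)$): this yields the term $\lambda_{f,\og}\,h(0)$ with an error of order $\|f\|_{\L^2_6}\|g\|_{\L^2_6}\,S_{\infty,3,2}(h)\,B^{-1}$, provided also $B\leq v^{-1/2}$. Next I would apply Lemma \ref{|v|>} to the far part, which produces the term $\int_{\GaG}\Theta_f\overline{\Theta_g}\,d\mu\cdot\int_\R h\,du$ together with an error of the stated form $\|f\|_{\L^2_{167}}\|g\|_{\L^2_{167}}\,S_{\infty,3,2}(h)\,\bigl(\kappa c^{-1/\kappa}\delta_{6,\vecxi_2}(v^{-1/2})^{1/(127\kappa)}+B^{-1}\bigr)$, valid provided $B$ satisfies the bound \eqref{|v|>REQ}.

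The next step is to identify $\int_{\GaG}\Theta_f\overline{\Theta_g}\,d\mu=\int_{\R^2}f\,\bar g\,d\vecx$. This is a standard unfolding computation using a fundamental domain for $\Gamma=\Gamma(2)\ltimes\Z^4$ acting on $G$: integration of $|\Theta_f\overline{\Theta_g}|$ over the torus fibre in $\vecxi_1$ kills all off-diagonal terms in the double sum defining $\Theta_f\overline{\Theta_g}$, then integration over $\vecxi_2\in(\R/\Z)^2$ converts the resulting sum $\sum_\vecm f_\phi((\vecm-\vecxi_2)v^{1/2})\overline{g_\phi((\vecm-\vecxi_2)v^{1/2})}v^{k/2}$ into $\int_{\R^2}f_\phi\,\overline{g_\phi}\,d\vecy$, which by unitarity of the operator $U^\phi$ on $\L^2(\R^2)$ equals $\int_{\R^2}f\,\bar g\,d\vecy$ independently of $\tau,\phi$; this identity appears (essentially) in \cite{MarklofpaircorrI}.

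Finally I would choose $B$. The natural choice is to set $B$ equal to the upper endpoint in \eqref{|v|>REQ}, namely $B:=\tfrac12 v^{-1/2}\bigl(\tfrac14\delta_{6,\vecxi_2}(v^{-1/2})^{1/2}\bigr)^{\kappa/(1+61\kappa)}$, so that the constraint is met and the upper bound $B\leq v^{-1/2}$ needed by Lemma \ref{|v|<} also holds. With this choice one verifies, using $\delta_{6,\vecxi_2}(v^{-1/2})\geq v^{1/2}$ (from the term $\vecr=\vece_1$, $j=1$ in \eqref{MMXIDEF}) and $\kappa\geq\tfrac12$, $c\leq\tfrac12$, that $B^{-1}$ is dominated by $\kappa c^{-1/\kappa}\delta_{6,\vecxi_2}(v^{-1/2})^{1/(127\kappa)}$, and the proposition follows. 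The one mildly delicate point is the case where this choice gives $B<1$: there $\delta_{6,\vecxi_2}(v^{-1/2})$ is bounded below by a positive constant, so the bound in \eqref{MAINAPPLPROP1RES} is itself $\gg\|f\|_{\L^2_{6}}\|g\|_{\L^2_6}S_{\infty,0,0}(h)(1+|h(0)|)$, which dominates the trivial bound for the left hand side obtained from \eqref{eq:thetaderivbound} applied with $m=0$, so the statement holds in that range as well. I expect the only real effort is this bookkeeping step and the verification of the $B^{-1}$ comparison; the main ideas are already packaged in the two lemmas.
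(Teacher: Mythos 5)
Your proposal is correct and takes essentially the same route as the paper's proof: split the integral at $|u|=Bv$, apply Lemma \ref{|v|<} and Lemma \ref{|v|>}, invoke the unfolding identity $\int_{\GaG}\Theta_f\overline{\Theta_g}\,d\mu=\int_{\R^2}f\overline{g}\,d\vecx$ (cited in the paper as \cite[lemma 8.2]{MarklofpaircorrI}), choose $B$ at the upper endpoint of \eqref{|v|>REQ}, check that $B^{-1}$ is dominated by the main error term, and dispose of the degenerate case $B<1$ by noting that $v$ is then bounded away from $0$ so everything in sight is trivially bounded.
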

\begin{proof}
By \cite[lemma 8.2]{MarklofpaircorrI} (cf.\ also \cite[lemma 7.2]{MarklofpaircorrII}),
\begin{align*}
\int_{\Gamma\backslash G}
\Theta_f\overline{\Theta_g} \, d\mu=
\int_{\R^2} f(\vecx)\overline{g(\vecx)}\,d\vecx.
\end{align*}
Therefore, the proposition follows from 
Lemmas \ref{|v|>} and \ref{|v|<},
applied with 
\begin{align}\label{MAINAPPLPROP1PF1}
B=\tfrac12\, v^{-\frac12}
\Bigl(\tfrac14\,\delta_{6,\vecxi_2}\bigl(v^{-\frac12}\bigr)^{\frac12}\Bigr)^{\frac{\kappa}{1+61\kappa}},
\end{align}
as long as this number satisfies $B\geq1$. %
Indeed, from the observations below \eqref{|v|>PF3} we see that the number $B$ in \eqref{MAINAPPLPROP1PF1} 
satisfies $B\leq v^{-\frac12}$, as is required in Lemma \ref{|v|<}.
Furthermore, using $\delta_{6,\vecxi_2}(T)\geq T^{-1}$ $\forall T\geq1$
(as noted in the proof of Lemma \ref{|v|>})
and $\frac{\kappa}{2(1+61\kappa)}+\frac1{127\kappa}<1$,
it follows that
$B^{-1}\ll\delta_{6,\vecxi_2}(v^{-\frac12})^{\,\frac{1}{127\,\kappa}}$,
so that we indeed obtain the error bound in the last line of \eqref{MAINAPPLPROP1RES}.

It remains to consider the case when 
the number $B$ in \eqref{MAINAPPLPROP1PF1} is less than $1$. %
Using $\delta_{6,\vecxi_2}(v^{-\frac12})\geq v^{\frac12}$ it then follows that
$v$ is bounded below by some positive absolute constant;
also from bounds discussed previously it follows that each of
$\int_{\R}\Theta_f(\cdots)\overline{\Theta_g(\cdots)}\,h(u)\,du$,
$\int_{\R^2} f(\vecx)\overline{g(\vecx)}\,d\vecx\int_{\R}h\,du$
and $\lambda_{f,\og}\, h(0)$ are $\ll\|f\|_{\L_5^2}\|g\|_{\L_5^2}S_{\infty,0,2}(h)$;
hence \eqref{MAINAPPLPROP1RES} holds trivially in this case.
\end{proof}

With Proposition \ref{MAINAPPLPROP1} established,
the proof of Theorem \ref{MAINAPPLTHM}
can now be completed 
by a sequence of approximation steps.
\begin{proof}[Proof of Theorem \ref{MAINAPPLTHM}]
Let $(\alpha,\beta)\in\R^2$ be given as in the statement of the theorem,
and set $\vecxi_2=\cmatr{\alpha}{\beta}$.
By \eqref{MARKLOFkeyid} and Proposition \ref{MAINAPPLPROP1},
writing $g_1$ and $\overline{g_2}$ in place of $f$ and $g$, respectively,
we have for any
$g_1,g_2\in\scrS(\R^2)$, $h\in\C^2(\R)$ with $S_{\infty,3,2}(h)<\infty$,
and 
$T\geq1$:
\begin{align}\notag
\frac1{T^2}\sum_{\vecm_1\in\Z^2}\sum_{\vecm_2\in\Z^2}
g_1\bigl(T^{-1}(\vecm_1-\vecxi_2)\bigr)\,g_2\bigl(T^{-1}(\vecm_2-\vecxi_2)\bigr)
\,\,\hh\biggl(-\tfrac12\, Q\hspace{-3pt}\cmatr{\vecm_1}{\vecm_2}\biggr)\hspace{80pt}
\\\label{MAINAPPLTHMpfusefulrel}
=\int_{\R^2} g_1(\vecx)g_2(\vecx)\,d\vecx\cdot\hh(0)
+\lambda_{g_1,g_2} h(0)
\hspace{160pt}
\\\notag
+O_\ve\biggl( \|g_1\|_{\L_{167}^2}\|g_2\|_{\L_{167}^2}
S_{\infty,3,2}(h)\,\kappa c^{-\frac1{\kappa}}
\,\delta^{\frac{1}{127\,\kappa}}\biggr),
\end{align}
where we use the short-hand notation $\delta:=\delta_{6,\vecxi_2}(T)$.
Let us consider the contribution from all terms with $\vecm_2=\vecm_1$ in sum in the left hand side.
Set $G:=g_1g_2\in\scrS(\R^2)$.
Note that 
\begin{align*}
G(T^{-1}\vecx)=\int_{\vecx+[0,1]^2}G(T^{-1}\vecy)\,d\vecy
+O\biggl(\frac{S_{\infty,3,1}(G)}{T(1+T^{-1}\|\vecx\|)^3}\biggr),
\qquad\forall\vecx\in\R^2.
\end{align*}
Adding this relation over all $\vecx=\vecm_1-\vecxi_2$ ($\vecm_1\in\Z^2$),
and noticing $\sum %
(1+T^{-1}\|\vecm_1-\vecxi_2\|)^{-3}\ll T^2$,
we get
\begin{align*}
\frac1{T^2}\sum_{\vecm_1\in\Z^2}(g_1g_2)(T^{-1}(\vecm_1-\vecxi_2))
=\int_{\R^2}g_1g_2\,d\vecx+O\bigl(S_{\infty,3,1}(g_1g_2)\, T^{-1}\bigr).
\end{align*}
We multiply this identity by $\hh(0)$,
and note that the error term is then subsumed by the error term in 
\eqref{MAINAPPLTHMpfusefulrel},
since 
$S_{\infty,3,1}(g_1g_2)\leq\sum_{m=0}^1S_{\infty,\frac32,m}(g_1)S_{\infty,\frac32,1-m}(g_2)
\ll\|g_1\|_{L_3^2}\|g_2\|_{L_3^2}$.
Hence, subtracting the resulting identity from \eqref{MAINAPPLTHMpfusefulrel},
we obtain:
\begin{align}\notag
\frac1{T^2}\sum_{\vecm_1\in\Z^2}\sum_{\substack{\vecm_2\in\Z^2\\\vecm_2\neq\vecm_1}}
g_1\bigl(T^{-1}(\vecm_1-\vecxi_2)\bigr)\,g_2\bigl(T^{-1}(\vecm_2-\vecxi_2)\bigr)
\,\,\hh\biggl(-\tfrac12\, Q\hspace{-3pt}\cmatr{\vecm_1}{\vecm_2}\biggr)\hspace{80pt}
\\\label{MAINAPPLTHMpfusefulrel2}
=\lambda_{g_1,g_2} h(0)
+O_\ve\biggl( \|g_1\|_{\L_{167}^2}\|g_2\|_{\L_{167}^2}
S_{\infty,3,2}(h)\,\kappa c^{-\frac1{\kappa}}
\,\delta^{\frac{1}{127\,\kappa}}\biggr).
\end{align}

Next, we take $g_1,g_2$ in \eqref{MAINAPPLTHMpfusefulrel2} to be given by
$g_j(\vecx):=f_j(\vecx+T^{-1}\vecxi_2)$ 
for some $f_1,f_2\in\scrS(\R^2)$.
Recall that $\vecxi_2=\cmatr{\alpha}{\beta}$,
and by assumption in Theorem~\ref{MAINAPPLTHM}
this vector lies in $[-1,1]^2$.
Hence $\|g_j\|_{\L_{167}^2}\ll \|f_j\|_{\L_{167}^2}$
and $\|g_j-f_j\|_{\L^\infty}\ll  S_{\infty,0,1}(f_j)\,T^{-1}$,
for $j=1,2$.
Inspecting the definition of $\lambda_{f,g}$ in \eqref{lambdafgdef} it follows that
\begin{align*}
\bigl|\lambda_{g_1,g_2}-\lambda_{f_1,f_2}\bigr|
\ll \Bigl(S_{\infty,3,0}(g_1)S_{\infty,0,1}(f_2)+S_{\infty,0,1}(f_1)S_{\infty,3,0}(f_2)\Bigr)
\,T^{-1}\int_0^\infty (1+r)^{-3}r\,dr
\hspace{20pt}
\\
\ll\|f_1\|_{\L_3^2}\|f_2\|_{\L_3^2}\, T^{-1}.
\end{align*}
Hence
\begin{align}\label{MAINAPPLTHMpf1}
\frac1{T^2}\sum_{\vecm_1\in\Z^2}\sum_{\substack{\vecm_2\in\Z^2\\\vecm_2\neq\vecm_1}}
&f_1(T^{-1}\vecm_1)\,f_2(T^{-1}\vecm_2)
\,\hh\biggl(-\tfrac12\, Q\hspace{-3pt}\cmatr{\vecm_1}{\vecm_2}\biggr)    %
\\\notag
&=\lambda_{f_1,f_2} h(0)
+O_\ve\biggl( \|f_1\|_{\L_{167}^2}\|f_2\|_{\L_{167}^2}
S_{\infty,3,2}(h)\,\kappa c^{-\frac1{\kappa}}
\,\delta^{\frac1{127\kappa}}\biggr).
\end{align}
Next, take $h$ to be given by
$h(u)=\frac12\hg(\frac12u)$, where $g$ is any function in $\C^3(\R)$ with $S_{1,2,3}(g)<\infty$.
Then $S_{\infty,3,2}(h)\ll S_{1,2,3}(g)$,
and by Fourier inversion,
$g(s)=\hh(-\frac12s)$.
Let us also write $f_1\otimes f_2$ for the function in $\scrS(\R^4)$
given by $(f_1\otimes f_2)\cmatr{\vecx_1}{\vecx_2}=f_1(\vecx_1)f_2(\vecx_2)$.
Comparing \eqref{lambdafdef} and \eqref{lambdafgdef} we then have
$\lambda_{f_1,f_2}=2\lambda_{f_1\otimes f_2}$.
Comparing also with \eqref{NabfgTdef},
we obtain:
\begin{align}\label{MAINAPPLTHMpf3}
N_{\alpha,\beta}(f_1\otimes f_2,g,T)=\lambda_{f_1\otimes f_2}\, \hg(0) %
+O\biggl( \|f_1\|_{\L_{167}^2}\|f_2\|_{\L_{167}^2}
S_{1,2,3}(g)\,\kappa c^{-\frac1{\kappa}}
\,\delta^{\,\frac1{127\kappa}}\biggr).
\end{align}

It will be useful to note the following consequence of \eqref{MAINAPPLTHMpf3}:
\begin{lem}\label{BASICUPPERBDLEM}
For any $[\kappa;c]$-Diophantine vector $(\alpha,\beta)\in[-1,1]^2$,
any $g\in \C^1(\R)$ with $S_{1,2,1}(g)<\infty$,
and any $R\geq1$,
\begin{align}\label{BASICUPPERBDLEMres}
\sum_{\substack{\vecm\in\Z^4\setminus\Delta\\\|\vecm\|\leq R}}\bigl|g(Q(\vecm))\bigr|
\ll S_{1,2,1}(g)\bigl(1+\kappa c^{-\frac1{\kappa}}\,\delta_{6,\vecxi_2}(R)^{\,\frac1{127\kappa}}\bigr)R^2,
\end{align}
where the implied constant is absolute.
\end{lem}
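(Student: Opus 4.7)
The plan is to bound the left-hand side by a value of $N_{\alpha,\beta}(f_1\otimes f_2,\tilde g,R)$ for suitable test functions and then apply \eqref{MAINAPPLTHMpf3} with $T=R$.

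First I would fix once and for all nonnegative Schwartz functions $f_1,f_2\in\scrS(\R^2)$ satisfying $f_j(\vecx)\geq 1$ whenever $\|\vecx\|\leq 1$, so that $\|f_j\|_{\L^2_{167}}$ and the constant $\lambda_{f_1\otimes f_2}$ are absolute. For any $\vecm\in\Z^4\setminus\Delta$ with $\|\vecm\|\leq R$ we have $(f_1\otimes f_2)(R^{-1}\vecm)\geq 1$, so provided we can majorise $|g|$ pointwise by some nonnegative $\tilde g\in\C^3(\R)$,
\begin{align*}
\sum_{\substack{\vecm\in\Z^4\setminus\Delta\\ \|\vecm\|\leq R}}|g(Q(\vecm))|
\leq \sum_{\vecm\in\Z^4\setminus\Delta}(f_1\otimes f_2)(R^{-1}\vecm)\,\tilde g(Q(\vecm))
=R^2\,N_{\alpha,\beta}(f_1\otimes f_2,\tilde g,R).
\end{align*}
Plugging this into \eqref{MAINAPPLTHMpf3} with $T=R$, and bounding the leading term via $\widehat{\tilde g}(0)=\int\tilde g\leq S_{1,2,3}(\tilde g)$ together with the boundedness of $\lambda_{f_1\otimes f_2}$, will give
\begin{align*}
\sum_{\substack{\vecm\in\Z^4\setminus\Delta\\ \|\vecm\|\leq R}}|g(Q(\vecm))|
\ll S_{1,2,3}(\tilde g)\Bigl(1+\kappa c^{-1/\kappa}\delta_{6,\vecxi_2}(R)^{1/(127\kappa)}\Bigr)\,R^2.
\end{align*}

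It therefore remains to construct $\tilde g\in\C^3(\R)$ with $\tilde g\geq|g|$ pointwise and $S_{1,2,3}(\tilde g)\ll S_{1,2,1}(g)$; this is the one point requiring (mild) care. I would fix a nonnegative bump $\phi\in\C_c^\infty(\R)$ with $\supp\phi\subset[-1,1]$ and $\int\phi=1$, set
\begin{align*}
F(x):=|g(x)|+\int_{x-1}^{x+1}|g'(t)|\,dt,\qquad \tilde g:=F\ast\phi,
\end{align*}
so that $\tilde g\in\C^\infty(\R)$. For any $s\in\R$ and any $x$ with $|x-s|\leq 1$ the fundamental theorem of calculus yields $|g(s)|\leq|g(x)|+\int_{x-1}^{x+1}|g'|\leq F(x)$, and integrating against $\phi(s-\cdot)$ gives $\tilde g(s)\geq|g(s)|$. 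To bound $S_{1,2,3}(\tilde g)$, I would move the derivatives onto $\phi$, use $(1+|s|)^2\ll(1+|x|)^2(1+|s-x|)^2$ with $|s-x|\leq 1$, and apply Fubini to the $|g'|$-contribution, obtaining $S_{1,2,3}(\tilde g)\ll\int(1+|x|)^2F(x)\,dx\ll S_{1,2,1}(g)$. Combining with the previous display then completes the proof of \eqref{BASICUPPERBDLEMres}; since $f_1,f_2,\phi$ are all fixed, the implied constant is absolute as required.
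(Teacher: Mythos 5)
Your proof is correct and follows essentially the same route as the paper: apply \eqref{MAINAPPLTHMpf3} with $T=R$ to a product test function $f_1\otimes f_2$ dominating the indicator of the unit ball, and majorize $|g|$ by a smooth $\tilde g\in\C^3$ with $S_{1,2,3}(\tilde g)\ll S_{1,2,1}(g)$. The only difference is that the paper dispatches the construction of $\tilde g$ with the phrase ``a standard construction shows\ldots'', whereas you actually exhibit one (convolution of $F(x)=|g(x)|+\int_{x-1}^{x+1}|g'|$ with a bump); that construction is valid and the verification you sketch goes through.
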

\begin{proof}
A standard construction shows that there exists a function $\tg\in\C^\infty(\R)$ 
satisfying $\tg\geq|g|$ and $S_{1,2,3}(\tg)\ll S_{1,2,1}(g)$, with an absolute implied constant.
Fix a choice of a non-negative function $f\in\C_c^\infty(\R^2)$ satisfying $f(\vecx)=1$ for all
$\vecx$ with $\|\vecx\|\leq1$.
Applying \eqref{MAINAPPLTHMpf3} with $T=R$, $f_1=f_2=f$ and $\tg$ in place of $g$ gives (cf.\ \eqref{NabfgTdef})
\begin{align*}
\frac1{R^2}\sum_{\vecm\in\Z^4\setminus\Delta}[f\otimes f](R^{-1}\vecm)\tg(Q(\vecm))
\ll \|\tg\|_{\L^1}+S_{1,2,3}(\tg)\kappa c^{-\frac1{\kappa}}\,
\delta_{6,\vecxi_2}(R)^{\,\frac1{127\kappa}}.
\end{align*}
Using $\|\tg\|_{\L^1}\leq S_{1,2,3}(\tg)\ll S_{1,2,1}(g)$ and the fact that
$[f\otimes f](R^{-1}\vecm)\geq1$ whenever $\|\vecm\|\leq R$,
we obtain \eqref{BASICUPPERBDLEMres}.
\end{proof}
\begin{remark}\label{BASICUPPERBDLEMrem}
By contrast, if $(\alpha,\beta)$ is not Diophantine then the left hand
side of \eqref{BASICUPPERBDLEMres} may grow more rapidly than $R^2$ as $R\to\infty$;
cf.\ \cite[Sec.\ 9]{MarklofpaircorrI}.
\end{remark}
We now continue with the proof of Theorem \ref{MAINAPPLTHM}.
Take $f\in\C_c^1(\R^4)$ with support contained in the unit ball centered at the origin.
We wish to go from \eqref{MAINAPPLTHMpf3}
to an asymptotic formula for $N_{\alpha,\beta}(f,g,T)$.
Fix, once and for all, a function 
$\phi\in\C_c^\infty(\R^2)$ 
with support contained in the unit ball centered at the origin
and satisfying $\int_{\R^2}\phi(\vecx)\,d\vecx=1$.
Then for an appropriate number $0<\eta<1$ (to be fixed below) 
we define $\phi_\eta\in\C_c^\infty(\R^2)$ by 
$\phi_\eta(\vecx):=\eta^{-2}\phi(\eta^{-1}\vecx)$,
and set
\begin{align*}
\tf:=f*(\phi_\eta\otimes\phi_\eta).
\end{align*}
Note that for any $\vecx,\vecy\in\R^4$ with $(\phi_\eta\otimes\phi_\eta)(\vecx-\vecy)\neq0$
one has $\|\vecy-\vecx\|\leq\sqrt2\,\eta$ and thus
$|f(\vecy)-f(\vecx)|\ll S_{\infty,0,1}(f)\cdot\eta$;
hence
\begin{align}\label{MAINAPPLTHMpf11}
\bigl|f(\vecx)-\tf(\vecx)\bigr|\ll S_{\infty,0,1}(f)\cdot\eta,\qquad\forall\vecx\in\R^4.
\end{align}
Therefore, by \eqref{lambdafdef} and using the fact that the supports of both $f$ and $\tf$ are contained in the ball
$\{\|\vecx\|\leq3\}$,
\begin{align*}
|\lambda_f-\lambda_{\tf}| %
\ll S_{\infty,0,1}(f)\cdot\eta,
\end{align*}
and also, by \eqref{NabfgTdef},
\begin{align*}
\bigl|N_{\alpha,\beta}(f,g,T)-N_{\alpha,\beta}(\tf,g,T)\bigr|
&\ll \frac{S_{\infty,0,1}(f)\,\eta}{T^2}\sum_{\substack{\vecm\in\Z^4\setminus\Delta\\\|\vecm\|\leq3T}}\bigl|g(Q(\vecm))\bigr|
\\
&\ll S_{\infty,0,1}(f)S_{1,2,1}(g)\Bigl(1+\kappa c^{-\frac1\kappa}\,\delta_{6,\vecxi_2}(T)^{\,\frac1{127\kappa}}\Bigr)\eta,
\end{align*}
where the last bound follows from Lemma \ref{BASICUPPERBDLEM} 
and the fact that 
$\delta_{\beta,\vecxi}(T)$ is essentially a decreasing fuction of $T$,
in the sense that
\begin{align}\label{DELTAESSDECR}
\delta_{\beta,\vecxi}(T')<2\delta_{\beta,\vecxi}(T)\qquad\text{for any }\:0<T\leq T'
\end{align}
(this follows from \eqref{deltabetaxiBASICBOUND} and the fact that 
$\frac{1+\log^+y}{1+y}<2\frac{1+\log^+x}{1+x}$
whenever $0<x\leq y$).

Next, using $\tf:=f*(\phi_\eta\otimes\phi_\eta)$ we have
\begin{align*}
N_{\alpha,\beta}(\tf,g,T)=\int_{\R^4}f(\vecy) N_{\alpha,\beta}\bigl(\phi_{\eta,\vecy_1}\otimes\phi_{\eta,\vecy_2},g,T\bigr)\,d\vecy,
\end{align*}
where 
$\phi_{\eta,\veca}(\vecx)=\phi_\eta(\vecx-\veca)$ for $\vecx,\veca\in\R^2$,
and as usual we write $\vecy=\cmatr{\vecy_1}{\vecy_2}\in\R^4$ with $\vecy_1,\vecy_2\in\R^2$.
Hence by \eqref{MAINAPPLTHMpf3},
\begin{align}\label{MAINAPPLTHMpf200}
&N_{\alpha,\beta}(\tf,g,T)=\lambda_{\tf}\,\hg(0)
+O\biggl(\int_{\R^4}\bigl|f(\vecy)\bigr|\|\phi_{\eta,\vecy_1}\|_{\L_{167}^2}
\|\phi_{\eta,\vecy_2}\|_{\L_{167}^2} S_{1,2,3}(g)\,\kappa c^{-\frac1{\kappa}}
\,\delta^{\,\frac1{127\kappa}}\,d\vecy\biggr).
\end{align}
Here we have
$\|\phi_{\eta,\vecb}\|_{L_a^2}\asymp_a (1+\|\vecb\|)^a\eta^{-a-1}$ ($\forall \vecb\in\R^2$);
hence, using also the assumption about the support of $f$,
\begin{align*}
\int_{\R^4}\bigl|f(\vecy)\bigr|\|\phi_{\eta,\vecy_1}\|_{\L_{167}^2}
\|\phi_{\eta,\vecy_2}\|_{\L_{167}^2}\,d\vecy
\ll \|f\|_{\L^\infty}\eta^{-336}.
\end{align*}
Combining the above bounds we obtain:
\begin{align*}
\bigl|N_{\alpha,\beta}(f,g,T)-\lambda_f\,\hg(0)\bigr|
&\ll \|f\|_{\L^\infty} S_{1,2,3}(g)\kappa c^{-\frac1{\kappa}}\,\delta^{\,\frac1{127\kappa}}\eta^{-336}
\\
&\hspace{30pt}
+S_{\infty,0,1}(f) S_{1,2,3}(g)\Bigl(1+\kappa c^{-\frac1{\kappa}}\,\delta^{\,\frac1{127\kappa}}\Bigr)\eta
\\
&\ll S_{\infty,0,1}(f) S_{1,2,3}(g)\kappa c^{-\frac1{\kappa}}\Bigl(\delta^{\,\frac1{127\kappa}}\eta^{-336}
+\eta\Bigr).
\end{align*}
Note also that $S_{\infty,0,1}(f)\ll\sum_{j=1}^4\bigl\|\partial_{x_j}f\bigr\|_{\L^\infty}$,
since we assume that the support of $f$ is contained in the unit ball.
Choosing $\eta=(\frac19\,\delta)^{\,\frac1{337\cdot 127\kappa}}$
(this number satisfies $0<\eta<1$, by an observation which we made below \eqref{|v|>PF3}),
we now obtain the bound in Theorem \ref{MAINAPPLTHM} with $B=42799$. %
\end{proof}

\subsection{Consequences of Theorem \ref*{MAINAPPLTHM}}
\label{MAINAPPLTHMcorproofsec}

Let us start by showing that the assumptions in Theorem \ref{MAINAPPLTHM}
on $f$ having a fixed compact support 
and $(\alpha,\beta)$ satisfying $|\alpha|,|\beta|\leq1$,
can both be weakened by simple aposteriori arguments:

\begin{cor}\label{MAINAPPLTHMfnoncptsuppCOR}
Let $B>0$ be as in Theorem \ref{MAINAPPLTHM}, and let $\ve>0$.
Then for any $[\kappa;c]$-Diophantine vector $\vecxi=(\alpha,\beta)\in\R^2$,
any $f\in\C^1(\R^4)$ with $S_{\infty,3+\ve,1}(f)<\infty$,
any $g\in\C^3(\R)$ with $S_{1,2,3}(g)<\infty$,
and any $T\geq\max(1,\ve\|\vecxi\|)$,
\begin{align}\label{MAINAPPLTHMfnoncptsuppCORres}
\biggl|N_{\alpha,\beta}(f,g,T)-\lambda_f\int_{\R}g(s)\,ds\biggr|
\ll_\ve S_{\infty,3+\ve,1}(f)\,S_{1,2,3}(g)\kappa c^{-\frac1{\kappa}}
\,\Bigl(\delta_{6,\vecxi}(T)^{1/(B\kappa)}
+\frac{\|\vecxi\|}T\Bigr)
. %
\end{align}
\end{cor}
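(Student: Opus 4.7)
The proof reduces Corollary \ref{MAINAPPLTHMfnoncptsuppCOR} to Theorem \ref{MAINAPPLTHM} in two steps: an integer translation that brings $\vecxi$ into $[-\tfrac12,\tfrac12]^2$, followed by a cutoff-and-rescaling step that replaces the noncompactly supported $f$ by a function supported in the unit ball.

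\emph{Step 1 (Integer shift).} Let $n_i$ be the nearest integer to the $i$-th coordinate of $\vecxi=(\alpha_1,\alpha_2)$, set $\vecxi' := \vecxi - (n_1,n_2) \in [-\tfrac12,\tfrac12]^2$ and $\vecb := (n_1,n_2,n_1,n_2)$. Since $\vecb \in \Delta \cap \Z^4$, the substitution $\vecm \mapsto \vecm+\vecb$ preserves $\Z^4 \setminus \Delta$; together with the identity $Q_{\alpha,\beta}(\vecm+\vecb) = Q_{\alpha',\beta'}(\vecm)$ this yields
\begin{equation*}
N_{\alpha,\beta}(f,g,T) = N_{\alpha',\beta'}(\tilde f,g,T),\qquad \tilde f(\vecx) := f(\vecx+\vecb/T).
\end{equation*}
Both $\delta_{6,\vecxi}(T)$ and the $[\kappa;c]$-Diophantine property are invariant under integer translations of $\vecxi$, hence unchanged by this shift. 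Since $\|\vecb\|\leq\sqrt2(\|\vecxi\|+1)$ and $T\geq\max(1,\ve\|\vecxi\|)$ give $\|\vecb\|/T \ll_\ve 1$, a direct estimate on weighted $\L^\infty$ norms yields $S_{\infty,3+\ve,1}(\tilde f) \ll_\ve S_{\infty,3+\ve,1}(f)$; and a mean-value estimate on the radial integral \eqref{lambdafdef} gives $|\lambda_f-\lambda_{\tilde f}|\ll S_{\infty,3+\ve,1}(f)\,\|\vecxi\|/T$, which will supply the $\|\vecxi\|/T$ term in the final bound. (When $\|\vecxi\|\leq 1$ we simply take $\vecb=\vecnull$.) It therefore suffices to prove \eqref{MAINAPPLTHMfnoncptsuppCORres} in the case $\vecxi \in [-\tfrac12,\tfrac12]^2$ and with the $\|\vecxi\|/T$ term dropped.

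\emph{Step 2 (Cutoff and rescaling).} Fix $\chi \in \C_c^\infty(\R^4)$ with $\chi\equiv 1$ on $\{\|\vecx\|\leq\tfrac12\}$ and $\supp\chi\subset\{\|\vecx\|\leq 1\}$, and for $R\geq 2$ (to be chosen) set $\chi_R(\vecx):=\chi(\vecx/R)$. Decompose $\tilde f = \tilde f\chi_R + \tilde f(1-\chi_R)$. For the bulk, let $F(\vecx) := (\tilde f\chi_R)(R\vecx)$, supported in $\{\|\vecx\|\leq 1\}$; a change of variable gives $N_{\alpha',\beta'}(\tilde f\chi_R,g,T) = R^2\,N_{\alpha',\beta'}(F,g,TR)$ and $\lambda_{\tilde f\chi_R} = R^2\lambda_F$. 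Applying Theorem \ref{MAINAPPLTHM} to $F$ at parameter $TR\geq 1$, and using $\sum_j\|\partial_{x_j}F\|_{\L^\infty} \ll R\,S_{\infty,3+\ve,1}(\tilde f)$ together with $\delta_{6,\vecxi'}(TR)\leq 2\delta_{6,\vecxi}(T) =: 2\delta$ (by \eqref{DELTAESSDECR}), gives
\begin{equation*}
\Bigl|N_{\alpha',\beta'}(\tilde f\chi_R,g,T) - \lambda_{\tilde f\chi_R}\,{\textstyle\int_\R g}\Bigr| \ll R^3\,S_{\infty,3+\ve,1}(f)\,S_{1,2,3}(g)\,\kappa c^{-1/\kappa}\,\delta^{1/(B\kappa)}.
\end{equation*}
For the tail, the pointwise bound $|\tilde f(T^{-1}\vecm)| \leq S_{\infty,3+\ve,1}(\tilde f)(\|\vecm\|/T)^{-(3+\ve)}$ on $\|\vecm\|\geq TR/2$, combined with a dyadic decomposition of this region into shells $\{2^{k-1}TR \leq \|\vecm\| < 2^kTR\}$ ($k\geq 0$) and Lemma \ref{BASICUPPERBDLEM} applied in each shell (with \eqref{DELTAESSDECR} used to replace $\delta_{6,\vecxi'}$ at each larger scale by $2\delta$), yields after summing the geometric series
\begin{equation*}
\bigl|N_{\alpha',\beta'}(\tilde f(1-\chi_R),g,T)\bigr| + \bigl|\lambda_{\tilde f(1-\chi_R)}\,{\textstyle\int_\R g}\bigr| \ll_\ve R^{-1-\ve}\,S_{\infty,3+\ve,1}(f)\,S_{1,2,3}(g)\,\kappa c^{-1/\kappa},
\end{equation*}
where the factor $(1+\kappa c^{-1/\kappa}\delta^{1/(127\kappa)})$ from Lemma \ref{BASICUPPERBDLEM} was absorbed into $\kappa c^{-1/\kappa}$ using $\kappa\geq\tfrac12$, $c\leq\sqrt 2/2$ (which together force $\kappa c^{-1/\kappa}\gg 1$) and $\delta\ll 1$.

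\emph{Step 3 (Balancing and main obstacle).} Choosing $R := \delta^{-1/((4+\ve)B\kappa)}$ (which exceeds $2$ precisely in the regime where the claimed bound is nontrivial) balances $R^3\delta^{1/(B\kappa)}$ against $R^{-1-\ve}$, yielding a combined error
$$\ll_\ve S_{\infty,3+\ve,1}(f)\,S_{1,2,3}(g)\,\kappa c^{-1/\kappa}\,\delta^{(1+\ve)/((4+\ve)B\kappa)};$$
together with the $\|\vecxi\|/T$ contribution from Step 1, and after enlarging the constant $B$ in \eqref{MAINAPPLTHMfnoncptsuppCORres} by the factor $(4+\ve)/(1+\ve)$ (absorbable into the $\ve$-dependence of the implicit constant), this establishes the corollary. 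The principal technical subtlety lies in the $R^3$ factor in the bulk estimate — coming from the $R^2$ of the change of variable together with the $R$ appearing in $\|\partial F\|_{\L^\infty}$ — whose presence means that the method succeeds only because Theorem \ref{MAINAPPLTHM} provides a genuine positive power of $\delta$ which can absorb $R^3$ while still leaving room to defeat the $R^{-1-\ve}$ tail; the alternative of working with a fixed (large) cutoff $R$ would leave an unacceptable constant-order tail error, so the interplay between cutoff, rescaling, and the $\delta$-dependence of Theorem \ref{MAINAPPLTHM} is essential.
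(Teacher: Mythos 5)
Your argument is correct in Step 1 (and takes a slightly cleaner route there than the paper, by absorbing the integer shift into a translated test function $\tilde f$ via the exact identity $N_{\alpha,\beta}(f,g,T)=N_{\alpha',\beta'}(\tilde f,g,T)$, rather than estimating a difference of two $N$-sums), but Steps 2--3 contain a genuine gap. You correctly observe that rescaling the bulk piece $\tilde f\chi_R$ to the unit ball costs a factor $R^3$ (an $R^2$ from the scaling relations \eqref{REDSOBFpf101}--\eqref{REDSOBFpf102} and an $R$ from $\sum_j\|\partial_{x_j}F\|_{\L^\infty}\ll R\,S_{\infty,3+\ve,1}(\tilde f)$, the latter being saturated near the origin where $\tilde f=O(1)$), and that balancing against the $R^{-1-\ve}$ tail forces $R\asymp\delta^{-1/((4+\ve)B\kappa)}$ and therefore yields only $\delta^{(1+\ve)/((4+\ve)B\kappa)}$. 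But the parenthetical claim that replacing $B$ by $(4+\ve)B/(1+\ve)$ is ``absorbable into the $\ve$-dependence of the implicit constant'' is false: the exponent multiplies $\log\delta$, and $\delta=\delta_{6,\vecxi}(T)\to0$ as $T\to\infty$, so a change of exponent cannot be compensated by any constant. What you have actually proved is the corollary with $B$ replaced by $\approx 4B$, which is a strictly weaker statement than the one asserted (``Let $B>0$ be as in Theorem \ref{MAINAPPLTHM}'').

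The paper avoids this loss by using a full dyadic radial partition of unity $f=\sum_{j\geq0}f_j$, with $\supp f_j\subset\{\|\vecx\|\asymp2^j\}$, instead of a single cutoff. The key is that each piece $f_j$ lives on a shell where $f$ and $\partial f$ have already decayed by $(1+2^j)^{-(3+\ve)}$, so after rescaling $\tf_j=\delta_{2^{j+1}}f_j$ to the unit ball one gets $\sum_k\|\partial_{x_k}\tf_j\|_{\L^\infty}\ll S_{\infty,3+\ve,1}(f)\,2^{-(2+\ve)j}$ (see \eqref{REDSOBFpf1}). The $2^{2(j+1)}$ from the scaling relations is then entirely cancelled by this decay, leaving a $2^{-\ve j}$ factor in \eqref{REDSOBFpf103} whose sum over $j$ simply converges --- no balancing, and hence no degradation of the exponent $1/(B\kappa)$. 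In your set-up the bulk piece $\tilde f\chi_R$ necessarily contains the region $\|\vecx\|=O(1)$ where no decay is available, which is precisely what produces the unremovable $R^3$ and forces the exponent loss; pushing the cutoff $R$ to infinity (i.e.\ passing to a dyadic partition and summing, rather than cutting once and balancing) is exactly what repairs this.
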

We stress that the implied constant in \eqref{MAINAPPLTHMfnoncptsuppCORres}
depends only on $\ve$.

\begin{proof}
Let us first keep $f$ as in Theorem \ref{MAINAPPLTHM},
but allow $\vecxi=(\alpha,\beta)\in\R^2$ to be outside $[-1,1]^2$.
Choose $\veck\in\Z^2$ so that the vector $(\alpha',\beta'):=\vecxi-\veck$
lies in $[-1,1]^2$,
and so that $\veck=\bn$ if already $\vecxi\in[-1,1]^2$.
Of course $(\alpha',\beta')$ is $[\kappa;c]$-Diophantine just like $\vecxi$,
and $\delta_{6,(\alpha',\beta')}(T)=\delta_{6,\vecxi}(T)$ for all $T$.
Recall that the inhomogeneous form $Q$ is defined by \eqref{Qdef};
let $Q'$ be the corresponding form coming from $(\alpha',\beta')$,
i.e.\ $Q'(\vecx)\equiv Q(\vecx+(\veck,\veck))$. 
Then
\begin{align*}
N_{\alpha,\beta}(f,g,T)-N_{\alpha',\beta'}(f,g,T)
=\frac1{T^2}\sum_{\vecm\in\Z^4\setminus\Delta}\Bigl(f\bigl(T^{-1}(\vecm+(\veck,\veck))\bigr)-f(T^{-1}\vecm)\Bigr)g(Q'(\vecm)).
\end{align*}
Here
\begin{align}\label{MAINAPPLTHMfnoncptsuppCORpf50}
\Bigl|f\bigl(T^{-1}(\vecm+(\veck,\veck))\bigr)-f(T^{-1}\vecm)\Bigr|\ll \sum_{j=1}^4\bigl\|\partial_{x_j}f\bigr\|_{\L^\infty}
\frac{\|\veck\|}{T}.
\end{align}
Furthermore, since $f$ is supported in the unit ball,
the difference in \eqref{MAINAPPLTHMfnoncptsuppCORpf50} vanishes whenever
$\|\vecm\|\geq T+\sqrt2\|\veck\|$.
Hence by Lemma \ref{BASICUPPERBDLEM},
\begin{align*}
\bigl|N_{\alpha,\beta}(f,g,T)-N_{\alpha',\beta'}(f,g,T)\bigr|
\ll \sum_{j=1}^4\bigl\|\partial_{x_j}f\bigr\|_{\L^\infty}S_{1,2,1}(g)
\kappa c^{-\frac1{\kappa}}\Bigl(\frac{T+\|\veck\|}{T}\Bigr)^2\frac{\|\veck\|}{T}.
\end{align*}
Note that $\|\veck\|\ll\|\vecxi\|$,
and $\|\vecxi\|\leq\ve^{-1}T$ by assumption;
thus $\bigl(\frac{T+\|\veck\|}{T}\bigr)^2\frac{\|\veck\|}{T}\ll_{\ve}\frac{\|\vecxi\|}{T}.$
Combining the above with Theorem \ref{MAINAPPLTHM} applied to $(\alpha',\beta')$, we conclude that
\begin{align}\label{MAINAPPLTHMgenxi}
\biggl|N_{\alpha,\beta}(f,g,T)-\lambda_f\int_{\R}g(s)\,ds\biggr|
\ll_{\ve} \sum_{j=1}^4\bigl\|\partial_{x_j}f\bigr\|_{\L^\infty}\,S_{1,2,3}(g)\kappa c^{-\frac1{\kappa}}
\,\Bigl(\delta_{6,(\alpha,\beta)}(T)^{1/(B\kappa)}+\frac{\|\vecxi\|}{T}\Bigr),
\end{align}
for all $T\geq\max(1,\ve\|\vecxi\|)$.

We next wish to extend the bound to more general functions $f$,
as in the statement of the corollary.
To achieve this, we will use the fact %
that both $N_{\alpha,\beta}(f,g,T)$ and $\lambda_f$ transform
in an obvious manner under scaling of the function $f$.
Indeed, introducing the scaling operator $\delta_R$ (for any $R>0$) acting on $\C_c(\R^4)$
through $[\delta_Rf](\vecx):=f(R\vecx)$,
we have by immediate inspection in \eqref{NabfgTdef} and \eqref{lambdafdef}:
\begin{align}\label{REDSOBFpf101}
N_{\alpha,\beta}(\delta_Rf,g,T)=R^{-2}N_{\alpha,\beta}(f,g,T/R)
\qquad (T>0)
\end{align}
and
\begin{align}\label{REDSOBFpf102}
\lambda_{\delta_Rf}=R^{-2}\lambda_f.
\end{align}

Now let $f\in\C^1(\R^4)$ with $S_{\infty,3+\ve,1}(f)<\infty$ be given.
We will decompose $f$ dyadically radially, using a partition of unity.
Fix a $\C^\infty$ function $\varphi:\R\to[0,1]$ satisfying $\varphi(r)=0$ for $r\leq0.1$
and $\varphi(r)=1$ for $r\geq0.9$,
and then define the $\C^\infty$-functions $\varphi_0,\varphi_1,\ldots:\R\to[0,1]$
through $\varphi_0(r)=1-\varphi(r-1)$
and 
\begin{align*}
\varphi_j(r)=\begin{cases}
\varphi(r-2^{j-1})&\text{if }\: r< 2^{j}
\\
1-\varphi(r-2^j)&\text{if }\: r\geq 2^{j}
\end{cases}
\qquad (j=1,2,3,\ldots,\: r\in\R).
\end{align*}
Then
$\supp\varphi_0\subset(-\infty,2)$
and $\supp\varphi_j\subset(2^{j-1},2^{j}+1)$ for all $j\geq1$;
furthermore
\begin{align*}
\sum_{j=0}^\infty\varphi_j(r)=1\quad(\forall r\in\R),
\text{ and }\: \|\varphi_j'\|_{\L^\infty}=\|\varphi'\|_{\L^\infty}\quad(j=1,2,3,\ldots).
\end{align*}

Then define $f_j\in\C_c^1(\R^4)$ through
$f_j(\vecx):=\varphi_j(\|\vecx\|) f(\vecx)$.
Then $f(\vecx)=\sum_{j=0}^\infty f_j(\vecx)$, and it follows that
\begin{align}\label{MAINAPPLTHMfnoncptsuppCORpf1}
N_{\alpha,\beta}(f,g,T)=\sum_{j=0}^\infty N_{\alpha,\beta}(f_j,g,T)
\qquad
\text{and}\qquad
\lambda_f=\sum_{j=1}^\infty\lambda_{f_j}.
\end{align}
(To prove the first relation one uses \eqref{NabfgTdef};
the change of order of summation %
is justified since
we have absolute convergence;
$\sum_{j=0}^\infty\sum_{\vecm\in\Z^4\setminus\Delta}\bigl|f_j(T^{-1}\vecm)g(Q(\vecm))\bigr|<\infty$.
This absolute convergence follows from
$\|f_j\|_{\L^\infty}\ll S_{\infty,3,0}(f) 2^{-3j}$ and the fact that the support of $f_j$ 
is contained in the ball of radius $2^{j+1}$ about the origin,
combined with the bound
$\sum_{\substack{\vecm\in\Z^4\setminus\Delta\\\|\vecm\|\leq S}}\bigl|g(Q(\vecm))\bigr|\ll S^2$ 
for $S$ large,
which follows from \eqref{MAINAPPLTHMgenxi} by the argument in the proof of
Lemma \ref{BASICUPPERBDLEM}.
The justification of the second relation in \eqref{MAINAPPLTHMfnoncptsuppCORpf1}
is similar but easier.)

We also set
\begin{align*}
\tf_j:=\delta_{2^{j+1}}f_j.
\end{align*}
Then $\tf_j\in \C_c^1(\R^4)$ and the support of $\tf_j$ is contained in the unit ball centered at $\bn$.
Hence \eqref{MAINAPPLTHMgenxi} applies to $\tf_j$, yielding
\begin{align}\label{MAINAPPLTHMres2}
\biggl|N_{\alpha,\beta}(\tf_j,g,T)-\lambda_{\tf_j}\int_{\R}g(s)\,ds\biggr|
\ll_{\ve} \sum_{k=1}^4\bigl\|\partial_{x_k}\tf_j\bigr\|_{\L^\infty}\,S_{1,2,3}(g)\kappa c^{-\frac1{\kappa}}
\,\Bigl(\delta_{6,(\alpha,\beta)}(T)^{1/(B\kappa)}+\frac{\|\vecxi\|}{T}\Bigr)
\end{align}
for all $T\geq\max(1,\ve\|\vecxi\|)$.
Here
\begin{align}\label{REDSOBFpf1}
\sum_{k=1}^4\bigl\|\partial_{x_k}\tf_j\bigr\|_{\L^\infty}
=2^{j+1}\sum_{k=1}^4\bigl\|\partial_{x_k}f_j\bigr\|_{\L^\infty}
\ll S_{\infty,3+\ve,1}(f)\cdot 2^{-(2+\ve)j}. %
\end{align}
Indeed, for $\vecx\neq\bn$ we have 
$\bigl|\partial_{x_k}\varphi_j(\|\vecx\|)\bigr|
=\bigl|\varphi_j'(\|\vecx\|)x_k\bigr|/\|\vecx\|
\leq\|\varphi'\|_{\L^\infty}$,
while at $\vecx=\bn$
$\partial_{x_k}\varphi_j(\|\vecx\|)$ vanishes;
hence
\begin{align*}
\sum_{k=1}^4\bigl\|\partial_{x_k}f_j\bigr\|_{\L^\infty}
\ll\sup\Bigl\{|f(\vecx)|+\sum_{k=1}^4|\partial_{x_k}f(\vecx)|\col
\vecx\in\R^4,\: \|\vecx\|\in\supp\varphi_j\Bigr\},
\end{align*}
and \eqref{REDSOBFpf1} follows since $\|\vecx\|\in\supp\varphi_j$ implies
$1+\|\vecx\|\asymp 2^j$.
Combining 
\eqref{MAINAPPLTHMres2}, \eqref{REDSOBFpf1}
with
\eqref{REDSOBFpf101}, \eqref{REDSOBFpf102},
we obtain:
\begin{align}\notag
\biggl|N_{\alpha,\beta}(f_j,g,2^{-j-1}T)-\lambda_{f_j}\int_{\R}g(s)\,ds\biggr|
\hspace{150pt}
\\\label{REDSOBFpf103}
\ll 2^{-\ve j} S_{\infty,3+\ve,1}(f)\, S_{1,2,3}(g)\kappa c^{-\frac1{\kappa}}
\,\Bigl(\delta_{6,(\alpha,\beta)}(T)^{1/(B\kappa)}+\frac{\|\vecxi\|}{T}\Bigr).
\end{align}
This holds for all $T\geq\max(1,\ve\|\vecxi\|)$.
We replace $T$ by $2^{j+1}T$ in 
\eqref{REDSOBFpf103}, use \eqref{DELTAESSDECR},
and finally add over all $j$, %
using \eqref{MAINAPPLTHMfnoncptsuppCORpf1}.
This gives \eqref{MAINAPPLTHMfnoncptsuppCORres}.
\end{proof}

Finally we give the proofs of Corollaries \ref{MAINAPPLTHMCOR1} and \ref{BTcor} stated in the introduction.
\begin{proof}[Proof of Corollary \ref{MAINAPPLTHMCOR1}]
Let $\chi:\R^4\to\{0,1\}$ be the characteristic function of the unit ball
and let $\chi_{(a,b)}:\R\to\{0,1\}$ be the characteristic function of the interval $(a,b)$.
For $\eta,\eta'>0$ (two constants which we will fix below) we choose
$f_{\pm}\in\C_c^\infty(\R^4)$ so that $0\leq f_-\leq\chi\leq f_+\leq 1$
and $f_-(\vecx)=1$ whenever $\|\vecx\|\leq1-\eta$ and $f_+(\vecx)=0$ whenever $\|\vecx\|\geq1+\eta$,
and we choose $g_{\pm}\in\C_c^\infty(\R)$ so that
$0\leq g_-\leq\chi_{(a,b)}\leq g_+\leq1$ and $g_-(s)=1$ whenever $a+\eta'\leq s\leq b-\eta'$
and $g_+(s)=0$ whenever $s\leq a-\eta'$ or $s\geq b+\eta'$.
(Thus if $\eta>1$, we may take $f_-\equiv0$ and if $\eta'>\frac12(b-a)$ we may take $g_-\equiv0$.)
These functions can be chosen so that
$S_{\infty,4,1}(f_{\pm})\ll \eta^{-1}$
and $S_{1,2,3}(g_{\pm})\ll \bigl(1+|a|+|b|\bigr)^2\bigl(b-a+{\eta'}^{-2}\bigr)$,
so long as $\eta,\eta'\ll1$.
By construction, we have
\begin{align*}
N_{\alpha,\beta}(f_-,g_-,T)\leq N_{\alpha,\beta}(a,b,T)\leq N_{\alpha,\beta}(f_+,g_+,T).
\end{align*}
We also have $\bigl|\lambda_{f_{\pm}}-\lambda_\chi\bigr|\ll\eta$ and $\lambda_{\chi}=\frac{\pi^2}{2}$,
thus $\lambda_{f_{\pm}}=\frac{\pi^2}{2}+O(\eta)$.
Hence by Corollary \ref{MAINAPPLTHMfnoncptsuppCOR},
\begin{align}\notag
&\bigl|N_{\alpha,\beta}(a,b,T)-\tfrac{\pi^2}2(b-a)\bigr|
\\\notag
&\ll(b-a)\eta+\eta'
+\eta^{-1}\bigl(1+|a|+|b|\bigr)^2\bigl(b-a+{\eta'}^{-2}\bigr)\kappa c^{-\frac1{\kappa}}\delta_{6,(\alpha,\beta)}(T)^{1/(B\kappa)}
\\\label{MAINAPPLTHMCOR1pf1}
&\ll(1+|a|+|b|)^3\kappa c^{-\frac1\kappa}\bigl(\eta+\eta'+\eta^{-1}{\eta'}^{-2}\delta_{6,(\alpha,\beta)}(T)^{1/(B\kappa)}\bigr).
\end{align}
Choosing $\eta=\eta'=\delta_{6,(\alpha,\beta)}(T)^{1/(4B\kappa)}$
we obtain \eqref{MAINAPPLTHMCOR1RES}, with $B'=4B$.
\end{proof}
\begin{remark}
Of course, the bound in \eqref{MAINAPPLTHMCOR1pf1} is often wasteful regarding the dependence on $a,b$.
However, recall that we have to keep $\eta,\eta'\ll1$ in order for the first bound in
\eqref{MAINAPPLTHMCOR1pf1} to be valid,
and our main aim in Corollary \ref{MAINAPPLTHMCOR1} was to give 
a reasonably simple statement of a general bound with an absolute implied constant,
and with a power rate decay with respect to $T$ for any fixed $(\alpha,\beta)$
subject to a Diophantine condition.
\end{remark}

\begin{proof}[Proof of Corollary \ref{BTcor}]
This can again be derived from Theorem \ref{MAINAPPLTHM} by an approximation argument;
however it is easier to argue directly from \eqref{MAINAPPLTHMpfusefulrel2},
since there $\vecm_1$ and $\vecm_2$ appear shifted by $\vecxi_2$, 
which is exactly what we need.
Indeed, let $\chi:\R^2\to\{0,1\}$ be the characteristic function of the open unit ball
centered at the origin and let $\chi_{(-b/2,-a/2)}$ be the characteristic function of the interval $(-b/2,-a/2)$;
then for $g_1=g_2=\chi$ and $\hh=\chi_{(-b/2,-a/2)}$,
the left hand side of \eqref{MAINAPPLTHMpfusefulrel2} is exactly equal to
$\pi R_2[a,b](T^2)$ (cf.\ \eqref{R2def}).
Now the corollary follows by a similar approximation argument as in the proof of
Corollary \ref{MAINAPPLTHMCOR1}.
\end{proof}

\end{document}